\documentclass[10pt,a4paper,pointlessnumbers]{scrartcl}

\usepackage[T1]{fontenc}

\usepackage{amssymb}
\usepackage{amsmath}
\usepackage{amsfonts}
\usepackage{bbm}
\usepackage{amsthm}
\usepackage{lmodern}
\usepackage{mathrsfs}
\usepackage[hidelinks]{hyperref}\usepackage{color}
\usepackage[margin=2.3cm]{geometry}
\usepackage[all,cmtip]{xy}
\usepackage[utf8]{inputenc}
\usepackage{graphicx}
\usepackage{varwidth}
\usepackage{overpic}

\makeatletter
\DeclareRobustCommand{\em}{%
	\@nomath\em \if b\expandafter\@car\f@series\@nil
	\normalfont \else \slshape \fi}
\makeatother
\usepackage{upgreek}	
\usepackage{rotating}
\usepackage{tikz}
\usetikzlibrary{matrix,arrows,decorations.pathmorphing,shapes.geometric}
\usepackage{tikz-cd}
\usepackage{needspace}
\newcommand{\spaceplease}{\needspace{10\baselineskip}}
\usetikzlibrary{decorations.markings}

\usetikzlibrary{backgrounds}

\newcommand{\PrL}{\catf{Pr}^\catf{L}}

\newcommand{\HOM}{\underline{\catf{Hom}}} \newcommand{\END}{\underline{\catf{\End}}}

\newcommand{\RGraphs}{\catf{RGraphs}}

\usepackage{tikz-cd}

\usetikzlibrary{decorations.markings}
\usetikzlibrary{backgrounds}

\usepackage{etex}

\tikzstyle{tikzfig}=[baseline=-0.25em,scale=0.5]

\pgfkeys{/tikz/tikzit fill/.initial=0}
\pgfkeys{/tikz/tikzit draw/.initial=0}
\pgfkeys{/tikz/tikzit shape/.initial=0}
\pgfkeys{/tikz/tikzit category/.initial=0}

\pgfdeclarelayer{edgelayer}
\pgfdeclarelayer{nodelayer}
\pgfsetlayers{background,edgelayer,nodelayer,main}

\tikzstyle{none}=[inner sep=0mm]

\newcommand{\tikzfig}[1]{%
	{\tikzstyle{every picture}=[tikzfig]
		\IfFileExists{#1.tikz}
		{\input{#1.tikz}}
		{%
			\IfFileExists{./figures/#1.tikz}
			{\input{./figures/#1.tikz}}
			{\tikz[baseline=-0.5em]{\node[draw=red,font=\color{red},fill=red!10!white] {\textit{#1}};}}%
	}}%
}

\tikzstyle{every loop}=[]

\usepackage{tikzit}

\tikzstyle{black dot}=[fill=black, draw=black, shape=circle, minimum size=3pt, inner sep=0pt]
\tikzstyle{reddot}=[fill=red, draw=red, shape=circle, minimum size=3pt, inner sep=0pt]
\tikzstyle{black dot small}=[fill=black, draw=black, shape=circle, minimum size=2pt, inner sep=0pt]
\tikzstyle{fblack dot}=[fill=black, draw=red, shape=circle, minimum size=2pt, inner sep=0pt]
\tikzstyle{wbox}=[fill=white, draw=black, shape=rectangle, minimum height=0.5cm, minimum width=0.01cm]
\tikzstyle{bbox}=[fill=white, draw=blue, shape=rectangle, minimum height=0.5cm, minimum width=0.01cm]
\tikzstyle{rbox}=[fill=white, draw=red, shape=rectangle, minimum height=0.5cm, minimum width=0.01cm]
\tikzstyle{bwbox}=[draw=blue, shape=rectangle, minimum width=2cm, minimum height=0.5cm]
\tikzstyle{bbwbox}=[draw=blue, shape=rectangle, minimum width=1cm, minimum height=1cm]
\tikzstyle{big white circle}=[fill=white, draw=black, shape=circle, minimum width=0.75cm]
\tikzstyle{white dot big}=[fill=white, draw=black, shape=circle, inner sep=1pt]
\tikzstyle{white dot}=[fill=white, draw=black, shape=circle, minimum size=3pt, inner sep=0pt]
\tikzstyle{flat box}=[fill=white, draw=black, shape=rectangle, minimum width=1.3cm, minimum height=0.5cm,fill=morphismcolor]
\tikzstyle{square}=[fill=white, draw=black, shape=rectangle]
\tikzstyle{flat box 2}=[fill=white, draw=black, shape=rectangle, minimum height=0.5cm, minimum width=0.01cm,fill=morphismcolor]
\tikzstyle{bigbox}=[fill=white, draw=black, shape=rectangle, minimum height=0.5cm, minimum width=0.8cm,fill=white]
\tikzstyle{over }=[front]
\tikzstyle{theta}=[fill=blue, draw=blue, shape=ellipse, minimum height=6pt, minimum width=6pt, inner sep=0pt]
\tikzstyle{thetabig}=[fill=blue, draw=blue, shape=ellipse, minimum width=1cm, minimum height=0.01cm]
\tikzstyle{thetainv}=[fill=blue, draw=red, shape=ellipse, minimum height=6pt, minimum width=6pt, inner sep=0pt]
\tikzstyle{thetabinv}=[fill=blue, draw=red, shape=ellipse, minimum width=1cm, minimum height=0.01cm]
\tikzstyle{bigdisk}=[draw=black, shape=circle, minimum width=0.5cm]
\tikzstyle{wdisk}=[shape=circle, minimum width=0.48cm,fill=white]
\tikzstyle{bigdisk2}=[draw=black, fill=lightgray, shape=circle, minimum width=3cm]
\tikzstyle{little disk}=[fill=white, draw=black, shape=circle, minimum width=0.5cm]

\tikzstyle{mid arrow}=[-, postaction={on each segment={mid arrow}}]
\tikzstyle{end arrow}=[->]
\tikzstyle{barrow}=[<->]
\tikzstyle{mover}=[-, link]
\tikzstyle{string}=[-, draw=blue,postaction={on each segment={mid arrow}}]
\tikzstyle{stringd}=[-, dotted,draw=blue,postaction={on each segment={mid arrow}}]
\tikzstyle{red}=[-,draw=red]
\tikzstyle{mydots}=[-,dotted,dashed,draw=gray]
\tikzstyle{mydotsblack}=[-,dotted,dashed,draw=black]
\tikzstyle{open}=[-, line width=1pt,draw=blue]
\tikzstyle{redopen}=[-, line width=1pt,draw=red]
\tikzstyle{yellowopen}=[-, line width=1pt,draw=yellow]
\tikzstyle{greenopen}=[-, line width=1pt,draw=green]
\tikzstyle{thick}=[-,line width=1pt]
\tikzstyle{rarrow}=[->,draw=red]
\tikzstyle{red mid arrow}=[-, draw={rgb,255: red,214; green,42; blue,51}, postaction={on each segment={mid arrow}}, line width=1pt]
\tikzstyle{RED}=[-, draw={rgb,255: red,214; green,42; blue,51}]
\tikzstyle{REDthick}=[-, draw={rgb,255: red,214; green,42; blue,51},line width=2.5pt]
\tikzstyle{REDdashed}=[-,dashed, draw={rgb,255: red,214; green,42; blue,51}]
\tikzstyle{REDarrow}=[->, draw={rgb,255: red,214; green,42; blue,51}]
\tikzstyle{darrow}=[->,dotted]
\tikzstyle{blue}=[-, draw=blue]
\tikzstyle{blue mid arrow}=[-, draw={rgb,255: red,23; green,37; blue,167}, postaction={on each segment={mid arrow}}, line width=1pt]
\tikzstyle{over}=[-, link]
\tikzstyle{bover}=[-, blink]
\tikzstyle{mover}=[-, link]
\tikzstyle{mapsto}=[{|->}]

\usetikzlibrary{decorations.pathreplacing,decorations.markings}
\tikzset{
	on each segment/.style={
		decorate,
		decoration={
			show path construction,
			moveto code={},
			lineto code={
				\path [#1]
				(\tikzinputsegmentfirst) -- (\tikzinputsegmentlast);
			},
			curveto code={
				\path [#1] (\tikzinputsegmentfirst)
				.. controls
				(\tikzinputsegmentsupporta) and (\tikzinputsegmentsupportb)
				..
				(\tikzinputsegmentlast);
			},
			closepath code={
				\path [#1]
				(\tikzinputsegmentfirst) -- (\tikzinputsegmentlast);
			},
		},
	},
	mid arrow/.style={postaction={decorate,decoration={
				markings,
				mark=at position .7 with {\arrow[#1]{stealth}}
	}}},
}
\tikzset{%
	link/.style    = { white, double = black, line width = 1.8pt,
		double distance = 0.4pt },
	channel/.style = { white, double = black, line width = 0.8pt,
		double distance = 0.8pt },
}
\tikzset{
	vtx/.style = {circle, fill, inner sep=2pt},
	edg/.style = {thick},
	squig/.style = {decorate, thick,
		decoration={snake, amplitude=.8mm,
			segment length=3mm, post length=2.5mm},
		-{Stealth[length=3mm,width=2.5mm]}}
}

\usetikzlibrary{calc}

\tikzset{%
	blink/.style    = { white, double = blue, line width = 2pt,
		double distance = 1pt },
	channel/.style = { white, double = blue, line width = 2pt,
		double distance = 1pt },
}

\usetikzlibrary{decorations.pathmorphing}

\tikzset{
	vtx/.style   = {circle, fill, inner sep=1.6pt},
	edg/.style   = {thick},
	squig/.style = {->, thick, decorate,
		decoration={snake, amplitude=.6mm,
			segment length=2.5mm, post length=1.5mm}}
}
\tikzset{
	vtx/.style    = {circle, fill, inner sep=2.4pt},
	edg/.style    = {thick},
	casing/.style = {draw=white, line width=6pt}   
}

\tikzstyle{tikzfig}=[baseline=-0.25em,scale=0.5]

\pgfkeys{/tikz/tikzit fill/.initial=0}
\pgfkeys{/tikz/tikzit draw/.initial=0}
\pgfkeys{/tikz/tikzit shape/.initial=0}
\pgfkeys{/tikz/tikzit category/.initial=0}

\pgfdeclarelayer{edgelayer}
\pgfdeclarelayer{nodelayer}
\pgfsetlayers{background,edgelayer,nodelayer,main}

\tikzstyle{none}=[inner sep=0mm]

\usetikzlibrary{arrows.meta,decorations.pathmorphing}

\tikzstyle{every loop}=[]

\usepackage{mathtools}
\mathtoolsset{showonlyrefs}
\usepackage{epstopdf}

\newtheoremstyle{mytheorem}
{\topsep}
{\topsep}
{\slshape}
{0pt}
{\bfseries}
{.}
{ }
{\thmname{#1}\thmnumber{ #2}\thmnote{ {\normalfont\slshape(#3)}}}

\newtheoremstyle{mydefinition}
{\topsep}
{\topsep}
{\normalfont}
{0pt}
{\bfseries}
{.}
{ }
{\thmname{#1}\thmnumber{ #2}\thmnote{ {\normalfont\slshape(#3)}}}

\theoremstyle{mytheorem}
\newtheorem{theorem}{Theorem}[section]

\makeatletter
\newtheorem*{rep@theorem}{\rep@title}
\newcommand{\newreptheorem}[2]{%
	\newenvironment{rep#1}[1]{%
		\def\rep@title{#2 \ref{##1}}%
		\begin{rep@theorem}}%
		{\end{rep@theorem}}}
\makeatother

\newreptheorem{theorem}{Theorem}
\newreptheorem{corollary}{Corollary}
\newreptheorem{proposition}{Proposition}
\newtheorem{lemma}[theorem]{Lemma}
\newtheorem{proposition}[theorem]{Proposition}
\newtheorem{corollary}[theorem]{Corollary}
\theoremstyle{mydefinition}
\newtheorem{definition}[theorem]{Definition}
\newtheorem{question}[theorem]{Question}
\newtheorem{exx}[theorem]{Example}
\newenvironment{example}
{\pushQED{\qed}\exx}
{\popQED\endexx}
\newtheorem{remm}[theorem]{Remark}
\newenvironment{remark}
{\pushQED{\qed}\remm}
{\popQED\endremm}
\numberwithin{equation}{section}
\usepackage{enumitem}

\newenvironment{pnum}{\begin{enumerate}[topsep=2pt,parsep=2pt,partopsep=2pt,itemsep=0pt,label={(\roman{*})}]}{\end{enumerate}}

\DeclareMathSymbol{\Phiit}{\mathalpha}{letters}{"08}\let\Phi\undefined\newcommand{\Phi}{\Phiit}
\DeclareMathSymbol{\Psiit}{\mathalpha}{letters}{"09}\let\Psi\undefined\newcommand{\Psi}{\Psiit}
\DeclareMathSymbol{\Sigmait}{\mathalpha}{letters}{"06}\let\Sigma\undefined\newcommand{\Sigma}{\Sigmait}
\DeclareMathSymbol{\Xiit}{\mathalpha}{letters}{"04}
\DeclareSymbolFont{extraup}{U}{zavm}{m}{n}
\DeclareMathSymbol{\vardiamond}{\mathalpha}{extraup}{87}
\DeclareMathSymbol{\Lambdait}{\mathalpha}{letters}{"03}\let\Lambda\undefined\newcommand{\Lambda}{\Lambdait}
\DeclareMathSymbol{\Piit}{\mathalpha}{letters}{"05}\let\Pi\undefined\newcommand{\Pi}{\Piit}
\DeclareMathSymbol{\Gammait}{\mathalpha}{letters}{"00}\let\Gamma\undefined\newcommand{\Gamma}{\Gammait}
\DeclareMathSymbol{\Omegait}{\mathalpha}{letters}{"0A}\let\Omega\undefined\newcommand{\Omega}{\Omegait}
\DeclareMathSymbol{\Upsilonit}{\mathalpha}{letters}{"07}\let\Upsilon\undefined\newcommand{\Upsilon}{\Upilonit}
\DeclareMathSymbol{\Thetait}{\mathalpha}{letters}{"02}\let\Theta\undefined\newcommand{\Theta}{\Thetait}

\def\Hom{\catf{Hom}}
\let\O\undefined\newcommand{\O}{\catf{O}}

\def\End{\catf{End}}
\def\id{\mathrm{id}}

\def\dim{\mathsf{dim}}
\let\to\undefined\newcommand{\to}{\longrightarrow}
\let\mapsto\undefined\newcommand{\mapsto}{\longmapsto}
\newcommand{\catf}[1]{\mathsf{#1}}
\newcommand{\Proj}{\operatorname{\catf{Proj}}}

\newcommand{\Map}{\catf{Map}}

\def\op{\mathrm{op}}

\newcommand{\Prc}{\catf{Pr}_\catf{c}}

\newcommand{\Vmod}{V\catf{-mod}}
\newcommand{\Wmod}{\cat{W}_{2,3}\catf{-mod}}

\def\tr{\catf{tr}\,}
\newcommand{\Ao}{\cat{A}_{\text{\normalfont \bfseries !}}}

\newcommand{\Vo}{V_{\text{\normalfont \bfseries !}}}
\newcommand{\Nat}{\catf{Nat}}

\newcommand{\ra}[1]{\xrightarrow{\   #1    \ }}

\newcommand{\Lexf}{\catf{Lex}^\catf{f}}
\newcommand{\Rexf}{\catf{Rex}^\catf{f}}
\newcommand{\Rex}{\catf{Rex}}

\def\Cat{\catf{Cat}}

\newcommand{\moduli}{\mathfrak{a}}

\newcommand{\act}{\triangleright}

\newcommand{\sing}{\catf{S}}

\newcommand{\SNA}{\catf{SN}_\cat{A}}

\newcommand{\colim}{\operatorname{colim}}
\newcommand{\colimsub}[1]{\underset{#1}{\operatorname{colim}}\,}

\newcommand{\Lex}{\catf{Lex}}

\newcommand{\Vect}{\catf{Vect}}
\newcommand{\vect}{\catf{vect}}
\newcommand{\block}{\mathbf{\Omega}_V}
\newcommand{\blockl}{\mathbf{\Omega}^V}

\newcommand{\im}{\operatorname{im}}
\newcommand{\spr}[1]{\left\langle #1\right\rangle}
\newcommand{\cat}[1]{\mathcal{#1}}

\newcommand{\Graphs}{\catf{Graphs}}
\newcommand{\CGraphs}{C\catf{-Graphs}}
\newcommand{\TwoGraphs}{\catf{Graphs}_2}

\newcommand{\Legs}{\catf{Legs}}

\newcommand{\M}{\catf{M}}

\newcommand{\CF}{\catf{CF}}

\newcommand{\catT}{\mathscr{T}}
\newcommand{\Lin}{\catf{Lin}}
\newcommand{\Bimod}{\catf{Bimod}}

\newcommand{\LinCat}{\catf{LinCat}}
\newcommand{\RV}{\catf{Rep}(V)}
\newcommand{\RW}{\catf{Rep}(\cat{W}_{2,3})}

\newcommand{\nakal}{\catf{N}^\ell}\newcommand{\nakar}{\catf{N}^r}

\usepackage{dingbat}

\makeatletter
\newcommand{\monthyeardate}{%
	\DTMenglishmonthname{\@dtm@month}, \@dtm@year
}
\makeatother

\setkomafont{caption}{\small\slshape}

\setkomafont{section}{\rmfamily\large}
\setkomafont{subsection}{\normalfont\slshape}
\setkomafont{subsubsection}{\rmfamily}
\setkomafont{subparagraph}{\normalfont\scshape}

\newtheorem*{theorem*}{Theorem}
\newtheorem*{corollary*}{Corollary}

\makeatletter
\renewcommand\section{\@startsection {section}{1}{\z@}%
	{-3.5ex \@plus -1ex \@minus -.2ex}%
	{2.3ex \@plus.2ex}%
	{\normalfont\scshape\centering}}
\makeatother

\usepackage{titlesec}
\titleformat{\subsection}[runin]
{\normalfont\slshape}
{\thesubsection}
{0.5em}
{}
[.]

\usepackage{titletoc}
\dottedcontents{section}[1.5em]{\rmfamily}{1.5em}{0.2cm}
\dottedcontents{subsection}[5em]{\rmfamily}{2.3em}{0.2cm}

\begin{document}

		\begin{center}	\textbf{\large{Modular Functors with Singularities \\[0.5ex] from Vertex Operator Algebras \\[0.5ex] Beyond Rigidity and Finiteness}} \\ 
		\vspace{1cm}{\large Lukas Müller $^a$ \quad and \quad Lukas Woike $^b$ }\\ 	\vspace{5mm}{\slshape $^a$ Ludwig-Maximilians-Universität München
		\\	Department für Physik\\
			Theresienstrasse 37\\
			D-80333 München \\
			Germany}	\\[7pt] 	\vspace{5mm}{\slshape  $^b$ Université Bourgogne Europe\\ CNRS\\ IMB UMR 5584\\ F-21000 Dijon\\ France }\end{center}	\vspace{0.3cm}	
	\begin{abstract}\noindent 
	For a vertex operator algebra $V$ and a suitable category of its modules, we propose a construction for spaces of conformal blocks organized into an open-closed modular functor with singularities. This is inspired by the idea of implementing directly from the start the principle of holomorphic factorization. More precisely, using the strategy of modular extension introduced by Costello and developed further in our previous work, we build for each surface $\Sigma$ with at least one boundary component per path component and specified boundary labels attached to marked intervals or boundary circles a representation $\block(\Sigma;-)$ of the mapping class group of $\Sigma$. The construction can be described explicitly on generating Dehn twists. This approach is a priori independent from other constructions based on algebraic geometry or topological techniques involving e.g.~surgery, but we include an overview over the available comparisons. In the special case in which the module category of $V$ is a not necessarily semisimple modular category $\cat{A}$, the spaces $\block(\Sigma)$ are equivalent to the string-net spaces for $\mathcal{A}$ and hence to the modular functor for the Drinfeld center $Z(\mathcal{A})\simeq \bar{\mathcal{A}}\boxtimes\mathcal{A}$. However, the construction of $\block$ in this paper has the advantage of being available beyond rationality, rigidity, self-contragredience and finiteness. Moreover, we prove that $\block$ satisfies excision, is finite-dimensional in the $C_2$-cofinite case and produces representations of surface braid groups generalizing the ones of Brochier-Jordan. We prove for the triplet $\mathcal{W}_{2,3}$ with non-exact fusion product that the boundary conditions introduced by Gaberdiel-Runkel-Wood produce correlation functions, provided that one uses the notion of a modular functor with singularities that we develop. 
\end{abstract}
	
	\newpage
	
		\normalsize
	\tableofcontents
\newpage

	\section{Introduction and summary}
 One axiomatic mathematical approach to two-dimensional conformal field theory is based on the theory of \emph{vertex operator algebras}, see e.g.~\cite{fhl,frenkelbenzvi} or, for an overview of the overall algebraic framework, the encyclopedia entry~\cite{algcften}. 
	 Given a vertex operator algebra $V$ and some appropriate notion of modules over it that serve as boundary labels,
	 one of the main tasks is to come up with some notion of \emph{spaces of conformal blocks} and to organize them into a \emph{modular functor}~\cite{Segal,ms89,turaev,tillmann,baki}, a system of projective mapping class group representations compatible with gluing. 
	 In the world of algebraic geometry, these correspond to projectively flat vector bundles over an appropriate moduli space of curves. 
	 
	 For a general vertex operator algebra, the construction of spaces of conformal blocks that form a modular functor is, despite the enormous progress over the last thirty years, an open problem, and so is the comparison of the many different approaches.
	 In this paper, we propose a new construction that we will compare to existing attempts to the extent possible.
	 Let us first give an account of some of the approaches together with 
	 their advantages and their shortcomings:
	 
	 \subsection*{The algebro-geometric approach} Through the algebro-geometric construction of Frenkel-Ben-Zvi~\cite{frenkelbenzvi}, developed further in the work of Damiolini-Gibney-Tarasca~\cite{DGT1,DGT2},
	 one can --- more or less directly --- build spaces of conformal blocks $\mathbb{V}$ from a vertex operator algebra $V$. This is a construction that is a priori possible in great generality. 
	 Except for some extremely restrictive special cases, it is not known whether these spaces of conformal blocks produce mapping class group representations with any meaningful gluing properties, let alone form a modular functor.
	 In the strongly rational case, the gluing problem was solved in \cite{DGT2}. The fact that $\mathbb{V}$ forms a 
	 modular functor was expected by Frenkel and Ben-Zvi in the strongly rational case and proved in \cite{damioliniwoike}; for certain example classes, this was already established in \cite{baki}, partly inspired by~\cite{BFM}.
	 For $C_2$-cofinite vertex operator algebras, compatibilities with certain types of gluing are treated by Gui-Zhang in~\cite{guizhang}, see also the article \cite{zhang} highlighting several significant problems such as jumping of dimensions for spaces of conformal blocks for smooth and nodal curves.
	 In any case, at the time of writing, no proof that  $\mathbb{V}$ forms a modular functor (or any structure that comes close to it) is available beyond the strongly rational case.\label{approach1}
	 
	 \subsection*{The topological approach}	 Instead of trying to build the spaces of conformal blocks using algebraic geometry, one can try to put more structure on a suitable representation category of $V$ and feed this category into the many constructions offered by quantum topology. In the strongly rational case, a result of Huang~\cite{huang} tells us that we can obtain a modular fusion category from which we may even build a three-dimensional topological field theory by means of the Reshetikhin-Turaev construction~\cite{rt1,rt2,turaev} and hence in particular a modular functor. The connection of the Reshetikhin-Turaev construction, that relies on surgery, to the construction of Frenkel-Ben-Zvi is generally not known~\cite[Section~3.3]{csrcft}, but by \cite[Theorem~5.5.1]{damioliniwoike}  $\mathbb{V}$ is canonically equivalent to the 
	 modular functor constructed from a geometrically constructed modular fusion category via the Reshetikhin-Turaev construction.
	 Beyond the rational case, no such comparison result is available.
	 Moreover, the topological approach becomes also a lot more involved: Under rather strong assumptions, one may find, on suitable categories of $V$-modules, the structure of a not necessarily semisimple modular category, see \cite{gannonnegron,mcrae}.
	 For a modular category, the construction of a modular functor was given by Lyubashenko~\cite{lyubacmp,lyu,lyulex,kl}, and as long as one demands finiteness, rigidity and compatibility with orientation reversal, the Lyubashenko construction gives us \emph{all} possible modular functors as shown in \cite[Theorem~5.11]{reflection}. Without this assumption, this is not true \cite{brochierwoike,microcosm}.
	 The limitations of this approach are different from the ones for the algebro-geometric context, but also rather heavy: For general conformal field theories, we cannot hope to obtain a modular category, at least with its current definition that includes finiteness and rigidity. Numerous examples  live outside the finite or rigid framework, see e.g.~\cite{grw,ghost} or also the table in \cite[Section~1.3]{crsy}.
	 
	 \subsection*{The goal of this paper}	In this paper, our goal is to  use  
	 the theory of cyclic operadic algebras in linear categories 
	 to propose a construction of spaces of conformal blocks that works at a high level of generality (no rationality, no rigidity, no self-contragredience, no finiteness) and that still produces spaces of conformal blocks for which we can establish the needed mapping class group representations and gluing properties.
	 The structures that we will find will, in some cases,
	 conform  to the axiomatic framework for modular functors from \cite{brochierwoike} --- but we will also see  some new features, most notably the inclusion of singularities. 
	 The construction that we give in this paper has been prepared through our previous papers on cyclic algebras \cite{cyclic,mwdiff,mwansular,envas} inspired by concepts from \cite{gk,gkmod,costello}, but the present paper is self-contained and many of the construction procedures are surprisingly elementary.

	 \subsection*{A new angle on the process of holomorphic factorization}	The  observation that forms the starting point for this paper is 
	 that the problem mentioned above is slightly more complicated than it needs to be.
	 To see this, it makes sense to just pretend for a second that we can actually build (some version of) the spaces of conformal blocks that we are looking for:
	 What would we actually do --- from the perspective of conformal field theory --- if we managed to 
	  produce a modular functor $\mathfrak{F}$ (or something closely related)?
	 The classical answer, see e.g.~\cite{algcften}, is that we would evaluate the modular functor on each surface \emph{and the surface with opposite orientation}; in other words, we consider $\bar{\mathfrak{F}}\boxtimes \mathfrak{F}$, where the bar indicates orientation reversal.
	 This combines `left and right movers' and is a part of the process of \emph{holomorphic factorization}. Afterwards, one would start searching for correlators to pass to the \emph{full conformal field theory}, see~\cite{jfcs,fspivotal} for more background. 
	 The key insight is that $\bar{\mathfrak{F}}\boxtimes \mathfrak{F}$ is actually a much more tractable object than $\mathfrak{F}$, and it is conceivable that there are situations in which $\bar{\mathfrak{F}}\boxtimes \mathfrak{F}$ admits a generalization while $ \mathfrak{F}$ does not.
	 In fact, the idea of combining left and right movers simply through a $\boxtimes$-tensoring is tested at best up to the finite rigid case~\cite{jfcs,fspivotal,correlators}, but not beyond.
	 The idea to re-imagine the process of holomorphic factorization is admittedly a little outside of the box, but also not entirely unmotivated: Combining left and right movers might only make sense if the spaces the conformal blocks are naturally compatible with orientation reversal as discussed in \cite{reflection}, and while that article focuses on the finite rigid case, we can already see in several places that a generalization beyond the finite rigid case would lead to all sorts of problems. 
	 We should also note that going straight to the modular functor obtained \emph{after} combination of left and right movers and skipping the troublesome step of passing through $\mathfrak{F}$ is not that unusual. In fact, in the rational case, a solution in such terms is known: 
	 Fuchs-Schweigert-Yang  give in \cite{rcftsn} a construction of conformal blocks plus correlators for rational conformal field theories based on a string-net model~\cite{levinwen,kirillovsn}. This construction builds directly an open-closed modular functor equivalent to the Reshetikhin-Turaev modular functor for the Drinfeld center $Z(\cat{A})\simeq \bar{\cat{A}}\boxtimes\cat{A}$
	 of a modular fusion category $\cat{A}$ (the Turaev-Viro modular functor \cite{turaevviro} for $\cat{A}$) without using
	  the modular functor for $\cat{A}$. 
	 The string-net model for the modular functor for $Z(\cat{A})$ exists still in the finite rigid case~\cite{sn}.  Also for the solution to finding consistent systems of correlators in the non-semisimple case~\cite{correlators},
	  the modular functor $\bar{\mathfrak{F}}\boxtimes \mathfrak{F}$ is treated again directly without passing through $\mathfrak{F}$.
	
	\spaceplease  Based on this motivation, we can now give a concise list of the results achieved in this paper:

	 \begin{enumerate}[topsep=2pt,parsep=2pt,partopsep=2pt,itemsep=0pt,label={(\Alph{*})}]
	 	\item We give a construction of spaces of conformal blocks and modular functors with singularities
	 	without requiring rationality, rigidity, self-contragredience or finiteness. 
	 	\label{goala}
	 	\item We prove that this construction generalizes the spaces of conformal blocks \emph{after combination of left and right movers}, i.e.\ 
	 	we test the idea in the most general case in which such a combination has been described, which seems to be the finite rigid case~\cite{jfcs,fspivotal}.
	 	\label{goalb}
	 	\item We prove structural results about the spaces of conformal blocks such as excision, finiteness in the $C_2$-cofinite case 
	 	 and provide a description of the mapping class group representations on generators. \label{goalc}
	 	\item We demonstrate that, even beyond the finite rigid case, our construction can be used to confirm expectations
	 	formulated previously 
	 	 \cite{grw} based on physically motivated ideas.
	 	 \label{goald}
	 \end{enumerate}
	 The application of the theory of cyclic algebras to vertex operator algebras was already alluded to and prepared in \cite{mwdiff,mwansular,envas}, and the present paper finishes the construction by building open-closed modular functors and correlators \emph{with singularities}.
	 It is hopefully written in a way such that it can be applied by experts in the field of vertex operator algebras, a group to which the authors of this article most certainly do not belong. Although we make an effort to spell out all constructions as explicitly as we can, it is natural that the attempt to apply the construction proposed in this article to rather complicated vertex operator algebras leads at the end of the day to possibly hard representation-theoretic questions, and we feel that is important to record those. 
	 Therefore, the last goal of this article will be the following:	
	 \begin{enumerate}[topsep=2pt,parsep=2pt,partopsep=2pt,itemsep=0pt,label={(\Alph{*})}]\setcounter{enumi}{4}
	 	\item We identify and list the open problems that our topological construction naturally leads to   in the theory of vertex operator algebras and elsewhere.\label{goale}
	 \end{enumerate}
	 These are interspersed in the form \emph{questions} in the text on the pages \pageref{q1}, \pageref{q2}, \pageref{q22}, \pageref{q3}, \pageref{q4}, \pageref{qtrace} and \pageref{q5}.

	 \subsection*{The construction of spaces of conformal blocks}
	 In many constructions of conformal field theories,
	 the spaces of conformal blocks have a strong locality behavior and are part of an \emph{open-closed modular functor} that assigns spaces of conformal blocks not only to surfaces with boundary components, but also surfaces with marked intervals. 
	 Since passing from marked intervals to boundary circles has proven to be a good way to automatically obtain the modular functor combining left and right movers \cite{ffrsunique,rcftsn,sn}, we will start our considerations in the open sector.
	 
	 The following would certainly be a common denominator of all constructions that one would consider to be reasonable: For a vertex operator algebra $V$, we fix
	 a suitable collection of generalized $V$-modules that are organized into a linear category $\catT$. 
	 This linear category should be thought of as a collection of `generating' $V$-modules; it will generally not contain $V$ itself. But there is no reason to already limit the construction at this point. We will allow \emph{anything} and just ask for the absolutely minimal amount of structure and relations.
	 We will need spaces of conformal blocks $\spr{t,u} , \spr{s\mid t,u} \in\Vect$ (not necessarily finite-dimensional) for a disk with two and three marked intervals:
	 \begin{align}
	 	\begin{tikzpicture}[scale=0.5]
	 		\begin{pgfonlayer}{nodelayer}
	 			\node [style=none] (2) at (-1.75, 1) {};
	 			\node [style=none] (3) at (-0.75, 1) {};
	 			\node [style=none] (4) at (0.25, 1) {};
	 			\node [style=none] (5) at (1.25, 1) {};
	 			\node [style=none] (6) at (-0.75, 3) {};
	 			\node [style=none] (7) at (0.25, 3) {};
	 			\node [style=none] (8) at (-11, 1) {};
	 			\node [style=none] (9) at (-10, 1) {};
	 			\node [style=none] (10) at (-9, 1) {};
	 			\node [style=none] (11) at (-8, 1) {};
	 			\node [style=none] (13) at (-5, 2) {$\spr{t,u}$};
	 			\node [style=none] (15) at (4.75, 2) {$\spr{s \mid t,u}$};
	 			\node [style=none] (16) at (-10.5, 0.5) {$t$};
	 			\node [style=none] (17) at (-8.5, 0.5) {$u$};
	 			\node [style=none] (18) at (-1.25, 0.5) {$t$};
	 			\node [style=none] (19) at (0.75, 0.5) {};
	 			\node [style=none] (20) at (0.75, 0.5) {$u$};
	 			\node [style=none] (21) at (-0.25, 3.5) {$s$};
	 			\node [style=none] (22) at (-7, 2) {$\mapsto$};
	 			\node [style=none] (23) at (2.25, 2) {$\mapsto$};
	 		\end{pgfonlayer}
	 		\begin{pgfonlayer}{edgelayer}
	 			\draw (6.center) to (2.center);
	 			\draw [bend left=90, looseness=2.75] (3.center) to (4.center);
	 			\draw (5.center) to (7.center);
	 			\draw [bend left=90, looseness=3.25] (9.center) to (10.center);
	 			\draw [style=open] (2.center) to (3.center);
	 			\draw [style=open] (4.center) to (5.center);
	 			\draw [style=open] (6.center) to (7.center);
	 			\draw [style=open] (11.center) to (10.center);
	 			\draw [style=open] (8.center) to (9.center);
	 			\draw [bend left=90, looseness=2.25] (8.center) to (11.center);
	 		\end{pgfonlayer}
	 	\end{tikzpicture}\label{eqngeneratorsnew}
	 \end{align}
	 The dependence on $t,u\in\catT$ is covariant while the dependence on $s$ is contravariant.
	 The reader might imagine `$\spr{t,u}=\catT(t',u)$' (here $\catT(-,-)$ denotes the hom spaces of $\catT$ and $'$ denotes the contragredient module)
	 and `$\spr{s\mid t,u}=\catT(s,t\otimes u)$', where $\otimes$ is maybe a tensor product of $V$-modules, but careful: We do not even ask $\catT$ to be preserved by the operation of taking the contragredient, and most certainly we do not ask the existence of a tensor product preserving $\catT$. These situations are special cases to which we will return later.
	 
	 What we should ask is the symmetry of $\spr{-,-}$ in the two arguments up to coherent isomorphism
	  (the topology dictates this), and moreover, the existence of a space of conformal blocks $\Delta(t,u)$ \emph{contravariant} in $t$ and $u$, to be thought of as associated to a disk with two oppositely oriented intervals.
	 For these two types of disks, we have to ask gluing relations
	  \begin{align}
	 \begin{tikzpicture}[scale=0.5]
	 	\begin{pgfonlayer}{nodelayer}
	 		\node [style=none] (8) at (-11, 0) {};
	 		\node [style=none] (9) at (-10, 0) {};
	 		\node [style=none] (10) at (-9, 2) {};
	 		\node [style=none] (11) at (-8, 2) {};
	 		\node [style=none] (12) at (-7, 4) {};
	 		\node [style=none] (13) at (-6, 4) {};
	 		\node [style=none] (14) at (-11, 2) {};
	 		\node [style=none] (15) at (-10, 2) {};
	 		\node [style=none] (16) at (-7, 2) {};
	 		\node [style=none] (17) at (-6, 2) {};
	 		\node [style=none] (18) at (-5, 2) {$=$};
	 		\node [style=none] (19) at (-4, 4) {};
	 		\node [style=none] (20) at (-4, 0) {};
	 		\node [style=none] (21) at (-3, 0) {};
	 		\node [style=none] (22) at (-3, 4) {};
	 		\node [style=none] (23) at (-1, 2) {and};
	 		\node [style=none] (26) at (1, 4) {};
	 		\node [style=none] (27) at (2, 4) {};
	 		\node [style=none] (32) at (2, 2) {};
	 		\node [style=none] (33) at (1, 2) {};
	 		\node [style=none] (34) at (3, 2) {};
	 		\node [style=none] (35) at (4, 2) {};
	 		\node [style=none] (36) at (5, 2) {};
	 		\node [style=none] (37) at (6, 2) {};
	 		\node [style=none] (38) at (5, 0) {};
	 		\node [style=none] (39) at (6, 0) {};
	 		\node [style=none] (40) at (6, 0) {};
	 		\node [style=none] (41) at (7, 2) {$=$};
	 		\node [style=none] (42) at (8, 4) {};
	 		\node [style=none] (43) at (8, 0) {};
	 		\node [style=none] (44) at (9, 0) {};
	 		\node [style=none] (45) at (9, 4) {};
	 	\end{pgfonlayer}
	 	\begin{pgfonlayer}{edgelayer}
	 		\draw [style=open] (11.center) to (10.center);
	 		\draw [style=open] (8.center) to (9.center);
	 		\draw [style=open] (12.center) to (13.center);
	 		\draw [bend left=90, looseness=3.50] (15.center) to (10.center);
	 		\draw [bend right=90, looseness=3.25] (11.center) to (16.center);
	 		\draw (16.center) to (12.center);
	 		\draw (13.center) to (17.center);
	 		\draw (14.center) to (8.center);
	 		\draw (15.center) to (9.center);
	 		\draw [bend left=90, looseness=2.25] (14.center) to (11.center);
	 		\draw [bend right=90, looseness=2.25] (10.center) to (17.center);
	 		\draw [style=open] (20.center) to (21.center);
	 		\draw [style=open] (19.center) to (22.center);
	 		\draw [style=open] (27.center) to (26.center);
	 		\draw [bend right=150, looseness=0.00] (27.center) to (32.center);
	 		\draw (26.center) to (33.center);
	 		\draw [style=open] (38.center) to (40.center);
	 		\draw [style=open] (34.center) to (35.center);
	 		\draw [bend right=90, looseness=3.50] (32.center) to (34.center);
	 		\draw [bend left=90, looseness=3.25] (35.center) to (36.center);
	 		\draw (36.center) to (38.center);
	 		\draw (37.center) to (40.center);
	 		\draw [bend right=90, looseness=2.25] (33.center) to (35.center);
	 		\draw [bend right=90, looseness=2.25] (37.center) to (34.center);
	 		\draw [style=open] (43.center) to (44.center);
	 		\draw [style=open] (42.center) to (45.center);
	 		\draw (19.center) to (20.center);
	 		\draw (21.center) to (22.center);
	 		\draw (42.center) to (43.center);
	 		\draw (44.center) to (45.center);
	 	\end{pgfonlayer}
	 \end{tikzpicture}
	 \end{align}
	 that are implemented algebraically through the isomorphisms \begin{align} \int^{u\in \catT}\spr{t,u}\otimes\Delta(u,v)\cong \catT(v,t)\quad \text{and}\quad  \int^{v\in\catT} \Delta(u,v) \otimes \spr{v,t}\cong \catT(u,t)\ , 
	 	\end{align} where $\int^{v\in\catT}$ denotes the coend over $\catT$. 
	 We need this to be able to have a mechanism to change the status of boundary labels from incoming to outgoing and vice versa.
	 In other words, $\spr{-,-}$ is a non-degenerate symmetric pairing through bimodules aka profunctors.
	 Note that $\Delta$ is not structure; its existence is just a property.
	 Beyond that, we have to ask $\spr{s\mid t,u}$ to come with associators (again, the topology requires it) and to be unital up to coherent isomorphism (the latter is just a property). 
	 Let us emphasize that the structure is again through profunctors only. We are not asking $\catT$ to be a monoidal category!
	 As a further compatibility, we must ask for the blocks with three insertions to reduce to the ones with two if we insert the unit.
	 This structure is referred to as a \emph{cyclic category of $V$-modules}; we give the formal definition in~Section~\ref{seccyclicmodulecat}.
	 
	 This  basic structure is present on subcategories of $V$-modules, see Section~\ref{secexamples} for the many different situations in which it arises. Just to be clear: The assignments~\eqref{eqngeneratorsnew} are a choice, but our claim is that 
	 once it is fixed, we can build mapping class group representations in an essentially unique way that respects gluing.
	Of course, this raises the difficult question how many cyclic structures there are.
	In the rational case, we prove in Corollary~\ref{corfiltration} that there is at least a filtration by finite terms.
	
	 To demonstrate the calculation recipe for spaces of conformal blocks, take a torus $\mathbb{T}_1^2$ with one boundary component on which we mark an interval.
	 This surface can be obtained by gluing together three disks with three marked intervals each along marked intervals.
	 First we glue them together to form a disk with five marked intervals, and then we glue two further pairs of intervals together as indicated by the arrows in the following diagram:
	 \vspace*{-1cm}
	 \begin{align}
	 \begin{tikzpicture}[scale=0.5]
	 	\begin{pgfonlayer}{nodelayer}
	 		\node [style=none] (0) at (-10, 7) {};
	 		\node [style=none] (1) at (-8, 7) {};
	 		\node [style=none] (2) at (-10, 5) {};
	 		\node [style=none] (3) at (-17, -5) {};
	 		\node [style=none] (4) at (-15, -5) {};
	 		\node [style=none] (5) at (-10, 3) {};
	 		\node [style=none] (6) at (-8, 1) {};
	 		\node [style=none] (7) at (-4, -5) {};
	 		\node [style=none] (8) at (-6, -5) {};
	 		\node [style=none] (9) at (-8, -1) {};
	 		\node [style=none] (10) at (-8, -5) {};
	 		\node [style=none] (11) at (-10, -5) {};
	 		\node [style=none] (12) at (-10, -3) {};
	 		\node [style=none] (13) at (-12, -5) {};
	 		\node [style=none] (14) at (-14, -5) {};
	 		\node [style=none] (15) at (-10, -1) {};
	 		\node [style=none] (16) at (-10, 1) {};
	 		\node [style=none] (19) at (-8, -4) {};
	 		\node [style=none] (23) at (-8, 2.25) {};
	 		\node [style=none] (25) at (-10, 1.5) {};
	 		\node [style=none] (26) at (1.5, 0.75) {};
	 		\node [style=none] (27) at (1.5, -0.5) {};
	 		\node [style=none] (28) at (1.5, -0.5) {};
	 		\node [style=none] (29) at (4, 0.5) {};
	 		\node [style=none] (30) at (6, 0.5) {};
	 		\node [style=none] (31) at (4.25, 0) {};
	 		\node [style=none] (32) at (5.75, 0) {};
	 		\node [style=none] (33) at (7.25, 0) {$\mathbb{T}_1^2$};
	 		\node [style=none] (34) at (-4, 0) {};
	 		\node [style=none] (35) at (0, 0) {};
	 		\node [style=none] (36) at (-2, 0.5) {gluing};
	 		\node [style=none] (37) at (-9, 7.5) {};
	 		\node [style=none] (38) at (-9, -5.5) {};
	 		\node [style=none] (39) at (-16, -5.5) {};
	 		\node [style=none] (40) at (-5, -5.5) {};
	 		\node [style=none] (41) at (-9, 6.5) {$r$};
	 		\node [style=none] (42) at (-5, -4.5) {$v$};
	 		\node [style=none] (43) at (-9, 0.5) {$s$};
	 		\node [style=none] (44) at (-9, 1.5) {$s$};
	 		\node [style=none] (45) at (-9, -4.5) {$u$};
	 		\node [style=none] (46) at (-12.75, -4.5) {$t$};
	 		\node [style=none] (47) at (2.5, 0.25) {$t$};
	 		\node [style=none] (48) at (-15.75, -4.5) {$w$};
	 		\node [style=none] (49) at (-9, -0.5) {$q$};
	 		\node [style=none] (50) at (-9, -1.5) {$q$};
	 	\end{pgfonlayer}
	 	\begin{pgfonlayer}{edgelayer}
	 		\draw (15.center) to (5.center);
	 		\draw (5.center) to (4.center);
	 		\draw (3.center) to (2.center);
	 		\draw (2.center) to (0.center);
	 		\draw (1.center) to (6.center);
	 		\draw (6.center) to (7.center);
	 		\draw [style=open] (14.center) to (13.center);
	 		\draw (8.center) to (9.center);
	 		\draw (9.center) to (10.center);
	 		\draw (11.center) to (12.center);
	 		\draw (12.center) to (13.center);
	 		\draw (15.center) to (14.center);
	 		\draw [style=redopen] (7.center) to (8.center);
	 		\draw [style=redopen] (3.center) to (4.center);
	 		\draw [style=yellowopen] (0.center) to (1.center);
	 		\draw [style=yellowopen] (11.center) to (10.center);
	 		\draw [style=greenopen] (15.center) to (9.center);
	 		\draw [style=greenopen] (16.center) to (6.center);
	 		\draw [bend left=90] (26.center) to (27.center);
	 		\draw [style=open, bend left=90] (26.center) to (27.center);
	 		\draw [in=-30, out=30, looseness=20.75] (26.center) to (28.center);
	 		\draw [bend right=90, looseness=1.50] (29.center) to (30.center);
	 		\draw [bend left] (31.center) to (32.center);
	 		\draw [bend right=90] (26.center) to (28.center);
	 		\draw [style=end arrow] (34.center) to (35.center);
	 		\draw [style=barrow, in=-30, out=45] (37.center) to (38.center);
	 		\draw [style=barrow, bend right=90, looseness=0.75] (39.center) to (40.center);
	 	\end{pgfonlayer}
	 \end{tikzpicture}
	 	\end{align}

	 For the disk with five marked intervals, we obtain
	 \begin{align}
	 	\int^{s,q\in\catT} \spr{r \mid w,s} \otimes \spr{s \mid q,v} \otimes \spr{q \mid t,u} \ . 
	 	\end{align}
	 Next we glue along the pair of yellow intervals. These have already opposite variance, so we just take a coend.
	 For the two red ones, this is not the case, so we insert a $\Delta$ in between to remedy this.
	 This yields 
	 \begin{align}
	 	\int^{s,q,u,v,w\in\catT} \spr{u \mid w,s} \otimes \spr{s \mid q,v} \otimes \spr{q \mid t,u} \otimes\Delta(v,w) \ . 
	 	\end{align}
 	This is not a vector space that we actually associate to the surface $\mathbb{T}_1^2$, but rather to the surface $\mathbb{T}_1^2$ plus a marking $\mu=(R,g)$ for $\mathbb{T}_1^2$ (corresponding to the decomposition into disks used above), i.e.\ a ribbon graph $R$ and a mapping class $g : |R|\to \mathbb{T}_1^2$ identifying the surface obtained by geometrically realizing $R$ with $\mathbb{T}_1^2$.
	 We set
	 \begin{align}
	 	\block^\mu(\mathbb{T}_1^2;t) :=\int^{s,q,u,v,w\in\catT} \spr{u \mid w,s} \otimes \spr{s \mid q,v} \otimes \spr{q \mid t,u} \otimes\Delta(v,w) \ . 
	 	\end{align}
	 The functor $\block^\mu(\Sigma;-)$ can be built for any surface $\Sigma$ (always smooth, compact and oriented) with $n\ge 1$ parametrized boundary intervals (also called marked intervals), at least one per connected component and a marking $\mu$.
	 Instead of seeing $\block^\mu(\Sigma;-)$ as a functor out of $\catT^{\otimes n}$, we may equivalently see it as a cocontinuous functor $\block^\mu (\Sigma;-):\RV^{\boxtimes n} \to \Vect$ out of the locally presentable category $\RV$ obtained by free cocompletion of $\catT$.
	 
	\begin{repproposition}{propmarking}
	If $\Sigma$ has at least one marked interval per connected component, then for two markings $\mu$ and $\nu$ of $\Sigma$, there is a canonical isomorphism
	\begin{align}\alpha_{\mu,\nu} : \block^\mu(\Sigma;-) \ra{\cong} \block^{\nu}(\Sigma;-) \end{align}
	such that $\alpha_{\mu,\mu}=\id$ and $\alpha_{\nu,\zeta}\circ\alpha_{\mu,\nu} = \alpha_{\mu,\zeta}$ for three markings $\mu,\nu$ and $\zeta$, and we obtain a marking independent space of conformal block functor
	$\block(\Sigma;-)$ as the functor 
	equipped with an isomorphism \begin{align} \psi_\mu :  \block^\mu(\Sigma;-) \ra{\cong} \block(\Sigma;-)\end{align} for every marking $\mu$
	satisfying the universal property
	that for two markings $\mu$ and $\nu$ the triangle
	\begin{equation}
		\begin{tikzcd}
			\block^{\mu}(\Sigma;-)  \ar[]{rrd}{\psi_\mu}  \ar[swap]{dd}{\alpha_{ \mu,	\nu}}  \\ && 	\block(\Sigma;-)
			\\	\block^{\nu}(\Sigma;-) \ar[swap]{rru}{\psi_\nu}
		\end{tikzcd} 
	\end{equation} commutes.
		\end{repproposition}
	 
	 One then proceeds  through a generalization of the Lego-Teichmüller game \cite{bakifm}:
	 The mapping class group of $\Sigma$ acts on the markings of $\Sigma$
	  by $f.\mu=f.(R,g):=(R,fg)$ for  $f\in \Map(\Sigma)$. 
	 Since $\block^{\mu}(\Sigma;-)$ for $\mu=(\Omega,g)$, up to canonical isomorphism, does not depend on $g$, the isomorphism $\psi_\mu :  \block^\mu(\Sigma;-) \to\block(\Sigma;-)$ from above
	 induces an isomorphism $ \psi_\mu^f:\block^{f.\mu}(\Sigma;-)\ra{\cong} 	\block(\Sigma;-)$.
	 Now $f$ gives us an automorphism $f_*$ of $\block(\Sigma;-)$, namely
	 the one making the square
	 \begin{equation}
	 \begin{array}{c}	\begin{tikzcd}
	 		\block^{\mu}(\Sigma;-)  \ar[]{rr}{\psi_\mu}  \ar[swap]{dd}{\alpha_{ \mu,	f.\mu}} && 	\block(\Sigma;-) \ar{dd}{f_*} \\ 
	 		\\	\block^{f.\mu}(\Sigma;-) \ar[swap]{rr}{ \psi_\mu^f}  && 	\block(\Sigma;-)  
	 	\end{tikzcd} 
 	\end{array}\label{eqnsqc}\tag{M}
	 \end{equation}
	 commute.
	 
	 Before stating the fact that this actually gives us a mapping class group action, let us comment on how to include parametrized boundary circles as well: For a boundary circle, as opposed to a marked interval, the functor defined on $\RV$ factors through the factorization homology $\int_{\mathbb{S}^1} \RV$ (the categorical Hochschild homology) of $\RV$ along the functor $L:\RV\to \int_{\mathbb{S}^1} \RV$ induced by embedding an interval into the circle, sometimes referred to as \emph{band tensor functor}~\cite{bzbj2}.
	 This  observation  has been made in \cite[Section~5]{envas} (and in \cite[Section~3]{sn}  in the finite rigid setting using the string-net picture) and is ultimately a lower dimensional version of \cite[Section~4]{brochierwoike}.
	 The main result on the spaces of conformal blocks can now be formulated as follows:
	 
	 \begin{reptheorem}{thmmain}[Modular functor with singularities from a vertex operator algebra]
	 For any cyclic category of $V$-modules, the construction $\Sigma \mapsto \block(\Sigma;-)$ extends from surfaces with at least one parametrized interval per connected component  to surfaces $\Sigma$ with $n$ parametrized boundary intervals, $m$ parametrized boundary circles and $p$ punctures
	 such that $n+m+p\ge 1$ holds separately for each path component of $\Sigma$.
	 For such a surface $\Sigma$, we obtain a cocontinuous
	 functor
	 \begin{align}
	 	\block(\Sigma;-): \RV^{\boxtimes n} \boxtimes \left(\int_{\mathbb{S}^1} \RV\right)^{\boxtimes m} \to \Vect  
	 \end{align}carrying an action of $\Map(\Sigma)$.
	 These mapping class group representations assemble into an open-closed modular functor with singularities
	 assigning $\RV$ to the boundary interval, $\int_{\mathbb{S}^1} \RV$ to the boundary circle, and for which the gluing properties, also referred to as excision, take the following form:
	 \begin{pnum}
	 	\item Excision~1: For any two fixed parametrized boundary intervals of $\Sigma$, there are canonical mapping class group equivariant isomorphisms
	 	\begin{align}
	 		\block(\Sigma';-)\cong \int^{t\in\catT} \block(\Sigma; \dots,t^*,\dots,t,\dots) \ , 
	 	\end{align}
	 	where $\Sigma'$ is the surface obtained by gluing along the two intervals, $*$ is the weak duality operation for compact projective objects of $\RV$ induced by the pairing, and the dummy variables are inserted into the slots affected by the gluing.
	 	\item Excision~2: For any two fixed parametrized boundary circles of $\Sigma$, there are canonical mapping class group equivariant isomorphisms
	 	\begin{align}
	 		\block(\Sigma'_+;-) \cong \int^{t\in \catT} \block (\Sigma;\dots,Lt^*,\dots,Lt,\dots) \  ,
	 	\end{align}
	 	where $\Sigma'_+$ is the surface obtained by gluing two boundary circles plus an additional puncture and $L:\RV\to\int_{\mathbb{S}^1} \RV$ is the band tensor functor.
	 \end{pnum}
	 \end{reptheorem}
 
The necessity to include the puncture is what motivates the term \emph{modular functor with singularities}.

\subsection*{Modular extension of a presentable representation category of $V$}
 After this very explicit description of the construction, let us also give the more condensed and conceptual version
 that also allows a better comparison to existing construction in the literature and will provide some shortcuts for the readers already familiar with  \cite{cyclic,mwansular,envas}:
 The structure to be fixed on the subcategory $\catT$ of $V$-modules
 amounts to a cyclic associative algebra up to coherent isomorphism in the sense of \cite{cyclic} inside the symmetric monoidal bicategory of linear categories and profunctors.
 Actually, we even ask for a cyclic framed $E_2$-structure (this means that we add a braiding and a balancing), but it is not needed for Theorem~\ref{thmmain}.
 By freely adding colimits we obtain a braided monoidal
 representation category $\RV$ in locally presentable categories whose category of compact projective objects can be identified with $\catT$.
 The mapping class group representations on the open sector are then produced via the modular extension procedure for $\RV$
  developed in \cite{envas} using important topological results \cite{giansiracusa} on the modular envelope of the cyclic associative operad \cite{costello} that partly rely on ribbon graph models for surfaces \cite{harer86,penner,kontsevichintersection,kon94,costellographs,costellotcft}, see also \cite{wahlwesterland,egaskupers} for more recent results.
  The reader familiar with \cite{envas}, even though it was mostly adapted to the finite case, might be able to just skim Section~\ref{secmodext}, but should take note of the new description 
  through the translation procedure from markings to coends over compact projective objects that is needed for the Dehn twist calculations in Section~\ref{secgenerators}.
  
  An $(\infty,1)$-categorical of such an extension starting from a cyclic associative algebra is discussed also in the work of Barkan-Steinebrunner-Zhang~\cite{bsz}.\label{bszcoparison}
  The input datum of this construction is an $E_1$-Calabi-Yau algebra. While the notes \cite{steinebrunner} suggest that, in the bicategorical case, these amount to cyclic associative algebras in our sense (which is important because otherwise we would not know how apply their construction to vertex operator algebras), we are not aware of a detailed	 proof. 
  For such an $E_1$-Calabi-Yau algebra, \cite{bsz} gives a partially defined open-closed modular functor
  (with the caveat that 
   the unique extension of the $E_1$-Calabi-Yau algebra to an open modular functor \cite{bsz} rely on an announced result of Barkan-Steinebrunner).
  This is related to, but not identical to a modular functor with singularities in Theorem~\ref{thmmain} which features on the closed sector a \emph{degenerate} copairing which is the structure that we propose to treat for instance the $\cat{W}_{2,3}$-model and is very crucial for the existence of correlators.

  \subsection*{Properties of the spaces of conformal blocks}
  The main part of Theorem~\ref{thmmain} is not the fact that such a construction exists for some abstract conditions that are practically hard to verify (because \cite{envas} essentially does that, at least in the on the open sector and in the finite case), but rather to what kind of categories of $V$-modules it is actually applicable and also the fact that the spaces of conformal blocks can be calculated from markings by means of a gluing prescription that translates the marking into a certain coend over compact projective object in the representation category of $V$.
  For example, for the torus with one boundary component, we can express $\block(\mathbb{T}_1^2)$ using the presentable module category $\RV$ and its pairing that we denote also by $\spr{-,-}$. 
	 For a compact projective object $t$ in $\RV$, we can define a \emph{weak dual} $t^*$
	 (Definition~\ref{defdual}) and prove
	 \begin{align}
	 	\block(\mathbb{T}_1^2;X)\cong \spr{X,\mathbb{F}^{\otimes 2}} \label{eqVVB3}
	 	\end{align}
 	for the coend $\mathbb{F}=\int^{t\in\catT} t^* \otimes t$ over the compact projective objects $\catT\subset \RV$ that however does not live in $\catT$ itself (this is why the cocompletion is so important).
 	We calculate the action of the mapping class group (which in this case is the braid group $B_3$
 	on three strands) in Corollary~\ref{corT12}.
 	In the rigid case, we use the holographic principle from \cite{correlators,yeralopen}
 	to prove that $\mathbb{F}^{\otimes 2}$
 	is an algebra with $B_3$-action, namely the \emph{elliptic double} of Brochier-Jordan \cite{brochierjordane}, and that this $B_3$-action actually induces the	 one in \eqref{eqVVB3}, see Corollary~\ref{corell}.
 	We should note, however that $\block$ exists beyond the rigid case and that generally $\mathbb{F}^{\otimes 2}$ has no reason to be an algebra.
 	For higher genus surfaces, we describe the action of the mapping class group on generators in Theorem~\ref{thmmcggen}.
 	
 	In Theorem~\ref{thmfinite} we prove that in the $C_2$-cofinite case $\block$ is finite-dimensional on compact objects.
 	This is a general expectation in conformal field theory
 	for any reasonable construction of spaces of conformal blocks, and to the best of our knowledge, this is the most general framework in which such a statement is a mathematically rigorous result.
 	Our proof mostly relies in the calculation recipes for 2-colimits in presentable categories given in \cite{chirjf}.
 	
 	\subsection*{Comparison results}
 	In the finite rigid case (we recall this framework  in Section~\ref{secfiniterigid}),
 	the presentable module category $\RV$ is the ind completion of a modular category $\cat{A}$ of $V$-modules, and
 	$\block$ recovers, as desired, the Lyubashenko modular functor $\mathfrak{F}_{\bar{\cat{A}}\boxtimes \cat{A}}$ for $\bar{\cat{A}}\boxtimes\cat{A}$
 	(Corollary~\ref{corfiniterigid}), and we also give a description in terms of the moduli algebras of Alekseev-Grosse-Schomerus~\cite{alekseevmoduli,agsmoduli,asmoduli}
 	in Corollary~\ref{corfiniterigidmodulialgebra}.
 One might think that the approach of this article only gives access to the non-projective mapping class group representations of $\mathfrak{F}_{\bar{\cat{A}}\boxtimes \cat{A}}$, and never to the projective ones of $\mathfrak{F}_{\cat{A}}$, but actually it does through the \emph{Heisenberg picture} on spaces of conformal blocks from \cite{reflection}, with the terminology borrowed from \cite{jfheisenberg}:
 Roughly, the representations of $\mathfrak{F}_{\bar{\cat{A}}\boxtimes \cat{A}}$ inherit an algebra structure with  $\mathfrak{F}_{\cat{A}}$ being the only simple module; the details are a bit subtle and given in Corollary~\ref{corsimplemodule}.
 In other words, in the finite rigid case, we can take a `square root' of $\block$. 
Generally --- and this is maybe one of the main insights of this article --- $\block$ does not need to have any interpretation as a `square' of a different construction.

\subsection*{Beyond the rigid case: Spaces of conformal blocks and correlators for $\cat{W}_{2,3}$}
With a rigid tensor product, the construction of $\block$ is much more difficult to describe, and 
it is a priori unclear whether the modular functor with singularities is useful.
We provide the following piece of evidence that this is indeed the case by looking at the triplet $\cat{W}_{2,3}$ studied by Gaberdiel-Runkel-Wood
in \cite{grw,grw-proc}:
The authors provide in these papers the notion of a boundary theory and expect it to produce a full open-closed conformal field theory. This is an abstract expectation because it is not formulated in reference to any rigorous construction of spaces of conformal blocks.
In Section~\ref{sectriplet23} we show that  our notion of modular functor with singularities 
naturally comes, through modular microcosm principle \cite{microcosm}, 
with a notion of correlators with singularities.
In this framework, we are then able to prove that, after a small cosmetic correction, the boundary theories from \cite{grw,grw-proc} actually provide a consistent system of open-closed correlators with singularities (Theorem~\ref{thmW23}); in particular, the chiral symmetry algebra $\cat{W}(0)$ provides such a system.
The search for other systems is reduced to the classification of symmetric Frobenius algebra in Grothendieck-Verdier categories for the notion of Frobenius algebra in~\cite{fsswfrobenius,microcosm}.

	\vspace*{0.2cm}\textsc{Acknowledgments.} 
We are grateful to Jorge Becerra, Chiara Damiolini,
Max Demirdilek, Alexander Kupers, Simon Lentner, Christoph Schweigert, Simon Wood and Deniz Yeral for helpful exchanges and comments related to this project. Parts of this work were presented at the Oberwolfach Workshop
on `Higher Structures from Symmetries in Quantum Field Theory' in March 2026, see \cite{woikemfo} for a brief summary. We thank the organizers for the invitation and the participants for their useful comments and questions.
LW  gratefully acknowledges support
by the ANR project CPJ n°ANR-22-CPJ1-0001-01 at the Institut de Mathématiques de Bourgogne (IMB).
The IMB receives support from the EIPHI Graduate School (ANR-17-EURE-0002).

\needspace{10\baselineskip}
\section{Presentable categories of modules over a vertex operator algebra}
In this section, we single out the type of categories of vertex operator algebra modules that we use in the sequel.
We first focus on the abstract categorical algebra. The question how this structure can actually be obtained will be answered in the next section.

\subsection{A reminder on linear categories, bimodules and presentable categories\label{seclincat}}
For the facts recalled in this subsection on symmetric monoidal bicategories of linear categories,
we refer e.g.\ to \cite[Section~1.1]{skeinfin}.
More generally, for more	 background on symmetric monoidal bicategories, we refer to \cite[Chapter 2]{schommerpries}.

Denote by $\LinCat$ the symmetric monoidal bicategory of linear categories, linear functors and linear natural transformations, all over a fixed algebraically closed field $k$ that we suppress from the notation. The monoidal product is the `na\" ive' monoidal product taking pairs of objects and monoidal products of morphism spaces. The monoidal unit is $k$, seen as one-object linear category.
We will denote the morphism spaces of linear and other categories $\mathscr{S}$ by $\mathscr{S}(-,-)$. 

Moreover, we denote by $\Bimod$ the symmetric monoidal bicategory of linear categories, bimodules (a bimodule from $\mathscr{S}$ to $\mathscr{T}$ is a linear functor $\mathscr{S}\otimes\mathscr{T}^\op \to \Vect$, where $\Vect$ is the category of $k$-vector spaces) and linear natural transformations of bimodules. The composition of bimodules 
$B:\mathscr{S}\otimes\mathscr{T}^\op \to \Vect$ and $C:\mathscr{T}\otimes\mathscr{U}^\op \to \Vect$ is the bimodule given by the coend
$\int^{t\in\mathscr{T}} C(t,-)\otimes B(-,t):\mathscr{S}\otimes\mathscr{U}^\op \to \Vect$. 
The monoidal product is induced from the monoidal product of $\LinCat$. 
There is a  symmetric monoidal functor
\begin{align}
	\LinCat \to \Bimod \label{eqnlincatbimod}
\end{align} 
that is the identity on objects and sends
a linear functor $F:\mathscr{S}\to\mathscr{T}$ to the bimodule $\mathscr{T}(-,F-):\mathscr{S}\otimes\mathscr{T}^\op \to \Vect $.

We denote by $\PrL$ the symmetric monoidal bicategory of locally presentable categories over $k$, cocontinuous functors (functors preserving colimits; by `colimits' we mean throughout \emph{small} colimits) and linear natural transformations. The monoidal product is the Deligne-Kelly product $\boxtimes$.
For $\cat{A}\in\PrL$, an object $P\in \cat{A}$ is called \emph{compact projective} if the hom functor $\cat{A}(P,-)$ is cocontinuous; it is called \emph{compact} if $\cat{A}(P,-)$ preserves filtered colimits.

There is  a symmetric monoidal functor
\begin{align}
	\widehat{-}:	\Bimod &\to \PrL \label{eqnwidehat}
\end{align}
referred to as \emph{cocompletion}
that sends a linear category 
$	\mathscr{S}$ to $ \widehat{\mathscr{S}}:=\Lin(\mathscr{S}^\op,\Vect)$.
The functor $\widehat{-}$ is an embedding whose image are exactly the locally presentable categories with enough compact projective objects. 

Let us also recall  some terminology from \cite[Section~2.2]{cyclic}:
In a symmetric monoidal bicategory $\cat{M}$ with monoidal product $\boxtimes$ and monoidal unit $I$,
a \emph{non-degenerate pairing}  on an object $X\in\cat{M}$ is a 1-morphism $\kappa : X \boxtimes X \to I$ that exhibits $X$ as its own dual in the homotopy category of $\cat{M}$. In that case, $\kappa$ comes with a copairing $\Delta : I \to X \boxtimes X$ that together with $\kappa$ fulfills the usual zigzag identities up to isomorphism (note that $\Delta$ is unique if it exists). 
A \emph{symmetric structure} on such a pairing is the structure of a homotopy fixed point on $\kappa : X \boxtimes X \to I$ with respect to the homotopy involution that exchanges the two $X$-factors via the symmetric braiding in $\cat{M}$.
Of course, if $\cat{M}=\PrL$, the coevaluation $\Delta$ amounts to an object in $X\boxtimes X$, referred to as \emph{coevaluation object}.

\subsection{Cyclic module categories over vertex operator algebras\label{seccyclicmodulecat}}

In order to apply this terminology to vertex operator algebras, we will use the set-up of
\cite[Section~2]{alsw} that is partly extracted from \cite{hlzi}:
For abelian groups $A$ and $B$, let $V$ be (here and in the rest of the paper --- unless otherwise stated) an $A$-graded vertex operator algebra.
By a $V$-module we understand a $B$-graded
generalized $V$-module (at least if $B$ is fixed through the context).

The following technical definition lies at the heart of this paper and will be justified by 
Theorem~\ref{thmcfe2} below:
\begin{definition}\label{defvcyclic}
	A full subcategory $\catT$ of the category of all graded $V$-modules is called \emph{cyclic} if it is  equipped with the following structure plus relations:
\begin{enumerate}
	\item[(P)] The category $\catT$ comes equipped with a pairing
	\begin{align}
		\spr{-,-} : \catT\otimes\catT \to \Vect \ , \quad (t,u) \mapsto \spr{t,u}
		\end{align}
	in $\Bimod$	that is \emph{non-degenerate}, i.e.\
		there exists a copairing $\Delta:\catT^\op \otimes \catT^\op\to\Vect$ with
		\begin{align}
			\int^{v\in\catT} \Delta (u,v) \otimes \spr{v,t} &\cong \Hom_V(u,t) \ , \label{eqndelta1} \\
			\int^{u\in \catT} \spr{t,u} \otimes \Delta(u,v) &\cong \Hom_V(v,t) \ . 
		\end{align}
		(Note that $\Delta$ is not structure. We require its existence; it is then unique.)
	\item[(M)] The structure of an associative algebra in $\Bimod$, i.e.\ a \emph{monoidal structure through bimodules} \begin{itemize}
		\item $B_\otimes : \catT^\op \otimes \catT\otimes \catT \to \Vect$, \item and $\eta: \catT^\op\to\Vect $,
		\item that come with natural transformations $\alpha, r , \ell$ implementing associativity and unitality and satisfy the usual triangle and pentagon axioms.
		\end{itemize}     
		This structure is also called a \emph{(linear) promonoidal category} \cite{daypromonoidal}, and we refer to \cite{nlab:promonoidal_category} for the diagrams expressing the triangle and pentagon axioms. 
	We write the value of the bimodule $B_\otimes$
	on $t,u,v\in\catT$ as $\spr{ v\mid t, u}$.
	It is covariant in $t,u$ and contravariant in $v$. The induced maps for morphisms $f:t\to r$, $g:u\to s$ and $h:w\to v$ are denoted by
	\begin{align}
		(f,g)_* : \spr{v\mid t,u } &\to \spr{v\mid r,s}  \ , \\
		h^* : \spr{v\mid t,u } &\to \spr{w\mid t,u} \ . 
		\end{align}
	\item[(B1)] A \emph{braiding}, i.e.\ a natural isomorphism of bimodules 
	\begin{align}
		c_{v\mid t,u } : \spr{v\mid t, u} \ra{\cong} \spr{v\mid u,t} \quad \text{for all}\quad t,u,v\in\catT
	\end{align}
	satisfying the two hexagon equations, sometimes referred to as \emph{Yang-Baxter equations}. 
	 
	\item[(B2)] A \emph{balancing}, i.e.\ a natural automorphism
	\begin{align}
		\theta_{t,u} : \catT(t,u)\ra{\cong}\catT(t,u)\quad \text{for all}\quad t,u\in\catT
	\end{align} 
(by the Yoneda Lemma this is automatically
the postcomposition with an automorphism $\theta_u:u\to u$ or, equivalently, precomposition with an automorphism $\theta_t:t\to t$)
	such that \begin{align} \theta_v^* =  	(c_{v\mid u,t} \circ c_{v\mid t,u})\circ ({\theta_t} , {\theta_u}) _*\ . 
		\end{align}
	\item[(B3)] The equality $\spr{\theta,\id}=\spr{\id,\theta}$  of bimodules holds.
	\item[(C)] An isomorphism $\gamma: \kappa \circ (\eta\otimes B_{\otimes}) \ra{\cong} \kappa$ of bimodules
	or, equivalently, in components 
	\begin{align}
		\gamma_{t,u} :  \int^{r,s\in\catT} \spr{r,s}\otimes  \eta(r)\otimes \spr{s\mid t,u } \ra{\cong}\spr{t,u} \ , \label{eqngamma}
	\end{align}
	such that $\gamma$ agrees with the left unitor if precomposed with $\eta$.
\end{enumerate}     
	\end{definition}

\begin{remark}
A priori, we do not ask the pairing to be symmetric, but the left hand side of~\eqref{eqngamma}, and hence also $\spr{-,-}$,
	is symmetric in $t$ and $u$ via the isomorphisms induced by
	\begin{align}
	\spr{s\mid t,u} \ra{(\theta_t ,\id_u)_*} \spr{s\mid u,t} \ra{c_{s \mid t,u}^{-1}}\spr{s\mid t,u} \ . 
	\end{align}
	We equip $\spr{-,-}$ always with this symmetric structure.
	\end{remark}

We discuss example classes in Section~\ref{secexamples}.

\subsection{Passing to the cocompletion}
In \cite{cyclic} we define, building on the notions of \cite{gk,costello}, the notion of a \emph{cyclic framed $E_2$-algebra} in a symmetric monoidal bicategory $\cat{M}$
by extending the theory of operadic algebras inside symmetric monoidal bicategories up to coherent isomorphism to cyclic operadic algebras. 
In this paper, we make an effort to present 
this notion  without
discussing in too much depth the theory of operads. It suffices to say that a cyclic framed $E_2$-algebra in $\cat{M}$
is an object in $\cat{M}$ equipped with operations of total arity $n+1$ for $n\ge -1$ parametrized by embeddings of $n+1$ disks in a two-dimensional sphere.
On these operations, the automorphisms of these embeddings act in a coherent way.
It is proved in \cite[Section~5]{cyclic} that 
this amounts to 
a balanced braided commutative algebra object in $\cat{M}$ with a compatible non-degenerate symmetric pairing, as discussed in Section~\ref{seclincat}, as this will allow us to make the connection to Definition~\ref{defvcyclic}.
For a condensed introduction to cyclic algebras, we refer also to \cite[Section~2]{hdr}.

We will discuss now how,
from a cyclic category of $V$-modules,
we can produce a presentable cyclic framed $E_2$-category of $V$-modules.

\begin{definition}\label{defRV}
	For any cyclic category $\catT$ of $V$-modules,
	we define $\RV$ as the cocompletion of $\catT$, i.e.\
	as the category
	\begin{align}
		\RV := \Lin( \catT^\op,\Vect  ) 
	\end{align}
	of linear functors $\catT^\op\to \Vect$. 
\end{definition}

Note that $\RV$ is defined \emph{in reference} to the type of module that we allow in $\catT$ (we suppress this dependence for legibility).
All choices conforming to Definition~\ref{defvcyclic} are possible.
Needless to say, the choice of modules matters greatly;  this is standard in the theory of vertex operator algebras, see e.g.~the many different module categories over the same vertex operator algebra in the table in \cite[Section~1.3]{crsy}. 

\begin{remark}\label{remobjT}
		Throughout, we see $\mathscr{T}$ as full subcategory of $\RV$ via the Yoneda embedding $t\mapsto \catT(-,t)$.
		Since
		\begin{align} \Hom_{\RV} (\catT(-,t),\catT(-,u))\cong\Hom_V(t,u) \ ,
			\end{align} for $t,u\in\catT$ by the Yoneda Lemma, we allow ourselves to denote the morphism spaces of $\RV$ by $\Hom_V(-,-)$ as well.
		\end{remark}

		\begin{definition}\label{defdual}
		We define the \emph{weak dual} of $t\in\catT$ through the coend
		\begin{align}
			t^* := \int^{u\in\catT} \Delta(u,t) \otimes u \in \RV \ , 
			\end{align}
		where $\Delta :\catT^\op \otimes\catT^\op \to \Vect$ is the coevaluation for $\catT$,
		 and the symbol $\otimes$ in the expression $\Delta(u,t) \otimes u$ denotes the $\Vect$-module structure of $\RV$, i.e.\
		the $\Vect$-tensoring of objects in $\RV$.
			\end{definition}
		
		Note that cocompleteness of $\RV$ guarantees the existence of the coend in $\RV$.

\begin{theorem}\label{thmcfe2}
	For any cyclic category $\catT$ of $V$-modules, the cocompletion
	\begin{align}
		\RV=\Lin(\catT^\op,\Vect)
		\end{align}
	is a cyclic framed $E_2$-algebra in $\PrL$ with enough compact projective objects; in fact,
	the compact projective objects of $\RV$ can, up to equivalence in $\Bimod$, be identified with $\catT$.
	The non-degenerate symmetric pairing $\spr{-,-} : \RV\boxtimes \RV \to\Vect$ of $\RV$
	 is given by the cocontinuous extension of the pairing on $\catT$.
	 The coevaluation object  is given by
	 \begin{align}
	 \label{eqnDelta}	\Delta = \int^{t\in\catT} t^*\boxtimes t \ . 
	 	\end{align}
	\end{theorem}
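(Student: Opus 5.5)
The plan is to work entirely inside $\Bimod$ and transport the structure to $\PrL$ along the symmetric monoidal embedding $\widehat{-}:\Bimod\to\PrL$ from \eqref{eqnwidehat}. By \cite[Section~5]{cyclic}, a cyclic framed $E_2$-algebra in a symmetric monoidal bicategory $\cat{M}$ amounts to a balanced braided commutative algebra in $\cat{M}$ together with a compatible non-degenerate symmetric pairing. I will therefore first check that Definition~\ref{defvcyclic} supplies exactly these data for $\catT$ as an object of $\Bimod$.

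The matching is as follows. Item (M) gives the associative algebra structure, with multiplication $B_\otimes$ and unit $\eta$. Items (B1) and (B2) give the braiding and balancing. The relation in (B2), namely that $\theta$ on the output equals the double braiding composed with $\theta\otimes\theta$ on the inputs, is exactly the balancing axiom written in components via coends. Item (P) gives a non-degenerate pairing $\kappa=\spr{-,-}$, and the remark after the definition gives its symmetric structure. The compatibility of pairing and algebra is the cyclicity/invariance isomorphism $\kappa\circ(\mu\otimes\id)\cong\kappa\circ(\id\otimes\mu)$ in the formulation of \cite{cyclic}. Item (C) provides it in the equivalent form $\kappa\circ(\eta\otimes B_\otimes)\cong\kappa$, in which the pairing is recovered from the multiplication followed by the counit-like functional $\int^{r,s}\spr{r,s}\otimes\eta(r)$. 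Finally, (B3) supplies the compatibility of the balancing with the pairing, namely self-duality of $\theta$.

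Once $\catT$ is a cyclic framed $E_2$-algebra in $\Bimod$, applying the symmetric monoidal functor $\widehat{-}$ gives the structure on $\RV=\widehat{\catT}$. Symmetric monoidal functors preserve cyclic algebras, since they preserve duality data and homotopy fixed points. The pairing then becomes the cocontinuous extension of $\spr{-,-}$, because $\widehat{-}$ sends a bimodule $\catT\otimes\catT\to k$ to its left Kan extension along Yoneda.

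For the remaining claims, the compact projectives of $\Lin(\catT^\op,\Vect)$ are the retracts of finite sums of representables. Hence they form the Cauchy completion of $\catT$, which is equivalent to $\catT$ in $\Bimod$. The coevaluation object is the image of $\Delta$ under $\widehat{-}$, i.e.\ the functor $\Delta\in\Lin(\catT^\op\otimes\catT^\op,\Vect)\simeq\RV\boxtimes\RV$. Writing $\Delta(u,t)$ via co-Yoneda as $\int^{u,t}\Delta(u,t)\otimes(u\boxtimes t)$ and using Definition~\ref{defdual}, this becomes $\int^t t^*\boxtimes t$, which is \eqref{eqnDelta}.

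The main obstacle is the identification step: showing that (C) together with the unit condition is equivalent to the full cyclic invariance isomorphism, with all coherence conditions of a homotopy fixed point satisfied. My first approach would be to construct $\kappa\circ(B_\otimes\otimes\id)\cong\kappa\circ(\id\otimes B_\otimes)$ by composing $\gamma$ with the associator, i.e.\ $\kappa(\eta\cdot(x y)\cdot z)\cong\kappa(\eta\cdot x\cdot(yz))$. I would then check the required coherences by reducing them to the pentagon and triangle identities together with the condition that $\gamma$ restricts to the left unitor. If direct verification becomes unwieldy, the fallback is the characterization in \cite[Section~5]{cyclic} of cyclic structures through such a trace-like datum, which is what (C) was designed to provide.
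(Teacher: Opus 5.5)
Your proposal is correct and matches the paper's proof in substance: the paper also obtains $\RV$ by pushing the $\Bimod$-valued cyclic framed $E_2$-algebra along $\widehat{-}$, and it does not check the identification of Definition~\ref{defvcyclic} with that structure by hand but cites \cite[Theorem~5.6]{cyclic} via Proposition~\ref{propcyce2}, which is exactly your fallback (if you did attempt the direct check, the homotopy fixed point coherence would also need the braiding, the balancing and (B3), not only the pentagon, the triangle and the unit condition on $\gamma$). The one difference is in \eqref{eqnDelta}: the paper verifies the two zigzag identities for $\int^{t} t^*\boxtimes t$ directly on $u\in\catT$, using $\spr{u,t^*}\cong\Hom_V(t,u)$ from \eqref{eqndelta1} together with co-Yoneda, whereas you identify the coevaluation as $\widehat{\Delta}$ because $\widehat{-}$ preserves duality data and then rewrite it by co-Yoneda; both arguments are valid.
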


A large part of the proof of this result relies on the following observation:
The structure in Definition~\ref{defvcyclic} 
amounts to a  $\Bimod$-valued cyclic $E_2$-algebra by \cite[Theorem~5.6]{cyclic} (this results applies to all symmetric monoidal bicategories and just needs to be spelled out in $\Bimod$). Let us record this:

\begin{proposition}\label{propcyce2}
	A cyclic structure on a full subcategory $\catT$ of the category of all $B$-graded $V$-modules is exactly the structure of a $\Bimod$-valued cyclic algebra over the cyclic framed $E_2$-operad.
\end{proposition}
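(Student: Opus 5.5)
The plan is to unpack the notion of a $\Bimod$-valued cyclic framed $E_2$-algebra and match it, item by item, with Definition~\ref{defvcyclic}. The main input is \cite[Theorem~5.6]{cyclic}, which holds in any symmetric monoidal bicategory $\cat{M}$. It says that a cyclic framed $E_2$-algebra in $\cat{M}$ is the same as a balanced braided algebra in $\cat{M}$ together with a non-degenerate symmetric pairing $\kappa$ on the underlying object. This pairing must be compatible with the multiplication and the unit, in the sense that $\kappa\circ(\eta\otimes\mu)$ is identified with $\kappa$, and the balancing must be self-adjoint with respect to $\kappa$. So the proof consists of specializing each of these data to $\cat{M}=\Bimod$ and comparing them with (P), (M), (B1)--(B3) and (C).

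First, an associative algebra up to coherent isomorphism in $\Bimod$ consists of
\begin{itemize}
\item a 1-morphism $\catT\otimes\catT\to\catT$, i.e.\ a functor $\catT^\op\otimes\catT\otimes\catT\to\Vect$;
\item a unit $k\to\catT$, i.e.\ a functor $\catT^\op\to\Vect$;
\item invertible 2-cells $\alpha,\ell,r$ satisfying the pentagon and triangle axioms.
\end{itemize}
This is exactly the promonoidal structure (M). The 2-cells are natural transformations between coends over $\catT$, which are the usual promonoidal associators.

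Second, a braiding is an invertible 2-cell from $B_\otimes$ to $B_\otimes$ precomposed with the symmetry, subject to the hexagon axioms; this is (B1). A balancing is an automorphism of $\id_\catT$ in $\Bimod$, i.e.\ of the hom bimodule $\catT(-,-)$. By Yoneda such an automorphism is the same as a natural automorphism $\theta$ of the identity functor, and the balancing axiom becomes the equation in (B2). The self-adjointness of $\theta$ with respect to the pairing is (B3).

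Third, a pairing $\catT\otimes\catT\to k$ in $\Bimod$ is a functor $\catT\otimes\catT\to\Vect$. A copairing is a functor $\catT^\op\otimes\catT^\op\to\Vect$. The zigzag identities, written with composition of bimodules as coends, are precisely \eqref{eqndelta1} and its companion, so non-degeneracy gives (P). Because duality data are unique, $\Delta$ is a property and not extra structure, as the statement of (P) says.

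The compatibility of the pairing with the algebra structure is the 2-cell $\gamma$ of (C), and its coherence with the left unitor is the condition stated there. What remains to check is symmetry. Here one observes that the symmetric structure demanded in \cite{cyclic} is not independent data. It is induced through $\gamma$ by the braiding and balancing, as in the remark following Definition~\ref{defvcyclic}, namely via $c^{-1}\circ(\theta,\id)_*$. The homotopy fixed point axiom then follows from (B2), since the square of this isomorphism is controlled by the balancing identity.

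The main obstacle is this last point: showing that, in the characterization of \cite[Theorem~5.6]{cyclic}, the symmetric structure on $\kappa$ and the remaining coherence between $\kappa$, the braiding and the balancing are encoded exactly by (B2), (B3) and (C), with nothing extra and nothing missing. I would handle it by writing the ribbon relation $\theta_{t\otimes u}=c^2(\theta_t\otimes\theta_u)$ in bimodule form, transporting it along $\gamma$, and verifying that the involution squares to the identity. Conversely, I would check that any symmetric structure compatible with $\gamma$ must coincide with this induced one.
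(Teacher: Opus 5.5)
Your proposal is correct and follows the paper's own argument. The paper simply invokes \cite[Theorem~5.6]{cyclic}, which holds in any symmetric monoidal bicategory, and spells it out in $\Bimod$; that is exactly your item-by-item matching of (M), (B1)--(B3), (P) and (C).

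One small refinement for your last step: squaring the induced symmetry gives, via (B2), a twist $\theta_s^*$ on the outgoing slot of $\spr{s\mid t,u}$. This twist becomes the identity only after you carry it through the coends defining $\gamma$, using (B3) to move it onto the unit $\eta$, where the balancing is trivial. So involutivity needs (B3) as well, not (B2) alone.
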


For people familiar with operads, Proposition~\ref{propcyce2} might as well be treated as a definition, but we really want to highlight that this structure can be spelled out very explicitly, which is why we prefer to give the definition in terms of the list in Definition~\ref{defvcyclic}.

\begin{proof}[\slshape Proof of Theorem~\ref{thmcfe2}]
	$\RV$ is the cyclic framed $E_2$-algebra obtained by applying the symmetric monoidal embedding $\Bimod\to\PrL$ from~\eqref{eqnwidehat}
	to the cyclic framed $E_2$-algebra in $\Bimod$ from Proposition~\ref{propcyce2}. 	By design $\RV$ always has enough compact projective objects,
	and the compact projective objects of $\RV$ agree with $\catT$ up to equivalence in $\Bimod$.
	The pairing on $\RV$ is determined on compact projective objects through the one on $\catT$.

	In order to prove that
	 $\Delta=\int^{t\in \catT} t^*\boxtimes t $ (the coend exists because $\RV$ is cocomplete) is the coevaluation object, we need to prove
	\begin{align}
		\spr{ -,\Delta'}\otimes \Delta''&\cong \id \ , \label{eqnsnake1}\\
		\Delta' \otimes \spr{  \Delta'',-}&\cong \id  \label{eqnsnake2}
		\end{align}
	with Sweedler notation $\Delta = \Delta' \boxtimes \Delta''$
	(as usual, Sweedler notation does not imply that $\Delta$ is a `pure tensor'). 
	These isomorphisms of cocontinuous functors just need to be established on compact projective objects, i.e.\ on $u\in\catT$.
	Then~\eqref{eqnsnake1}, since $\spr{ u,-}$ is cocontinuous, amounts to
	\begin{align}
	\int^{t\in\catT}	\spr{    u,t^*}\otimes t \cong u \ . 
		\end{align}
	Indeed, by applying Definition~\ref{defdual} we find \begin{align}
	\label{eqnsprhom}	\spr{ u,t^*}\cong \int^{v\in\catT} \Delta(v,t) \otimes \spr{ u,v} \cong \Hom_V(t,u) \ , 
		\end{align} where we use \eqref{eqndelta1} in the last step. Hence,
	$	\int^{t\in\catT}	\spr{ u,t^*} \otimes t \cong 	\int^{t\in\catT} \Hom_V(t,u) \otimes t\cong u$. 
	Analogously, we verify~\eqref{eqnsnake2}.
	\end{proof}

\begin{remark} Ind completions (see Remark~\ref{remarkprc} below)
	 instead of free cocompletions of categories of modules over a vertex operator algebra are considered in the recent article
	\cite{mcraenegron}. The initial assumptions made for this are generally not fulfilled for our cyclic subcategories. As motivated in the introduction, these will generally not be equipped with a monoidal product. 
	\end{remark}

\subsection{The Spider Theorem}
Theorem~\ref{thmcfe2} takes the rather basic algebraic data of a cyclic category of $V$-modules and produces a topological object, namely a cyclic
framed $E_2$-algebra in presentable categories. 	
We will see now why this is useful:
Suppose that we are given a cyclic category $\catT$ of $V$-modules.
This means in particular that we have the non-degenerate pairing
\begin{align}
	\spr{-,-} : \RV\boxtimes \RV\to\Vect
	\end{align}
with its symmetry isomorphism $\spr{X,Y}\cong\spr{Y,X}$. 
We will now explain how we can extend this to maps
\begin{align}
\spr{-,\dots,-} : \RV^{\boxtimes n} \to\Vect \quad \text{for}\quad n\ge 0\label{eqnsprmaps}
\end{align}
with cyclic symmetry (i.e.\ invariant under cyclic permutation up to coherent isomorphism)
 such that for $n=2$ we recover $\spr{-,-}$.
For $n=0$, we use $\spr{I,I}$ with the monoidal unit $I\in\RV$,
 while for $n=1$, we use $\spr{I,-}$ or the isomorphic $\spr{-,I}$. 
For $n=3$, and $t,u,r\in\catT$, we define
\begin{align}
	\spr{r,u,t} := \int^{s\in\catT} \spr{s,r}\otimes \spr{s \mid t,u}  
	\end{align}
and extend cocontinuously to $\RV^{\boxtimes 3}$.
For $n=4$ and $t,u,r,s \in \catT$, we set
\begin{align}
	\spr{t,u,r,s} := \int^{v,w\in\catT} \spr{t,u,v}\otimes\Delta(v,w) \otimes \spr{w,r,s} 
	\end{align}
and extend cocontinuously to $\RV^{\boxtimes 4}$.
Actually, we might have the idea to set instead
\begin{align}
	\spr{t,u,r,s} := \int^{v,w\in\catT} \spr{u,r,v}\otimes\Delta(v,w) \otimes \spr{w,s,t} \ , \label{eqncoendexpression}
\end{align}
and we will see in moment that it does not matter up to a contractible choice as long as the cyclic order of the elements on the left hand side and the coend expression on the right hand side is the same.
The following result, that may be seen as a coherent version or a categorification
 of the Spider Theorem for Frobenius algebras \cite{coeckeduncan}, tells us that we can safely make a choice:	

\begin{theorem}[Coherent Spider Theorem]\label{thmcohspider}
	Define $\spr{-,\dots,-}_c$ through a choice $c$ of coend expression as in \eqref{eqncoendexpression} involving $\kappa,\Delta$ and $\spr{- \mid -,-}$ respecting the cyclic structure.
	Then for another choice $c'$, there is a canonical isomorphism \begin{align}
		\alpha_{c,c'}:\spr{-,\dots,-}_c\ra{\cong } \spr{-,\dots,-}_{c'}\end{align} with $\alpha_{c,c}=\id$ and $\alpha_{c',c''}\circ\alpha_{c,c'}=\alpha_{c,c''}$. 
	\end{theorem}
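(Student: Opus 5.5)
The plan is to reduce the statement to the coherence theory of cyclic associative algebras in the symmetric monoidal bicategory $\PrL$ (or, equivalently, $\Bimod$). By Theorem~\ref{thmcfe2} and Proposition~\ref{propcyce2}, $\RV$ is a cyclic framed $E_2$-algebra. Restricting along the map from the cyclic associative operad to the cyclic framed $E_2$-operad makes it a cyclic associative algebra. By \cite{cyclic}, this datum is equivalent to a pairing $\kappa=\spr{-,-}$ together with a multiplication, associator and unitors, satisfying the invariance isomorphism $\gamma$ from~(C) of Definition~\ref{defvcyclic}.

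A choice $c$ of coend expression is then read as a tree with $n$ cyclically ordered leaves. Its internal vertices are trivalent, decorated by $\spr{-,-,-}$, and its internal edges are decorated by $\Delta$, with the coends implementing composition in $\Bimod$. Thus $\spr{-,\dots,-}_c$ is the image of a planar trivalent tree, i.e.\ a triangulation of an $n$-gon, under the cyclic algebra structure.

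The key steps run as follows:
\begin{enumerate}
\item Produce, for two choices differing by one elementary move, an explicit isomorphism between the corresponding coend expressions. There are two kinds of move. The first is the flip of an internal edge (the associativity move $\spr{t,u,v}\otimes\Delta\otimes\spr{w,r,s}$ versus the rotated version). The second is the cyclic rotation of a single vertex $\spr{r,u,t}\cong\spr{u,t,r}$, or the insertion or removal of a $\Delta$ together with a pairing via the zigzag isomorphisms \eqref{eqnsnake1}, \eqref{eqnsnake2}.
\item The flip isomorphism is built from the associator $\alpha$ of $B_\otimes$, conjugated by the invariance isomorphism $\gamma$. Cyclic rotation at a vertex is built from $\gamma$ and the symmetry of $\spr{-,-}$; this is exactly where (C) and the symmetric structure enter.
\item Assemble these moves into a graph whose vertices are the choices (trees or triangulations) and whose edges are elementary moves. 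This graph is connected: it is the $1$-skeleton of the associahedron, together with the cyclic rotations. Define $\alpha_{c,c'}$ by composing along any path.
\end{enumerate}

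The main obstacle is showing that $\alpha_{c,c'}$ is independent of the chosen path. This amounts to checking that the $2$-cells of a simply connected $2$-complex with this $1$-skeleton commute. The relevant $2$-cells are the pentagon (which follows from the pentagon axiom for $\alpha$), the squares of commuting disjoint flips (which hold by naturality and the interchange law for coends), and the cells relating rotations and flips (which hold by the compatibility of $\gamma$ with the unitor and the homotopy fixed point coherence of the symmetric pairing).

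Rather than checking these cells by hand, I would try first to invoke the general result underlying \cite[Theorem~5.6]{cyclic} and the modular envelope description \cite{costello,giansiracusa}. The space of cyclically ordered trivalent trees, with these moves, models the contractible groupoid of disks with $n$ boundary intervals. A cyclic associative algebra in a symmetric monoidal bicategory therefore gives a functor from this contractible groupoid to the category of cocontinuous functors $\RV^{\boxtimes n}\to\Vect$, and contractibility yields canonical isomorphisms. The identities $\alpha_{c,c}=\id$ and $\alpha_{c',c''}\circ\alpha_{c,c'}=\alpha_{c,c''}$ are then automatic from functoriality. Finally, all these isomorphisms need only be checked on compact projective objects $t\in\catT$, since every expression is cocontinuous, so the whole argument can be carried out in $\Bimod$.
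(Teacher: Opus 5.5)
Your argument is correct. Its decisive step, deferring to the coherence theory of cyclic associative algebras from \cite{cyclic} instead of checking $2$-cells by hand, is also what the paper does, but the packaging differs.

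The paper uses only the equivalence \cite[Theorem~4.2]{cyclic} between two descriptions of a cyclic associative algebra: via product, unit and non-degenerate pairing, and as an algebra over the cyclic operad of cyclic orders. The second description provides a canonical $n$-ary operation $\spr{-,\dots,-}$. The equivalence provides, for each choice $c$, an isomorphism $\gamma_c:\spr{-,\dots,-}_c\to\spr{-,\dots,-}$, and one sets $\alpha_{c,c'}:=\gamma_{c'}^{-1}\circ\gamma_c$. Since all choices are compared with one common target, $\alpha_{c,c}=\id$ and the cocycle identity are automatic, and no path-independence issue arises.

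You instead separate a local problem from a global one. The local problem is to identify each vertex built from $\kappa$ and $\spr{-\mid -,-}$, in any rotation, with the value on a cyclically ordered corolla. This again needs \cite[Theorem~4.2]{cyclic}; the result \cite[Theorem~5.6]{cyclic} you cite is the framed $E_2$ classification, not the relevant input. The global problem is comparing trees, which you settle via the modular envelope and contractibility of the groupoid of disks. That is legitimate: it is Proposition~\ref{propmarking} for the disk, resting on Lemma~\ref{lemmaM}. It is heavier than needed, though, because for a disk the category of planar trees has the corolla as terminal object, and contracting onto it already plays the role of $\gamma_c$. In exchange you get an explicit list of elementary moves, which is closer to the marking calculus used later for the Dehn twists.

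If you do check the rotation cells by hand, note that the symmetry of $\spr{-,-}$ is built from $\theta$ and the braiding. So, for example, showing that the threefold rotation of a vertex is the identity uses (B2) and (B3). The compatibility of $\gamma$ with the unitor and the homotopy fixed point structure alone do not suffice.
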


\begin{proof}
	By Theorem~\ref{thmcfe2} $\RV$ is a cyclic framed $E_2$-algebra, so it has an underlying cyclic associative algebra. Such a cyclic associative algebra can be described through \begin{enumerate}
		\item a unital and associative product up to coherent isomorphism plus a compatible non-degenerate pairing
	(the description involving $\kappa,\Delta$ and $\spr{- \mid -,-}$), \item or as  a cyclic algebra over the cyclic operad whose operations of total arity $n$ are the cyclic orders of a set with $n$ elements. \end{enumerate}
	The coherent Spider Theorem is a consequence of the fact that these two descriptions are actually equivalent \cite[Theorem 4.2]{cyclic}.
	From the second description, we obtain
	the maps \begin{align} \spr{-,\dots,-}:\RV^{\boxtimes n}\to\Vect\label{eqnsprproof}\end{align} with the desired cyclic symmetry and, through the equivalence of both descriptions,
	  for any choice $c$ as above a canonical isomorphism $\gamma_c : \spr{-,\dots,-}_c \ra{\cong} \spr{-,\dots,-}$, so that we can set $\alpha_{c,c'}=\gamma_{c'}\circ\gamma_c$ to obtain the statement of the coherent Spider Theorem.
	\end{proof}

If we want to define
 the maps \eqref{eqnsprmaps} without making any choice, we can define them as a colimit over the contractible groupoid spanned by all possible choices $c$, or we just use \eqref{eqnsprproof} from the proof.
Both options are canonically isomorphic.

\begin{corollary}\label{corspr}
	The maps 	\begin{align}
		\spr{-,\dots,-}:	\RV^{\boxtimes n}\to\Vect \ , \quad
		X_1 \boxtimes \dots \boxtimes X_n \mapsto \spr{X_1,\dots,X_n}
	\end{align}  from 
	Theorem~\ref{thmcfe2}
\begin{itemize}
	\item	come with an action of the ribbon braid group of $n-1$ strands,
\item	and there is a canonical isomorphism
	\begin{align}
	\label{eqnY}	\int^{t\in\catT} \spr{X_1,\dots,X_n,t^*} \otimes \spr{Y_1,\dots,Y_m,t} \ra{\cong} \spr{X_1,\dots,X_n,Y_1,\dots,Y_m} \ , 
		\end{align}
	where the $t$ and $t^*$ can also be inserted in different positions.
	\end{itemize}
	\end{corollary}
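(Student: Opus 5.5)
Both parts should come out of Theorem~\ref{thmcfe2} together with the operadic description used in the proof of Theorem~\ref{thmcohspider}. There, $\spr{X_1,\dots,X_n}$ is the value of $\RV$, as a cyclic framed $E_2$-algebra, on an operation of total arity $n$; equivalently, it is obtained from the $(n-1)$-ary product by pairing with the last slot via $\kappa=\spr{-,-}$. So I will identify $\spr{X_1,\dots,X_n}$ with $\spr{X_1\otimes\dots\otimes X_{n-1},X_n}$, where $\otimes$ is the cocontinuous monoidal product on $\RV$ induced by $B_\otimes$. I will use the canonical isomorphisms $\alpha_{c,c'}$ from Theorem~\ref{thmcohspider} to pass between the coend expression and this form.

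\emph{Ribbon braid group action.} In this form, the braiding $c$ from (B1) and the balancing $\theta$ from (B2), both extended cocontinuously to $\RV$, act on the first $n-1$ tensor factors. The hexagon axioms and the naturality of $c$ yield the braid relations. The relation in (B2) makes $\theta$ a balancing of the braided monoidal structure, $\theta_{X\otimes Y}=c^2\circ(\theta_X\otimes\theta_Y)$, so together the two give an action of the framed (ribbon) braid group on $n-1$ strands on $X_1\otimes\dots\otimes X_{n-1}$. Postcomposing with $\spr{-,X_n}$ transports this action to $\spr{X_1,\dots,X_n}$. Conceptually, this is the action of automorphisms of the embedding of $n-1$ disks into a disk that fix the outgoing boundary, i.e.\ the framed $E_2$ part of the cyclic structure. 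Axiom (B3) is only needed to make the action compatible with the cyclic rotations given by the symmetry of $\spr{-,-}$; it is not needed for the action itself.

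\emph{Gluing isomorphism \eqref{eqnY}.} By cocontinuity it suffices to take all $X_i,Y_j\in\catT$. Write $A=X_1\otimes\dots\otimes X_n$. Using cyclic invariance, I will rewrite $\spr{X_1,\dots,X_n,t^*}\cong\spr{A,t^*}$ and $\spr{Y_1,\dots,Y_m,t}\cong\spr{t,Y_1\otimes\dots\otimes Y_m}$. Then \eqref{eqnsprhom}, in its cocontinuous version, gives $\spr{A,t^*}\cong\Hom_V(t,A)$ for compact projective $t$. The left hand side of \eqref{eqnY} becomes
\begin{align}
\int^{t\in\catT}\Hom_V(t,A)\otimes\spr{t,Y_1\otimes\dots\otimes Y_m}\cong \spr{A,Y_1\otimes\dots\otimes Y_m}
\end{align}
by the co-Yoneda lemma and cocontinuity of $\spr{-,-}$. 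By the associativity constraints and cyclicity, the right hand side is $\spr{X_1,\dots,X_n,Y_1,\dots,Y_m}$. Equivalently, this is the snake identity \eqref{eqnsnake1} for $\Delta=\int^t t^*\boxtimes t$. If $t$ and $t^*$ sit in other positions, I first rotate them to the ends using the cyclic symmetry and then apply the same argument. The symmetry of $\spr{-,-}$ also lets me exchange the roles of $t$ and $t^*$.

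\emph{Main obstacle.} The hardest part is canonicity: showing that the composite isomorphism does not depend on the choice of bracketing and rotations. My plan is to settle this by appeal to the contractibility statement of Theorem~\ref{thmcohspider}, i.e.\ \cite[Theorem~4.2]{cyclic}. Under that equivalence, the gluing map is the cyclic operad composition along the copairing $\Delta$. It is therefore intrinsically defined, and the explicit coend manipulations above merely compute it.
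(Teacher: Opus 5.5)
Your proposal is correct and follows the paper's route. For the first part, the paper applies \cite[Proposition~5.3]{cyclic} to $\RV$, and your construction from the braiding (B1) and the balancing (B2) acting on the first $n-1$ slots is an explicit unpacking of that action. For the second part, the paper obtains \eqref{eqnY} directly as the operadic gluing compatibility of the cyclic framed $E_2$-algebra $\RV$ combined with $\Delta=\int^{t}t^*\boxtimes t$ from \eqref{eqnDelta}; this is exactly where your canonicity argument lands, and your co-Yoneda computation via \eqref{eqnsprhom} (the snake identity \eqref{eqnsnake1}) is a valid explicit check of that isomorphism rather than a separate argument.
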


\begin{proof}
	To obtain the action of the ribbon braid group on $\spr{-,\dots,-}$, one needs to apply \cite[Proposition 5.3]{cyclic}
	to the situation of Theorem~\ref{thmcfe2}.
	The isomorphism~\eqref{eqnY} is the gluing compatibility for the cyclic framed $E_2$-algebra $\RV$ in combination with \eqref{eqnDelta}.
	\end{proof}

\needspace{10\baselineskip}
\section{Different cases and sources of examples\label{secexamples}}
Theorem~\ref{propcyce2} produces a cyclic framed $E_2$-category $\RV$
of modules over a vertex operator algebra
from a cyclic category of $V$-modules.
This begs the question where cyclic categories actually appear.
In this section, we describe a number of situations in which we can 
 obtain rich classes of
 cyclic categories of $V$-modules and describe $\RV$.
 The reader just interested in the topological and geometric constructions and already convinced that the categorical structure described in Definition~\ref{defvcyclic} is  ubiquitous in quantum algebra may skip this section.

\subsection{The finite case}
In Theorem~\ref{thmcfe2}, there are no finiteness assumptions in place, but the situation simplifies considerably if we add some.

\begin{definition}\label{defcompact}
	We say that a cyclic category $\catT$ of $V$-modules (and then also $\RV$) is \emph{compact} if
	all the structure maps of the cyclic framed $E_2$-algebra $\RV$, including the coevaluation,	 preserve compact objects.
	In other words, in the compact case, we ask $\RV$ to live actually in $\catf{Pr}_\catf{c}$, the symmetric monoidal bicategory of compactly generated presentable categories, cocontinuous functors preserving compact objects and linear natural transformations.
	\end{definition}

\begin{remark}\label{remarkprc}
Denote by $\Rex$ the symmetric monoidal bicategory of finitely cocomplete linear categories, functors preserving finite colimits (so-called \emph{right exact functors}) and linear natural transformations. By \cite[Section~3.1]{bzbj} there is a pair of inverse equivalences
\begin{equation}\label{equiv}
	\begin{tikzcd}
		\Rex \ar[rrrr, shift left=2,"\catf{ind}=\text{ind completion}"] &&\simeq&& \ar[llll, shift left=2,"\catf{comp}=\text{compact objects}"] \catf{Pr}_\catf{c}\ . 
	\end{tikzcd}
\end{equation}
This means that, in the compact case, the compact objects 
$\catf{comp}(  \RV   )$
of $\RV$ can equivalently be seen as a cyclic framed $E_2$-algebra in $\Rex$.
\end{remark}

Note that $\Rex$ has a full symmetric monoidal subcategory $\Rexf\subset \Rex$ spanned by \emph{finite categories} in the sense of Etingof-Ostrik~\cite{etingofostrik}, i.e.\ linear abelian categories with finite-dimensional morphism spaces, enough projective objects, finitely many isomorphism classes of simple objects and finite length for every object. These are exactly those linear categories that can be written as finite-dimensional modules over a finite-dimensional algebra.

\begin{definition}\label{deffinite}
	We say that a cyclic category $\catT$ of $V$-modules (and then also $\RV$) is \emph{finite} if it is compact in the sense of Definition~\ref{defcompact}
	 and if $\catf{comp}(  \RV   )\in\Rexf$. 
	\end{definition}

\begin{lemma}\label{lemmafiniteness}
	A cyclic category $\catT$
	of $V$-modules is finite if and only if $\catT$ is equivalent in $\Bimod$ to a linear category
	with one object $\star$ whose endomorphism algebra $A:=\End(\star)$ is finite-dimensional. 
\end{lemma}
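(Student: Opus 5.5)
The strategy is to translate finiteness of $\catf{comp}(\RV)$ into a statement about the Cauchy (Karoubi plus finite direct sum) completion of $\catT$, and then use Morita theory in $\Bimod$. Two facts are basic. First, two linear categories are equivalent in $\Bimod$ if and only if their cocompletions are equivalent in $\PrL$, by the embedding~\eqref{eqnwidehat}. Second, this in turn holds if and only if their Cauchy completions are equivalent as linear categories, since the compact projectives of $\widehat{\mathscr{S}}$ form the Cauchy completion of $\mathscr{S}$.

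For the direction ``$\Leftarrow$'', assume $\catT\simeq\star_A$ in $\Bimod$ with $A$ finite-dimensional. Then $\RV\simeq \Lin(\star_A^\op,\Vect)\simeq A\text{-mod}$, all modules. We need two things.
\begin{itemize}
\item Compactness in the sense of Definition~\ref{defcompact}: the cyclic structure maps preserve compact objects.
\item $\catf{comp}(\RV)\simeq A\text{-mod}^{\mathrm{fd}}$ lies in $\Rexf$.
\end{itemize}
The second point is standard: the compact objects of $A$-$\mathrm{Mod}$ are the finitely presented modules, which for finite-dimensional $A$ are exactly the finite-dimensional ones.

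For the first point, cocontinuous functors between module categories are given by bimodules. The promonoidal structure $B_\otimes$, the unit $\eta$, the pairing and the copairing $\Delta$ are bimodules between tensor powers of $\star_A$, that is, $A^{\otimes k}$-$A^{\otimes l}$-bimodules. We must check that these are finite-dimensional, since then tensoring sends finite-dimensional modules to finite-dimensional ones. I expect this to be the main obstacle: finite dimensionality of $A$ does not by itself force the structure bimodules to be finite-dimensional. The first thing I would try is non-degeneracy. The zigzag isomorphisms exhibit $\spr{-,-}$ and $\Delta$ as mutually inverse dualities, which forces the pairing, viewed as an $A\otimes A$-module, to be an invertible $A$-$A^\op$-bimodule. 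An invertible bimodule is finitely generated projective on each side, hence finite-dimensional, and the same then holds for $\Delta$. For $B_\otimes$ and $\eta$, I would use the isomorphism $\gamma$ from (C) together with Corollary~\ref{corspr}, which expresses $\spr{-,-,-}$ through $B_\otimes$ and the non-degenerate pairing, and hence recovers $B_\otimes$ from $\spr{-,-,-}$ via $\Delta$. If finite dimensionality of $\spr{-,-,-}$ is not automatic, I would read the statement as presupposing compactness, so that the point to prove is only that the extra condition $\catf{comp}(\RV)\in\Rexf$ is equivalent to the one-object description.

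For the direction ``$\Rightarrow$'', assume $\catT$ is finite. Then $\catf{comp}(\RV)$ is a finite category, so it is equivalent to $A\text{-mod}^{\mathrm{fd}}$ for the finite-dimensional algebra $A=\End(P)^\op$, where $P$ is a projective generator, for instance the direct sum of the projective covers of the finitely many simple objects. Applying $\catf{ind}$ from~\eqref{equiv} gives $\RV\simeq A\text{-Mod}$. By Theorem~\ref{thmcfe2}, the compact projectives of $\RV$ are identified with $\catT$ up to equivalence in $\Bimod$. On the other hand, the compact projectives of $A\text{-Mod}$ are the finitely generated projective modules, and these form the Cauchy completion of $\star_A$. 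Hence $\catT\simeq\star_A$ in $\Bimod$; concretely, the identity bimodule on $\catT$ factors through $\star_A$ via the bimodules $\Hom_V(P,-)$ and $\Hom_V(-,P)$.
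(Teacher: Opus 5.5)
Your direction ``$\Rightarrow$'' is the paper's argument: a finite category is equivalent to finite-dimensional modules over a finite-dimensional $A$, hence $\catT\simeq BA$ in $\Bimod$. You additionally make the Cauchy-completion step explicit. For ``$\Leftarrow$'', the paper's proof consists exactly of your second bullet: $\RV$ is equivalent to $A$-modules, whose compact objects are the finite-dimensional modules, and these form a finite category. The paper never addresses the compactness clause of Definition~\ref{defcompact}. So the reading you fall back on is precisely what the paper proves, and in that sense your proposal is complete and agrees with it.

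Your extra attempt to verify compactness is only partly successful.

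\begin{itemize}
\item The pairing and copairing are handled correctly. Non-degeneracy makes $\kappa$ an invertible bimodule, which over a finite-dimensional algebra is finite-dimensional, and the same holds for $\Delta$.
\item The route for $B_\otimes$ and $\eta$ is circular. After transporting the structure along $\catT\simeq BA$, the product preserves compact objects if and only if $\dim\spr{\star\mid\star,\star}<\infty$. The unit is compact if and only if $\dim\eta(\star)<\infty$.
\item Rewriting $B_\otimes$ through $\spr{-,-,-}$ and $\Delta$ via Corollary~\ref{corspr} merely restates these conditions.
\item The isomorphism $\gamma$ only says that the coend $\int^{r,s}\spr{r,s}\otimes\eta(r)\otimes\spr{s\mid t,u}$ is finite-dimensional. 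That does not bound $\eta$ or $B_\otimes$ separately.
\end{itemize}

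None of the data you invoke, in particular $\dim A<\infty$, gives an a priori bound on $\dim\Hom_V(s,t\otimes u)$. So for the literal statement, where finiteness includes compactness, the converse is established neither by your proposal nor by the paper's proof. Both prove it only with compactness presupposed, i.e.\ as a characterization of the condition $\catf{comp}(\RV)\in\Rexf$.
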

	
\begin{proof}
	Suppose that $\catT$ is finite, then the compact objects of $\RV$ are equivalent to the finite-dimensional modules over a finite-dimensional algebra $A$, and
	 hence $\catT\simeq BA$ in $\Bimod$, where $BA$ has one object $\star$ and $A=\End(\star)$.
	
	Conversely, if $\catT \simeq BA$ in $\Bimod$, then $\RV$ is equivalent to the cocompletion of $BA	$, i.e.\
	the category of $A$-modules.
	The subcategory of compact objects is the category of finite-dimensional $A$-modules, which is a finite category. 
	\end{proof}

\subsection{Grothendieck-Verdier duality}
 The finite situation from Definition~\ref{deffinite}
 can be described using the following notion: A \emph{ribbon Grothendieck-Verdier category},
 as introduced by Boyarchenko-Drinfeld \cite{bd} based on notions appearing in work of Barr~\cite{barr},	 is a balanced braided monoidal category $\cat{A}$ 
\begin{itemize}
	\item together with an object $K\in \cat{A}$ making the hom functors $\cat{A}(-\otimes Y,K)$ representable via $\cat{A}(X\otimes Y,K)\cong \cat{A}(X,DY)$ such that $X \mapsto DX$ becomes an anti-equivalence of $\cat{A}$, 
	\item such that $D\theta_X=\theta_{DX}$ for all $X\in\cat{A}$.
\end{itemize}One calls $K$ the \emph{dualizing object} and $D$ the \emph{Grothendieck-Verdier duality}. One has necessarily $K=DI$. For a recent overview about the connection between Grothendieck-Verdier categories, low dimensional topology and representation theory,
 we refer to~\cite{Muller2026GVnotes}, and to \cite{cyclic,alsw} for the details. 

The connection between the finite version of Theorem~\ref{thmcfe2} and Grothendieck-Verdier duality is the following:

\begin{proposition}\label{propfinite}
	The cyclic categories $\catT$ of $V$-modules
	that are finite are,
	up to equivalence of $\Bimod$-valued cyclic framed $E_2$-algebras, exactly the ones that arise as the projective objects of a
	 ribbon Grothen\-dieck-Verdier category $\cat{A}$ in $\Rexf$.
	 In that case, $\RV$ is equivalent to the ind completion of that $\Rexf$-valued ribbon Grothendieck-Verdier category $\cat{A}$.
	 Moreover, for $t,u \in \catT$, the pairing $\kappa$ is determined by
	 \begin{align}
	 	\spr{t,u} \cong \catT( D\nakar t,u)
	 	\end{align} in terms of the right Nakayama functor
	 	$\nakar = \int^{X\in\cat{A}} \cat{A}(-,X)^* \otimes X$  of $\cat{A}$ and the Grothendieck-Verdier duality $D$ of $\cat{A}$. 
	 	The coevaluation object is given by
	 	\begin{align}
	 		\Delta \cong \int_{X \in \cat{A}} DX \boxtimes X \cong \int^{t\in \Proj \cat{A}} \nakar Dt \boxtimes t \label{eqndeltafinite}
	 		\end{align}
	\end{proposition}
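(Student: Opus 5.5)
The plan is to reduce Proposition~\ref{propfinite} to the known classification of cyclic framed $E_2$-algebras in $\Rexf$, using Lemma~\ref{lemmafiniteness} and Remark~\ref{remarkprc} to transport everything between $\Bimod$, $\catf{Pr}_\catf{c}$ and $\Rexf$.

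First, suppose $\catT$ is finite. By Definition~\ref{deffinite} and Remark~\ref{remarkprc}, $\cat{A}:=\catf{comp}(\RV)$ is a cyclic framed $E_2$-algebra in $\Rexf$, and $\RV\simeq\catf{ind}\,\cat{A}$. Next I will invoke the result of \cite{cyclic} (see also \cite{alsw}): cyclic framed $E_2$-algebras in $\Rexf$ are exactly ribbon Grothendieck-Verdier categories in $\Rexf$. For this to be an honest ribbon Grothendieck-Verdier category I need to check that the monoidal product, pairing and copairing of $\RV$ restrict to right exact functors on $\cat{A}$. Compactness gives exactly this. By Theorem~\ref{thmcfe2} the compact projectives of $\RV$ are $\catT$ up to equivalence in $\Bimod$. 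In the finite case these are the projectives of $\cat{A}$, since compact projectives of $\catf{ind}\,\cat{A}$ are the projectives of $\cat{A}$. Hence $\catT\simeq\Proj\cat{A}$ as $\Bimod$-valued cyclic framed $E_2$-algebras, because the restriction along $\Proj\cat{A}\subset\cat{A}$ is an equivalence in $\Bimod$ compatible with all structure.

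Conversely, given a ribbon Grothendieck-Verdier category $\cat{A}\in\Rexf$, the same classification makes it a cyclic framed $E_2$-algebra in $\Rexf$. Applying $\catf{ind}$ and restricting to compact projectives gives a $\Bimod$-valued one on $\Proj\cat{A}$. By Proposition~\ref{propcyce2} this is a cyclic structure, and it is finite by Lemma~\ref{lemmafiniteness}, taking $A=\End(G)$ for a projective generator $G$.

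For the formulas, I start from the pairing of a Grothendieck-Verdier category in $\Rexf$, which is $\kappa(X,Y)=\cat{A}(X\otimes Y,K)^*\cong\cat{A}(X,DY)^*$. I then restrict to projectives $t,u$ and use the defining property of the Nakayama functor for projective $t$, namely $\cat{A}(t,Z)^*\cong\cat{A}(Z,\nakar t)$. Combining this with the anti-equivalence $D$ gives $\spr{t,u}\cong\cat{A}(D\nakar t,u)$, possibly after moving $D$ across and using $D^2\cong\id$ up to the pivotal-type isomorphism. The coevaluation is the unique inverse of $\kappa$. I will verify that $\int_X DX\boxtimes X$ satisfies the zigzag identities by the end calculus in $\Rexf$, using the equivalence $\cat{A}\boxtimes\cat{A}\simeq\Rexf(\cat{A}^{\op},\cat{A})$. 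The coend form then follows from \eqref{eqnDelta} together with the identification $t^*\cong\nakar Dt$. That identification I obtain from \eqref{eqnsprhom}: $\spr{u,t^*}\cong\Hom(t,u)$, and this pins down $t^*$ by Yoneda, using the formula for $\spr{-,-}$ just established.

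The main obstacle is the bookkeeping of variance and duals in the Nakayama/$D$ formula, in particular whether $\nakar$ or $\nakal$ and $D$ or $D^{-1}$ appears. I will fix this by testing on the case $t=I$, $\cat{A}$ unimodular, where the answer is known.
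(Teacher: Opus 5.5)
Your reduction follows the paper's own proof for everything except the last formula. You show that $\cat{A}=\catf{comp}(\RV)$ is a cyclic framed $E_2$-algebra in $\Rexf$, hence a ribbon Grothendieck-Verdier category by the $\Rexf$-version of the classification in \cite{cyclic}. You then get $\Proj\cat{A}\simeq\catf{CProj}(\RV)\simeq\catT$, $\RV\simeq\catf{ind}\,\cat{A}$, and $\spr{t,u}\cong\cat{A}(t,Du)^*\cong\cat{A}(Du,\nakar t)\cong\cat{A}(D\nakar t,u)$. To write the last term as $\catT(D\nakar t,u)$ you still need to say that $D\nakar t$ is projective: $\nakar$ sends projectives to injectives, and $D$ exchanges the two. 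Your explicit converse and your direct zigzag check for $\int_X DX\boxtimes X$ are reasonable substitutes for the paper's citations. For the converse, compactness of the structure maps holds because $\catf{ind}$ of $\Rexf$-data lands in $\Prc$.

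\textbf{The gap is the identification $t^*\cong\nakar Dt$.} Your own Yoneda argument does not produce it. For $u$ projective and any $Y\in\cat{A}$ the pairing formula gives $\spr{u,Y}\cong\cat{A}(D\nakar u,Y)$, so \eqref{eqnsprhom} becomes
\begin{align}
\cat{A}(D\nakar u,t^*)\cong\cat{A}(t,u)\cong\cat{A}(\nakar t,\nakar u)\cong\cat{A}(D\nakar u,D\nakar t) \ ,
\end{align}
since $\nakar$ is an equivalence from projectives to injectives. As $D\nakar u$ runs through all projectives, this yields $t^*\cong D\nakar t\cong\nakal Dt$, not $\nakar Dt$.

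The two agree only under extra hypotheses, e.g.\ $\nakar\cong\id$ on projectives. That is exactly your proposed unimodular test case, so the test cannot detect the problem. In general they differ. Take finite-dimensional modules over $B=k[x,y]/(x,y)^2$ with $\otimes_B$, $K=B^*$, $D$ the linear dual, and trivial braiding and twist; this is a ribbon Grothendieck-Verdier category in $\Rexf$. There $\spr{X,Y}\cong X\otimes_B Y$, and the copairing is the $B\otimes B$-module $B$. So $t^*\cong D\nakar B\cong B$ is $3$-dimensional, whereas $\nakar DB\cong B^*\otimes_B B^*$ is $4$-dimensional.

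Your method therefore proves $\Delta\cong\int^{t\in\Proj\cat{A}}D\nakar t\boxtimes t$, and no bookkeeping turns this into the second isomorphism of \eqref{eqndeltafinite}. The paper reaches $\nakar Dt$ by writing $\Delta(t,u)=\kappa(Dt,Du)$. It then applies the pairing formula, which was derived only for a projective first argument, to the injective object $Dt$. In the example, $\kappa(DB,DB)\cong B^*\otimes_B B^*\not\cong B\cong\Delta(B,B)$. So you should state the coend with $D\nakar t$, or add a hypothesis making $\nakar D\cong D\nakar$ on projectives, rather than aim for $\nakar Dt$.
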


For more background on Nakayama functors in the context of finite categories, we refer to \cite[Section~3.5]{fss}.

\begin{proof}
	By assumption $\cat{A}=\catf{comp}(\RV)$ is finite and hence a cyclic framed $E_2$-algebra in $\Rexf$ which can equivalently be described a ribbon Grothendieck-Verdier category in $\Rexf$ \cite[Theorem~5.13]{cyclic}
	(originally formulated for left exact functors, see Remark~\ref{remlexrex} below).

	Now the projective objects of $\cat{A}=\catf{comp}(\RV)$ are given by
	\begin{align}
\Proj\cat{A}=	\Proj \catf{comp}(\RV)\simeq 	\catf{CProj}(\RV)\simeq \catT\end{align}
where $\catf{CProj}$ denotes the subcategory of compact projective objects, and
$\simeq$ denotes the equivalence of $\Bimod$-valued cyclic framed $E_2$-algebras.
From \eqref{equiv}, we conclude that $\RV$ is the ind completion of $\cat{A}$.

With \cite[Corollary~8.1]{brochierwoike} (\cite{cyclic} already treats the $\Lexf$-valued case, see also Remark~\ref{remlexrex} for the comparison), we observe
$\spr{ t,u}\cong \cat{A}(t\otimes u,K)^*\cong \cat{A}(t,Du)^*$ with the dualizing object $K$ of $\cat{A}$ and hence
\begin{align}
\spr{ t,u}\cong	\cat{A}(Du,\nakar t)\cong \cat{A}(D\nakar t,u)\label{eqnsprnakar}
	\end{align} through the connection between the right Nakayama functor  and twisted Calabi-Yau structures~\cite{shibatashimizu,tracesw}.
Finally, we realize $\cat{A}(D\nakar t,u)=\catT(D\nakar t,u)$ because $D\nakar t$ actually lives again in $\catT$, i.e.\ it is projective.
This is because the Nakayama functor sends projective objects to injective ones \cite[Section~3]{ivanov}, just like the anti-equivalence $D$.

The first isomorphism in \eqref{eqndeltafinite} is (dual to) \cite[Proposition 2.26]{cyclic}.
For the second one,
use
\begin{align}
	\Delta (t,u) = \kappa(Dt,Du)\stackrel{\eqref{eqnsprnakar}}{\cong} \cat{A}(D\nakar Dt,Du)\cong \cat{A}(\nakal t,Du)\cong \cat{A}(u,\nakar Dt)
	\end{align} for 
$t,u\in \Proj \cat{A}$, where we have used 
 the left Nakayama functor $\nakal = \int_{X\in\cat{A}} \cat{A}(X,-)\otimes X$ and the relation $D\nakar = \nakal D$ that can be verified directly.
This leads to $t^* \cong \nakar Dt$. 
Now the second isomorphism in \eqref{eqndeltafinite} follows from \eqref{eqnDelta}.
Note that the translation between the end and the coend could also be achieved using \cite[eq.~(3.52)]{fss}.
	\end{proof}

\begin{remark}[$\Rexf$ versus $\Lexf$]\label{remlexrex}
	The result \cite[Theorem~5.13]{cyclic} treats cyclic framed $E_2$-algebras and ribbon Grothendieck-Verdier categories in $\Lexf$, the symmetric monoidal bicategory of finite categories, \emph{left exact} functors and linear natural transformations.
	The notion of Grothendieck-Verdier duality in $\Lexf$ has to be defined \emph{dually}. More precisely, one asks for isomorphisms
	\begin{align}
		\cat{A}(K,X\otimes Y)\cong \cat{A}(DX,Y) \ . 
		\end{align}
	But the classification of cyclic framed $E_2$-algebras in \cite[Section~5]{cyclic} applies to all symmetric bicategories and yields, when properly unpacked, 
	 an equivalence of $\Rexf$-valued cyclic framed $E_2$-algebras and $\Rexf$-valued ribbon Grothendieck-Verdier categories, see  \cite{mwansular,envas}.
	Actually, given a ribbon Grothendieck-Verdier category $(\cat{A},\otimes,K,D,c,\theta)$ in $\Rexf$ (or $\Lexf$, respectively), one may pass to the `second monoidal product' \cite[Section~4.1]{bd}
	\begin{align} X\odot Y := D(DY \otimes DX) \label{eqnsmp}
	\end{align} which then makes $(\cat{A},\odot,I,D,c,\theta)$ a ribbon Grothendieck-Verdier category in $\Lexf$ (or $\Rexf$, respectively).
	\end{remark}

\subsection{The finite rigid case\label{secfiniterigid}}
Suppose that for a vertex operator algebra $V$ the category $\Vmod$ of strongly graded
$V$-modules is a finite tensor category in the sense of \cite{etingofostrik} (this means that $\Vmod$ is a finite category with rigid monoidal product and simple unit), with the rigid dual agreeing with the operation of taking the contragredient
(note that this is stronger than just asking $\Vmod$ to be rigid as a property).
We refer to this situation as the \emph{finite rigid case}. An important example is the triplet  $\cat{W}_{p}$ \cite{gannonnegron}. 

In the finite rigid  case, one finds a modular category, i.e.\ a finite ribbon category whose braiding is non-degenerate, see \cite{shimizumodular} for the different equivalent characterizations.
Let us record this:

\begin{corollary}\label{corvmod}
	In the finite rigid case, 
	$\catT=\Proj \Vmod$ is a cyclic category of $V$-modules. 
For this choice,	$\RV$ is the ind completion of the modular category $\Vmod$.	
	\end{corollary}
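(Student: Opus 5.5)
The plan is to deduce the statement from Proposition~\ref{propfinite}: exhibit $\Vmod$ as a ribbon Grothendieck-Verdier category in $\Rexf$ and identify $\catT=\Proj\Vmod$ with its projective objects.

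First, I would record the structure present in the finite rigid case. Since $\Vmod$ is a finite tensor category with rigid duality given by the contragredient, the braiding and twist coming from the vertex algebraic tensor product theory of \cite{hlzi} (the setup of \cite{alsw}) make $\Vmod$ a finite ribbon category. By the results for this case cited above (e.g.\ \cite{gannonnegron,mcrae}) it is moreover modular, i.e.\ the braiding is non-degenerate. A rigid ribbon category is in particular a ribbon Grothendieck-Verdier category with dualizing object $K=I$ and $D=(-)^\vee$. The condition $D\theta_X=\theta_{DX}$ is exactly the ribbon axiom $\theta_{X^\vee}=\theta_X^\vee$. Rigidity makes $\otimes$ exact in each variable, so $\Vmod$ is a ribbon Grothendieck-Verdier category in $\Rexf$. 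By Remark~\ref{remlexrex}, or simply because $\otimes$ is exact, the $\Rexf$ and $\Lexf$ formulations coincide here.

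Second, I would apply Proposition~\ref{propfinite}, read in the direction ``ribbon Grothendieck-Verdier category in $\Rexf$ $\Rightarrow$ finite cyclic category''. Via \cite[Theorem~5.13]{cyclic}, $\cat{A}=\Vmod$ is a cyclic framed $E_2$-algebra in $\Rexf\subset\Rex$. Applying $\catf{ind}$ from \eqref{equiv} gives a cyclic framed $E_2$-algebra in $\Prc\subset\PrL$ with enough compact projectives, namely $\catf{ind}\,\Vmod$, whose compact projectives are $\Proj\Vmod$. Restricting the structure to compact projectives gives a $\Bimod$-valued cyclic framed $E_2$-algebra on $\Proj\Vmod$. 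By Proposition~\ref{propcyce2}, this is precisely a cyclic structure in the sense of Definition~\ref{defvcyclic}. Concretely, I would write the structure as
\begin{align}
\spr{v\mid t,u}=\Hom_V(v,t\otimes u), \qquad \spr{t,u}\cong\Hom_V(D\nakar t,u),
\end{align}
with $\eta=\Hom_V(-,I)$. This also explains why $\Proj\Vmod$ need not be closed under $\otimes$ or contain $V$: the structure is only through bimodules. The subcategory $\Proj\Vmod$ is a full subcategory of graded $V$-modules, as Definition~\ref{defvcyclic} requires.

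Third, I would identify the cocompletion. The free cocompletion $\Lin((\Proj\Vmod)^\op,\Vect)$ agrees with $\catf{ind}\,\Vmod$. This holds because a finite category is $A$-$\mathrm{mod}$ for a finite-dimensional algebra $A$ (compare Lemma~\ref{lemmafiniteness}), and both sides are all $A$-modules. The equivalence is one of cyclic framed $E_2$-algebras, since $\widehat{-}$ in \eqref{eqnwidehat} is symmetric monoidal and both structures restrict to the same one on compact projectives. This is also the last clause of Proposition~\ref{propfinite}.

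The main obstacle I expect is the first step: checking that the cyclic and ribbon structure really comes from the vertex algebra. In particular, the duality $D$ must be the contragredient, and the non-degeneracy of the pairing must hold. The latter is where the assumption that the rigid dual agrees with the contragredient is used: it ensures the Grothendieck-Verdier duality is an anti-equivalence compatible with $\theta$. I would take this from the assumptions of the finite rigid case and the cited literature, rather than reprove it.
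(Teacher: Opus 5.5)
Your proposal is correct and follows the paper's route. The paper's proof just notes that $\Vmod$ is a finite ribbon category, modular by \cite[Theorem 1]{mcrae}, hence a finite ribbon Grothendieck-Verdier category, and then invokes Proposition~\ref{propfinite}; your second and third steps unpack that proposition. One small aside: in the finite rigid case projectives form a tensor ideal, so $\Proj\Vmod$ is in fact closed under $\otimes$, and $\spr{v\mid t,u}=\Hom_V(v,t\otimes u)$ is the bimodule of an honest functor; your remark about non-closure only becomes relevant beyond rigidity.
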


\begin{proof}
	By assumption $\Vmod$ is a finite ribbon category, and actually a modular category \cite[Theorem 1]{mcrae}.
In particular, it is a finite ribbon Grothendieck-Verdier category, and hence Proposition~\ref{propfinite} tells us
that $\Proj \Vmod$ is a cyclic category of $V$-modules.
With this choice, $\RV$ is the ind completion of $\Vmod$.
\end{proof}

A special case of the finite rigid case is the rational case:
A vertex operator algebra $V$ is called \emph{rational} if its category of strongly graded modules $\Vmod$ is semisimple with finitely many isomorphism classes of simple objects.
This includes many classical examples (see for example~\cite[Chapter 6]{LepowskyLi2004} for a textbook reference).
In case $V$ is, in addition to being rational, \emph{self-contragredient}, i.e.\ $V\cong V'$,
the category $\Vmod$ is a modular fusion category~\cite{huang}, i.e.\ a \emph{semisimple} modular category, and hence a cyclic framed $E_2$-algebra in finite semisimple categories~\cite{cyclic}. In this situation,
we may choose for our cyclic subcategory simply $\catT=\Vmod$.
Again, $\RV$ ends up being the ind completion of $\Vmod$.

\subsection{The $C_2$-cofinite case\label{secC2}}
	Not all cyclic categories of $V$-modules are of the form described in Section~\ref{secfiniterigid}.
	A counterexample comes from the  triplet $\cat{W}_{2,3}$ at $c=0$ that gives rise to ribbon Grothendieck-Verdier in $\Rexf$ by \cite{grw,grw-proc,alsw}; the monoidal product is not exact and hence cannot be rigid. The projective objects of this 
	finite ribbon Grothendieck-Verdier category still form a cyclic category in the sense of Definition~\ref{defvcyclic}
	as follows again from Proposition~\ref{propfinite}.
	
	Important warning: It is tempting to think that simply any vertex operator algebra $V$ plus choice of modules good enough to produce a Grothendieck-Verdier category through the results of \cite{alsw} would also produce a cyclic category of $V$-modules.
	This conclusion is not correct! The ribbon Grothendieck-Verdier categories from \cite{alsw} live a priori in (linear) categories, where we do not even have interesting dualizable objects, see \cite[Remark 4.13]{cyclic}.

	But we can make the following statement: 
	
	\begin{proposition}\label{propc2cofinite}
		If $V$ is a $C_2$-cofinite vertex operator algebra, and let $\Vmod$ be the category
		 of strongly graded $V$-modules, then $\Proj \Vmod$ is cyclic,
		and $\RV$ is  the ind completion of $\Vmod$.
		\end{proposition}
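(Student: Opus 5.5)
The plan is to reduce the statement to Proposition~\ref{propfinite}. It then suffices to show that for $C_2$-cofinite $V$ the category $\Vmod$ of strongly graded $V$-modules is a ribbon Grothendieck-Verdier category in $\Rexf$. Once this is known, Proposition~\ref{propfinite} says that $\Proj\Vmod$, with the pairing $\spr{t,u}\cong\Hom_V(D\nakar t,u)$ and the induced promonoidal structure, is a finite cyclic category of $V$-modules. The same proposition identifies $\RV$ with the ind completion of $\Vmod$.

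First, finiteness. For $C_2$-cofinite $V$, the category $\Vmod$ is equivalent to finite-dimensional modules over a finite-dimensional algebra. I would cite Huang's result on the existence of projective covers and finitely many simples, together with the description via a Zhu-type algebra $A_n(V)$ for $n$ large. Hence $\Vmod\in\Rexf$ in the sense of Etingof-Ostrik.

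Second, the tensor structure. By the logarithmic tensor category theory of Huang-Lepowsky-Zhang, in the $C_2$-cofinite case $\Vmod$ carries a braided monoidal structure $\boxtimes_{P(1)}$ with twist $\theta=e^{2\pi i L_0}$. The work \cite{alsw} equips it with a ribbon Grothendieck-Verdier structure: the duality $D$ is the contragredient, and the dualizing object is $V'$. What remains is to check that this structure lives in $\Rexf$ and not merely in linear categories. The tensor product is defined by a universal property representing intertwining operators, so $\Hom_V(X\boxtimes Y,Z)$ is the space of intertwining operators of type $\binom{Z}{X\,Y}$. This is left exact in $Z$, hence $\boxtimes$ is right exact in each variable, and $\boxtimes$ is compatible with finite colimits, as required in $\Rexf$. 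The contragredient is exact, and the Grothendieck-Verdier isomorphism $\Hom_V(X\boxtimes Y,V')\cong\Hom_V(X,Y')$ comes from the identification of intertwining operators into $V'$ with pairings.

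Third, the identification with cyclic structures. Here I would use the classification of $\Rexf$-valued cyclic framed $E_2$-algebras as $\Rexf$-valued ribbon Grothendieck-Verdier categories (Remark~\ref{remlexrex}, \cite[Theorem~5.13]{cyclic}), which Proposition~\ref{propfinite} already encapsulates. The balancing condition $D\theta_X=\theta_{DX}$ holds because $e^{2\pi iL_0}$ commutes with passage to the contragredient.

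The main obstacle is the second step: making sure that the Grothendieck-Verdier structure from \cite{alsw} genuinely lives in $\Rexf$, including right exactness of $\boxtimes$. I would handle this through the universal property of $\boxtimes$ as a representing object for intertwining operators. The warning before the proposition stresses exactly that this passage is not automatic in general; it is finiteness, available under $C_2$-cofiniteness, that makes it work.
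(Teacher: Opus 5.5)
Your proposal follows the paper's proof: the ribbon Grothendieck-Verdier structure with right exact tensor product comes from \cite{alsw}, finiteness from Huang's projective-cover results \cite{huangfin}, and Proposition~\ref{propfinite} then gives both claims. One repair is needed in the right-exactness step. Left exactness of $\Hom_V(X\boxtimes Y,-)$ in $Z$ holds for every Hom functor, so it says nothing about right exactness in $X$. Instead, the Grothendieck-Verdier isomorphism $\Hom_V(X\boxtimes Y,Z)\cong\Hom_V\bigl((X\boxtimes Y)\boxtimes Z',V'\bigr)\cong\Hom_V\bigl(X,(Y\boxtimes Z')'\bigr)$ gives $-\boxtimes Y$ a right adjoint, hence right exactness. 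Alternatively, take right exactness directly from \cite[Theorem~2.11]{alsw}, as the paper does.
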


	\begin{proof}
		On $\Vmod$, we obtain  a ribbon Grothendieck-Verdier structure via \cite[Theorem~2.11]{alsw} with right exact tensor product, and
	\cite[Theorems~3.23 \& 3.24]{huangfin} tell us that $\Vmod$ is actually 
	finite
	(this seems to be known among vertex operator algebra experts, and we thank Simon Wood for pointing out an explicit reference).
	 This means that $\Vmod$ is a ribbon Grothendieck-Verdier category in $\Rexf$, so that $\Proj \Vmod$ is cyclic thanks to Proposition~\ref{propfinite}.
	With this choice, $\RV$ is again the ind completion of $\Vmod$.
\end{proof}

\subsection{The non-finite rigid case\label{secghost}}
Suppose that a suitable module category $\Vmod$ is linear abelian with enough projective objects and a rigid monoidal product. An example can be obtained from the $\beta\gamma$ ghosts \cite{ghost} (these are neither rational nor $C_2$-cofinite), see the table in \cite[Section~1.3]{crsy} for more examples.
In that situation, it is proved in \cite[Proposition 3.2 \& Example 3.8]{yeralopen} that the ind completion of $\Vmod$ is a cyclic framed $E_2$-algebra, see also \cite[Section~4]{yeralthesis}. 
Since $\catf{ind}\, \Vmod$ has enough compact projective objects, the compact projective objects of  $\catf{ind}\, \Vmod$, which are the projective objects of $\Vmod$ are a cyclic category, and $\RV$ agrees with  $\catf{ind}\, \Vmod$. 
This class of examples is different from the ones in Section~\ref{secfiniterigid} and \ref{secC2} because there $\RV$ is the ind completion of a cyclic framed $E_2$-algebra in $\Rexf$. In the non-finite rigid case, the underlying framed $E_2$-algebra $\Vmod$ lives still in $\Rex$ (though not in $\Rexf$), but the technical subtlety is that $\Vmod$ is generally \emph{not} cyclic in $\Rex$, but in $\PrL$.
This is another instance where the observation that $\Vmod$ has a ribbon Grothendieck-Verdier structure in linear categories (which clearly it has because it is rigid) is not enough to extract the relevant cyclic framed $E_2$-structure needed in the sequel, see \cite[Example 3.3]{yeralopen} for more details on this important technical subtlety.

\subsection{The algebro-geometric approach\label{secag}}
In the Sections~\ref{secfiniterigid}-\ref{secghost}, we make use of the tensor product theory of	\cite{hlzi}
to produce cyclic categories of $V$-modules, but we do not have to rely on it in all cases:
For a strongly rational vertex operator algebra, one may define spaces of conformal blocks using tools of algebraic geometry \cite{frenkelbenzvi,DGT1,DGT2} and obtain a modular functor $\mathbb{V}$ \cite{damioliniwoike}. 
This endows the category of admissible $V$-modules with the structure of a modular fusion category $\cat{C}_V$ of geometric origin \cite[Theorem 5.3.1]{damioliniwoike}.
While one would expect the monoidal product of $\cat{C}_V$ to agree with the one of \cite{hlzi}, this is actually not known (and it seems difficult to check).
The good news is that this is not at all needed in Definition~\ref{defvcyclic}: $\cat{C}_V$ is cyclic regardless of this point.

\begin{question}\label{q1}
	What examples of cyclic subcategories of module categories over a vertex operator algebra exist that do not fall into the above example classes?
	\end{question}

\needspace{10\baselineskip}
\section{Spaces of conformal blocks via markings and modular extension\label{secmodext}}
In this section, we present the construction of
 spaces of conformal blocks 
based on a cyclic subcategory $\catT$ of modules over a vertex operator algebra $V$.
We  use the tools from \cite{envas} developed mostly for the finite case, and any reader familiar with this paper will be able to digest this section quickly.
One of the key points of this section will be the description of the spaces of conformal blocks
 in terms of markings and coends over compact projective objects.

\subsection{Reminder on the extension of cyclic associative algebras to ribbon graphs\label{secmodex}}		
For $n\ge 0$, denote by $\RGraphs(n)$ the category of connected ribbon graphs~\cite{costello,giansiracusa} whose legs are identified with the legs of a standard graph $T$ with one vertex and $n$ legs 	(a so-called \emph{corolla}); its morphisms are contractions of disjoint unions of trees. 
In more detail,
a ribbon graph $R \in \RGraphs(n)$ consists of \begin{itemize}
	\item a finite graph $\Gamma$ and an identification of the graph obtained by contracting all internal edges of $\Gamma$ with $T$,
	\item and cyclic orders $c_1,\dots,c_m$ for the $m$ vertices of $\Gamma$.
\end{itemize}
The categories $\RGraphs(n)$ form 
a category-valued modular operad \cite{gkmod} whose more detailed definition is given in \cite[Example 7.3]{cyclic} building on \cite{costello}.

Suppose now that $\cat{A}$ is a cyclic associative algebra in $\PrL$, i.e.\ an associative algebra, up to coherent isomorphism, with a non-degenerate symmetric pairing subject to several, rather intricate compatibilities, see \cite[Section~4]{cyclic}. In particular, any cyclic framed $E_2$-algebra in $\PrL$ gives rise to a cyclic associative algebra in $\PrL$ by forgetting the braiding and the balancing. 
Then $\cat{A}$ can be evaluated on ribbon graphs, i.e.\ we get compatible functors \begin{align} \RGraphs(n) \to \Hom_{\PrL}(\cat{A}^{\boxtimes n},\Vect) \ , \quad R \mapsto \Ao^{(R)} \ . \label{eqndefassoc}
\end{align} The formal definition is through \cite[Proposition~7.1 \& Example~7.3]{cyclic}
that treat the extension of operadic algebras to Costello's modular envelope \cite{costello}.
Here is the explicit description:\begin{itemize}
	\item
Take the ribbon graph $R$ and consider the disjoint union $\nu(R)=R_1 \sqcup \dots \sqcup R_\ell$ of corollas with cyclically ordered legs that are obtained by cutting $R$ at all internal edges.
Let us suppose that $R_i$ has $m_i$ legs for $1\le i\le \ell$. 
\item The cyclic associative algebra can be evaluated on each $R_i$ and yields a map $\cat{A}_{R_i} : \cat{A}^{\boxtimes m_i} \to \Vect$.
\item Being a cyclic associative algebra means that we have a coevaluation object $\Delta \in \cat{A}\boxtimes \cat{A}$ at our disposal.
\item Take $\boxtimes _{i=1}^\ell \cat{A}_{R_i} : \boxtimes_{i=1}^\ell \cat{A}^{\boxtimes m_i} \to \Vect$ and insert $\Delta$ in all those pairs of slots belonging to the legs that have to be glued together to recover $R$ from the $R_i$.
The resulting map goes from $\cat{A}^{\boxtimes n}$ to $\Vect$, with $n$ being the number of legs of $R$.\end{itemize}
This is the definition of $\Ao^R$ from \eqref{eqndefassoc}.

\subsection{Marked spaces of conformal blocks}
Denote by $\O(n)$ the groupoid of connected compact oriented surfaces with at least one boundary component and $n$ parametrized boundary intervals. The morphisms in $\O(n)$ are isotopy classes of diffeomorphisms preserving the orientation and the marked intervals.
This means that the automorphism group of $\Sigma \in \O(n)$ is the (pure) mapping class group $\Map(\Sigma)$ of $\Sigma$ (for the chosen parametrization of the boundary intervals). 

By sending a ribbon graph $R\in \RGraphs(T)$ to its associated surface $|R| \in \O(T)$ (the one obtained 
by fattening $R$), we obtain
a functor $|-|:\RGraphs(n) \to \O(n)$.

	For an operation $\Sigma \in \O(n)$
in the open surface operad, we denote by $\M(\Sigma)$ the \emph{category of markings for $\Sigma$}, which is the category of pairs
$(R,f)$ with a ribbon graph $R\in \RGraphs(n)$ and a mapping class $f:|R|\to \Sigma$ identifying the surface $|R|$ obtained by geometrically realizing $R$ and $\Sigma$.
In other words, $\M(\Sigma)$ is the homotopy fiber of $|-|$.
Note that the word `marking' is used in a closely related way in \cite{egaskupers}.

Now let $\mu=(R,f)\in\M(\Sigma)$. Through~\eqref{eqndefassoc}, we may evaluate a cyclic associative algebra $\cat{A}$ 
in $\PrL$ on $\mu$ by forgetting the mapping class, thereby giving us via \eqref{eqndefassoc} \begin{align} \Ao^\mu(\Sigma;-):= \Ao^{(R)} :\cat{A}^{\boxtimes n}\to\Vect \ . \label{eqnAomarking}
\end{align} 
For $\mu = (R,f) \in \M(\Sigma)$ and $g \in \Map(\Sigma)$, denote $g.\mu =(R,gf)$. This defines a $\Map(\Sigma)$-action on $\M(\Sigma)$.

\begin{remark}\label{remmarking}
The assignment
$\M (\Sigma)\ni \mu \mapsto \Ao^{\mu}(\Sigma;-)$ factors through the forgetful functor
$\M(\Sigma)\to \RGraphs(T)$. This means that for $\mu=(R,f)\in\M(\Sigma)$,
	we have 
	natural isomorphisms
	\begin{align}
		\chi_g : \Ao^\mu(\Sigma;-) \ra{\cong} \Ao^{g.\mu}(\Sigma;-) \ . \label{eqnchig}
	\end{align}
Note that~\eqref{eqnchig} is a tautological isomorphism. Actually, $\Ao^\mu(\Sigma;-)$ really just depends
	on the ribbon graph data, so we may model $\Ao^\mu(\Sigma;-)$ and $\Ao^{g.\mu}(\Sigma;-)$ by the same functor in which case $\chi_g$ is the identity. But this is just an artifact of the model that we choose. 
	The functors 
	$\Ao^\mu(\Sigma;-)$ and $\Ao^{g.\mu}(\Sigma;-)$
	will later play different roles in the construction; therefore, it is better to think of them as canonically isomorphic because equality on the nose is not something that we should ask categorically speaking.
\end{remark}

	The following is one of the key technical definitions of this article:

\begin{definition}[Marked spaces of conformal blocks]
	Let $\catT$ be a cyclic category of $V$-modules.
	We define
	$\block^\mu(\Sigma;-):\RV^{\boxtimes n}\to\Vect$ for $\mu=(\Omega,f) \in \M(\Sigma)$ with $\Sigma \in \O(\Sigma)$
	by specializing \eqref{eqnAomarking} applied to the cyclic associative algebra obtained from the $\PrL$-valued cyclic framed $E_2$-algebra $\RV$ 
	from Theorem~\ref{defRV}
	 \emph{by forgetting the braiding and the balancing}.
	We call $\block^\mu(\Sigma;-)$ the \emph{space of conformal blocks for $V$, $\Sigma$ and the marking $\mu$}.
\end{definition}

\subsection{Modular extensions via markings}
Just like for the ribbon graphs, the groupoids $\O(n)$ form 
 a category-valued modular operad.
 A modular algebra over $\O$ is an \emph{open modular functor} or \emph{categorified two-dimensional topological field theory}, see \cite[Section~3]{sn} and \cite{envas} for the definition within the framework that interests us here and \cite{Lazaroiu,mooresegal}  for more background on open field theories.

The functors \begin{align}
\RGraphs(n) \to \Hom_{\PrL}(\cat{A}^{\boxtimes n},\Vect) \ , \quad R \mapsto \Ao^{(R)}\end{align}
built in \eqref{eqndefassoc} from a $\PrL$-valued cyclic associative algebra  send
every morphism to an isomorphism~\cite[Remark 7.2]{cyclic}, which means that they  descend
to the $\infty$-groupoid $|B \RGraphs(n)|$ obtained by taking nerve and geometric realization of the category $\RGraphs(n)$.
Actually, the functor even descends to the fundamental groupoid $\Pi |B \RGraphs(n)|$ of this space because $\PrL$ is `just' a symmetric monoidal bicategory and not an arbitrary symmetric monoidal $(\infty,1)$-category.
This is used in \cite{envas} for the following construction:
As a strengthening of \cite[Theorem~B]{giansiracusa}, we show \begin{align}\label{eqnequivO}\Pi |B \RGraphs(n)|\simeq \O(n) \ , \end{align} meaning that  we obtain  from $\cat{A}$ actually functors \begin{align} \O(n)\to \Hom_{\PrL}(\cat{A}^{\boxtimes n},\Vect) \end{align} producing a modular $\O$-algebra $\Ao$
that we
call 
the \emph{modular extension} of $\cat{A}$.

\begin{theorem}[$\text{\cite[Theorem~2.2 \& eq.~(2.2)]{envas}}$]\label{thmclassification}
	The assignment $\cat{A}\mapsto \Ao$ provides an equivalence \begin{align}
		\text{cyclic associative algebras in $\PrL$}\quad  \ra{\simeq}\quad  \text{open modular functors in $\PrL$} \ . \label{eqnclassopen}
	\end{align} of 2-groupoids.
\end{theorem}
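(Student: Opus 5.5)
The plan is to realize \eqref{eqnclassopen} as an instance of the universal property of the modular envelope, combined with the equivalence \eqref{eqnequivO}. The first step is the inverse functor. An open modular functor $\mathfrak{F}$ is a modular $\O$-algebra in $\PrL$. Restricting it to the genus zero surfaces with one boundary component gives a cyclic algebra over the sub-cyclic operad of disks with $n$ marked intervals. The isotopy classes of such disks form a discrete cyclic operad, namely the cyclic associative operad (cyclic orders of the marked intervals). So restriction yields a functor from open modular functors to cyclic associative algebras in $\PrL$.

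The second step identifies the modular envelope. By Costello's construction, the modular envelope of the cyclic associative operad is the category-valued modular operad $\RGraphs$, as recalled in Section~\ref{secmodex}. By \cite[Proposition~7.1]{cyclic}, cyclic algebras over a cyclic operad in a symmetric monoidal bicategory extend uniquely, up to contractible choice, to modular algebras over the modular envelope. This extension is given explicitly by the formula \eqref{eqndefassoc}, which glues corollas with the coevaluation $\Delta$. The result is an equivalence between cyclic associative algebras and modular $\RGraphs$-algebras in $\PrL$.

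The third step passes from $\RGraphs$ to $\O$. Since $\PrL$ is a bicategory and all morphisms of $\RGraphs(n)$ go to isomorphisms \cite[Remark~7.2]{cyclic}, modular $\RGraphs$-algebras in $\PrL$ factor through the groupoidification $\Pi|B\RGraphs(n)|$. One then invokes $\Pi|B\RGraphs(n)|\simeq \O(n)$. This is the strengthening of \cite[Theorem~B]{giansiracusa}: the classifying space of ribbon graphs with $n$ legs is the disjoint union of classifying spaces of the mapping class groups of the corresponding surfaces. The identification must be compatible with operadic gluing, so it is an equivalence of modular operads in groupoids. It therefore induces an equivalence of 2-groupoids of algebras in $\PrL$. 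Finally, composing restriction to disks with this extension gives back the original cyclic associative algebra, because the corollas correspond to disks. The other composite is identified with the identity by the uniqueness part of the envelope universal property.

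The main obstacle is the third step. Giansiracusa's theorem is a statement about homotopy types levelwise. Here one needs an equivalence of modular operads at the level of fundamental groupoids, compatible with all gluing operations, including self-gluing, which raises genus and creates non-planar ribbon graphs. I would prove this by checking that the realization functor $|-|:\RGraphs(n)\to\O(n)$ is operadic. Then a functor of modular operads that is a levelwise equivalence after groupoidification induces an equivalence of algebra 2-groupoids. Levelwise essential surjectivity and fully faithfulness come from connectivity of the ribbon graph complex, via the Harer--Penner contractibility results, together with the identification of its $\pi_1$ with $\Map(\Sigma)$.
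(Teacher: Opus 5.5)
Your proposal is correct and takes essentially the route the paper indicates, with restriction to disks as the inverse: extend the cyclic associative algebra to Costello's envelope $\RGraphs$ via \cite[Proposition~7.1]{cyclic}, descend to $\Pi|B\RGraphs(n)|$ because all morphisms go to isomorphisms \cite[Remark~7.2]{cyclic}, and use the groupoid-level strengthening $\Pi|B\RGraphs(n)|\simeq\O(n)$ of \cite[Theorem~B]{giansiracusa}. One point needs adjusting in your sketch of that last step: Harer--Penner type contractibility (equivalently, contractibility of the spaces of markings from \cite[Lemma~5.1.5]{giansiracusa}) fails for the annulus without marked intervals in arity zero (Lemma~\ref{lemmaM}), so your claim that the classifying space is a disjoint union of $B\Map(\Sigma)$'s is false on that component, and the identification of fundamental groupoids there must be checked separately, as the paper does via \cite{envas}.
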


\begin{remark}
This classification is true for any symmetric monoidal bicategorical target.
In \cite{bsz} an $(\infty,1)$-categorical generalization
 is announced to appear in upcoming work of Barkan-Steinebrunner.
 This would be a genuine generalization because \cite{envas} only covers the bicategorical case.
\end{remark}
 
Next we will give a description
of $\Ao$ in terms of the marking dependent quantity in \eqref{eqnAomarking}.
To this end, we need the following observation 
  based on results in \cite{giansiracusa}, with some necessary additions coming from \cite{envas}:
\begin{lemma} \label{lemmaM}
	If $\Sigma \in \O(n)$ is an annulus without marked intervals,
	the space $|B\M(\Sigma)|$ is a circle. Otherwise, $\M(\Sigma)$ is contractible, i.e.\
	$|B\M(\Sigma)|$ is homotopy equivalent to a point.
\end{lemma}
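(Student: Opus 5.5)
Since $\M(\Sigma)$ is by definition the homotopy fiber of $|-|:\RGraphs(n)\to\O(n)$ over $\Sigma$, I would compute $|B\M(\Sigma)|$ from the homotopy type of $|B\RGraphs(n)|$. By Giansiracusa's result \cite[Theorem~B]{giansiracusa}, and its strengthening~\eqref{eqnequivO}, the realization $|B\RGraphs(n)|$ splits over the components of $\O(n)$. The component over the diffeomorphism type of $\Sigma$ is a model for $B\mathrm{Diff}(\Sigma)$. Here $\mathrm{Diff}(\Sigma)$ is the group of diffeomorphisms preserving orientation and the parametrized marked intervals, and the realization carries no extra topology beyond this classifying space. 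The functor $|-|$ then corresponds to the map $B\mathrm{Diff}(\Sigma)\to B\Map(\Sigma)=B\pi_0\mathrm{Diff}(\Sigma)$ induced by truncation. Its homotopy fiber is $B\mathrm{Diff}_0(\Sigma)$, the classifying space of the identity component. So the task reduces to computing $\mathrm{Diff}_0(\Sigma)$ by classical results: Earle--Eells, Earle--Schatz and Gramain.

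I would argue in the following order.
\begin{enumerate}
\item Every $\Sigma\in\O(n)$ is connected with nonempty boundary. If some boundary component carries a marked interval, diffeomorphisms must fix that interval pointwise, so they fix a point of the boundary. By Earle--Schatz, the identity component of diffeomorphisms fixing a boundary arc is contractible for all compact surfaces with boundary, so $B\mathrm{Diff}_0$ is a point.
\item If $n=0$, the boundary is free, and I would use Earle--Eells/Gramain. For $\chi(\Sigma)<0$, $\mathrm{Diff}_0$ is contractible. For the disk, $\mathrm{Diff}_0(D^2)\simeq SO(2)$, but this disk case must be handled separately: with boundary freely rotating, $B\mathrm{Diff}(D^2)\simeq BSO(2)$ while the mapping class group is trivial, which would give a fiber $BSO(2)$, not a point. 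So I would check how \cite{envas} treats unmarked disks, presumably by building the rotation into the definition of $\O(0)$ or by excluding this case. For the annulus, $\mathrm{Diff}_0\simeq S^1\times S^1$ if both boundaries may rotate independently. The circle in the statement must therefore come from the ribbon graph model: the annulus is realized by a graph with one vertex and one loop (or a cycle), and $|B\M|$ records the rotation along the core circle.
\end{enumerate}

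The hard step is the precise identification in \eqref{eqnequivO} of which diffeomorphism group $|B\RGraphs(n)|$ models, i.e.\ whether boundary components without marked intervals are treated as free or as having fixed points. Here I would rely directly on the computation in \cite{envas}, which shows that the map $|B\RGraphs(n)|\to B\O(n)$ is $1$-connected with fibers contractible except in the annulus case.

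As a fallback, I would compute $\M(\Sigma)$ combinatorially. Ribbon graphs with fixed legs and contractions of forests form a category whose realization, by Penner/Kontsevich-type arguments, is the decorated Teichmüller space of $\Sigma$ relative to its legs. This space is contractible whenever there is at least one leg, i.e.\ $n\ge1$. When $n=0$ only the annulus has a non-contractible realization: its graphs are cycles, and their realization is a circle from the cyclic rotation symmetry.
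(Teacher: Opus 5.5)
Your reduction depends on identifying the component of $|B\RGraphs(n)|$ over $\Sigma$ with $B\mathrm{Diff}(\Sigma)$. That identification is false in exactly the cases you leave open, so the argument does not prove the lemma as stated.

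Take the disk $\mathbb{D}^2$ without marked intervals. It is an object of $\O(0)$, and the lemma asserts that its marking category is contractible. The ribbon graphs realizing it are the trees without legs. Contracting all edges gives the unique morphism to the one-vertex corolla, which is therefore terminal, and $\Map(\mathbb{D}^2)$ is trivial. Hence $\M(\mathbb{D}^2)$ is contractible, whereas your model predicts $B\mathrm{Diff}_0(\mathbb{D}^2)\simeq B\mathrm{SO}(2)\simeq\mathbb{CP}^\infty$. The ribbon graph category does not see rotations of free boundary circles, so neither ``building the rotation into $\O(0)$'' nor ``excluding the case'' is what happens.

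Your picture also cannot produce a circle for the unmarked annulus. Incidentally, $\mathrm{Diff}_0(\mathbb{S}^1\times[0,1])\simeq\mathbb{S}^1$, not a torus: a full turn of one boundary circle alone only lifts to a path ending at a Dehn twist, so only the diagonal rotation survives. Your sentence about rotation along the core circle is not an argument; the paper takes this case from the proof of \cite[Theorem~2.2]{envas}.

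Where $\mathrm{Diff}_0(\Sigma)$ is contractible, your argument is fine, granted the global theorem in the $B\mathrm{Diff}$ form you quote. Even then it only shows that the component is a $K(\Map(\Sigma),1)$, which is equivalent to the claim itself.

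The missing idea is to argue fiberwise rather than through diffeomorphism groups. By \cite[Section~5, Lemma~5.1.5]{giansiracusa}, the functor $|-|$ factors as an equivalence $\RGraphs(n)\simeq\int F_n$ followed by the projection of a Grothendieck construction. Here $F_n:\O(n)\to\Cat$ is a functor whose values are contractible categories except at the unmarked annulus. Since $\O(n)$ is a groupoid, Quillen's Theorem~B shows that the homotopy fiber of $\int F_n\to\O(n)$ over $\Sigma$ is $F_n(\Sigma)$. This settles every surface other than the unmarked annulus uniformly, including the unmarked disk and all $n=0$ cases, with no input on $\mathrm{Diff}_0$.

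Your fallback via decorated Teichmüller spaces points at exactly this contractibility of the fibers. As written, though, it asserts that contractibility (and only for $n\ge 1$) rather than proving or citing it.
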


\begin{proof}
	If $\Sigma \in \O(n)$ is an annulus without marked intervals, we can obtain the statement from the proof of \cite[Theorem 2.2]{envas}.
	
	If $\Sigma$ is not the annulus without marked intervals, one argues as follows: We conclude from \cite[Section~5, in particular Lemma~5.1.5]{giansiracusa} that $\RGraphs(n) \to \O(n)$
	can be written as
	\begin{align}
		\RGraphs(n) \ra{\simeq} \int F_n \to \O(n) \ , \label{eqndecomp}
	\end{align} in terms of the following objects:
	\begin{itemize}\item 	 $F_n : \O(n)\to\Cat$ is a functor
		whose value on any $\Sigma \in \O(n)$
		that is not the annulus without marked intervals is a contractible category (this means that $|BF_n(\Sigma)|$ is homotopy equivalent to a point).
		\item	  $\int F_n$ is its Grothendieck construction, see e.g.~\cite[Section I.5]{maclanemoerdijk}, and the second functor is the projection to $\O(n)$.
	\end{itemize}
	The first functor in~\eqref{eqndecomp}
	 is a categorical equivalence.
	For this reason, the homotopy fiber of $\RGraphs(n) \to \O(n)$
	over $\Sigma \in \O(n)$  if $\Sigma$ is not the annulus without marked intervals is equivalent to $F_n(\Sigma)$, which is a contractible category. 
	This means that $\M(\Sigma)$ is contractible.
\end{proof}

\begin{proposition}\label{propcolim} 
	Let $\cat{A}$ be a cyclic associate algebra in $\PrL$. 
	For any  $\Sigma \in \O(n)$ that is not the annulus without marked intervals,
	there is a canonical $\Map(\Sigma)$-equivariant isomorphism
	\begin{align}
		\colimsub{ \mu \in \M(\Sigma)} \Ao^\mu(\Sigma; -)   \ra{\cong}	\Ao(\Sigma; -) \ , \label{eqncolim}
	\end{align}
	where on the left hand side, the mapping class group action is induced by action on the markings and~\eqref{eqnchig}.
	Moreover, the structure map
	\begin{align}
		\psi_\mu:	\Ao^\mu(\Sigma; -)   \ra{\cong} 	\colimsub{ \mu \in \M(\Sigma)} \Ao^\mu(\Sigma; -) \label{eqncolim2}
	\end{align}
	of the colimit is an isomorphism.
\end{proposition}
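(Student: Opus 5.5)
We obtain \eqref{eqncolim} by combining the construction of the modular extension $\Ao$ from \eqref{eqnequivO} with the contractibility of the marking categories in Lemma~\ref{lemmaM}. Recall that $\Ao$ arises from the functor $\RGraphs(n)\to\Hom_{\PrL}(\cat{A}^{\boxtimes n},\Vect)$, $R\mapsto \Ao^{(R)}$. This functor inverts all morphisms and hence descends along $\RGraphs(n)\to \Pi|B\RGraphs(n)|\simeq \O(n)$. In particular, for each marking $\mu=(R,f)\in\M(\Sigma)$, the descent data contain an isomorphism $\Ao^{(R)}\cong \Ao(|R|;-)$. Transporting along the mapping class $f:|R|\to\Sigma$ gives an isomorphism $\phi_\mu:\Ao^\mu(\Sigma;-)\to\Ao(\Sigma;-)$.

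The first step is to check that $\mu\mapsto \phi_\mu$ is a cocone under the diagram $\M(\Sigma)\to\Hom_{\PrL}(\cat{A}^{\boxtimes n},\Vect)$. A morphism $(R,f)\to(R',f')$ in the homotopy fiber $\M(\Sigma)$ is a contraction $R\to R'$ together with an identification $f'\circ|R\to R'|=f$ of mapping classes. Compatibility of the $\phi_\mu$ with such a morphism is exactly the statement that the descended functor on $\O(n)$ is a functor, so the cocone condition holds essentially by construction. This yields a comparison map from the colimit to $\Ao(\Sigma;-)$.

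The second step shows that this map is an isomorphism and that each $\psi_\mu$ is an isomorphism. The diagram $\mu\mapsto\Ao^\mu(\Sigma;-)$ sends every morphism to an isomorphism, so it factors through the groupoid completion $\Pi|B\M(\Sigma)|$. By Lemma~\ref{lemmaM} this groupoid is contractible when $\Sigma$ is not the annulus without marked intervals. A diagram indexed by a category with contractible nerve whose arrows all go to isomorphisms has a colimit computed by any single vertex. Concretely, for any object $\mu$, the structure map $\psi_\mu$ is an isomorphism, and the canonical isomorphisms between different vertices are independent of the chosen zigzag. This uses only the fundamental groupoid, which is what a bicategorical target requires. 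Since $\phi_\mu$ is itself an isomorphism, the comparison map, which restricts to $\phi_\mu$ along $\psi_\mu$, is an isomorphism as well. In the excluded annulus case, $\pi_1$ of the circle would act by monodromy, and the colimit would become a coinvariants quotient; this is why that case is excluded.

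The third step is equivariance. For $g\in\Map(\Sigma)$, the action on the colimit sends the $\mu$-summand to the $g.\mu$-summand via $\chi_g$ from \eqref{eqnchig}. The action on $\Ao(\Sigma;-)$ is $\Ao(g)$. We need $\phi_{g.\mu}\circ\chi_g=\Ao(g)\circ\phi_\mu$. This holds because $\phi_{g.\mu}$ is defined by transport along $gf$, which equals transport along $f$ followed by $g$. Meanwhile, $\chi_g$ is the tautological identification coming from the fact that $\Ao^{(R)}$ depends only on $R$. The main obstacle is making this precise: we must unwind the strengthening of \cite[Theorem~B]{giansiracusa} used in \cite{envas}, in particular the Grothendieck construction description \eqref{eqndecomp}, carefully enough that the isomorphisms $\phi_\mu$ are canonical and functorial in the $\Map(\Sigma)$-action. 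I would try to do this by identifying $\M(\Sigma)$ with $F_n(\Sigma)$ and defining $\phi_\mu$ via the cocartesian transport in $\int F_n$.
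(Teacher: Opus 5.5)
Your proof is correct, but it takes a somewhat different route from the paper.

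\textbf{The paper's route.} The paper views $\Ao(\Sigma;-)$, the transport of $R\mapsto\Ao^{(R)}$ along $E:\Pi|B\RGraphs(n)|\simeq\O(n)$, as a left Kan extension along $E$. This gives a colimit over the slice $E/\Sigma$. The paper then proves that the canonical functor $\M(\Sigma)\to E/\Sigma$ is final, which is the first use of Lemma~\ref{lemmaM}. Equivariance is formal in this approach: the Kan extension formula is equivariant for the postcomposition action on slices, and this restricts to the action on markings. Contractibility of $\M(\Sigma)$ is used a second time to see that each $\psi_\mu$ is an isomorphism.

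\textbf{Your route.} You build the cocone $\phi_\mu=\Ao(f)\circ\eta_R$ by hand. Here $\eta_R:\Ao^{(R)}\cong\Ao(|R|;-)$ is the transport isomorphism, and this cocone is exactly the restriction of the paper's Kan extension cocone along $\M(\Sigma)\to E/\Sigma$. You prove first that $\psi_\mu$ is invertible. The comparison map $c$ is then an isomorphism because $c\circ\psi_\mu=\phi_\mu$ and both $\psi_\mu$ and $\phi_\mu$ are invertible.

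\textbf{What each buys.} Your version needs no Kan extensions and no finality check, and it uses Lemma~\ref{lemmaM} only once. In exchange, you verify the cocone condition and equivariance directly. The paper's version is shorter and makes equivariance automatic.

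\textbf{The step you flag as the main obstacle is not one.} In the model of Remark~\ref{remmarking}, $\chi_g$ is the identity. Then
$\phi_{g.\mu}\circ\chi_g=\Ao(gf)\circ\eta_R=\Ao(g)\circ\Ao(f)\circ\eta_R=\Ao(g)\circ\phi_\mu$,
which is just functoriality of $\Ao$ on $\O(n)$. The action of $g$ on the colimit is characterized by $g_*\circ\psi_\mu=\psi_{g.\mu}\circ\chi_g$. Combining the two gives
$c\circ g_*\circ\psi_\mu=\phi_{g.\mu}\circ\chi_g=\Ao(g)\circ c\circ\psi_\mu$,
hence $c\circ g_*=\Ao(g)\circ c$. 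You do not need to unwind Giansiracusa's Grothendieck construction or identify $\M(\Sigma)$ with $F_n(\Sigma)$.
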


\begin{proof}
	We may describe $\O(n) \ni \Sigma \mapsto \Ao(\Sigma;-)$ as the functor \eqref{eqndefassoc}, but transported along the equivalence of modular operads
	$E:\Pi |B\RGraphs(n)|\simeq\O(n)$ from \eqref{eqnequivO}; in total arity zero, we consider a version of $\O(n)$ and $\RGraphs(n)$ without the annulus. 
	Therefore, we may see $\Sigma \mapsto \Ao(\Sigma;-)$ also
	as the left Kan extension of \eqref{eqndefassoc} along $E$.
	This tells us 
	\begin{align}
		\Ao(\Sigma;-) \cong \colim \left( E/\Sigma \ra{\text{forget}} \Pi |B\RGraphs(n)| \ra{\eqref{eqndefassoc}}       \Hom_{\PrL}(\cat{A}^{\boxtimes n},\Vect)\right) \ ,
		\label{eqnkanextension} \end{align} see e.g.\ \cite[Theorem 6.2.1]{riehlcat}.

	By the universal property of homotopy fibers, there is a canonical functor $\M(\Sigma) \to E / \Sigma$, and the colimit on the left hand side of \eqref{eqncolim} is the restriction of \eqref{eqnkanextension} along
	  $\M(\Sigma) \to E / \Sigma$.
	 This restriction  induces an isomorphism for the colimits because $\M(\Sigma) \to E / \Sigma$ 
	is final thanks to Lemma~\ref{lemmaM}, see e.g.\ \cite[Section~8.3]{riehl} for more background.
	This gives us \eqref{eqncolim}. In fact, this isomorphism is equivariant because the same is true for \eqref{eqnkanextension}, where on the left hand side the action originates from the action on the slices by postcomposition.
	After restriction along $\M(\Sigma) \to E / \Sigma$,
	 this translates to the action
	induced by the action on markings.
	
The isomorphism~\eqref{eqncolim2} is now an easy consequence: Since $\M(\Sigma)\ni \mu \mapsto \Ao^\mu(\Sigma;-)$
	sends all morphisms to isomorphisms, the colimit descends to the localization of $\M(\Sigma)$ at all morphisms, which is homotopy equivalent to a point by Lemma~\ref{lemmaM}. This makes the inclusion $\{\mu\} \to \M(\Sigma)$ final and gives us~\eqref{eqncolim2}.
\end{proof}

\begin{proposition}\label{propmarking}
If $\Sigma$ has at least one marked interval per connected component, then for two markings $\mu$ and $\nu$ of $\Sigma$, there is a canonical isomorphism
\begin{align}\alpha_{\mu,\nu} : \block^\mu(\Sigma;-) \ra{\cong} \block^{\nu}(\Sigma;-) \end{align}
such that $\alpha_{\mu,\mu}=\id$ and $\alpha_{\nu,\zeta}\circ\alpha_{\mu,\nu} = \alpha_{\mu,\zeta}$ for three markings $\mu,\nu$ and $\zeta$, and we obtain a marking independent space of conformal block functor
$\block(\Sigma;-)$ as the functor 
equipped with an isomorphism \begin{align} \psi_\mu :  \block^\mu(\Sigma;-) \ra{\cong} \block(\Sigma;-)\end{align} for every marking $\mu$
satisfying the universal property
that for two markings $\mu$ and $\nu$ the triangle
\begin{equation}
	\begin{tikzcd}
		\block^{\mu}(\Sigma;-)  \ar[]{rrd}{\psi_\mu}  \ar[swap]{dd}{\alpha_{ \mu,	\nu}}  \\ && 	\block(\Sigma;-)
		\\	\block^{\nu}(\Sigma;-) \ar[swap]{rru}{\psi_\nu}
	\end{tikzcd} 
\end{equation} commutes.
\end{proposition}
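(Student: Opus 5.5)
The plan is to reduce everything to Lemma~\ref{lemmaM} and Proposition~\ref{propcolim}, applied to the cyclic associative algebra $\cat{A}$ underlying the cyclic framed $E_2$-algebra $\RV$ of Theorem~\ref{thmcfe2}. By definition $\block^\mu(\Sigma;-)=\Ao^\mu(\Sigma;-)$. First I treat the case where $\Sigma$ is connected. Then $\Sigma\in\O(n)$ with $n\ge 1$, so $\Sigma$ is not the annulus without marked intervals. Hence $\M(\Sigma)$ is contractible by Lemma~\ref{lemmaM}: between any two markings $\mu,\nu$ there is a morphism, and any two zigzags of morphisms in $\M(\Sigma)$ from $\mu$ to $\nu$ become equal in the fundamental groupoid of $|B\M(\Sigma)|$.

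The functor $\M(\Sigma)\ni\mu\mapsto\block^\mu(\Sigma;-)$ sends every morphism to an isomorphism, because it factors through $\RGraphs(n)$ and \cite[Remark~7.2]{cyclic} applies. It therefore descends to the groupoid $\Pi|B\M(\Sigma)|$, which is contractible. I define $\alpha_{\mu,\nu}$ as the image of the unique morphism $\mu\to\nu$ in this contractible groupoid. Concretely, it is the composite of the images of any zigzag connecting $\mu$ and $\nu$, with backward arrows inverted. The identities $\alpha_{\mu,\mu}=\id$ and $\alpha_{\nu,\zeta}\circ\alpha_{\mu,\nu}=\alpha_{\mu,\zeta}$ then follow from functoriality on a groupoid in which all hom sets are singletons.

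Next I set $\block(\Sigma;-):=\colim_{\mu\in\M(\Sigma)}\block^\mu(\Sigma;-)$, which by Proposition~\ref{propcolim} is canonically $\Ao(\Sigma;-)$. The structure maps $\psi_\mu$ are isomorphisms by \eqref{eqncolim2}. The triangle $\psi_\nu\circ\alpha_{\mu,\nu}=\psi_\mu$ holds for a single morphism $\mu\to\nu$ because the colimit cocone is natural, and it extends to zigzags by invertibility. For the universal property, a functor $G$ with a compatible family of maps $\block^\mu(\Sigma;-)\to G$ is exactly a cocone over the diagram, so it factors uniquely through $\block(\Sigma;-)$. Alternatively, the universal property follows from the fact that the inclusion of any single object $\{\mu\}$ into a contractible groupoid is final.

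For disconnected $\Sigma$ I take markings componentwise. Both $\M(\Sigma)$ and $\block^\mu(\Sigma;-)$ are external products over the components, where the latter uses the monoidal structure of the modular extension, so everything follows from the connected case. I expect the main obstacle to be checking that the coherence is genuinely canonical, i.e.\ that the $\alpha$'s do not depend on the chosen zigzag. This is exactly where one needs contractibility of $\M(\Sigma)$, that is $\pi_1|B\M(\Sigma)|=0$, rather than mere connectedness. It is also the reason the annulus without marked intervals must be excluded, since there $\pi_1|B\M(\Sigma)|=\mathbb{Z}$ would produce monodromy.
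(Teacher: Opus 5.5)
Your proposal is correct and is essentially the paper's argument. The paper likewise defines $\block(\Sigma;-)$ as the colimit over $\M(\Sigma)$, with the $\psi_\mu$ the structure maps from Proposition~\ref{propcolim}, but then simply sets $\alpha_{\mu,\nu}:=\psi_\nu^{-1}\circ\psi_\mu$, so that $\alpha_{\mu,\mu}=\id$, $\alpha_{\nu,\zeta}\circ\alpha_{\mu,\nu}=\alpha_{\mu,\zeta}$ and the triangle hold by definition; your zigzag description gives the same isomorphism made explicit, which is also the form used later for the Dehn twist computations. One small slip: contractibility of $|B\M(\Sigma)|$ only guarantees a zigzag between two markings, not a single morphism (in Figure~\ref{Fig:Dehn}, $\mu$ and $d.\mu$ are connected via $\mu\leftarrow\nu\to d.\mu$), but your construction of $\alpha_{\mu,\nu}$ already works with zigzags, so nothing breaks.
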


\begin{proof}
	Apply Proposition~\ref{propcolim} to $\cat{A}=\RV$
	and set 
	\begin{align}
		\block (\Sigma;-) := \colimsub{ \mu \in \M(\Sigma)} \block^\mu(\Sigma;-) \ . 
		\end{align}
	 This colimit comes with the isomorphism 
		$\psi_\mu:	\block^\mu(\Sigma; -)   \ra{\cong} 	\block(\Sigma; -)$
		from \eqref{eqncolim2}. Now set $
		\alpha_{\mu,\nu}:= \psi_{\nu}^{-1}\circ \psi_\mu$.
		This automatically characterizes $\block$ through the above commuting triangles.
\end{proof}

	\begin{theorem}\label{thmmcgaction}
	The $\PrL$-valued
	cyclic associative algebra underlying the cyclic framed $E_2$-algebra $\RV$ extends uniquely to an open modular functor $\Vo$, and $\Vo$ is the unique open modular functor satisfying
	\begin{align}
		\Vo(\mathbb{D}^2_n; X_1,\dots,X_n)\cong \spr{X_1,\dots,X_n}\quad \text{for}\quad \quad X_1,\dots,X_n\in\RV \label{eqnrvdisk}
	\end{align} 
	on a disk $\mathbb{D}^2_n$ with $n$ marked intervals, 
	where $\spr{...}$ was introduced in Corollary~\ref{corspr}.
	In particular, for each surface $\Sigma \in \O(n)$, we obtain a functor
	$\Vo(\Sigma;-):\RV^{\boxtimes n}\to\Vect$ in $\PrL$ on which the mapping class group of $\Sigma$ acts through linear natural automorphisms.
	There is a canonical mapping class group equivariant isomorphism
	\begin{align}
		\block(\Sigma;-)\cong \Vo(\Sigma;-)\ , 
		\end{align} whenever $\Sigma$ has at least one marked interval per connected component.
	This affords a canonical extension of $\block$ to an open modular functor.
\end{theorem}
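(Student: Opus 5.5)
The plan is to deduce everything from the classification Theorem~\ref{thmclassification} together with Proposition~\ref{propcolim} and Proposition~\ref{propmarking}. First, by Theorem~\ref{thmcfe2}, $\RV$ is a cyclic framed $E_2$-algebra in $\PrL$. Forgetting the braiding and the balancing, we obtain a cyclic associative algebra in $\PrL$. Applying the equivalence~\eqref{eqnclassopen}, this cyclic associative algebra has a modular extension $\Vo$, which is an open modular functor. Since~\eqref{eqnclassopen} is an equivalence of 2-groupoids, this extension is unique up to a contractible choice. The functors $\Vo(\Sigma;-):\RV^{\boxtimes n}\to\Vect$ in $\PrL$ carry a $\Map(\Sigma)$-action because $\Vo$ is defined on the groupoid $\O(n)$, whose automorphism groups are the mapping class groups.

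For the disk formula~\eqref{eqnrvdisk}, I would evaluate $\Vo$ on the disk $\mathbb{D}^2_n$ through a marking given by a single corolla with $n$ cyclically ordered legs. By Proposition~\ref{propcolim}, $\Vo(\mathbb{D}^2_n;-)$ is isomorphic to $\Ao^{(T)}$ for this corolla $T$. By the explicit description in Section~\ref{secmodex}, $\Ao^{(T)}$ is the arity-$n$ operation of the cyclic associative algebra, evaluated on the cyclic order of the corolla. This is precisely the map $\spr{-,\dots,-}$ of Theorem~\ref{thmcohspider} and Corollary~\ref{corspr}.

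Uniqueness with respect to~\eqref{eqnrvdisk} goes as follows. An open modular functor $\mathfrak{F}$ satisfying~\eqref{eqnrvdisk} restricts on disks to a cyclic associative algebra. Its binary and ternary operations and its pairing are those of $\RV$. I would have to check that this identification is compatible with the cyclic symmetry, i.e.\ with the rotations of $\mathbb{D}^2_n$. Granting that, the classification gives $\mathfrak{F}\simeq\Vo$. This is the step where I expect the main subtlety. The isomorphism~\eqref{eqnrvdisk} has to be read as an isomorphism of genus-zero data, namely of cyclic associative algebras, and not merely objectwise. I would argue that the disks with marked intervals, together with the gluing between them, form exactly the cyclic associative operad. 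So the restriction of an open modular functor to disks is the cyclic associative algebra, and the natural isomorphisms~\eqref{eqnrvdisk} are required to respect this structure.

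Finally, take $\Sigma$ with at least one marked interval per component. In particular $\Sigma$ is not the annulus without marked intervals, and we may work componentwise since everything is monoidal under disjoint union. By definition, $\block^\mu(\Sigma;-)=\Ao^\mu(\Sigma;-)$ for $\cat{A}=\RV$. The colimit defining $\block(\Sigma;-)$ in Proposition~\ref{propmarking} is then exactly the left hand side of~\eqref{eqncolim}, so Proposition~\ref{propcolim} yields a canonical isomorphism $\block(\Sigma;-)\cong\Vo(\Sigma;-)$.

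It remains to check that this isomorphism is $\Map(\Sigma)$-equivariant. I need that the action on $\block$ defined via the square~\eqref{eqnsqc} agrees with the action on the colimit induced by $g.\mu$ and $\chi_g$. Both are characterized by compatibility with the structure maps $\psi_\mu$. Indeed, $\alpha_{\mu,g.\mu}=\psi_{g.\mu}^{-1}\psi_\mu$, and $\psi^g_\mu$ is $\psi_\mu\circ\chi_g^{-1}$, so the two descriptions coincide by a direct diagram chase. Transporting $\Vo$ along this isomorphism then gives the asserted extension of $\block$ to an open modular functor.
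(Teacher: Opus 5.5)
Your proposal is correct and follows the paper's own route: forget the braiding and balancing of $\RV$ to get a cyclic associative algebra, apply Theorem~\ref{thmclassification}, and obtain the comparison $\block(\Sigma;-)\cong\Vo(\Sigma;-)$ and its equivariance from Propositions~\ref{propcolim} and~\ref{propmarking}; your corolla argument for~\eqref{eqnrvdisk}, and your reading of uniqueness as uniqueness relative to the genus-zero cyclic structure, fill in what the paper leaves implicit. One caveat on your last step: you do not need the square~\eqref{eqnsqc} here, because $\block(\Sigma;-)$ is defined as the colimit and Proposition~\ref{propcolim} already gives equivariance for the induced action $f_\#=\psi_{f.\mu}\circ\chi_f\circ\psi_\mu^{-1}$; moreover, if you carry out your chase literally, \eqref{eqnsqc} yields $f_*=\psi_\mu\circ\chi_f^{-1}\circ\psi_{f.\mu}^{-1}=f_\#^{-1}$, so the two descriptions agree only after replacing $f$ by $f^{-1}$, not on the nose as you claim.
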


\begin{proof}
	By Theorem~\ref{thmcfe2} $\RV$ is a cyclic framed $E_2$-algebra, so we may consider the underlying cyclic associative algebra
	by forgetting the braiding and the balancing.
	Now we apply the classification 
	 Theorem~\ref{thmclassification}.
	 The rest is a consequence of Proposition~\ref{propcolim} and \ref{propmarking}.
\end{proof}

\begin{corollary}\label{corexcinterval}
	If $\Sigma'$ arises from $\Sigma$ by gluing two marked boundary intervals together, there is a canonical isomorphism
	\begin{align}
		\block(\Sigma';-) \cong \int^{t\in \catT} \block(\Sigma; \dots, t^*,\dots,t,\dots)  \ , \label{eqnexcint}
	\end{align}
	where the coend variable is placed into the $\RV$-slots corresponding to the intervals that are glued together.
	The isomorphism \eqref{eqnexcint} is $\Map(\Sigma)$-equivariant, where the left hand side carries the $\Map(\Sigma)$-action obtained by restricting the $\Map(\Sigma')$-action along the group morphism $\Map(\Sigma)\to \Map(\Sigma')$. 
\end{corollary}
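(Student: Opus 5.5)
We would deduce the statement from the gluing structure of the open modular functor $\Vo$ from Theorem~\ref{thmmcgaction}, combined with the explicit coevaluation object from Theorem~\ref{thmcfe2}. Since $\block(\Sigma;-)\cong\Vo(\Sigma;-)$ equivariantly whenever each component carries a marked interval, and $\Sigma'$ has at least as many marked intervals per component as needed (we assume, as in the statement, that both $\Sigma$ and $\Sigma'$ satisfy this), it suffices to prove the claim for $\Vo$. For a modular $\O$-algebra in $\PrL$, the operadic gluing of the two intervals is implemented by inserting the coevaluation object $\Delta\in\RV\boxtimes\RV$ into the two slots: $\Vo(\Sigma';-)\cong \Vo(\Sigma;\dots,\Delta',\dots,\Delta'',\dots)$. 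This covers self-gluing within one component as well as gluing two components, the latter via the monoidal structure $\Vo(\Sigma_1\sqcup\Sigma_2)=\Vo(\Sigma_1)\boxtimes\Vo(\Sigma_2)$.

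Next we substitute $\Delta=\int^{t\in\catT}t^*\boxtimes t$ from \eqref{eqnDelta}. Since $\Vo(\Sigma;-)$ is cocontinuous, it commutes with the coend, which gives $\int^{t\in\catT}\Vo(\Sigma;\dots,t^*,\dots,t,\dots)$. Transporting back along $\psi$ yields \eqref{eqnexcint}.

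Alternatively, and more concretely, we could argue on markings. We would choose a marking $\mu$ of $\Sigma$ and let $\mu'$ be the marking of $\Sigma'$ obtained by joining the two legs into an internal edge. By the recipe in Section~\ref{secmodex}, $\block^{\mu'}(\Sigma';-)$ is literally $\block^\mu(\Sigma;-)$ with $\Delta$ inserted, so the isomorphism holds at the marked level by the definition of $\Ao^{(R)}$. It would then be transported through $\psi_\mu$ and $\psi_{\mu'}$ of Proposition~\ref{propmarking}.

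The main obstacle is equivariance together with independence of the choice of $\mu$. For $g\in\Map(\Sigma)$, its image $g'\in\Map(\Sigma')$ sends $\mu'$ to $(g.\mu)'$, and the gluing isomorphism intertwines $\alpha_{\mu,g.\mu}$ with $\alpha_{\mu',g'.\mu'}$. To establish this, we would use that the assignment $\mu\mapsto\mu'$ is a functor $\M(\Sigma)\to\M(\Sigma')$ compatible with the map of homotopy fibres induced by operadic gluing $\RGraphs\to\RGraphs$ over $\O\to\O$, and that the marked gluing isomorphisms are natural in $\M(\Sigma)$. Both facts hold because the modular extension of Theorem~\ref{thmclassification} is a map of modular operads. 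Contractibility of $\M(\Sigma)$ (Lemma~\ref{lemmaM}) then makes the induced isomorphism of colimits canonical and $\Map(\Sigma)$-equivariant, using the description of the action in \eqref{eqnsqc}.
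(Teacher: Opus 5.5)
Your proposal is correct and follows the paper's proof. Excision for the open modular functor $\Vo$ gives $\Vo(\Sigma';-)\cong\Vo(\Sigma;\dots,\Delta',\dots,\Delta'',\dots)$; one then substitutes $\Delta=\int^{t\in\catT}t^*\boxtimes t$, uses cocontinuity, and transfers to $\block$ via Theorem~\ref{thmmcgaction}. Your marking-level argument for equivariance is extra detail; the paper does not argue this separately but absorbs it into the naturality of the excision isomorphisms of a modular algebra.
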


\begin{proof} 
	By Theorem~\ref{thmmcgaction} $\Vo$ is an open modular functor and satisfies, as any modular algebra, 
	excision, see e.g.~\cite[Remark 2.6]{hdr} for a short reminder and \cite[Section~2.4]{cyclic} for the details. 
	This implies 
	\begin{align}
		\Vo(\Sigma';-)\cong \Vo(\Sigma; \dots, \Delta',\dots,\Delta'',\dots)
	\end{align}
	for the coevaluation object $\Delta = \Delta' \boxtimes \Delta''$ of $\RV$.
	Now we use the formula for $\Delta$ provided in Theorem~\ref{thmcfe2} and the fact that $\Vo$ is cocontinuous to obtain
		\begin{align}
		\Vo(\Sigma';-) \cong \int^{t\in \catT} \Vo(\Sigma; \dots, t^*,\dots,t,\dots)  \ .
	\end{align}
Finally, we use Proposition~\ref{thmmcgaction} to transfer
the statement to $\block$.
\end{proof}

For $\mu \in \M(\Sigma)$ and $f \in \Map(\Sigma)$, we define $\psi_\mu^f:\block^{f.\mu}(\Sigma;-)\to \block(\Sigma;-)$ as the composition
\begin{align}
	\block^{f.\mu}(\Sigma;-) \ra{\chi_f^{-1}} \block^\mu(\Sigma;-) \ra{\psi_\mu} \block(\Sigma;-)\ . 
	\end{align}
It is now convenient to record the following consequence of Proposition~\ref{propcolim}:
\begin{corollary}
	If $\Sigma$ has at least one parametrized interval per connected component, 
	then the representation  of the mapping class group $\Map(\Sigma)$ of $\Sigma$ on $\block(\Sigma;-)$ is the unique one such 
	 that for every mapping class $f:\Sigma \to \Sigma$ 
	the associated automorphism $f_* $ of $ \block(\Sigma;-)$  makes for 
	every marking $\mu\in\M(\Sigma)$ the square 
	\begin{equation}
		\begin{array}{c}	\begin{tikzcd}
				\block^{\mu}(\Sigma;-)  \ar[]{rr}{\psi_\mu}  \ar[swap]{dd}{\alpha_{ \mu,	f.\mu}} && 	\block(\Sigma;-) \ar{dd}{f_*}  \\ 
				\\	\block^{f.\mu}(\Sigma;-) \ar[swap]{rr}{ \psi_\mu^f}  && 	\block(\Sigma;-)  
			\end{tikzcd} 
		\end{array}\label{eqnsquare}\tag{$\star$}
	\end{equation}
	commute.
\end{corollary}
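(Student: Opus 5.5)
We deduce the statement from Proposition~\ref{propcolim} together with the definitions $\block(\Sigma;-):=\colimsub{\mu\in\M(\Sigma)}\block^\mu(\Sigma;-)$ and $\alpha_{\mu,\nu}=\psi_\nu^{-1}\circ\psi_\mu$ from the proof of Proposition~\ref{propmarking}. The first task is to make the $\Map(\Sigma)$-action on the colimit explicit. A mapping class $f$ acts on $\M(\Sigma)$ by $\mu\mapsto f.\mu$. Combined with the tautological isomorphisms $\chi_f$ from \eqref{eqnchig}, this gives a natural isomorphism from the diagram $\mu\mapsto\block^\mu(\Sigma;-)$ to its precomposition with $f.(-)$. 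By the universal property, the induced automorphism $f_*$ of the colimit is characterized by
\begin{align}
f_*\circ\psi_\mu=\psi_{f.\mu}\circ\chi_f \qquad \text{for all } \mu\in\M(\Sigma).
\end{align}
Proposition~\ref{propcolim} states that this action corresponds, under the canonical $\Map(\Sigma)$-equivariant isomorphism \eqref{eqncolim}, to the action on $\Vo(\Sigma;-)$. By Theorem~\ref{thmmcgaction}, this is the representation on $\block(\Sigma;-)$.

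Next we rewrite the characterizing identity in the form of the square \eqref{eqnsquare}. By definition $\psi^f_\mu=\psi_\mu\circ\chi_f^{-1}$ and $\alpha_{\mu,f.\mu}=\psi_{f.\mu}^{-1}\circ\psi_\mu$. Commutativity of \eqref{eqnsquare} therefore reads
\begin{align}
f_*\circ\psi_\mu=\psi_\mu\circ\chi_f^{-1}\circ\psi_{f.\mu}^{-1}\circ\psi_\mu .
\end{align}
We compare this with the identity from the first step. There is a subtlety in matching conventions. The paper's square uses $\psi_\mu\circ\chi_f^{-1}$ as the bottom map, so the relevant statement is that $\psi_{f.\mu}\circ\chi_f$ and $\psi_\mu$ differ exactly by the transported action. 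One has to fix which of $f_*$ or $f_*^{-1}$ is meant; equivalently, one checks that the square is the conjugate form of the colimit-induced identity.

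The cleanest route is to note the following. Since $\psi_\mu$ is an isomorphism (by \eqref{eqncolim2}), there is exactly one automorphism $f_*$ making \eqref{eqnsquare} commute, namely
\begin{align}
f_*=\psi^f_\mu\circ\alpha_{\mu,f.\mu}\circ\psi_\mu^{-1}.
\end{align}
This gives uniqueness for free. The content of the corollary is then twofold. First, this automorphism is independent of $\mu$. Second, it agrees with the action from Proposition~\ref{propcolim}.

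For independence of $\mu$, we take another marking $\nu$ and use the cocycle property of $\alpha$ from Proposition~\ref{propmarking}. We also need naturality of $\chi_f$ with respect to the isomorphisms $\alpha$, i.e.\ $\chi_f\circ\alpha_{\mu,\nu}=\alpha_{f.\mu,f.\nu}\circ\chi_f$. This holds because $f.(-)$ is a functor on $\M(\Sigma)$, $\chi$ is a natural transformation of diagrams, and $\alpha_{\mu,\nu}$ is induced by the contractibility of $\M(\Sigma)$ (Lemma~\ref{lemmaM}).

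For agreement, we insert the characterization from the first step. We expect the main obstacle to be the bookkeeping of this step. The source of each $\alpha$ is the canonical morphism in the contractible groupoid $\M(\Sigma)$, and one must verify that the colimit-induced action and the composite above coincide rather than differ by an inverse. We would settle this by evaluating both sides on the colimit leg $\psi_\mu$ and using that all $\psi$ are jointly epimorphic, in fact each one individually invertible.
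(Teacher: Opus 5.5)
Your strategy matches the paper's. Its proof just declares the automorphism defined by \eqref{eqnsquare} to be, by definition, the one inherited from the colimit in Proposition~\ref{propcolim}, and then invokes Theorem~\ref{thmmcgaction}. Your uniqueness argument (each $\psi_\mu$ is invertible) and your $\mu$-independence argument are fine.

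The problem is the step you postpone as bookkeeping: it does not come out the way you predict. Your own two displays give
\[
f_*=\psi_\mu\circ\chi_f^{-1}\circ\psi_{f.\mu}^{-1}\quad\text{(square)},\qquad f_*=\psi_{f.\mu}\circ\chi_f\circ\psi_\mu^{-1}\quad\text{(colimit)},
\]
and composing the first after the second gives $\psi_\mu\circ\chi_f^{-1}\circ\chi_f\circ\psi_\mu^{-1}=\id$. The two maps are inverse to each other, so evaluating on the legs $\psi_\mu$ will confirm an inversion, not an equality.

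Your colimit formula is the one that matches $\Vo$. In the Kan extension description from the proof of Proposition~\ref{propcolim}, $\Vo(f)$ is induced by postcomposition on slices, which sends the leg at $\mu$ to the leg at $f.\mu$; that is, $\Vo(f)\circ\psi_\mu=\psi_{f.\mu}\circ\chi_f$. Hence the automorphism singled out by the square is the inverse of the $\Vo$-action of $f$, i.e.\ the action of $f^{-1}$. Consistently, $\rho(f):=\psi_\mu\circ\chi_f^{-1}\circ\psi_{f.\mu}^{-1}$ satisfies $\rho(fg)=\rho(g)\circ\rho(f)$. To see this, compute $\rho(g)$ with the marking $\mu$ and $\rho(f)$ with $g.\mu$, and use $\chi_{fg}=\chi_f\circ\chi_g$. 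So $\rho$ is an anti-homomorphism.

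The agreement you intend to verify is therefore false in your convention. The corollary holds literally only under the convention hidden in the paper's ``by definition'': the action inherited from the colimit is taken to be the one induced by $\chi_f^{-1}\colon\block^{f.\mu}(\Sigma;-)\to\block^{\mu}(\Sigma;-)$ together with reindexing along $f.(-)$. With that convention, $f\mapsto f_*$ is the inverse of the $\Vo$-action, i.e.\ a right action. To close your argument, either adopt that convention explicitly (reading ``representation'' accordingly, or replacing $f$ by $f^{-1}$), or state and prove the corrected characterization
\[
f_*\circ\psi_\mu=\psi_\mu\circ\alpha_{f.\mu,\mu}\circ\chi_f ,
\]
which is equivalent to \eqref{eqnsquare} with $f_*^{-1}$ in place of $f_*$. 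Downstream this is harmless; for a single Dehn twist it only exchanges $d$ and $d^{-1}$. But it is exactly the point your proof has to settle, and it cannot be settled in the direction you expect.
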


\begin{proof}
	The action on $\block(\Sigma;-)$ defined by the commutativity of \eqref{eqnsquare} is by definition the one it inherits from the colimit description in Proposition~\ref{propcolim}. In combination with Theorem~\ref{thmmcgaction}, this yields the claim.
	\end{proof}

\section{Modular functors with singularities from vertex operator algebras}
For a cyclic subcategory $\catT$ of $V$-modules, we have accomplished through Theorem~\ref{thmmcgaction} the construction of spaces of conformal blocks on the open sector.
In this section, we extend further.

\subsection{Open-closed modular functors with singularities\label{secmfsing}}
In order to deal with colored modular operads below, recall
 from \cite{costello}
that a graph $\Gamma$ consists of two sets $H$ (its half edges)
and $V$ (its vertices) equipped with an involution $\iota : H \to H$ assigning to a half edge the half edge that it is glued to  and a map $H\to V$ sending a half edge to the vertex that it is attached to. The fixed points of $\iota$ are the external legs of $\Gamma$. From a graph $\Gamma$, we can build
 two new graphs, namely $\pi_0(\Gamma)$ by contracting all internal edges,
  and $\nu (\Gamma)$ by cutting them open (in other words, by 
  replacing $\iota$ by the identity map). 

Let $C$ be a set that we will refer to as \emph{set of colors}. A \emph{$C$-coloring} of a graph $\Gamma$ is a function $F: H \to C $ such that $F(\iota (h))=F(h)$ for all half edges $h\in H$. A morphism of colored graphs is a morphism of graphs which is compatible with the colorings. If $(\Gamma,F)$ is a colored graph then both $\pi_0(\Gamma)$ and $\nu(\Gamma)$ inherit a natural $C$-coloring. Costello's construction of the symmetric monoidal category $\Graphs$ explained in detail in~\cite{costello} straightforwardly generalizes to colored graphs.
We only sketch the essential points: 
\begin{itemize}
	\item The objects of $\CGraphs$ are disjoint unions of colored corollas, i.e.\ graphs with only one vertex with some number of legs plus a coloring (zero legs are allowed). 
	\item A morphism $T \to T'$ is an equivalence class of colored graphs $\Gamma$ together with identifications $T \cong \nu (\Gamma)$ and $T'\cong \pi_0(\Gamma)$. The equivalence relation we impose is given by isomorphisms of graphs which are compatible with the identifications.
\end{itemize}
Composition is defined by inserting colored graphs into each other exactly as in~\cite{costello}. Disjoint union of graphs equips $\CGraphs$ with a symmetric monoidal structure whose monoidal unit is the empty graph. 

If $\mathcal{S}$ is  a higher symmetric monoidal category (for example a symmetric monoidal bicategory such as $\Cat$) and $C$ a set of colors, a \emph{$C$-colored modular operad} is a symmetric monoidal functor $\mathcal{O}: \CGraphs \to \mathcal{S}$,
where `symmetric monoidal functor' always has to be understood in the weak sense as explained 
\cite[Section~2.1]{cyclic}.

	Up to isomorphism, a colored corolla is determined
	 by the number  legs for a given color. For example, if $S$ has two elements,
	  the objects of operations are $\mathcal{O}(n,m)\in \mathcal{S}$ 
	  in arity $(n,m)$ for $n\geq 0$ and $m\geq 0$ with $n$ and $m$ many inputs of the two colors.

The following is a generalization of \cite[Section~3]{sn}:
\begin{definition}[Singular surface operad]
	For total arities $n,m\ge 0$, we define the groupoids $\sing(n,m)$ as follows: 
	Objects are
	connected compact oriented surfaces $\Sigma$ with boundary parametrization $[0,1] ^{\sqcup n} \sqcup (\mathbb{S}^1)^{\sqcup m} \to \partial \Sigma$
	with $p\ge 0$ unparametrized boundary circles, that are also referred to as \emph{free boundary components} or \emph{punctures}, such that $n+m+p\ge 1$. 
	We make these groupoids into a two-colored modular operad $\sing$, the \emph{singular surface operad},
	as follows: The two colors are the interval and the circle.
	The operadic composition over parametrized intervals is by gluing of those intervals.
	For the parametrized boundary circles, one needs to add a puncture after the gluing; more precisely, one adds
	parametrized intervals to the associated boundary 
	 circles (depicted below in blue together with a collar) around the image of a fixed base point on $\mathbb{S}^1$ under the parametrization (red dot), then glues along the intervals and leaves an unparametrized boundary circle, i.e.\ a removed disk; schematically, the gluing process is the following:
	\begin{align}
		\label{eqnpucturedgluing} \begin{array}{c}\begin{tikzpicture}[scale=0.5]
				\begin{pgfonlayer}{nodelayer}
					\node [style=none] (0) at (-9, 4) {};
					\node [style=none] (1) at (-9, 2) {};
					\node [style=none] (2) at (-6, 4) {};
					\node [style=none] (3) at (-6, 2) {};
					\node [style=none] (4) at (-12, 4) {};
					\node [style=none] (5) at (-12, 2) {};
					\node [style=none] (6) at (-3, 4) {};
					\node [style=none] (7) at (-3, 2) {};
					\node [style=reddot] (8) at (-8.25, 2.5) {};
					\node [style=reddot] (9) at (-6.75, 3.5) {};
					\node [style=none] (10) at (2, 4) {};
					\node [style=none] (11) at (2, 2) {};
					\node [style=none] (12) at (5, 4) {};
					\node [style=none] (13) at (5, 2) {};
					\node [style=none] (14) at (-1, 4) {};
					\node [style=none] (15) at (-1, 2) {};
					\node [style=none] (16) at (8, 4) {};
					\node [style=none] (17) at (8, 2) {};
					\node [style=none] (18) at (2, 2) {};
					\node [style=none] (19) at (2.75, 2.5) {};
					\node [style=none] (20) at (4.25, 3.5) {};
					\node [style=none] (21) at (5, 4) {};
					\node [style=none] (22) at (-2.5, 3) {};
					\node [style=none] (23) at (-1.5, 3) {};
					\node [style=none] (24) at (9, 3) {};
					\node [style=none] (25) at (9, -1) {};
					\node [style=none] (26) at (2, 0) {};
					\node [style=none] (27) at (2, -2) {};
					\node [style=none] (28) at (5, 0) {};
					\node [style=none] (29) at (5, -2) {};
					\node [style=none] (30) at (-1, 0) {};
					\node [style=none] (31) at (-1, -2) {};
					\node [style=none] (32) at (8, 0) {};
					\node [style=none] (33) at (8, -2) {};
					\node [style=none] (34) at (2, -2) {};
					\node [style=none] (35) at (2.75, -1.5) {};
					\node [style=none] (36) at (4.25, -0.5) {};
					\node [style=none] (37) at (5, 0) {};
					\node [style=none] (62) at (-9, 0) {};
					\node [style=none] (63) at (-9, -2) {};
					\node [style=none] (64) at (-6, 0) {};
					\node [style=none] (65) at (-6, -2) {};
					\node [style=none] (66) at (-12, 0) {};
					\node [style=none] (67) at (-12, -2) {};
					\node [style=none] (68) at (-3, 0) {};
					\node [style=none] (69) at (-3, -2) {};
					\node [style=none] (72) at (-1.5, -1) {};
					\node [style=none] (73) at (-2.5, -1) {};
					\node [style=bigdisk] (74) at (-7.5, -1) {};
					\node [style=none] (75) at (3.25, -1.5) {};
					\node [style=none] (76) at (3.5, -2) {};
					\node [style=none] (77) at (3.75, 0) {};
					\node [style=none] (78) at (4, -0.5) {};
				\end{pgfonlayer}
				\begin{pgfonlayer}{edgelayer}
					\draw [style=open, bend left=90, looseness=1.50] (0.center) to (1.center);
					\draw [style=open, bend right=90, looseness=1.50] (0.center) to (1.center);
					\draw [style=open, bend left=90, looseness=1.50] (2.center) to (3.center);
					\draw [style=open, bend right=90, looseness=1.50] (2.center) to (3.center);
					\draw (0.center) to (4.center);
					\draw (1.center) to (5.center);
					\draw (3.center) to (7.center);
					\draw (2.center) to (6.center);
					\draw [style=open, bend left=90, looseness=1.50] (10.center) to (11.center);
					\draw [style=open, bend right=90, looseness=1.50] (10.center) to (11.center);
					\draw [style=open, bend left=90, looseness=1.50] (12.center) to (13.center);
					\draw [style=open, bend right=90, looseness=1.50] (12.center) to (13.center);
					\draw (10.center) to (14.center);
					\draw (11.center) to (15.center);
					\draw (13.center) to (17.center);
					\draw (12.center) to (16.center);
					\draw [style=REDthick, bend right=15, looseness=1.25] (18.center) to (19.center);
					\draw [style=REDthick, bend left, looseness=1.25] (20.center) to (21.center);
					\draw [style=end arrow] (22.center) to (23.center);
					\draw [style=end arrow, bend left=90, looseness=1.25] (24.center) to (25.center);
					\draw [style=open, bend left=90, looseness=1.50] (26.center) to (27.center);
					\draw [style=open, bend right=90, looseness=1.50] (26.center) to (27.center);
					\draw [style=open, bend left=90, looseness=1.50] (28.center) to (29.center);
					\draw [style=open, bend right=90, looseness=1.50] (28.center) to (29.center);
					\draw (26.center) to (30.center);
					\draw (27.center) to (31.center);
					\draw (29.center) to (33.center);
					\draw (28.center) to (32.center);
					\draw [style=REDthick, bend right=15, looseness=1.25] (34.center) to (35.center);
					\draw [style=REDthick, bend left, looseness=1.25] (36.center) to (37.center);
					\draw (62.center) to (66.center);
					\draw (63.center) to (67.center);
					\draw (65.center) to (69.center);
					\draw (64.center) to (68.center);
					\draw [style=end arrow] (72.center) to (73.center);
					\draw (64.center) to (62.center);
					\draw (65.center) to (63.center);
					\draw [style=REDthick] (35.center) to (75.center);
					\draw [style=REDthick] (34.center) to (76.center);
					\draw [style=REDthick] (36.center) to (78.center);
					\draw [style=REDthick] (37.center) to (77.center);
					\draw [style=REDthick, in=180, out=0, looseness=0.75] (76.center) to (78.center);
					\draw [style=REDthick, in=180, out=0, looseness=0.75] (75.center) to (77.center);
				\end{pgfonlayer}
		\end{tikzpicture}\end{array}
	\end{align}
\end{definition}

For a symmetric monoidal bicategory $\cat{M}$, consider an object $X\in \cat{M}$ and a \emph{symmetric copairing}~\cite[Section~2.2]{cyclic}, i.e.\ a morphism $\Delta : I \to X \boxtimes X$ that is a homotopy fixed point under the homotopy involution on $\cat{M}(I,X\boxtimes X)$ induced by the symmetric braiding of $\cat{M}$. 
Denote by $\End_X^\Delta$ the modular endomorphism operad of $(X,\Delta)$, namely the $\Cat$-valued
modular operad $\Graphs \to \Cat$ sending a corolla $T$ to
$\cat{M}(X^{\boxtimes \Legs(T)},I)$
whose definition of morphisms in $\Graphs$ is afforded by $\Delta$ \cite[Section~2.3]{cyclic} (see in particular the comments at the end of that section).
Note that we do not ask $\Delta$ to be the copairing to a non-degenerate symmetric pairing, see also
\cite[Section~2.3]{costello} where the non-degeneracy condition is dropped as well.

More generally, for two objects $X$ and $Y$ with symmetric copairings $\Delta_X : I\to X \boxtimes X$ and $\Delta_Y:I\to Y\boxtimes Y$,
one may define a  two-colored modular endomorphism operad $\End_{X,Y}^{\Delta_X,\Delta_Y}$ with values in $\Cat$
that in total arity $(n,m)$ assigns the category $\cat{M}(   X^{\boxtimes n}   \boxtimes Y^{\boxtimes m} ,I )$ whose definition on compositions is defined via the copairings as in \cite[Section~2.3]{cyclic}.

\begin{definition}[Modular functor with singularities]\label{defmfsingularities}
	Let $\cat{M}$ be a symmetric monoidal bicategory.
	An $\cat{M}$-valued
	\emph{(open-closed) modular functor with singularities}
	consists of
	\begin{pnum}
		\item an object $\cat{A}$ (for the open boundary) equipped with a \emph{non-degenerate} symmetric pairing $\kappa :\cat{A}\boxtimes\cat{A}\to I$ whose associated copairing we denote by $\Delta : I \to \cat{A}\boxtimes\cat{A}$,
		\item an object $\cat{B}$ (for the closed boundary, i.e.\ for the bulk) equipped with a \emph{possibly degenerate} symmetric copairing $\Delta_\circ:I\to \cat{B}\boxtimes\cat{B}$,
		\item plus a morphism of two-colored modular operads $\sing \to \End_{\cat{A},\cat{B}}^{\Delta,\Delta_\circ}$ sending the interval to $\cat{A}$ and the circle to $\cat{B}$.
		\end{pnum}
\end{definition}

By restriction to the open sector, we obtain an open modular functor as discussed in \cite{envas}; in other words, we obtain a modular $\O$-algebra structure on $\cat{A}$ which is equivalent, by restriction to disks with marked intervals, to a cyclic associative algebra structure.
The restriction to the closed sector is what one might call \emph{modular functor with singularities} (this time without the qualifier `open-closed'), but we will focus in this paper entirely on the open-closed situation.

The addition `with singularities' takes into account \begin{itemize}
\item the fact that 
we exclude closed surfaces, but instead allow surfaces with punctures,
\item  
and that we need to add a puncture when gluing boundary circles.\end{itemize}

\subsection{Reminder on factorization homology}
For a $\PrL$-valued 
framed $E_2$-algebra $\cat{A}$,
one may define the \emph{factorization homology} \cite{AF,bzbj} $\int_{\Sigma} \cat{A}\in\PrL$
for any oriented surface $\Sigma$ as the homotopy colimit over $\cat{A}^{\boxtimes n}$ running over all oriented embeddings $\varphi : (\mathbb{D}^2)^{\sqcup n} \to \Sigma$ of $n\ge 0$ disks into $\Sigma$.
The embedding $\emptyset \to \Sigma$ induces an object $\cat{O}_\Sigma^\cat{A} \in \int_{\Sigma} \cat{A}$, the so-called \emph{quantum structure sheaf}~\cite{bzbj}. 

In particular, we obtain  the category $\int_{\mathbb{S}^1 \times [0,1]} \cat{A}$, namely the \emph{(categorical) Hochschild homology} of $\cat{A}$.
For brevity, we will denote it by $\int_{\mathbb{S}^1} \cat{A}$.
Note that this category can be defined if $\cat{A}$ is just an $E_1$-algebra in $\PrL$, but if $\cat{A}$ is actually framed $E_2$, we obtain
from the stacking along cylinders a monoidal structure
$\int_{\mathbb{S}^1} \cat{A}$
 with unit $\cat{O}_{\mathbb{S}^1}^\cat{A}$.
By embedding an interval around a fixed base point of the circle, the induced embedding $[0,1]^2 \to \mathbb{S}^1 \times [0,1]$ yields a cocontinuous functor
\begin{align}
	L:\cat{A} \to \int_{\mathbb{S}^1 } \cat{A} \label{eqnbandtensorfunctor}
\end{align}
sending the monoidal unit in $\cat{A}$ to $\cat{O}_{\mathbb{S}^1}^\cat{A}$. 
In \cite[Section 4.2]{bzbj2} it is explained why this functor,
that is called the  \emph{band tensor functor}, is monoidal.

	\begin{question}\label{q2}
		In $\PrL$, every cocontinuous functor has a right adjoint (which of course need not be cocontinuous).
		This means that \eqref{eqnbandtensorfunctor} has a right adjoint $R: \int_{\mathbb{S}^1 } \cat{A} \to \cat{A}$.
		Consider now the monad $T=RL$ on $\cat{A}$ 
		and the Eilenberg-Moore category $\cat{A}^T$ of $T$-algebras in $\cat{A}$.
		The adjunction $(L,R)$ is called \emph{monadic}
		if the canonical comparison functor
		\begin{align}
			\int_{\mathbb{S}^1} \cat{A} \to \cat{A}^T
			\end{align}
			sending $Y \in \int_{\mathbb{S}^1} \cat{A}$ 
			to $RY$ plus the $T$-algebra structure induced by the counit $LR\to \id_{\int_{\mathbb{S}^1} \cat{A}}$
			is an equivalence, see e.g.\ \cite{riehlverity} for more background.	
		In the (possibly non-finite) rigid situation from Section~\ref{secghost}, the 
	 the reconstruction results in \cite{bzbj} tells us that $(L,R)$ is monadic.
	 For $\cat{A}=\RV$ for a vertex operator algebra $V$ and a cyclic category of $V$-modules, when is $(L,R)$ monadic?
	 Are there non-rigid examples producing a monadic adjunction?
		\end{question}

\subsection{Construction of the modular functor with singularities}
We may now state the main result on the construction of spaces of conformal blocks using the concept of an open-closed modular functor with singularities.
For the comparison to \cite{bsz}, we refer back to the comments on page~\pageref{bszcoparison} in the introduction.

	\begin{theorem}\label{thmmain}
For any cyclic category of $V$-modules, the construction $\Sigma \mapsto \block(\Sigma;-)$ extends from surfaces with at least one parametrized interval per connected component  to surfaces $\Sigma$ with $n$ parametrized boundary intervals, $m$ parametrized boundary circles and $p$ punctures
such that $n+m+p\ge 1$ holds separately for each path component of $\Sigma$.
For such a surface $\Sigma$, we obtain a cocontinuous
functor
\begin{align}
	\block(\Sigma;-): \RV^{\boxtimes n} \boxtimes \left(\int_{\mathbb{S}^1} \RV\right)^{\boxtimes m} \to \Vect  
\end{align}carrying an action of $\Map(\Sigma)$.
These mapping class group representations assemble into an open-closed modular functor with singularities
assigning $\RV$ to the boundary interval, $\int_{\mathbb{S}^1} \RV$ to the boundary circle, and for which the gluing properties, also referred to as excision, take the following form:
\begin{pnum}
	\item Excision~1: For any two fixed parametrized boundary intervals of $\Sigma$, there are canonical mapping class group equivariant isomorphisms
	\begin{align}
		\block(\Sigma';-)\cong \int^{t\in\catT} \block(\Sigma; \dots,t^*,\dots,t,\dots) \ , 
	\end{align}
	where $\Sigma'$ is the surface obtained by gluing along the two intervals, $*$ is the weak duality operation for compact projective objects of $\RV$ induced by the pairing, and the dummy variables are inserted into the slots affected by the gluing.
	\item Excision~2: For any two fixed parametrized boundary circles of $\Sigma$, there are canonical mapping class group equivariant isomorphisms
	\begin{align}
		\block(\Sigma'_+;-) \cong \int^{t\in \catT} \block (\Sigma;\dots,Lt^*,\dots,Lt,\dots) \  ,
	\end{align}
	where $\Sigma'_+$ is the surface obtained by gluing two boundary circles plus an additional puncture and $L:\RV\to\int_{\mathbb{S}^1} \RV$ is the band tensor functor.
\end{pnum}
\end{theorem}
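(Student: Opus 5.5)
The plan is to build the open-closed modular functor with singularities out of the open modular functor $\Vo$ of Theorem~\ref{thmmcgaction}, using $\cat{B}:=\int_{\mathbb{S}^1}\RV$ for the closed color. Closed boundary circles are converted into open ones, and the construction is reduced to the open sector.

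\emph{Surfaces with boundary circles.} Let $\Sigma$ have $n$ parametrized intervals, $m$ parametrized circles and $p$ punctures. Define $\widetilde\Sigma$ by adding, on each parametrized boundary circle, a parametrized interval around the image of the base point. Then $\widetilde\Sigma$ has $n+m$ marked intervals. If $n+m\ge 1$ on each component, the open modular functor gives $\Vo(\widetilde\Sigma;-):\RV^{\boxtimes(n+m)}\to\Vect$. The first step is to show that, in each circle variable, this functor descends along the band tensor functor $L$ to a cocontinuous functor on $\int_{\mathbb{S}^1}\RV$. For this I use the excision/colimit description of Hochschild homology: $\int_{\mathbb{S}^1}\RV$ is the colimit (coequalizer in $\PrL$) of $\RV\boxtimes\RV\rightrightarrows\RV$ given by $X\otimes Y$ and $Y\otimes X$. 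So it suffices to produce a coherent isomorphism $\Vo(\widetilde\Sigma;\dots,X\otimes Y,\dots)\cong\Vo(\widetilde\Sigma;\dots,Y\otimes X,\dots)$. This isomorphism comes from the mapping class (a half-rotation / point-push of the marked interval around the circle collar) relating the two ways of attaching a pair of pants to the circle, which is available through the $\Map$-action of $\Vo$ and Corollary~\ref{corspr}. This is the argument of \cite[Section~5]{envas} and its $\Map(\widetilde\Sigma)$-equivariance gives the $\Map(\Sigma)$-action. If a component has only punctures ($n=m=0$, $p\ge1$), I turn one puncture into a boundary circle with a marked interval, evaluate at the unit, and use $\O_{\mathbb{S}^1}=L(I)$. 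Independence of this choice follows again from the mapping class group action.

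\emph{Structure and Excision~1.} Equip $\RV$ with its non-degenerate pairing from Theorem~\ref{thmcfe2}. Equip $\cat{B}$ with the copairing $\Delta_\circ:=(L\boxtimes L)\Delta=\int^{t\in\catT}Lt^*\boxtimes Lt$, which is symmetric because $\Delta$ is and $L$ is monoidal. This copairing is generally degenerate, which is exactly what Definition~\ref{defmfsingularities} allows. Excision~1 is Corollary~\ref{corexcinterval} applied to $\widetilde\Sigma$, together with cocontinuity of the descended functor in the remaining variables.

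\emph{Excision~2.} This is the key geometric input. Gluing two boundary circles of $\Sigma$ along the procedure~\eqref{eqnpucturedgluing} is, by construction, the same as gluing $\widetilde\Sigma$ along the two added intervals. So $\Sigma'_+$ with its extra puncture equals the interval-gluing $(\widetilde\Sigma)'$, with the remaining boundary of the collars becoming the free puncture. Excision~1 then gives $\int^t\Vo(\widetilde\Sigma;\dots,t^*,\dots,t,\dots)$, which by definition of the descent is $\int^t\block(\Sigma;\dots,Lt^*,\dots,Lt,\dots)$. The same identification shows that gluing in $\sing$ is sent to composition via $\Delta$ and $\Delta_\circ$ in $\End^{\Delta,\Delta_\circ}_{\RV,\cat{B}}$, which yields the morphism of two-colored modular operads. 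Associativity and equivariance are inherited from the modular operad structure of $\Vo$, since all gluings take place in the open sector.

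The main obstacle is the descent step. One must verify that the cyclic-invariance isomorphisms satisfy the coherence needed to define a functor out of the bicategorical colimit $\int_{\mathbb{S}^1}\RV$, rather than merely a $\otimes$-cyclic family. I would handle this through the point-pushing subgroup of $\Map(\widetilde\Sigma)$ (rotation of the interval around the circle generates a $\mathbb{Z}$-action, and the Hochschild colimit is the homotopy quotient by it), checking the hexagon-type coherence on the disk with three intervals via Corollary~\ref{corspr}.
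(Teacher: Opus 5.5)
Apart from the descent step, your argument is the paper's. There, too, $\block(\Sigma;-)$ on parametrized circles is obtained by descending the open modular functor $\Vo$ along the band tensor functor, and the closed copairing is $\Delta_\circ=(L\boxtimes L)\Delta\cong\int^{t\in\catT}Lt^*\boxtimes Lt$. Excision~1 is Corollary~\ref{corexcinterval}. Excision~2 is proved exactly as you do: replace the two circles by intervals around the base points (the paper's $\Sigma^\ddagger$), use $\block(\Sigma;\dots,LX,\dots,LY,\dots)\cong\block(\Sigma^\ddagger;\dots,X,\dots,Y,\dots)$, apply Excision~1, and use cocontinuity to match with $\Delta_\circ$. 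Two small corrections:
(i) Symmetry of $\Delta_\circ$ only uses the symmetry of $\Delta$ and naturality of the symmetric braiding of $\PrL$; monoidality of $L$ plays no role.
(ii) Your special treatment of components carrying only punctures is unnecessary, since $\Vo$ is already defined on $\O(0)$, i.e.\ on surfaces all of whose boundary circles are free.

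The genuine difference is how you produce the descent. The paper takes the colimit over all embeddings of intervals into the union of the boundary circles. An embedding gives a surface in $\O$, an isotopy of embeddings gives a morphism in $\O$, and nested embeddings give gluings of disks. So the modular $\O$-algebra structure of $\Vo$ is itself a coherent cocone, with no choices involved. You instead use the bar-construction model: a cocontinuous functor out of $\int_{\mathbb{S}^1}\RV$ is a functor $F$ on $\RV$ with natural isomorphisms $\tau_{X,Y}:F(X\otimes Y)\cong F(Y\otimes X)$ satisfying $\tau_{X,Y\otimes Z}=\tau_{Z\otimes X,Y}\circ\tau_{X\otimes Y,Z}$ (plus unit normalization). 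This is a codescent object, not a plain coequalizer. That route works and gives the same functor, with the bonus of an explicit trace description, but your plan for the coherence needs three corrections.

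(a) The fractional rotation of the collar realizing $\tau_{X,Y}$ can go in two directions, and these differ by the Dehn twist about that boundary circle. In general only one direction, used uniformly, satisfies the cocycle condition. With the other, the two sides differ by that Dehn twist, which acts nontrivially in general (cf.\ Proposition~\ref{propdehntwist}).

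(b) The cocycle must be checked on a collar carrying three intervals on the same circle. There it reduces to the isotopy between a composite of two fractional rotations and a single one, together with compatibility of gluing with the $\O$-action. It is not a statement about the disk with three intervals. The ribbon braid action of Corollary~\ref{corspr} encodes the braiding, which does not enter $\int_{\mathbb{S}^1}\RV$ at all, since only the associative structure does. The relevant maps are fractional twists of the collar, not point-pushes.

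(c) $\int_{\mathbb{S}^1}\RV$ is not a homotopy quotient of $\RV$ by the $\mathbb{Z}$ of full rotations. The full rotation is not quotiented out; it survives as the Dehn twist about the circle acting on the descended functor.
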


\begin{proof}
	In the first step, let us construct
\begin{align}\label{eqnfunctorsfhx}	\block(\Sigma;-): \RV^{\boxtimes n} \boxtimes \left(\int_{\mathbb{S}^1} \RV\right)^{\boxtimes m} \to \Vect \ . 
	\end{align}
	 The descent of the modular extension
	 to factorization homology was already remarked in \cite[Theorem 5.1 and its proof]{envas} and is ultimately the argument from \cite[Section~4]{brochierwoike}; in an $(\infty,1)$-categorical setting,
	  this has also been observed in \cite{bsz}. For completeness, 
	 let us give the details:
	 For an embedding of $\ell\ge 0$ intervals into the union $S$ of the $m$ boundary circles of $\Sigma$, we obtain from Theorem~\ref{thmmcgaction} a map \begin{align}\label{eqnmapsinducingfh}
	 \RV^{\boxtimes n} \boxtimes \RV^{\boxtimes \ell} \to \Vect \ . 
	 \end{align} This descends to the colimit over all such embeddings by virtue of $\block$ or, equivalently, $\Vo$ being an open modular functor,
	  and gives us a map \begin{align}\RV^{\boxtimes n}\boxtimes \int_{S\times [0,1]} \RV\to\Vect\end{align} on which the mapping class group acts via 2-automorphisms (because they preserve the boundary parametrizations). Through the parametrization, we obtain \eqref{eqnfunctorsfhx}.
	  
	  The functors \eqref{eqnfunctorsfhx} are exactly what we need for an open-closed modular functor with singularities assigning $\RV$ to the interval and $\int_{\mathbb{S}^1} \RV$ to the circle.
	  We also need to exhibit the (co)pairings. For $\RV$, we already have this of course, see \eqref{eqnDelta}.
	  	The needed symmetric  copairing for $\int_{\mathbb{S}^1} \RV$ is
	  \begin{align}
	  	\Delta_\circ : \Vect \ra{\Delta} \RV\boxtimes \RV \ra{L\boxtimes L} \int_{\mathbb{S}^1} \RV\boxtimes \int_{\mathbb{S}^1} \RV \ , 
	  \end{align}
	  where $L$ is the band tensor functor from \eqref{eqnbandtensorfunctor}.
	  The symmetry of $\Delta_\circ$ is induced by the one of $\Delta$ and the symmetric braiding of $\PrL$.
	  
	  It remains to verify the gluing properties:
	  For the gluing of intervals, this is Corollary~\ref{corexcinterval}.
	  For the gluing along boundary circles (with the added puncture), we need a preparation:
	  If  for the surface $\Sigma$
	   we denote by $\Sigma^\ddagger$ the surface for which we have replaced the two specific boundary circles with a marked interval around the image of some fixed base point on the standard circle under the parametrization, then \eqref{eqnfunctorsfhx} is defined such that
	  \begin{align}\block(\Sigma;\dots,LX,\dots,LY,\dots)\cong \block(\Sigma^\ddagger;\dots,X,\dots,Y,\dots ) \, \end{align}
	  where $X,Y\in\RV$ and their images under the band tensor functor are inserted into the affected slots.
	  If we now set $X=t^*$ and $Y=t$ for $t\in\catT$ and take the coend over $\catT$, then we are left with
	   \begin{align} \int^{t\in\catT} \block(\Sigma;\dots,Lt^*,\dots,Lt,\dots) \cong \int^{t\in\catT} \block(\Sigma^\ddagger;\dots,t^*,\dots,t,\dots )\cong \block(\Sigma'_+;-) \ , 
	   	\end{align}
   	where the last step is again Corollary~\ref{corexcinterval}.
   	In order to see that this amounts indeed to the operadic gluing law for the boundary circles, we need to verify 
   	\begin{align} \int^{t\in\catT} \block(\Sigma;\dots,Lt^*,\dots,Lt,\dots) \cong \block(\Sigma;\dots,\Delta_\circ',\dots,\Delta_\circ'',\dots) \ , 
   		\end{align} but this follows because $\Delta_\circ = \int^{t\in\catT} Lt^* \boxtimes Lt$ (use Theorem~\ref{thmcfe2} and cocontinuity of $L$) and cocontinuity of $\block(\Sigma;-)$.
	\end{proof}

	\begin{remark}
		The functors\begin{align}
		\block(\Sigma;-): \RV^{\boxtimes n} \boxtimes \left(\int_{\mathbb{S}^1} \RV\right)^{\boxtimes m} \to \Vect\end{align} constituting the modular functor with singularities
		 come with a canonical isomorphism
			\begin{align}
			\block(\widetilde\Sigma;-)\cong \block( \Sigma;-,\O(V)^{\boxtimes q}):\RV^{\boxtimes n} \to \Vect \ , \label{eqnevO}
			\end{align}
			where $\widetilde \Sigma$ is obtained by replacing the boundary circles of $\Sigma$ with punctures.
			The isomorphism~\eqref{eqnevO} was already derived in the proof of \cite[Theorem~5.1]{envas}.
			We call the algebra
			\begin{align}
			\CF (V):= 
			\End_{\int_{\mathbb{S}^1} \RV}(\cat{O}(V))
			\end{align}
			the \emph{algebra of class functions of $V$} (and its choice of modules).
			The terminology is justified because we recover in the finite rigid case the class functions of \cite{shimizucf} as explained in \cite[Section~5]{envas}.
			Since $\RV$ is braided, $\int_{\mathbb{S}^1} \RV$ is monoidal with unit $\cat{O}(V)$, so that $\CF(V)$ is commutative by the Eckmann-Hilton argument.
			From \eqref{eqnevO},
			we deduce that $\block(\Sigma;-)$ inherits the structure of a $\CF(V)$-module for every puncture, and the $\CF(V)$-module structures for different punctures commute.
		\end{remark}
	
	\begin{question}\label{q22}
		How can one compute the algebra $\CF(V)$ of class functions directly in terms $V$?
		\end{question}

We note one immediate consequence of Theorem~\ref{thmmain}:
\begin{corollary}\label{corpairingfh}
The evaluation
of the construction of Theorem~\ref{thmmain}
on the annulus with two parametrized boundary circles yields a symmetric pairing
\begin{align}
	\beta : \int_{\mathbb{S}^1}     \RV\boxtimes \int_{\mathbb{S}^1}     \RV \to \Vect \ . 
\end{align}
\end{corollary}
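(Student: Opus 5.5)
The plan is to read $\beta$ off directly from Theorem~\ref{thmmain}. Let $\mathbb{A}$ be the annulus with two parametrized boundary circles and no punctures or intervals. Here $n=0$, $m=2$ and $p=0$, so $n+m+p\ge 1$ holds and $\mathbb{A}$ lies in the arity $(0,2)$ part of the singular surface operad $\sing$. Theorem~\ref{thmmain} therefore gives a cocontinuous functor
\begin{align}
\beta:=\block(\mathbb{A};-,-): \int_{\mathbb{S}^1}\RV\boxtimes\int_{\mathbb{S}^1}\RV\to\Vect \ ,
\end{align}
which is a 1-morphism in $\PrL$ to the monoidal unit. This is all the word ``pairing'' requires. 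No non-degeneracy is claimed, consistent with the possibly degenerate copairing $\Delta_\circ$ in Definition~\ref{defmfsingularities}.

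Next I build the symmetric structure. Let $\sigma$ be the symmetric braiding of $\PrL$ exchanging the two tensor factors. Choose an orientation-preserving diffeomorphism $\tau:\mathbb{A}\to\mathbb{A}$ that exchanges the two boundary circles and is compatible with their parametrizations, for instance a rotation by $\pi$ of $\mathbb{S}^1\times[0,1]$ about an axis in the plane of the core circle, suitably adjusted near the boundary. Then $\tau$ is an isomorphism in the groupoid $\sing(0,2)$ that permutes the labels of the two circle legs. Because $\block$ is a morphism of two-colored modular operads $\sing\to\End^{\Delta,\Delta_\circ}_{\RV,\int_{\mathbb{S}^1}\RV}$, it is in particular equivariant for the symmetric group actions on legs. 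Hence $\tau$ induces a 2-isomorphism $\beta\cong\beta\circ\sigma$.

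It remains to check the homotopy fixed point condition: applying the isomorphism twice must give the identity. The composite $\tau^2$ preserves each boundary circle with its parametrization. In the mapping class group of the annulus fixing the boundary, $\tau^2$ is at worst a power of the Dehn twist along the core, and I expect it to be trivial with the standard choice. This is the main obstacle. If the naive $\tau^2$ equals a Dehn twist $T$, I would correct $\tau$ to $\tau'=\tau\circ T^{k}$ for a suitable $k$; since $\tau T\tau^{-1}=T$, one gets $\tau'^2=\tau^2T^{2k}$. Arranging the rotation so that $\tau^2$ is isotopic to the identity rel boundary is the cleanest route.

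Alternatively, the symmetry can be transported from the symmetric copairing. By Excision~2 with the puncture, or directly from the construction via $\Sigma^\ddagger$, one has $\beta(LX,LY)\cong\block(\mathbb{A}^\ddagger;X,Y)$, where $\mathbb{A}^\ddagger$ is a disk with two marked intervals and one puncture. The symmetry of this disk's value reduces to the symmetric pairing $\spr{-,-}$ on $\RV$ twisted by the $\CF(V)$-module structure. The symmetry then descends along the colimit defining factorization homology.
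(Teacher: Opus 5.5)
Your main argument is correct and is essentially what the paper means when it calls this an immediate consequence of Theorem~\ref{thmmain}. The pairing $\beta$ is the value of the modular functor with singularities on the annulus in $\sing(0,2)$. Its symmetric structure comes from the $\Sigma_2$-equivariance of the operad morphism together with the boundary-swapping isomorphism in $\sing(0,2)$.

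The step you call the main obstacle is not one. Realize the annulus as $\mathbb{S}^1\times[0,1]$ with boundary parametrizations $z\mapsto(z,0)$ and $z\mapsto(\bar z,1)$. Then $\tau(z,t)=(\bar z,1-t)$ is orientation preserving, exchanges the parametrized circles on the nose, and satisfies $\tau^2=\id$. So the homotopy fixed point condition holds directly. Your Dehn twist correction is unnecessary, and in any case it could only shift the exponent by an even number.

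You should drop the alternative paragraph, because it misidentifies the surface. For $\Sigma=\mathbb{A}$, the surface $\mathbb{A}^\ddagger$ from the proof of Theorem~\ref{thmmain} is the annulus with one marked interval on \emph{each} boundary circle. Hence $\beta(LX,LY)\cong\int^{s\in\catT}\spr{X,s,Y,s^*}$, as in Proposition~\ref{propdehntwist} and the proof of Theorem~\ref{thmhigman}. A once-punctured disk with two marked intervals is instead the annulus with both intervals on the same circle. It is related to $\mathbb{A}^\ddagger$ only non-canonically, through the braiding (Corollary~\ref{corstability}).

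That route has a second problem. A symmetry defined only on $\beta\circ(L\boxtimes L)$ does not automatically descend to the factorization homology. You would first have to show it is compatible with the remaining data of the $2$-colimit, for example moving the interval around the circle, and you do not check this.
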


\begin{remark}\label{rembetanondeg}
In the finite rigid case, this pairing has been constructed in \cite[Section~3]{sn} and is also non-degenerate.
Beyond the finite rigid case, this is not known.
We should also remark that the copairing $\Delta_\circ \in \int_{\mathbb{S}^1} \RV\boxtimes \int_{\mathbb{S}^1}\RV$ from the proof of Theorem~\ref{thmmain}
 is generally not the copairing for $\beta$ even in cases in which $\beta$ is non-degenerate.
 There is nonetheless an interesting relation between $\beta$ and $\Delta_\circ$: By Theorem~\ref{thmmain} and its proof, the vector space
 $
 	\beta \circ \Delta_\circ$ is isomorphic to the space of conformal blocks 
 	$\block(\mathbb{T}^2_\times)$
 	for the punctured torus  and hence comes with an action of its mapping class group, the braid group $B_3$ on three strands.
\end{remark}

\begin{remark}\label{rembulkboundary}
	In Theorem~\ref{thmmain}, the value for the circle is the factorization homology for the value on the interval.
	This is not the only possibility, but it is the unique choice if we want the closed sector, i.e.\ the bulk, to be induced by the open boundary as in the proof of Theorem~\ref{thmmain}.
	In a physical language, it is argued in \cite{fsv} that this is the natural requirement for a bulk-boundary system in which the bulk is induced by the boundary, see also \cite[Remark~6.3]{sn}. In an $(\infty,1)$-categorical set-up, a related uniqueness statement is given in \cite[Theorem 1.6]{bsz}.
	\end{remark}

	\subsection{Representations of surface braid groups and mapping class groups of closed surfaces}
	Modular functors with singularities do not include
	representations of mapping class groups of closed surfaces.
	There is, however, always at least one possibility to produce such representations even though
	are not necessarily part of a modular functor.
	To this end, recall that 
for a compact oriented surface $\Sigma$, the \emph{surface braid group $B_n(\Sigma)$ on $n\ge 1$ strands} is the fundamental group of the configuration space of 
$n$ points in $\Sigma$~\cite[Section~9.1.4]{farbmargalit}.

\begin{proposition}
	Let $\Sigma_\times$ be a closed surface $\Sigma$ plus $n\ge 1$ punctures,
	$\block(\Sigma_\times)$ inherits a representation of the surface braid group $B_n(\Sigma)$; this is a restriction to a subgroup unless $\Sigma = \mathbb{S}^2,\mathbb{T}^2$.
	The invariants $\block(\Sigma_\times)^{B_n(\Sigma) }$ carry an action of $\Map(\Sigma)$.
\end{proposition}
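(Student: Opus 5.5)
The plan is to derive both claims from the Birman exact sequence and the mapping class group action of Theorem~\ref{thmmain}. Since $\Sigma_\times$ has $p=n\ge 1$ punctures, it is an admissible surface for Theorem~\ref{thmmain} with no parametrized intervals or circles. Hence $\block(\Sigma_\times)$ is a vector space carrying an action of $\Map(\Sigma_\times)$. Here $\Map(\Sigma_\times)$ means the mapping classes of $\Sigma$ preserving the $n$ free boundary components individually; by the conventions of the groupoids $\sing(0,0)$, these are unparametrized boundary circles.

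First I will identify $\Map(\Sigma_\times)$ with the pure mapping class group $\mathrm{PMap}(\Sigma,\{x_1,\dots,x_n\})$ of $\Sigma$ with $n$ marked points. This works because unparametrized boundary circles can be collapsed to punctures without changing the mapping class group. If permutations of punctures are allowed in the groupoid, I will work with the full braid group and the non-pure version instead; the argument is identical.

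Next I will invoke the Birman exact sequence \cite[Theorem~9.1]{farbmargalit},
\begin{align}
	\pi_1(\mathrm{Conf}_n(\Sigma)) = B_n(\Sigma) \ra{\mathrm{Push}} \Map(\Sigma_\times) \ra{\mathrm{Forget}} \Map(\Sigma) \to 1 \ ,
\end{align}
where $\mathrm{Forget}$ fills in the punctures. The point-pushing map is injective exactly when $\chi(\Sigma)<0$, i.e.\ unless $\Sigma=\mathbb{S}^2$ or $\mathbb{T}^2$. Pulling back the $\Map(\Sigma_\times)$-action along $\mathrm{Push}$ gives the $B_n(\Sigma)$-representation. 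When $\mathrm{Push}$ is injective, this is literally the restriction to a subgroup, which explains the stated exception. For $\mathbb{S}^2$ and $\mathbb{T}^2$ the map still produces a representation, but it has a kernel, e.g.\ the centre of $B_n(\mathbb{T}^2)$ coming from $\pi_1$ of the torus acting by translations.

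For the second claim, let $N:=\mathrm{Push}(B_n(\Sigma))\subseteq\Map(\Sigma_\times)$. By exactness, $N=\ker(\mathrm{Forget})$, so $N$ is a normal subgroup. The invariants of a normal subgroup are stable under the ambient group: for $v$ invariant, $g\in\Map(\Sigma_\times)$ and $h\in N$, we have $h.(g.v)=g.(g^{-1}hg).v=g.v$. Thus $\Map(\Sigma_\times)$ acts on $\block(\Sigma_\times)^{B_n(\Sigma)}=\block(\Sigma_\times)^N$, and $N$ acts trivially there. The action therefore descends to $\Map(\Sigma_\times)/N\cong\Map(\Sigma)$ by surjectivity of $\mathrm{Forget}$. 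The invariants are the same whether one takes them under $B_n(\Sigma)$ or under its image $N$.

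The main obstacle is the first step: checking carefully that, in the conventions of $\sing$, mapping classes of $\Sigma_\times$ are taken relative to the free boundary circles in a way that matches the marked-point mapping class group. If they are instead isotopy classes fixing the boundary pointwise, $\Map(\Sigma_\times)$ is a central extension by boundary Dehn twists. In that case I would use the version of the Birman sequence for surfaces with boundary, or first compose with the quotient by these boundary twists. That quotient acts compatibly because the free boundaries are unparametrized, so they can be rotated, and boundary twists are isotopic to the identity in $\Map(\Sigma_\times)$. The remaining steps are formal.
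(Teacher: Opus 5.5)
Your proposal is correct and follows the paper's proof essentially verbatim: you restrict the $\Map(\Sigma_\times)$-action from Theorem~\ref{thmmain} along the point-pushing map of the Birman exact sequence \cite[Theorem~9.1]{farbmargalit}, and you use that the invariants of the normal subgroup $\ker \pi=\im p$ carry an action of the quotient $\Map(\Sigma)$. Your side remark that point-pushing is injective \emph{exactly} when $\chi(\Sigma)<0$ is slightly overstated (for example, $B_1(\mathbb{S}^2)$ is trivial, so point-pushing is injective there), but only the ``if'' direction is needed for the statement.
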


\begin{proof} 
	By	\cite[Theorem~9.1]{farbmargalit} (and its proof),
	we obtain an exact sequence
	\begin{align}
	B_n(\Sigma) \ra{p} \Map(\Sigma_\times)\ra{\pi} \Map(\Sigma) \to 1 \ ,
	\end{align} and $p$ is injective unless $\Sigma=\mathbb{S}^2,\mathbb{T}^2$. 
	The modular functor with singularities gives us on $\block(\Sigma_\times)$ 
	 an action of $\Map(\Sigma_\times)$ that we restrict along $p$ to obtain
	the desired $B_n(\Sigma)$-representation. 
	The invariants of this restriction $p^* \block(\Sigma_\times)$ along $p$
	are the invariants of $\im p=\ker \pi$ for the original representation 
	of $\Map(\Sigma_\times)$ on $\block(\Sigma_\times)$. 
	
	Now we use the following basic fact:
	For any exact sequence $1\to A \to G\to H\to 1$ of groups, the invariants $V^A$ for any $G$-representation carry an action of $H$.
	We apply this to \begin{align}1\to \ker \pi \to \Map(\Sigma_\times)\ra{\pi} \Map(\Sigma) \to 1 \ . 
	\end{align}
\end{proof}

\subsection{Comparison result in the finite rigid case}
The spaces of conformal blocks $\block$ based on a cyclic category $\catT$ of $V$-modules
extend always to an open-closed modular functor with singularities 
in the sense of Theorem~\ref{thmmain}.
At that level of generality (without semisimplicity, rigidity or finiteness), there is no modular functor construction that we could compare it to.
In the finite rigid special case, however, there is:
Suppose that $\RV$ is, as in Section~\ref{secfiniterigid}, the ind completion of a finite tensor category, actually even a modular category $\Vmod$, see Corollary~\ref{corvmod}.
In that case, $\Vmod$ can also be seen as a cyclic framed $E_2$-algebra in the symmetric monoidal bicategory $\Lexf$.
The $\Lexf$-version of Theorem~\ref{thmmain} then gives us an open-closed modular functor with singularities $\blockl$ (we are turning the subscript `$V$' into a superscript). 
We can now compare with 
Lyubashenko's modular functor \cite{lyubacmp,lyu,lyulex,kl} for $Z(\Vmod)$:

\begin{corollary}\label{corfiniterigid}
	In the finite rigid case, the singularities of $\blockl$ can be removed, i.e.\ $\blockl$ extends to a $\Lexf$-valued open-closed modular functor without singularities that associates \begin{itemize}
		\item $\Vmod=\catf{comp}(\RV)$ to the interval,
		\item  and $Z(\Vmod)\simeq \overline{\Vmod}\boxtimes \Vmod$ to the circle.\end{itemize}	
	The topologically inherited cyclic framed $E_2$-structure on $Z(\Vmod)$ agrees with the standard balanced braided structure with the rigid duality of $Z(\Vmod)$.
	The closed part of the open-closed modular functor $\blockl$ is equivalent to the Lyubashenko modular functor for $Z(\Vmod)$.
	\end{corollary}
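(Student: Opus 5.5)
The plan is to identify each ingredient of the $\Lexf$-version of Theorem~\ref{thmmain} with known constructions for modular categories, and then to invoke the string-net comparison from \cite{sn}.

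First, we identify the closed value. For $\cat{A}:=\Vmod$, a modular category by Corollary~\ref{corvmod}, the Hochschild homology $\int_{\mathbb{S}^1}\cat{A}$ in $\Lexf$ is equivalent to the Drinfeld center $Z(\cat{A})$. This holds by the finite rigid reconstruction (e.g.\ \cite{bzbj}), under which the band tensor functor $L$ becomes the left adjoint of the forgetful functor $Z(\cat{A})\to\cat{A}$. Modularity then gives $Z(\cat{A})\simeq\overline{\cat{A}}\boxtimes\cat{A}$. The monoidal structure from stacking cylinders matches the tensor product of $Z(\cat{A})$. The balancing and braiding induced topologically come from the framed $E_2$-structure on annuli and should agree with the standard ones. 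Since $Z(\cat{A})$ is rigid, the cyclic structure is then forced by \cite[Theorem~5.13]{cyclic}: a cyclic framed $E_2$-structure on a ribbon category with Grothendieck--Verdier duality given by rigid duality is a property. This settles the assertion about the cyclic framed $E_2$-structure on $Z(\Vmod)$.

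Second, we remove the singularities. By Corollary~\ref{corpairingfh} and Remark~\ref{rembetanondeg}, the annulus gives a pairing $\beta$ on $Z(\cat{A})$ that is non-degenerate in the finite rigid case by \cite[Section~3]{sn}. Let $\Delta_\beta$ be its copairing. The aim is to show that gluing two circles with the copairing $\Delta_\beta$, without adding a puncture, is compatible with the operadic structure. Using Excision~2, the punctured gluing equals insertion of $\Delta_\circ=(L\boxtimes L)\Delta$. Filling the extra puncture by a disk should correspond to acting with the canonical element in $\CF(V)$, i.e.\ to evaluation on the quantum structure sheaf $\cat{O}$ together with the non-degeneracy of $\beta$. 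Concretely, we expect $\Delta_\beta$ to be obtained from $\Delta_\circ$ by the $\CF(V)$-action of the integral/cointegral, in the spirit of the non-semisimple Verlinde-type formulas. This gives a $\Lexf$-valued open-closed modular functor with non-degenerate closed copairing, assigning $\Vmod$ to the interval and $Z(\Vmod)$ to the circle, and closed surfaces are then reached by gluing.

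Third, we compare with Lyubashenko. For the closed part we use the classification of modular functors from \cite{brochierwoike}: a $\Lexf$-valued modular functor is determined by its genus-zero restriction, a cyclic framed $E_2$-algebra, and by \cite[Theorem~5.11]{reflection} the one attached to a modular category is the Lyubashenko modular functor. Combined with the first step, the closed part of $\blockl$ is therefore Lyubashenko's modular functor for $Z(\Vmod)$. Alternatively, we can match our marking description with the string-net spaces of \cite{sn}, which are known to agree with $\mathfrak{F}_{Z(\cat{A})}$.

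I expect the main obstacle to be the second step: showing that the punctured gluing with $\Delta_\circ$ can be upgraded coherently to unpunctured gluing via $\beta$. I would first try to do this via the string-net model of \cite{sn}, where both gluings are explicit, rather than directly.
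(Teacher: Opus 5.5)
There is a genuine gap in your second step, and it is the step that carries the whole content of ``the singularities can be removed''. You only state it as an expectation, and the mechanism you suggest does not do what is needed. Evaluating on $\cat{O}(V)$ turns a parametrized circle into a puncture (this is \eqref{eqnevO}); it never fills one. On closed surfaces $\blockl$ is not defined at all, so ``closed surfaces are then reached by gluing'' presupposes the unpunctured gluing you are trying to construct. Passing from $\Map(\Sigma_\times)$-actions to $\Map(\Sigma)$-actions would also require controlling the point-pushing subgroup in the Birman sequence and checking all operadic relations for the new gluing. Even an identity expressing $\Delta_\beta$ through $\Delta_\circ$ and the $\CF(V)$-action of a cointegral would not provide this.

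Your first step depends on the second in the same way. In $\sing$, composition along circles creates punctures, so before the singularities are removed there is no topologically inherited cyclic framed $E_2$-structure on the circle category to compare with. The agreement of braiding and balancing with the standard ones is also asserted rather than shown.

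The fallback you mention at the end is the right idea, and it is how the paper argues. What you are missing is the mechanism that connects $\blockl$ to string-nets without any hands-on matching of markings:
\begin{enumerate}
\item By the uniqueness statement in Theorem~\ref{thmmcgaction}, the open sector is determined by its values on disks with marked intervals.
\item \cite[Theorem~4.2]{envas} relates the open string-net modular functor of \cite{sn} to the modular extension of $\Vmod$.
\item In both constructions the circle value and the punctured gluing are induced from the open sector via $\int_{\mathbb{S}^1}$ and $L$.
\end{enumerate}
Together these give $\blockl\simeq\catf{SN}_{\Vmod}$ as open-closed modular functors with singularities (Corollary~\ref{corfiniterigid2}). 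The string-net modular functor is defined on all surfaces, so the extension without singularities comes for free. Then \cite[Theorem~7.1]{sn} identifies its closed part with Lyubashenko's modular functor for $Z(\Vmod)$, whose genus-zero part is $Z(\Vmod)$ with its standard ribbon structure and rigid duality. Finally, factorizability of modular categories \cite{eno-d,shimizumodular} gives $Z(\Vmod)\simeq\overline{\Vmod}\boxtimes\Vmod$. With this, your Steps~1 and~3 reduce to citations rather than independent arguments.
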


This result follows from another one that we prove instead:
In \cite{sn} the string-net techniques \cite{levinwen,kirillovsn} are generalized to produce a string-net open-closed modular functor $\SNA$ built from a pivotal finite tensor category $\cat{A}$.

\begin{corollary}\label{corfiniterigid2}
	Under the assumptions of Corollary~\ref{corfiniterigid},
	$\blockl \simeq \catf{SN}_{\Vmod}$
	as open-closed modular functors with singularities.
	In other words, $\blockl$ generalizes the string-net construction and inherits in the finite rigid case an extension to an open-closed modular functor without singularities from the string-net construction.
\end{corollary}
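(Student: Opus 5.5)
The plan is to compare both sides first on the open sector and then to bootstrap to the closed sector using the fact that in both constructions the bulk is induced by the boundary.

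\emph{Open sector.} By the classification Theorem~\ref{thmclassification}, in its $\Lexf$-version (it holds for any symmetric monoidal bicategorical target), an open modular functor is determined up to equivalence by its underlying cyclic associative algebra, i.e.\ by its values on disks with marked intervals. For $\blockl$ this cyclic associative algebra is $\Vmod$ with the product of the modular category and the pairing from Proposition~\ref{propfinite}; with rigid duality, $D=(-)^\vee$, the pairing reduces to $\spr{X,Y}\cong \Hom(X\otimes Y,K)^*$. On the string-net side, \cite{sn} identifies the open sector of $\catf{SN}_{\Vmod}$ on disks with the pivotal tensor category $\Vmod$ and the pairing built from the Nakayama functor, or equivalently from the pivotal structure and the unimodularity of the modular category. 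The first step is therefore to exhibit an equivalence of $\Lexf$-valued cyclic associative algebras between these two descriptions. To do this I would use \cite[Theorem~5.13]{cyclic} together with Remark~\ref{remlexrex}, and check that the two non-degenerate symmetric pairings agree. Since the underlying monoidal structures coincide, this amounts to matching the Calabi--Yau/pivotal data. Applying Theorem~\ref{thmclassification} then yields an equivalence of open modular functors, compatible with mapping class group actions.

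\emph{Closed sector.} Both $\blockl$ and $\catf{SN}_{\Vmod}$ assign to the circle the factorization homology $\int_{\mathbb{S}^1}\Vmod$. For $\blockl$ this holds by construction in Theorem~\ref{thmmain}. For string-nets it is \cite[Section~3]{sn}, where this category is identified with $Z(\Vmod)$. In both cases the functor on a surface with boundary circles descends from the open functor on $\Sigma^\ddagger$ along the band tensor functor $L$. The equivalence from the open sector is natural in embeddings of intervals, so it passes to the colimit defining $\int_{\mathbb{S}^1}$ and gives equivalences of the functors $\block(\Sigma;-)$ for all surfaces with circles and punctures, equivariantly for the mapping class groups. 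It remains to match the closed gluing, i.e.\ the copairing $\Delta_\circ=(L\boxtimes L)\Delta$ together with the puncture-adding gluing. The gluing law for string-nets with an added puncture is obtained in the same way from interval excision via $L$, so it suffices to invoke Corollary~\ref{corexcinterval} on both sides. Operadic compatibility then follows because every gluing is generated by interval gluings and the descent along $L$.

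\emph{Main obstacle.} I expect the hardest step to be the identification of the cyclic structures in the open sector. The two sides present the pairing quite differently: on the $\blockl$ side through the right Nakayama functor and Grothendieck--Verdier duality, and on the string-net side through the pivotal trace twisted by the distinguished invertible object. One has to show that the symmetry isomorphisms, meaning the homotopy fixed point data, agree, and not merely that the pairings are isomorphic as functors. I would first try to use the ribbon structure, where $\theta$ together with the braiding induces the pivotal structure, combined with \cite{shibatashimizu,tracesw}, to identify $\nakar\cong D^2$ up to the modular-category trivialization. I would then check that the symmetric structure in Remark after Definition~\ref{defvcyclic}, built from $c$ and $\theta$, matches the pivotal symmetry used in \cite{sn}.
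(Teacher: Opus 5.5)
Your strategy is essentially the paper's. Its proof consists of Theorem~\ref{thmmcgaction} (uniqueness of the modular extension, i.e.\ your use of Theorem~\ref{thmclassification}) together with \cite[Theorem~4.2]{envas}. That cited result is precisely the identification of $\catf{SN}_{\Vmod}$ with the modular extension of $\Vmod$, including its descent to $\int_{\mathbb{S}^1}$, which you plan to carry out by hand as your `main obstacle'.

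If you do it by hand, note one correction: $\blockl$ is the $\Lexf$-version, so its pairing is the left exact $\Hom(I,X\otimes Y)$ (with $K=I$ and $D$ the rigid dual). It is not the right exact $\Hom(X\otimes Y,K)^*$ of Proposition~\ref{propfinite}, which describes the $\PrL$/$\Rexf$ picture. With the correct pairing, the Nakayama-functor and unimodularity detour is unnecessary. The comparison on disks then reduces to checking that the string-net construction uses the pivotal structure of $\Vmod$ induced by the braiding and the twist.
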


\begin{proof}
	This follows from Theorem~\ref{thmmcgaction}
	and \cite[Theorem~4.2]{envas}.
	\end{proof}

\begin{proof}[\slshape Proof of Corollary~\ref{corfiniterigid}]
	Corollary~\ref{corfiniterigid2} gives us the claimed extension.
	Now \cite[Theorem 7.1]{sn}, that asserts that $\catf{SN}_{\Vmod}$ models the Lyubashenko modular functor for $Z(\Vmod)$,
	yields the statement.
	Finally note that $Z(\Vmod)\simeq \overline{\Vmod}\boxtimes \Vmod$ because modular categories are factorizable \cite{eno-d,shimizumodular}.
\end{proof}

Suppose that $\Sigma$ is connected and has $n\ge 0$ embedded intervals.
This turns $\int_\Sigma \Vmod$ into an $\Vmod^{\boxtimes n}$-module category
whose action functor we denote by $-\act - : \Vmod^{\boxtimes n} \boxtimes \int_\Sigma \Vmod\to\int_\Sigma \Vmod$, see \cite{bzbj}, and the results of this article actually imply that this is an action inside $\Rexf$, see e.g.\ \cite{becerrawoike} for further explanations.
For $X\in \int_\Sigma \Vmod$, the internal hom $\HOM(X,-):\int_\Sigma \Vmod\to\Vmod^{\boxtimes n}$ is defined as the right adjoint to the functor $-\act X:\Vmod^{\boxtimes n} \to \int_\Sigma\Vmod$. 
The \emph{moduli algebra} $\moduli_\Sigma^V:= \END(\cat{O}_\Sigma^V)$ is an algebra in $\Vmod^{\boxtimes n}$ carrying a $\Map(\Sigma)$-action through algebra maps~\cite[Section 5.2]{bzbj}. It generalizes the moduli algebras of Alekseev-Grosse-Schomerus~\cite{alekseevmoduli,agsmoduli,asmoduli}.
For the torus with one boundary component, the moduli algebra is the \emph{elliptic double} from \cite{brochierjordane}.

For the next statement, observe that for a finite ribbon category $\cat{A}$, the power $\cat{A}^{\boxtimes n}$ inherits a non-degenerate symmetric pairing $X \boxtimes Y \mapsto \Hom_{\cat{A}^{\boxtimes n}}(I,X\otimes Y)$, where $\otimes$ is the monoidal product in $\cat{A}^{\boxtimes n}$. 
This pairing induces an equivalence
\begin{align}
\Lex[\cat{A}^{\boxtimes n},\vect]\simeq \Lex[\vect,\cat{A}^{\boxtimes n}]\simeq \cat{A}^{\boxtimes n}\label{eqnequivAn}
\end{align} ($\vect$ denotes the category of finite-dimensional vector spaces)
that we use to turn $\Lex[\cat{A}^{\boxtimes n},\vect]$ into a monoidal category.

\begin{corollary}\label{corfiniterigidmodulialgebra}
	Under the assumptions of Corollary~\ref{corfiniterigid},
	\begin{align}\blockl(\Sigma;-)\cong \Hom_{\Vmod^{\boxtimes n}}(I,-\otimes \moduli_\Sigma^V)\label{eqnblocklma} \end{align}
	as left exact functor $\Vmod^{\boxtimes n}\to\Vect$ with $\Map(\Sigma)$-action.		
	In particular, $\blockl(\Sigma;-)$ inherits the structure of an algebra
	in $\Lex[\Vmod^{\boxtimes n},\vect]$ such that the mapping class group representation is through algebra maps. 
	\end{corollary}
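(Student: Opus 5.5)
We reduce the statement to a comparison of two descriptions of the same open modular functor and then invoke the known moduli algebra description from factorization homology.

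First, by Theorem~\ref{thmmcgaction} and Corollary~\ref{corfiniterigid2}, $\blockl(\Sigma;-)$ is, $\Map(\Sigma)$-equivariantly, the value on $\Sigma$ of the open modular functor obtained by modular extension of the cyclic associative algebra underlying $\Vmod$ in $\Lexf$. By \cite[Theorem~5.1 and its proof]{envas}, this value can be written through factorization homology: the embedded intervals make $\int_\Sigma\Vmod$ a $\Vmod^{\boxtimes n}$-module, and
\begin{align}
\blockl(\Sigma;X)\cong \Hom_{\int_\Sigma \Vmod}\big(X\act \cat{O}_\Sigma^V,\ \cat{O}_\Sigma^V\big)^{\ast}
\end{align}
or its dual version, depending on the $\Lexf$ versus $\Rexf$ conventions of Remark~\ref{remlexrex}. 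In both versions, $\Map(\Sigma)$ acts through its action on $\int_\Sigma\Vmod$ fixing $\cat{O}_\Sigma^V$. I would take the $\Lexf$-convention for which no dual appears. The main obstacle is matching the variance and the pairing conventions, so that the left exact functor $\Vmod^{\boxtimes n}\to\Vect$ appears covariantly.

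Second, we use the defining adjunction of the internal hom:
\begin{align}
\Hom_{\int_\Sigma\Vmod}(X\act\cat{O}_\Sigma^V,\cat{O}_\Sigma^V)\cong \Hom_{\Vmod^{\boxtimes n}}(X,\moduli_\Sigma^V).
\end{align}
We then rewrite this using rigidity of $\Vmod^{\boxtimes n}$ and the non-degenerate pairing \eqref{eqnequivAn}. The equivalence \eqref{eqnequivAn} sends the object $\moduli^V_\Sigma$ to the functor $\Hom(I,-\otimes\moduli_\Sigma^V)$. Inserting duals where needed, and using that in the finite rigid case the Nakayama twist is given by a distinguished invertible object that is trivial for a modular category (unimodularity), this yields \eqref{eqnblocklma}. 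Equivariance follows because $\Map(\Sigma)$ acts on $\moduli^V_\Sigma$ by algebra automorphisms induced from the autoequivalences of $\int_\Sigma\Vmod$ fixing $\cat{O}^V_\Sigma$ \cite[Section~5.2]{bzbj}, and because the adjunction isomorphism is natural with respect to module equivalences.

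Finally, the algebra structure: $\moduli_\Sigma^V$ is an algebra in $\Vmod^{\boxtimes n}$. The equivalence \eqref{eqnequivAn} is by definition monoidal for the transported monoidal structure on $\Lex[\Vmod^{\boxtimes n},\vect]$, so $\blockl(\Sigma;-)$ is an algebra there. Since the mapping classes act on $\moduli_\Sigma^V$ through algebra maps, the transported action on $\blockl(\Sigma;-)$ is also through algebra maps. I expect the main work to be in the first step: carefully identifying the modular extension value with the internal-hom expression, including the correct duality twist, while preserving equivariance.
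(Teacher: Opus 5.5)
Your Steps 2--4 are fine. The algebra structure and its compatibility with the mapping class group action are obtained exactly as in the paper, by transport along \eqref{eqnequivAn}. The gap is Step 1, which carries the entire content of \eqref{eqnblocklma} and is not justified. As it is used in this paper, \cite[Theorem~5.1]{envas} concerns the descent of the open modular functor along the band functor $L$ to $\int_{\mathbb{S}^1}$ at closed boundary components; this is where \eqref{eqnevO} comes from. It does not identify the value of the modular extension on $\Sigma$ with a morphism space in the factorization homology $\int_\Sigma\Vmod$ of the whole surface built from $\cat{O}^V_\Sigma$. That value is defined via markings and gluing with the coevaluation $\Delta$.

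Such an identification is also not a formal consequence of the modular-extension machinery. Already on a disk, in the $\Lexf$ conventions of Remark~\ref{remlexrex}, one has $\blockl(\mathbb{D}^2_2;X,Y)\cong\Hom(K,X\otimes Y)$. By contrast, $\Hom_{\int_{\mathbb{D}^2}\Vmod}(\cat{O},(X\boxtimes Y)\act\cat{O})\cong\Hom(I,X\otimes Y)$. So rigidity ($K\cong I$) is needed even there, and in higher genus one needs a genuine comparison between gluing with $\Delta$ and excision for factorization homology. This comparison is precisely the holographic principle \cite[Theorem~6.1 and eq.~(6.5)]{correlators} on which the paper's proof rests. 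It also supplies the compatibility of the marking-defined $\Map(\Sigma)$-action on $\blockl(\Sigma;-)$ with the geometric action on $\int_\Sigma\Vmod$. Your equivariance argument takes that compatibility for granted; naturality of the adjunction only covers Steps 2--4.

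Step 1 as written also has the wrong variance. $\Hom_{\int_\Sigma\Vmod}(X\act\cat{O}^V_\Sigma,\cat{O}^V_\Sigma)$ is contravariant in $X$, so it cannot be the left exact functor $\blockl(\Sigma;-)$. The correct $\Lexf$ form is $\Hom_{\int_\Sigma\Vmod}(\cat{O}^V_\Sigma,X\act\cat{O}^V_\Sigma)$. Rigidity of the action then gives $\Hom(X^\vee\act\cat{O}^V_\Sigma,\cat{O}^V_\Sigma)\cong\Hom(X^\vee,\moduli^V_\Sigma)\cong\Hom(I,X\otimes\moduli^V_\Sigma)$ directly, so no Nakayama twist enters.

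To close the gap, either cite the holographic principle or prove it. One way to prove it is to show that $\Sigma\mapsto\Hom_{\Vmod^{\boxtimes n}}(I,-\otimes\moduli^V_\Sigma)$, with its geometric $\Map(\Sigma)$-action, satisfies excision along intervals and agrees with $\blockl$ on disks. The uniqueness in Theorem~\ref{thmclassification} then finishes the argument, and this excision step is where rigidity is genuinely used.
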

	
\begin{proof}
	The isomorphism~\eqref{eqnblocklma} is a consequence of
	the holographic principle from \cite[Theorem 6.1 and eq.~(6.5)]{correlators}.
	Since $\blockl(\Sigma;-)$ is an algebra under the equivalence \eqref{eqnequivAn}, namely the moduli algebra, it inherits an algebra structure. The mapping class group action is through algebra maps because this is already the case for the moduli algebra.
	\end{proof}

\begin{corollary}\label{corsimplemodule}
	Under the assumptions of Corollary~\ref{corfiniterigid}, let $H$ be a three-dimensional handlebody with boundary surface $\Sigma = \partial H$.
	Then the algebra $\blockl(\Sigma)$ has, up to isomorphism, one simple module, namely the admissible skein module $\catf{Sk}_V(H)$ for $\Vmod$ of Costantino-Geer-Patureau-Mirand.
	The action map $\blockl(\Sigma) \ra{\cong}\End(\catf{Sk}_V(H))$ is an isomorphism.
	The unique projective $\Map(\Sigma)$-action on $\catf{Sk}_V(H)$ making the module action equivariant is equivalent to Lyubashenko's mapping class group representation for the closed surface $\Sigma$ and the modular category $\Vmod$.
	\end{corollary}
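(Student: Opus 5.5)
The plan is to combine Corollary~\ref{corfiniterigidmodulialgebra} with a Morita-theoretic statement about the moduli algebra of a closed surface. Since $\Sigma=\partial H$ is closed, the relevant case is $n=0$. Then \eqref{eqnblocklma} reduces to $\blockl(\Sigma)\cong \Hom_{\int_\Sigma \Vmod}(\cat{O}_\Sigma^V,\cat{O}_\Sigma^V)=\End(\cat{O}_\Sigma^V)$. This is a finite-dimensional algebra with $\Map(\Sigma)$ acting by algebra automorphisms. I have to be careful here: $\Sigma$ closed is treated with singularities. I would use the singularity-free extension from Corollary~\ref{corfiniterigid}, i.e.\ the string-net/Lyubashenko model for $Z(\Vmod)$, which allows closed surfaces. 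In that model, the holographic principle of \cite{correlators} still identifies the value on $\Sigma$ with $\End_{\int_\Sigma\Vmod}(\cat{O}_\Sigma)$.

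The key step is to show that this endomorphism algebra is a full matrix algebra $\End(W)$. I would invoke the known result for modular categories (Brochier--Jordan--Safronov--Snyder, and its non-semisimple version for finite factorizable categories) that $\int_\Sigma \Vmod\simeq \vect$ for closed $\Sigma$ when $\Vmod$ is factorizable, which holds by \cite{shimizumodular}. Hence $\cat{O}_\Sigma$ corresponds to a finite-dimensional vector space $W$, and $\blockl(\Sigma)\cong\End(W)$. A matrix algebra has, up to isomorphism, exactly one simple module, namely $W$, and the action map is an isomorphism. This proves the first two claims up to identifying $W$.

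To identify $W$ with the admissible skein module $\catf{Sk}_V(H)$, I would use the handlebody $H$: it gives an object of $\int_\Sigma\Vmod$, or rather a functional on it, whose pairing with $\cat{O}_\Sigma$ computes skein modules. Concretely, the Costantino--Geer--Patureau-Mirand admissible skein module of $H$ is a module over the skein algebra of $\Sigma\times[0,1]$ by stacking. In the factorization homology picture this stacking action is the action of $\End(\cat{O}_\Sigma)$ on $\Hom(\cat{O}_\Sigma, \cat{O}_H)$-type spaces. Since $\catf{Sk}_V(H)$ is nonzero, it is a nonzero module over a simple matrix algebra. Counting dimensions, it is then $W$, because $\dim \catf{Sk}_V(H)^2=\dim\blockl(\Sigma)$ by gluing $H\cup_\Sigma \bar H$ or by the known dimension formula. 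I expect this comparison of the skein-theoretic and factorization-homology actions to be the main obstacle. I would first try to deduce it from existing comparisons between admissible skeins and factorization homology or string-nets, rather than proving it directly.

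Finally, the projective action follows from Skolem--Noether. Each $f\in\Map(\Sigma)$ acts on $\End(W)$ by an algebra automorphism, so it is conjugation by an element of $GL(W)$ that is unique up to scalars. This yields the unique projective representation on $W$ making the module structure equivariant. To identify it with Lyubashenko's representation, I would use that Lyubashenko's $\mathfrak{F}_{\Vmod}(\Sigma)$ satisfies $\mathfrak{F}_{Z(\Vmod)}(\Sigma)\cong\mathfrak{F}_{\Vmod}(\Sigma)^*\otimes\mathfrak{F}_{\Vmod}(\Sigma)\cong\End(\mathfrak{F}_{\Vmod}(\Sigma))$ equivariantly. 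This holds by factorizability $Z(\Vmod)\simeq\overline{\Vmod}\boxtimes\Vmod$ and Corollary~\ref{corfiniterigid}. Uniqueness in Skolem--Noether then forces the two projective actions to agree, and it also gives $W\cong\mathfrak{F}_{\Vmod}(\Sigma)\cong\catf{Sk}_V(H)$, consistent with the previous step.
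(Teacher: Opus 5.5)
Your first two steps are fine, granting the Morita triviality of $\int_\Sigma\Vmod$ for closed $\Sigma$. The paper does not use an invertibility argument for this. It gets everything at once from Corollary~\ref{corfiniterigidmodulialgebra} together with the Heisenberg picture \cite[Theorem~6.8]{reflection}: the matrix algebra, its unique simple module $\widehat V(H)$ (the ansular functor evaluated on $H$), the isomorphism of the action map, and the comparison with Lyubashenko. It then identifies $\widehat V(H)\cong\catf{Sk}_V(H)$ as skein modules via \cite{mwskein}.

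\textbf{Gap in your last step.} From Corollary~\ref{corfiniterigid} and $Z(\Vmod)\simeq\overline{\Vmod}\boxtimes\Vmod$ you only obtain an isomorphism $\blockl(\Sigma)\cong\mathfrak{F}_{\Vmod}(\Sigma)^*\otimes\mathfrak{F}_{\Vmod}(\Sigma)$ of $\Map(\Sigma)$-representations. Even this needs more than factorizability: you need compatibility of Lyubashenko's modular functor with orientation reversal, $\mathfrak{F}_{\overline{\Vmod}}(\Sigma)\cong\mathfrak{F}_{\Vmod}(\Sigma)^*$. Skolem--Noether lets you compare projective actions only if this isomorphism intertwines two algebra structures: the one on $\blockl(\Sigma)$ coming from $\End(\cat{O}_\Sigma)$, and composition in $\End(\mathfrak{F}_{\Vmod}(\Sigma))$. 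Nothing in your argument provides that. An isomorphism $\End(W)\cong\End(\mathfrak{F})$ of representations alone does not force $W$ and $\mathfrak{F}$ to be projectively equivalent. For a counterexample, take an odd prime $p$ and the Heisenberg group of order $p^3$. Its $p$-dimensional irreducible representations with different central characters have isomorphic conjugation representations on their endomorphism algebras: both characters are $p^2$ on the center and $0$ elsewhere. Yet they are not projectively equivalent, since all linear characters are trivial on the center, which is the commutator subgroup. Compatibility of the algebra structures is exactly the content of \cite[Theorem~6.8]{reflection}, so your Skolem--Noether argument cannot replace it.

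\textbf{Step~3 is also not adequate as stated.} Stacking makes $\catf{Sk}_V(H)$ a module over skeins in $\Sigma\times[0,1]$. In the non-semisimple setting, this is not a priori the action of $\End(\cat{O}_\Sigma)$; the paper deliberately refers to the skein algebra \emph{defined using factorization homology}. Making $\catf{Sk}_V(H)$ a $\blockl(\Sigma)$-module, compatibly with the algebra structure of Corollary~\ref{corfiniterigidmodulialgebra}, is precisely what the comparison \cite{mwskein} between admissible skein modules and the ansular functor supplies. Your dimension count is also unsupported: gluing $H\cup_\Sigma\bar H$ yields a closed $3$-manifold and gives no relation $(\dim\catf{Sk}_V(H))^2=\dim\blockl(\Sigma)$. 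Once you use \cite{mwskein} and \cite[Theorem~6.8]{reflection}, both the dimension count and the Skolem--Noether comparison become unnecessary.
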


\begin{proof}
	The algebra $\blockl(\Sigma)$ is a matrix algebra and has a unique simple module, namely the value $\widehat{V}(H)$
	of the ansular functor for $\Vmod$ on $H$, and the action map is an isomorphism; this follows from  Corollary~\ref{corfiniterigidmodulialgebra} and \cite[Theorem~6.8]{reflection}.
	
	Under the identification of $\blockl(\Sigma)$ with the moduli algebra for $\Sigma$, i.e.\ the skein algebra (the one defined using factorization homology), $\widehat{V}(H)$ is isomorphic, as skein module, to the admissible skein module from $\catf{Sk}_V(H)$ from \cite{asm} by the comparison in \cite{mwskein}.
	The connection to Lyubashenko's mapping class group representations is again \cite[Theorem~6.8]{reflection}.
	\end{proof}

\begin{question}\label{q3}
	In the finite rigid case, when $\Vmod$ is a modular category, the proof above together with \cite[Corollary 6.6]{reflection}
	tells us that the factorization homology of $V$-modules over the closed surface
	$\Sigma$ is the category of modules over the skein algebra, and that this module category is trivial in the sense that it is generated by one simple module, namely the skein module for any handlebody $H$ with $\partial H$; this is the Morita triviality of skein algebras, which is one of the main ideas of \cite{roberts,masbaumroberts,asmoduli} in the rational case.
	For arbitrary $V$ and its presentable module category $\RV$ based on some cyclic subcategory of $V$-modules, no such results are known, and the best course of action would be to understand $\int_\Sigma \RV$. Is there, in that case, any generalization of the Morita triviality of skein algebras?
	\end{question}

	\subsection{Relation to the spaces of conformal blocks à la Frenkel-Ben-Zvi}
Let $V$ be a strongly rational vertex operator algebra and denote by $\mathbb{V}$ the spaces of conformal blocks built from $V$ in that situation in the work of Frenkel-Ben-Zvi \cite{frenkelbenzvi} and Damiolini-Gibney-Tarasca \cite{DGT1,DGT2}.
Denote moreover by $\cat{C}_V$
the modular fusion category of $V$-modules obtained by evaluation of $\mathbb{V}$ in genus zero through \cite{damioliniwoike}.
 As explained in Section~\ref{secag},
 this is a possible choice for a cyclic category of $V$-modules.
 With this choice, the following holds:

\begin{corollary}
	Let $V$ be a strongly rational vertex operator algebra, and choose as cyclic category of $V$-modules the category $\cat{C}_V$ of admissible $V$-modules with its geometrically constructed structure of a modular fusion category inherited from the spaces of conformal blocks $\mathbb{V}$ in the sense of Frenkel-Ben-Zvi.
	Then for any surface $\Sigma$ with $n$ boundary components, there is a isomorphism
	\begin{align}
		\bigoplus_{i_1,\dots,i_n=1}^m	\mathbb{V}(\bar \Sigma;Y_{i_1}^\vee,\dots,Y_{i_n}^\vee)\otimes\mathbb{V}(\Sigma; X_1\otimes Y_{i_1},\dots, X_n\otimes Y_{i_n}) \ra{\cong} \block(\Sigma; X_1,\dots,X_n)
	\end{align} natural in $X_1,\dots,X_n \in\cat{C}_V$, where all $Y_{i_j}$ runs over the simple objects of $\cat{C}_V$. 
	More compactly, there is an isomorphism
	\begin{align}
		\Nat \left(  \mathbb{V}( \Sigma;-) ,  \mathbb{V}(\Sigma;X_1\otimes-,\dots,X_n\otimes - \right) \ra{\cong} \block(\Sigma;X_1,\dots,X_n) \ . 
	\end{align}
	In particular, if $\Sigma$ is closed, there is a canonical isomorphism
	\begin{align}
		\mathbb{V}(\bar \Sigma)\otimes \mathbb{V}(\Sigma)\cong	\End \left(  \mathbb{V}( \Sigma)\right) \ra{\cong} \block(\Sigma) \ . 
	\end{align}
\end{corollary}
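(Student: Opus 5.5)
We reduce to the finite rigid comparison of Corollary~\ref{corfiniterigid} and then use the Frenkel-Ben-Zvi/Damiolini-Woike identification. Since $V$ is strongly rational, $\cat{C}_V$ is a modular fusion category, so $\catT=\cat{C}_V$ is a cyclic category (Section~\ref{secag}). Moreover $\RV$ is the ind completion of $\cat{C}_V$, and we are in the finite rigid, even semisimple, case.

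First step: by Corollary~\ref{corfiniterigid} the closed part of $\block$ (in its $\Lexf$-version, which agrees with $\block$ on compact objects) is equivalent to the Lyubashenko modular functor $\mathfrak{F}_{Z(\cat{C}_V)}$. Here $Z(\cat{C}_V)\simeq \overline{\cat{C}_V}\boxtimes\cat{C}_V$. A boundary label $X\in\cat{C}_V$ on a circle enters through the band tensor functor $L$, which under this equivalence is the central-induction-type functor $X\mapsto \bigoplus_i Y_i^\vee\boxtimes (X\otimes Y_i)$. I would verify the latter by computing $L$ on the unit via $\Delta_\circ$ and $\eqref{eqndeltafinite}$, using semisimplicity.

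Second step: use that the modular functor of a Deligne product is the tensor product of modular functors. Concretely, $\mathfrak{F}_{\overline{\cat{A}}\boxtimes\cat{A}}(\Sigma;-)\cong \mathfrak{F}_{\cat{A}}(\bar\Sigma;-)\otimes\mathfrak{F}_{\cat{A}}(\Sigma;-)$ on pure tensors, with $\mathfrak{F}_{\overline{\cat{A}}}(\Sigma)=\mathfrak{F}_{\cat{A}}(\bar\Sigma)$. Then invoke \cite[Theorem~5.5.1]{damioliniwoike}: $\mathbb{V}$ is equivalent to the Reshetikhin-Turaev, equivalently Lyubashenko, modular functor of $\cat{C}_V$. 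Inserting the decomposition of $LX_j$ slotwise and using additivity gives the direct sum formula. The compact $\Nat$ form then follows from semisimplicity. Indeed, a natural transformation between functors on $\cat{C}_V^{\boxtimes n}$ decomposes over simples as $\bigoplus_{i}\Hom(\mathbb{V}(\Sigma;Y_i),\mathbb{V}(\Sigma;X\otimes Y_i))$. Combined with $\mathbb{V}(\Sigma;Y)^*\cong\mathbb{V}(\bar\Sigma;Y^\vee)$ (the nondegenerate pairing of a modular functor under orientation reversal), this gives the summands. For $\Sigma$ closed there are no labels, which yields $\mathbb{V}(\bar\Sigma)\otimes\mathbb{V}(\Sigma)\cong\End(\mathbb{V}(\Sigma))$.

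Main obstacle: matching conventions so the isomorphism is canonical and natural. This means identifying the boundary circle value $\int_{\mathbb{S}^1}\RV$ with $Z(\cat{C}_V)$ and $L$ with the induction functor, and making the orientation reversal duality of $\mathbb{V}$ compatible with the bar in Lyubashenko's functor. I would handle this via the string-net comparison (Corollary~\ref{corfiniterigid2} and \cite{sn}), where both identifications are explicit. For closed $\Sigma$ I would also check that the punctures required by excision~2 are harmless. In the finite rigid case Corollary~\ref{corfiniterigid} removes the singularities, so $\block(\Sigma)$ is defined and matches the Lyubashenko value.
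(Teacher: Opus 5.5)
Your proposal is correct and follows essentially the same route as the paper. The paper's proof simply combines Corollary~\ref{corfiniterigid}, which removes the singularities and identifies $\block$ with the Lyubashenko modular functor for $Z(\cat{C}_V)\simeq\overline{\cat{C}_V}\boxtimes\cat{C}_V$, with \cite[Theorem~5.5.1]{damioliniwoike}; your identification of $L$ with central induction $X\mapsto\bigoplus_i Y_i^\vee\boxtimes(X\otimes Y_i)$, the splitting of the Deligne-product modular functor, and the semisimple computation of the $\Nat$-space only make explicit what the paper leaves implicit in that citation.
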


\begin{proof}
	Since $\cat{C}_V$ is finite rigid, $\block$ extends to an open-closed modular functor, namely the one for $Z(\cat{C}_V)\simeq \overline{\cat{C}_V} \boxtimes \cat{C}_V$, see Corollary~\ref{corfiniterigid}.
	Now the statement follows from \cite[Theorem 5.5.1]{damioliniwoike}.
	\end{proof}

\spaceplease
\section{Description of the mapping class group representations on generators\label{secgenerators}}
In this section,
we describe the mapping class group representations 
constructed in the previous sections
 on generators of 
the mapping class groups by using  the description in terms of markings.

\subsection{The action of the Dehn twists of an annulus}
As always, we fix our cyclic subcategory $\catT$
of $V$-modules.
Denote by
\begin{align}
	\gamma_1 , \gamma_2 : \int^{t_1,t_2 \in \catT} \spr{X,t_1,t_2^*} \otimes \spr{Y,t_1^*,t_2}\ra{\cong} \int^{s\in\catT} \spr{X,s,Y,s^*}\label{eqngamma12}
\end{align}
the isomorphisms obtained by applying \eqref{eqnY} 
 to the coend over $t_1$ and $t_2$, respectively.
Roughly, this is nothing but a version of the Yoneda Lemma, but applied to the two different coends.

\begin{proposition}\label{propdehntwist}
	For the annulus $A$ with two parametrized intervals, $\block(A;X,Y)$ for $X,Y \in \RV$ is given by
	\begin{align}
		\block(A;X,Y)\cong	\int^{s\in\catT} \spr{X,s,Y,s^*} \ , 
	\end{align}
	and under this identification, the action $d_*$ of the Dehn twist $d:A\to A$ is given by
	\begin{align}
		d_* = \gamma_2\circ \gamma_1^{-1} \ , 
	\end{align} i.e.\
	the Dehn twist acts by the operator describing the multiplicative difference between $\gamma_1$ and $\gamma_2$.
\end{proposition}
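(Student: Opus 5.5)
The plan is to compute everything through two markings of the annulus $A$ and then read off $d_*$ from the square \eqref{eqnsquare}. Realize $A$ by a ribbon graph $R$ with two trivalent vertices joined by two internal edges, with one leg at each vertex; the legs correspond to the two parametrized intervals carrying $X$ and $Y$. Cutting $R$ at its internal edges gives two corollas with three cyclically ordered legs. Inserting the coevaluation object $\Delta=\int^{t\in\catT}t^*\boxtimes t$ from Theorem~\ref{thmcfe2} at both edges, the recipe of Section~\ref{secmodex} gives for $\mu=(R,\id)$
\begin{align}
\block^\mu(A;X,Y)=\int^{t_1,t_2\in\catT}\spr{X,t_1,t_2^*}\otimes\spr{Y,t_1^*,t_2}\ .
\end{align}
Applying \eqref{eqnY} to either coend identifies this with $\int^{s}\spr{X,s,Y,s^*}$. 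This is the value on a disk with four intervals glued along a pair, which is the first claim; it can also be obtained from Corollary~\ref{corexcinterval}.

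Next, consider the Dehn twist $d$ along the core circle. The marking $d.\mu=(R,d)$ has the same underlying graph, so $\chi_d$ is the tautological identity. The content lies in $\alpha_{\mu,d.\mu}=\psi_{d.\mu}^{-1}\circ\psi_\mu$, which is defined by a zigzag in $\M(A)$. I would build this zigzag explicitly.
\begin{itemize}
\item Contracting edge $1$ of $R$ gives a morphism $R\to R_1$ in $\RGraphs(2)$. Here $R_1$ is a single four-valent vertex with a loop, and with the identity mapping class this gives a morphism $\mu\to(R_1,\id)$ in $\M(A)$.
\item Under the contraction morphism, the functor $\RGraphs\to\Hom_{\PrL}$ sends this contraction to exactly the gluing isomorphism \eqref{eqnY} on the $t_1$-coend, that is, to $\gamma_1$. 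Similarly, contracting edge $2$ gives $\gamma_2$.
\item Topologically, the two contractions of $R$ to $R_1$ give the same ribbon graph, but the identifications $|R_1|\to A$ differ by the Dehn twist. Sliding the loop's attaching point once around the core realizes the twist.
\item Hence there is a morphism $d.\mu\to(R_1,\id)$ covering the contraction of edge $2$, up to the tautological $\chi_d$, and a morphism $\mu\to(R_1,\id)$ via edge $1$.
\end{itemize}
Since $\M(A)$ is contractible (Lemma~\ref{lemmaM}; $A$ has marked intervals), $\alpha_{\mu,d.\mu}$ equals the composite along this zigzag, namely $\gamma_2^{-1}\circ\gamma_1$ or its inverse depending on orientation conventions. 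Plugging this into \eqref{eqnsquare} and transporting along the identification $\psi_{(R_1,\id)}$ with $\int^s\spr{X,s,Y,s^*}$ yields $d_*=\gamma_2\circ\gamma_1^{-1}$.

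The main obstacle is the geometric bookkeeping in the third and fourth points. One has to check carefully that the two edge contractions, with their induced identifications of $|R_1|$ with $A$, differ precisely by $d$ rather than by $d^{-1}$ or the identity, and that the cyclic orders at the four-valent vertex match the order $X,s,Y,s^*$. I would verify this by drawing the fattened graph, tracking a transverse arc from $X$ to $Y$ under both contractions, and checking that the arc acquires one full turn around the core. The sign convention is then fixed by the orientation of $d$.
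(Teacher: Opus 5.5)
Your argument is essentially the paper's proof. Both compute $\alpha_{\mu,d.\mu}$ from a zigzag of single-edge contractions in the contractible category $\M(A)$ that relates the two-vertex marking to the one-vertex marking with a loop. The paper uses the span $\mu\leftarrow\nu\to d.\mu$ with the one-vertex markings at the ends, so $\alpha_{\mu,d.\mu}=\gamma_2\circ\gamma_1^{-1}$ directly and no transport is needed; you use the mirrored cospan through the one-vertex marking $\rho$, which costs an extra conjugation.

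One bookkeeping point on the sign. With the assignment you actually wrote ($\mu\to\rho$ via edge $1$, $d.\mu\to\rho$ via edge $2$), one gets $\alpha_{\mu,d.\mu}=\gamma_2^{-1}\circ\gamma_1$ and $\psi_\rho^{-1}\psi_\mu=\gamma_1$. The transport then yields $\gamma_1\circ\gamma_2^{-1}$, the inverse of the claimed formula. The stated sign needs the opposite edge assignment; equivalently, your configuration describes $d^{-1}$ when the paper's figure describes $d$. Fixing this is exactly the orientation check you deferred.
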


\begin{proof}
	To compute the action of a Dehn twist,
	we start by picking the marking $\mu$ of $A$ drawn on the left in Figure~\ref{Fig:Dehn}. 
	By acting with the Dehn twist we obtain the marking on the right.
	
	\begin{figure}[h]
		\centering
		\begin{overpic}[scale=0.2]{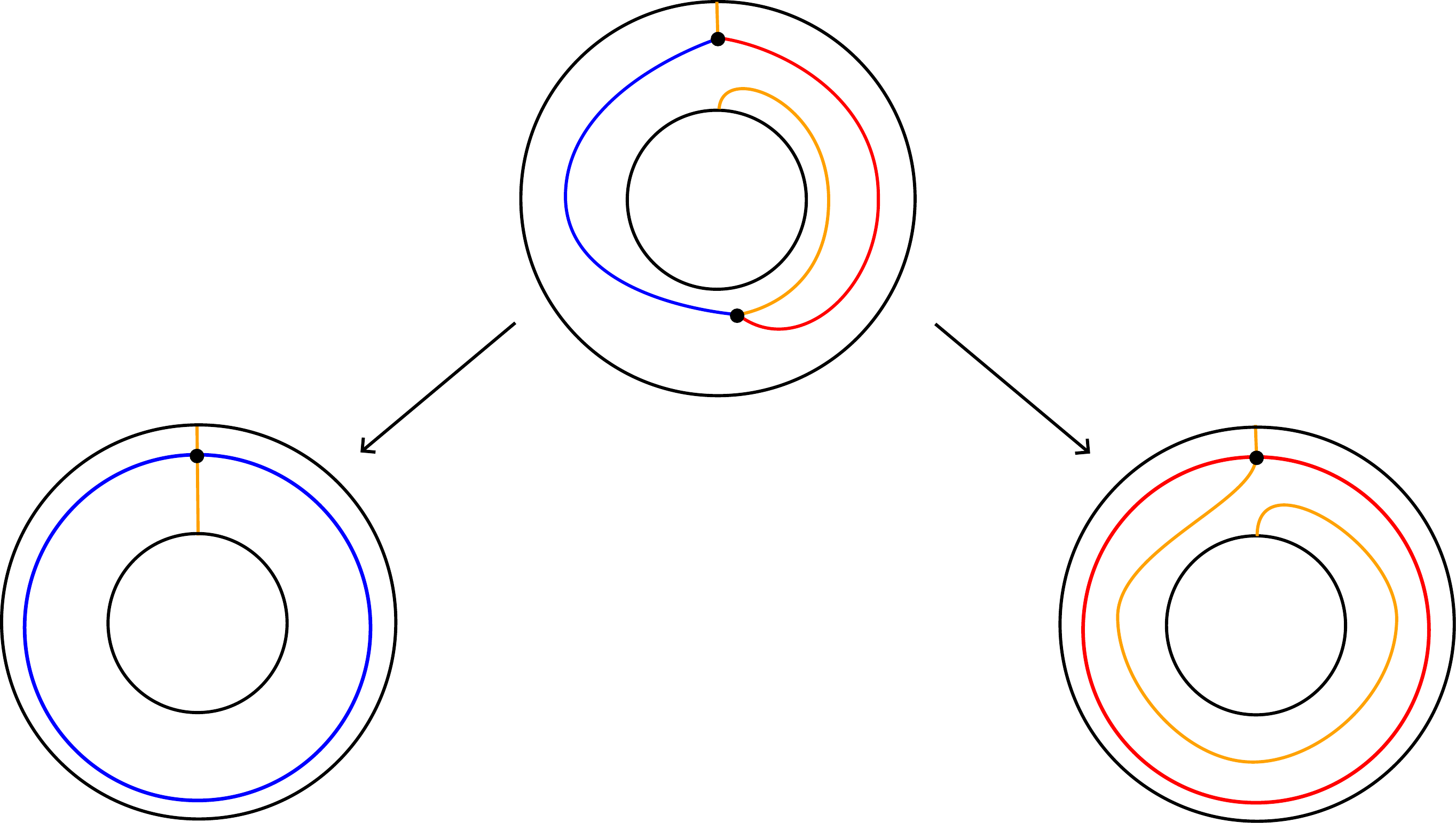}
			\put(12,29){$\mu$} 
			\put(49,26.5){$\nu$} 
			\put(84,29){$d.\mu$}
		\end{overpic} 
		\caption{A zigzag between the marking $\mu$ for an annulus and its image $d.\mu$ under a Dehn twist $d$.
			The pieces of the marking are drawn in color, but we choose different colors to distinguish the different pieces.
			The arrow from the middle to the left contracts the red part; the arrow from the middle to the right contracts the blue part.}
		\label{Fig:Dehn}
	\end{figure}

	The space of conformal blocks that we calculate for both markings $\mu$ and $d.\mu$
	is $\int^{s\in\catT} \spr{-,s,-,s^*}$, see also Remark~\ref{remmarking}.
	The blank spaces `$-$' can be filled with boundary labels.
	Figure~\ref{Fig:Dehn} also shows a zigzag between the two markings. Fixing two boundary objects $X,Y$ and evaluating $\block$ on the zigzag gives us the following:
	\begin{equation}
		\begin{tikzcd}
			& \hspace{-1.3cm} \ar[rd, "\gamma_2"] \ar[ld, "\gamma_1",swap] \block^{\nu}(A,X,Y) = \int^{t_1,t_2 \in \catT} \spr{X,t_1,t_2^*} \otimes \spr{Y,t_1^*,t_2} \hspace{-1.3cm} &  \\ 
			\block^{\mu}(A,X,Y) = \int^{s\in\catT} \spr{X,s,Y,s^*}\hspace{-0.6cm} & & \hspace{-0.6cm} \block^{d.\mu}(A,X,Y) = \int^{s\in\catT} \spr{X,s,Y,s^*}
		\end{tikzcd} 
	\end{equation}
	Now the claim follows from Theorem~\ref{thmmain}.
\end{proof}

The same argument
carries over to an annulus with more than one boundary label.

\begin{corollary}\label{cordehn}	
	For an annulus with $n\ge 1$ and $m\ge 1$ marked intervals on the two boundary circles,
	\begin{align}
		\block(A; X_1,\dots,X_n,Y_1,\dots,Y_m) \cong \int^{s\in\catT} \spr{X_1,\dots,X_n,s,Y_1,\dots,Y_m,s^*}
	\end{align} 
	and denoting by a slight abuse of notation by $\gamma_1$ and $\gamma_2$ also the generalizations of the maps defined in~\eqref{eqngamma12} to multiple objects 
	\begin{align}
		d_* = \gamma_2\circ \gamma_1^{-1} \ . 
	\end{align}
\end{corollary}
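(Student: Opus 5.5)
I will run the argument of Proposition~\ref{propdehntwist} again, only with more labels on each boundary circle. The plan has four steps.

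\textbf{Step 1: the marking $\mu$.} Let the $X_i$ sit on the first boundary circle and the $Y_j$ on the second, in their cyclic orders. I choose as marking $\mu$ the ribbon graph with one vertex. Its cyclically ordered legs are $X_1,\dots,X_n$, then one half of an internal loop, then $Y_1,\dots,Y_m$, then the other half of that loop. Fattening this graph gives an annulus with $n$ marked intervals on one boundary circle and $m$ on the other.

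By the recipe in Section~\ref{secmodex}, we insert the coevaluation object $\Delta=\int^{s\in\catT}s^*\boxtimes s$ from Theorem~\ref{thmcfe2} into the two loop slots. The vertex is evaluated by the cyclic maps of Corollary~\ref{corspr}, using Theorem~\ref{thmcohspider} to fix the choice of expression. Cocontinuity then gives
\begin{align}
\block^\mu(A;X_1,\dots,X_n,Y_1,\dots,Y_m)\cong\int^{s\in\catT}\spr{X_1,\dots,X_n,s,Y_1,\dots,Y_m,s^*} .
\end{align}
Proposition~\ref{propmarking} turns this into the claimed description of $\block(A;-)$ via $\psi_\mu$.

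\textbf{Step 2: the marking $\nu$.} The Dehn twist $d$ along the core circle sends $\mu$ to $d.\mu$. By Remark~\ref{remmarking}, $d.\mu$ has the same underlying ribbon graph, hence the same coend expression. As in Figure~\ref{Fig:Dehn}, I build an intermediate marking $\nu$ with two vertices joined by two internal edges. One vertex carries $X_1,\dots,X_n$ and the other carries $Y_1,\dots,Y_m$. Contracting the first edge gives $\mu$, and contracting the second gives $d.\mu$, because the two contractions differ exactly by pushing the $Y$-block once around the core.

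Evaluating $\nu$ gives
\begin{align}
\int^{t_1,t_2\in\catT}\spr{X_1,\dots,X_n,t_1,t_2^*}\otimes\spr{Y_1,\dots,Y_m,t_1^*,t_2} .
\end{align}
The two edge contractions are sent to the gluing isomorphisms~\eqref{eqnY} applied to the $t_1$-coend and to the $t_2$-coend, respectively. These are by definition the generalized maps $\gamma_1$ and $\gamma_2$.

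\textbf{Step 3: reading off $d_*$.} From Proposition~\ref{propmarking} we have $\alpha_{\mu,d.\mu}=\alpha_{\nu,d.\mu}\circ\alpha_{\mu,\nu}$, and along morphisms of markings $\alpha$ is the value of the functor. So $\alpha_{\mu,d.\mu}=\gamma_2\circ\gamma_1^{-1}$. The square~\eqref{eqnsquare} then identifies $d_*$ with this composite, once we note that $\chi_d$ is the tautological identification of the two identical coend expressions.

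\textbf{Main obstacle.} The hard part is the topological check in Step~2: that the two contractions of $\nu$ really produce markings related by the Dehn twist $d$, and not by its inverse or by a twist composed with a boundary rotation. I would verify this on the fattened surfaces by following the arc joining the two boundary circles. Its image changes by one full turn around the core, exactly as in the case $n=m=1$. Additional labels on either circle only ride along inside the vertex blocks and do not affect this.
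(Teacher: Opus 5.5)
Your argument is correct, but it is not how the paper proves this corollary. You rerun the zigzag $\mu \leftarrow \nu \rightarrow d.\mu$ from the proof of Proposition~\ref{propdehntwist}, now with extra legs at the two vertices; the sentence preceding the corollary in the paper points to exactly this. The written proof instead reduces to Proposition~\ref{propdehntwist} by multiplying the labels together. It replaces $X_1,\dots,X_n$ by $X_1\otimes\dots\otimes X_n$ and $Y_1,\dots,Y_m$ by $Y_1\otimes\dots\otimes Y_m$, using the compatibility of the cyclic maps $\spr{-,\dots,-}$ with the monoidal product. Topologically, this means gluing disks with several marked intervals onto the annulus with one interval per boundary circle.

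That reduction is a one-liner once the case $n=m=1$ is known. It does, however, tacitly use two compatibilities of the collapsing isomorphism: with the Dehn twist (equivariance of excision along the induced map of mapping class groups), and with the generalized $\gamma_1,\gamma_2$ (coherence from Theorem~\ref{thmcohspider}). Your route avoids these checks and is more self-contained. The price is redoing the combinatorics: checking that $\nu$ fattens to the annulus with the $X$'s and $Y$'s on different boundary circles, and that its two edge contractions differ by $d$.

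Your cyclic orders $(X_1,\dots,X_n,t_1,t_2^*)$ and $(Y_1,\dots,Y_m,t_1^*,t_2)$ are the right ones for this; the other relative orientation would put all legs on one boundary circle. Your reading $\alpha_{\nu,\mu}=\gamma_1$ and $\alpha_{\nu,d.\mu}=\gamma_2$ from the cocone relations $\psi_\mu\circ\gamma_1=\psi_\nu$ and $\psi_{d.\mu}\circ\gamma_2=\psi_\nu$ also has the correct direction.
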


\begin{proof}
	After multiplying together the labels on the two boundary components, the statement reduces to Proposition~\ref{propdehntwist}.
\end{proof}

\begin{corollary}
	There is an isomorphism
	\begin{align}
		\block(\mathbb{S}^1 \times [0,1]) \cong \int^{t\in \catT}\Hom_V(t,t)  \ . \label{eqnHHcoend}
	\end{align}
	In particular,
	for $t,u\in\catT$, there is a canonical map
	\begin{align}
		\catT(t,u)\otimes\catT(u,t) \to 	\block(\mathbb{S}^1 \times [0,1]) \ . \label{eqncomposition}
	\end{align}
	Moreover, $\int^{t\in \catT}\Hom_V(t,t)$ inherits a $\mathbb{Z}_2$-action.
\end{corollary}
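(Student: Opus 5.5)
The annulus $\mathbb{S}^1\times[0,1]$ here has no marked intervals; the plan is to read it as the annulus with two parametrized boundary circles. Its value should therefore come from Theorem~\ref{thmmain} evaluated at the monoidal units $\cat{O}(V)\in\int_{\mathbb{S}^1}\RV$. By \eqref{eqnevO}, this is the same as the annulus with two punctures. The strategy is to reach this annulus by gluing from a surface that has marked intervals, and then compute with the explicit coend formulas.

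Concretely, I would start from the disk $\mathbb{D}^2_2$ with two marked intervals, where $\block(\mathbb{D}^2_2;X,Y)\cong\spr{X,Y}$ by Theorem~\ref{thmmcgaction}. Gluing the two intervals of $\mathbb{D}^2_2$ to each other gives the annulus with no marked intervals. By Excision~1 (Corollary~\ref{corexcinterval}, extended via Theorem~\ref{thmmain}) this yields
\begin{align}
\block(\mathbb{S}^1\times[0,1])\cong\int^{t\in\catT}\spr{t^*,t}\ .
\end{align}
By \eqref{eqnsprhom} and the symmetry of the pairing, $\spr{t^*,t}\cong\spr{t,t^*}\cong\Hom_V(t,t)$. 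One must check that this isomorphism is natural in the dinatural sense, so that it passes to the coend. This is immediate because \eqref{eqnsprhom} is built from the coend formula for $t^*$ together with \eqref{eqndelta1}, both of which are natural. The result is \eqref{eqnHHcoend}.

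An alternative route avoids the subtlety of a surface without marked intervals. One would instead use Proposition~\ref{propdehntwist} with $X=Y=I$ and forget the intervals via the puncture description. I expect the main obstacle to be exactly this point: identifying which annulus the statement refers to, and checking that Lemma~\ref{lemmaM} does not interfere. For the annulus without marked intervals, the marking space is a circle, not contractible. This is why I would go through gluing, i.e.\ through modular operad composition, rather than through the marking colimit.

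The map \eqref{eqncomposition} is then the composition $\catT(t,u)\otimes\catT(u,t)\to\catT(t,t)$ followed by the structure map of the coend at $t$. Equivalently, one can use the pair of structure maps at $t$ and $u$, which agree by the coend relation; this is the usual trace property.

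For the $\mathbb{Z}_2$-action, I would use the orientation-preserving diffeomorphism of $\mathbb{S}^1\times[0,1]$ that exchanges the two boundary components, $(z,s)\mapsto(\bar z,1-s)$. Equivalently, on the glued disk model, this is the rotation of $\mathbb{D}^2_2$ swapping its two intervals. Its square is isotopic to the identity in the relevant mapping class groupoid. On the coend it acts through the symmetry isomorphism $\spr{t^*,t}\cong\spr{t,t^*}$, which squares to the identity because the pairing carries a symmetric structure, i.e.\ it is a homotopy fixed point of the involution. This gives the $\mathbb{Z}_2$-action on $\int^{t\in\catT}\Hom_V(t,t)$.
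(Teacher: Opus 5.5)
Your proposal is correct and follows the paper's proof. The paper likewise obtains \eqref{eqnHHcoend} from Corollary~\ref{corexcinterval}, applied to gluing the two intervals of $\mathbb{D}^2_2$, combined with \eqref{eqnsprhom}; it builds the map \eqref{eqncomposition} from composition and the coend structure maps, and it gets the $\mathbb{Z}_2$-action from the mapping class group of the annulus with free boundary.
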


\begin{proof}
	The isomorphism \eqref{eqnHHcoend} follows from
	Corollary~\ref{corexcinterval} and \eqref{eqnsprhom}.
	The map \eqref{eqncomposition} composes over $t$ or $u$ and maps to the coend. By the universal property of the coend it actually does not matter whether we compose over $t$ or $u$.
	The $\mathbb{Z}_2$-action on $\int^{t\in \catT}\Hom_V(t,t)$ is a consequence of the fact the
	mapping class group of the annulus without parametrized boundary is $\mathbb{Z}_2$, see e.g.\ \cite[Section~2.2.2]{farbmargalit}.
\end{proof}

\begin{remark}	One the right hand of \eqref{eqnHHcoend}, we obtain the zeroth Hochschild homology $HH_0(\RV)$ of $\RV$ (or the so-called \emph{cocenter}), which is defined as a coend over the endomorphism spaces of compact projective objects
	\cite{keller}.
	Note that the observation~\eqref{eqnHHcoend} was already made in special cases \cite{mwdiff,mwansular,envas,yeralopen}.
	One may also interpret \eqref{eqnHHcoend} in the sense that the Hattori-Stallings trace $\catT(t,t)\to HH_0(\RV)$ \cite{hattori,stallings}
	of endomorphisms of compact projective objects of $\RV$ is naturally  $\block(\mathbb{S}^1 \times [0,1])$-valued.
\end{remark}

\subsection{The action of Dehn twists adapted to a marking}
Let $\mu=(R,f) \in \M(\Sigma)$, where $R$ has just one vertex, so that it is a bouquet of $r$ circles, each of which is mapped by $f$ to a simple closed curve in $\Sigma$. We  say that such a  simple closed curve is a part of $\mu$.
Denote by $d$ the Dehn twist about one of these curves $\gamma$.
Then by excision for $\block$ (Corollary~\ref{corexcinterval}) and \eqref{eqnrvdisk}
\begin{align}
	\block^\mu(\Sigma;-)\cong \int^{\dots} \spr{\dots,s,\dots,s^*} \cong \int^{\dots} \int^{t_1,t_2} \spr{\dots, t_1}\otimes \spr{t_1, s, t_2,s^*} \otimes \spr{t_2,\dots}  \ , \label{eqnaddcoend}
\end{align}
where the first coend runs over $r-1$ dummy variables, and the second one adds the two new dummy variables $t_1$ and $t_2$ via Corollary~\ref{corspr} splitting the coend as indicated in Figure~\ref{figeqnaddcoend}: The two half edges corresponding to $\gamma$ split the half edges at the single vertex into two parts (where we allow one of the parts to be empty). The dummy variables $t_1$ and $t_2$ are inserted so that each of the parts containing dots in the expression contains one part.

\begin{figure}[h]
	\centering
	\begin{overpic}[scale=0.5]{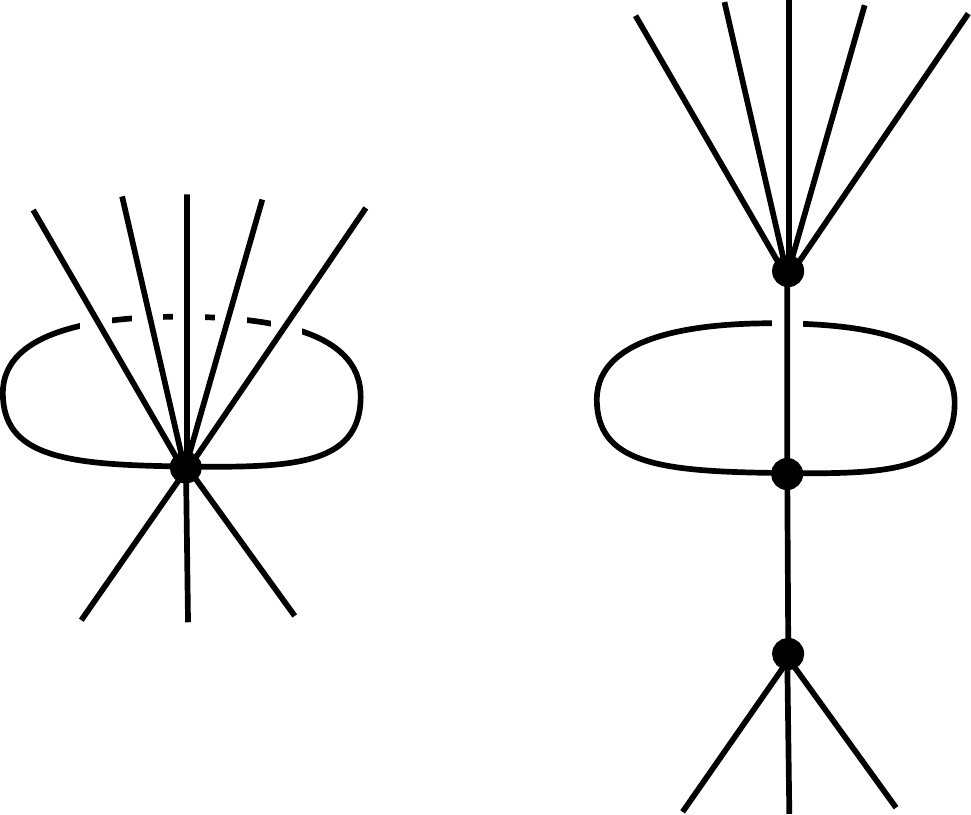}
		\put(38,40){$s$} 
			\put(100,40){$s$}
			\put(82,45){$t_1$}
			\put(82,25){$t_2$}
	\end{overpic}
	\caption{Graphical explanation for \eqref{eqnaddcoend}. The half edges at the top and bottom are connected in an arbitrary way to form the bouquet on the left. They are connected the same on the right}
	\label{figeqnaddcoend}
\end{figure}

By applying~\eqref{eqngamma12}
to the inner coend, we obtain
isomorphisms $\gamma_1(d)$ and $\gamma_2(d)$ 
from
the right hand side of~\eqref{eqnaddcoend}
to $\block^\mu(\Sigma;-)$.
We denote them by $\gamma_1^\mu(d)$ and $\gamma_2^\mu(d)$, respectively.

\begin{corollary}\label{cordehnadapted}
	With the notation as above,  the isomorphism
	\begin{align}
		\psi_\mu:	\block^\mu(\Sigma;-) \ra{\cong} \block(\Sigma;-) \ , 
	\end{align}
	from Proposition~\ref{propmarking},
	becomes equivariant for the $\mathbb{Z}$-action that 
	\begin{itemize}
		\item on the left hand side is given by $\gamma_2^\mu(d) \circ\gamma_1^\mu(d)^{-1}$,
		\item and on the right hand side by the action ${d}_*$ of the Dehn twist $d$ about a simple closed curve that is part of $\mu$.
	\end{itemize}
	In equations,
	\begin{align}
		d_* \circ \psi_\mu = \psi_\mu \circ \gamma_2^\mu(d) \circ \gamma_1^\mu(d)^{-1} \ . 
	\end{align}
\end{corollary}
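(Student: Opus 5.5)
The plan is to reduce the statement to the annulus computation of Proposition~\ref{propdehntwist} (or rather its multi-label version Corollary~\ref{cordehn}) by cutting $\Sigma$ along the boundary of a regular neighborhood of the curve $\gamma$ that belongs to $\mu$. First I would realize the right hand side of \eqref{eqnaddcoend} as the marked space of conformal blocks for an auxiliary marking $\mu'$ of $\Sigma$: the ribbon graph $R'$ obtained from the bouquet $R$ by splitting its single vertex into three vertices as in Figure~\ref{figeqnaddcoend}, with the middle vertex carrying the loop $\gamma$ together with the two new edges labeled $t_1,t_2$. Contracting the two new edges gives morphisms in $\M(\Sigma)$, and I would check that, under the descriptions of $\block^{\mu'}$ and $\block^\mu$ via \eqref{eqndefassoc}, the induced isomorphism is exactly the coend identification \eqref{eqnaddcoend} coming from Corollary~\ref{corspr}. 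Hence $\psi_{\mu'}$ and $\psi_\mu$ differ by this identification by the commuting triangles of Proposition~\ref{propmarking}.

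Second, I would localize the Dehn twist. The middle vertex together with its loop fattens to an annulus $A\subset\Sigma$ containing $\gamma$ as core, with marked intervals on both boundary circles (the legs $t_1,t_2$ and the $s$-edges). Since $d$ is supported in $A$, the zigzag of Figure~\ref{Fig:Dehn} for $A$ can be performed inside the middle corolla of $\mu'$, while the outer pieces of the marking are left untouched; this yields a zigzag $\mu \leftarrow \nu' \rightarrow d.\mu$ in $\M(\Sigma)$ whose legs are the contractions used to define $\gamma_1^\mu(d)$ and $\gamma_2^\mu(d)$. Evaluating $\block$ on this zigzag and using that all structure maps are the coend isomorphisms \eqref{eqnY} applied to the inner coend only, one obtains $\alpha_{\mu,d.\mu}=\gamma_2^\mu(d)\circ\gamma_1^\mu(d)^{-1}$ after identifying $\block^{d.\mu}$ with $\block^\mu$ via $\chi_d$ (Remark~\ref{remmarking}).

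Finally, inserting this into the defining square \eqref{eqnsquare}, $d_*\circ\psi_\mu=\psi^d_\mu\circ\alpha_{\mu,d.\mu}=\psi_\mu\circ\chi_d^{-1}\circ\alpha_{\mu,d.\mu}$, gives the claimed formula; alternatively one can invoke excision (Corollary~\ref{corexcinterval}) equivariantly for $\Map(A)\to\Map(\Sigma)$ and quote Corollary~\ref{cordehn} directly. The main obstacle I expect is the second step: verifying that the local zigzag in the annulus really lifts to a zigzag of markings of $\Sigma$ whose evaluation involves only the inner coend, i.e.\ that the gluing isomorphisms of the open modular functor $\Vo$ are compatible with the Yoneda-type maps $\gamma_1,\gamma_2$. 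I would try to handle this by appealing to the fact that $\RGraphs\to\O$ and the evaluation \eqref{eqndefassoc} are compatible with operadic composition, so that the whole computation is the image under gluing of the annulus computation of Proposition~\ref{propdehntwist}.
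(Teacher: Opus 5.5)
Your proposal is correct and is essentially the paper's own argument. The paper deduces the statement in one line from the annulus computation (Corollary~\ref{cordehn}) together with the equivariant gluing/excision built into Theorem~\ref{thmmain}, which is exactly your alternative route, and your direct lifting of the annulus zigzag of Figure~\ref{Fig:Dehn} into $\M(\Sigma)$, followed by the defining square, is just an unpacked version of that argument.
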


\begin{proof}
	This follows from Theorem~\ref{thmmain} and Corollary~\ref{cordehn}.\end{proof}

\subsection{The elliptic space of conformal blocks}
Thanks to Corollary~\ref{cordehnadapted}, we can calculate the action on the space of conformal blocks of the torus with one boundary component:
Recall that we can obtain the torus $\mathbb{T}^2_1$
with one boundary component from the  ribbon graph $R$ in Figure~\ref{figtorus}. We model $\mathbb{T}^2_1$ simply as $|R|$ and denote the resulting marking by $\mu$.
Then
\begin{align}
	\block^\mu(\mathbb{T}^2_1;p) \cong \int^{u,v \in \catT} \spr{p,u^*,v^*,u,v} 
\end{align} on any compact projective object $p\in\RV$, see also \cite[Example 3.8]{yeralopen} for the non-finite rigid case.

\begin{figure}[h]
	\centering
	\begin{tikzpicture}[scale=0.5]
		\begin{pgfonlayer}{nodelayer}
			\node [style=none] (8) at (0.25, 3.5) {};
			\node [style=none] (9) at (0.25, -3.5) {};
			\node [style=none] (10) at (0.25, 1.5) {};
			\node [style=none] (11) at (0.25, 0.5) {};
			\node [style=none] (12) at (0.25, -0.5) {};
			\node [style=none] (13) at (0.25, -1.5) {};
			\node [style=none] (14) at (0.25, -2.5) {};
			\node [style=none] (15) at (0.25, -3.5) {};
			\node [style=none] (16) at (0.25, -2.5) {};
			\node [style=none] (17) at (0.25, 3.5) {};
			\node [style=none] (18) at (0.25, 2.5) {};
			\node [style=none] (19) at (-4.75, 1) {};
			\node [style=none] (20) at (-4.75, -1) {};
			\node [style=none] (22) at (-4.75, -1) {};
			\node [style=none] (23) at (-5.25, 0) {$1$};
			\node [style=none] (25) at (0, 3.75) {$2$};
			\node [style=none] (26) at (0, 2) {$3$};
			\node [style=none] (27) at (0, 0) {$4$};
			\node [style=none] (28) at (0, -3.75) {$5$};
			\node [style=none] (32) at (-1.75, 0) {};
			\node [style=none] (33) at (-4.75, 0) {};
			\node [style=none] (35) at (0.25, -3) {};
			\node [style=none] (36) at (0.25, -1) {};
			\node [style=none] (37) at (0.25, 1) {};
			\node [style=none] (38) at (0.25, 3) {};
			\node [style=none] (39) at (-2.25, 0.5) {$R$};
		\end{pgfonlayer}
		\begin{pgfonlayer}{edgelayer}
			\draw [style=open] (10.center) to (11.center);
			\draw [style=open] (13.center) to (12.center);
			\draw [style=open] (16.center) to (15.center);
			\draw [style=open] (17.center) to (18.center);
			\draw [bend right=90, looseness=3.00] (18.center) to (10.center);
			\draw [bend right=90, looseness=3.25] (11.center) to (12.center);
			\draw [bend right=90, looseness=3.00] (13.center) to (16.center);
			\draw (22.center) to (15.center);
			\draw (19.center) to (17.center);
			\draw [style=open] (19.center) to (20.center);
			\draw [style=mydotsblack, bend left=90, looseness=1.50] (18.center) to (12.center);
			\draw [style=mydotsblack, bend left=90, looseness=1.75] (17.center) to (13.center);
			\draw [style=mydotsblack, bend left=90, looseness=1.75] (10.center) to (15.center);
			\draw [style=mydotsblack, bend left=90, looseness=1.75] (11.center) to (16.center);
			\draw [style=red] (33.center) to (32.center);
			\draw [style=green, in=-180, out=90, looseness=0.75] (32.center) to (38.center);
			\draw [style=violet, in=-180, out=30] (32.center) to (37.center);
			\draw [style=green, in=180, out=-30] (32.center) to (36.center);
			\draw [style=violet, in=180, out=-90, looseness=0.75] (32.center) to (35.center);
			\draw [style=violet, bend right=90, looseness=1.75] (35.center) to (37.center);
			\draw [style=green, bend right=90, looseness=1.75] (36.center) to (38.center);
		\end{pgfonlayer}
	\end{tikzpicture}
		
	\caption{Ribbon graph for the torus with one boundary component.
		The Dehn twist $d_1$ about the green simple closed curve and the Dehn twist $d_2$ about the violet simple closed curve generate the mapping class group  $\Map(\mathbb{T}_1^2)\cong B_3$.}
	\label{figtorus}
\end{figure}

The mapping class group of $\mathbb{T}_1^2$ is given by
\begin{align}
	\Map(\mathbb{T}_1^2)\cong B_3 = \spr{d_1,d_2 \,|\, d_1d_2d_1=d_2 d_1 d_2} \ , 
\end{align}
where $B_3$ is the braid group on three strands $d_1$ and $d_2$ are the Dehn twists described in Figure~\ref{figtorus}, see e.g.\  \cite[Section~3.1.1]{farbmargalit} for more background.
A more common presentation 
in quantum topology is by means of the generators $T=d_1$ and $S=d_2d_1d_2$ (see e.g.~\cite[Section~3.1]{baki}) subject to the relation
\begin{align}
	(ST)^3=S^2 \ . 
\end{align}

Now Corollary~\ref{cordehnadapted} tells us:

\begin{corollary}\label{corT12}
	For a compact projective object $p\in \RV$, the representation of
	on $\block(\mathbb{T}_1^2;p)$
	is equivalent to the representation
	on $\int^{u,v \in \catT} \spr{p,u^*,v^*,u,v} $ in which 
	the generators $T$ and $S$ of $\Map(\mathbb{T}_1^2)$ from above act
	 by 
	\begin{align} T &\mapsto \gamma_2^\mu(d_1) \circ\gamma_1^\mu(d_1)^{-1} \ , \\
		S &\mapsto  \gamma_2^\mu(d_2) \circ\gamma_1^\mu(d_2)^{-1} \circ \gamma_2^\mu(d_1) \circ\gamma_1^\mu(d_1)^{-1}\circ \gamma_2^\mu(d_2) \circ\gamma_1^\mu(d_2)^{-1} \ . 
	\end{align}
	The same remains true for the punctured torus instead of the torus with one boundary, after replacing $\int^{u,v \in \catT} \spr{p,u^*,v^*,u,v} $ with $\int^{u,v \in \catT} \spr{u^*,v^*,u,v} $.
\end{corollary}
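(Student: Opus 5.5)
The plan is to read off Corollary~\ref{corT12} from Corollary~\ref{cordehnadapted}, applied to the single marking $\mu=(R,\id)$ given by the ribbon graph $R$ of Figure~\ref{figtorus}. First I identify $\block^\mu(\mathbb{T}_1^2;p)$. The graph $R$ has one vertex with five cyclically ordered half edges $1,\dots,5$, where $1$ is the leg and $2,\dots,5$ are paired into two loops. By the description of $\Ao^{(R)}$ in Section~\ref{secmodex}, we insert the coevaluation object $\Delta=\int^{t\in\catT} t^*\boxtimes t$ from Theorem~\ref{thmcfe2} into each pair of glued slots of $\spr{-,-,-,-,-}$. Cocontinuity of $\spr{\dots}$ (Corollary~\ref{corspr}) then gives $\int^{u,v\in\catT}\spr{p,u^*,v^*,u,v}$. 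The interleaved order reflects that the two loops cross, and the Coherent Spider Theorem~\ref{thmcohspider} makes this expression well-defined up to canonical isomorphism. Composing with $\psi_\mu$ from Proposition~\ref{propmarking} gives the claimed identification with $\block(\mathbb{T}_1^2;p)$.

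Next I treat the generators. The green and violet loops of $R$ are mapped by $f=\id$ to the simple closed curves defining $d_1$ and $d_2$, so each is a part of $\mu$ in the sense of the previous subsection. For each $d_i$, Corollary~\ref{cordehnadapted} applies. I split the single-vertex coend as in \eqref{eqnaddcoend}: the two half edges of the relevant loop divide the remaining half edges into two parts, and I insert the dummy variables $t_1,t_2$ into these parts. This yields $d_{i*}\circ\psi_\mu=\psi_\mu\circ\gamma_2^\mu(d_i)\circ\gamma_1^\mu(d_i)^{-1}$. Since $\Map(\mathbb{T}_1^2)$ acts on $\block$ through a group representation, products of mapping classes act by the corresponding composites. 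With $T=d_1$ and $S=d_2d_1d_2$ this gives the stated formulas, since $\psi_\mu$ intertwines each factor.

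For the punctured torus, Theorem~\ref{thmmain} together with \eqref{eqnevO} identifies $\block(\mathbb{T}^2_\times)$ with the value on the surface with the leg deleted. Equivalently, I use the ribbon graph $R$ with leg $1$ removed, which requires the extension of the marking formalism to punctured surfaces. Alternatively, I insert the monoidal unit $I$ for $p$ and use $\spr{I,X_1,\dots,X_4}\cong\spr{X_1,\dots,X_4}$, which follows from unitality (C) and Theorem~\ref{thmcohspider}. The Dehn twist computation then goes through verbatim.

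The main obstacle is checking that the splitting \eqref{eqnaddcoend} and the isomorphisms $\gamma_1^\mu,\gamma_2^\mu$ are compatible with the spider isomorphisms for the interleaved half-edge order. Concretely, I must verify that the zigzag of markings in Figure~\ref{Fig:Dehn}, embedded in a regular neighbourhood of each loop, is a genuine zigzag in $\M(\mathbb{T}_1^2)$ connecting $\mu$ to $d_i.\mu$. I would get this by applying Corollary~\ref{cordehn} to the annulus neighbourhood of the loop, with the remaining half edges as its boundary labels, and then using excision (Corollary~\ref{corexcinterval}) to glue back.
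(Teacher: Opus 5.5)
Your proposal is correct and follows the paper's route: the paper obtains the corollary directly from Corollary~\ref{cordehnadapted} applied to the one-vertex marking $\mu$ of Figure~\ref{figtorus}, with $\block^\mu(\mathbb{T}^2_1;p)\cong\int^{u,v\in\catT}\spr{p,u^*,v^*,u,v}$ and $S=d_2d_1d_2$, so the zigzag check in your last paragraph is already contained in Corollary~\ref{cordehnadapted}, whose proof is exactly Corollary~\ref{cordehn} plus excision. For the punctured torus, no extension of the marking formalism is needed: the torus with one free boundary component already lies in $\O(0)$, and Proposition~\ref{propcolim} (which excludes only the annulus without marked intervals) applies to it via the leg-free one-vertex ribbon graph.
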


We may now deduce a relation to the elliptic double of Brochier-Jordan~\cite{brochierjordane}:

\begin{corollary}\label{corell}
	Through the pairing, we may see $\block(\mathbb{T}_1^2;-):\RV\to\Vect$ as an object in $\RV$ that, by abuse of notation, we denote also by
	$\block(\mathbb{T}_1^2)$.
	Then
	\begin{align}
		\block(\mathbb{T}_1^2) \cong \mathbb{F}^{\otimes 2}
	\end{align} with $\mathbb{F}=\int^{t\in \catT} t^* \otimes t$; the identification uses the braiding of $\RV$ in a non-canonical way.
	The induced $B_3$-action on $\mathbb{F}^{\otimes 2}$
	obtained via Corollary~\ref{corT12} generalizes the $B_3$-action on the elliptic double of Brochier-Jordan in the finite rigid case.
\end{corollary}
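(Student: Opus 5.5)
We start from the description $\block(\mathbb{T}_1^2;p)\cong\int^{u,v\in\catT}\spr{p,u^*,v^*,u,v}$ furnished by the marking $\mu$ of Figure~\ref{figtorus}, together with Corollary~\ref{corT12}. To see $\block(\mathbb{T}_1^2;-)$ as an object of $\RV$, we use non-degeneracy of the pairing: a cocontinuous functor $F:\RV\to\Vect$ corresponds to the object $\int^{p\in\catT}F(p)\otimes p^*$, and conversely any $Z\in\RV$ gives $\spr{-,Z}$. So it suffices to produce a natural isomorphism $\spr{p,u^*,v^*,u,v}$-coend $\cong\spr{p,\mathbb{F}^{\otimes 2}}$ for $p\in\catT$.

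First, we rewrite the five-point pairing via the Spider Theorem (Theorem~\ref{thmcohspider}) and the cocontinuous extension of $\spr{-\mid-,-}$ to the monoidal product of $\RV$. Cyclic pairings $\spr{X_1,\dots,X_n}$ are, up to coherent isomorphism, $\spr{X_1,X_2\otimes\cdots\otimes X_n}$; this follows from the choice $c$ of coend expression that successively multiplies adjacent entries, using the definition $\spr{r,u,t}=\int^s\spr{s,r}\otimes\spr{s\mid t,u}$ and cocontinuity. Hence
\begin{align}
\int^{u,v}\spr{p,u^*,v^*,u,v}\cong\spr{p,\textstyle\int^{u,v}u^*\otimes v^*\otimes u\otimes v}\ .
\end{align}
The coends pass inside because $\spr{p,-}$ and $\otimes$ are cocontinuous in each variable. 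The remaining task is an isomorphism $\int^{u,v}u^*\otimes v^*\otimes u\otimes v\cong \mathbb{F}\otimes\mathbb{F}$. For this we apply the braiding of $\RV$ once, moving $v^*$ past $u$, to get $\int^{u,v}u^*\otimes u\otimes v^*\otimes v$, and then Fubini for coends. This is exactly where the braiding enters non-canonically: one may equally use $c^{-1}$, or braid a different pair, so the identification is not canonical, as the statement says.

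For the comparison with Brochier--Jordan, we restrict to the finite rigid case, where $\RV=\catf{ind}\,\Vmod$ and $t^*\cong\nakar Dt$ by Proposition~\ref{propfinite}. Rigidity and unimodularity of a modular category give $\nakar D t\cong t^\vee$ up to the distinguished invertible object, which is trivial here. So $\mathbb{F}$ is the canonical coend $\int^t t^\vee\otimes t$, and $\mathbb{F}^{\otimes2}$ is the underlying object of the elliptic double. Corollary~\ref{corfiniterigidmodulialgebra} identifies $\blockl(\mathbb{T}^2_1;-)$ with $\Hom(I,-\otimes\moduli^V_{\mathbb{T}_1^2})$ $\Map$-equivariantly, and $\moduli^V_{\mathbb{T}^2_1}$ is the elliptic double with its $B_3$-action by \cite{brochierjordane,bzbj}. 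What remains is to check that the isomorphism $\moduli\cong\mathbb{F}^{\otimes2}$ used there agrees with ours. The $B_3$-action transported via Corollary~\ref{corT12} is determined by the Dehn twist operators $\gamma_2^\mu(d_i)\circ\gamma_1^\mu(d_i)^{-1}$, so uniqueness of the action (the square \eqref{eqnsquare}) reduces the comparison to matching markings.

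The main obstacle is this last matching: making sure that the braiding choice in our identification coincides with the one implicit in the factorization homology presentation of $\moduli_{\mathbb{T}^2_1}$ as $\mathbb{F}\otimes\mathbb{F}$, so that the two actions agree rather than differ by a conjugation. I would handle it by noting that both presentations come from the same ribbon graph (the bouquet of two circles in Figure~\ref{figtorus}). If the conventions differ, the actions differ by conjugation with a braiding, which is still an equivalence of representations. That is enough for the word \emph{generalizes}.
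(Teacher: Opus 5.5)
Your proposal is correct and follows the paper's proof essentially step for step. You rewrite $\int^{u,v}\spr{p,u^*,v^*,u,v}$ as $\spr{p,\int^{u,v}u^*\otimes v^*\otimes u\otimes v}$ using the compatibility of the pairing with the monoidal product and cocontinuity, braid $v^*$ past $u$ (the non-canonical step), split the coend as $\mathbb{F}\otimes\mathbb{F}$, and get the Brochier--Jordan comparison from Corollary~\ref{corfiniterigidmodulialgebra}; your extra remarks ($t^*\cong\nakar Dt\cong t^\vee$ in the unimodular case, and that different identifications only change the $B_3$-action by conjugation) just make explicit what the paper leaves implicit.
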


\begin{proof}We have
	\begin{align}
		\block^\mu(\mathbb{T}^2_1;p) \cong \int^{u,v \in \catT} \spr{p,u^*,v^*,u,v} \cong \int^{u,v \in \catT} \spr{p, u^* \otimes v^* \otimes u \otimes v  }
	\end{align} thanks to the compatibility between the pairing and the monoidal product.
	After braiding $v^*$ over $u$, we arrive at 
	\begin{align}
		\block^\mu(\mathbb{T}^2_1;p) &\cong \int^{u,v \in \catT} \spr{p, u^* \otimes u \otimes v^*\otimes v  }\\&\cong \spr{p, \int^{u,v \in \catT}u^* \otimes u \otimes v^*\otimes v}\\&\cong 
		\spr{p, \left(\int^{u \in \catT}u^* \otimes u\right) \otimes \left(\int^{v\in\catT} v^*\otimes v\right)}\\&\cong \spr{p,\mathbb{F}^{\otimes 2}} \ . 
	\end{align}
	The comparison result to the elliptic double, i.e.\ the moduli algebra for the torus with one boundary component, is a consequence of Corollary~\ref{corfiniterigidmodulialgebra}.
\end{proof}

\subsection{Description on generators}
In Corollary~\ref{corT12}, we have treated the torus with one boundary component.
In this subsection, we cover surfaces of genus $g\ge 2$ (we leave out the torus with more boundary components and genus zero surfaces with more than two boundary components).

Suppose that for $1\le i\le m$ we are given a Dehn twist $d_i$ about a simple closed curve that is part of a marking $\mu_i$. Then we abbreviate (the notation generalizes the one already in use in Corollary~\ref{cordehnadapted})
\begin{align}
	\psi_i &:= \psi_{\mu_i} \ , \\
	\gamma_j^i &:= \gamma_j^{\mu_i}(d_i) \quad \text{for}\quad j=1,2 \ .
\end{align}

\begin{theorem}\label{thmmcggen}
	Suppose that $\Sigma$ is a connected surface of genus $g\ge 2$, $n$ boundary components and $p$ punctures with $n+p\ge 1$.
	Then the $\Map(\Sigma)$-action on $\block(\Sigma;-)$ is determined on a system of generators for
	$\Map(\Sigma)$ given by Dehn twists $d_1,\dots,d_m$ about non-separating simple closed curves which are part of corresponding markings $\mu_1,\dots , \mu_n$
	by the fact that $d_i$ acts by
	\begin{align}
		\label{eqndi}	{d_i}_* = \psi_{i} \circ \gamma_2^{i} \circ {\gamma_1^{i}}^{-1} \circ \psi_{i}^{-1} \quad \text{for}\quad 1\le i\le m \ . 
	\end{align}
\end{theorem}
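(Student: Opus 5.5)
The plan has three steps: find a suitable generating set of Dehn twists, realize each twist as part of a one-vertex marking, and then apply Corollary~\ref{cordehnadapted} twist by twist.

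\textbf{Step 1: generators.} I would start from a classical fact about mapping class groups. For a surface of genus $g\ge 2$ with boundary components and punctures, $\Map(\Sigma)$ is generated by finitely many Dehn twists about non-separating simple closed curves. One source is the Humphries generators, supplemented by twists that handle the boundary components and punctures; see \cite[Section~4.4]{farbmargalit}.

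Genus $g\ge 2$ is used precisely here. Boundary-parallel twists and twists about curves separating off punctures can be written as products of twists about non-separating curves via the lantern relation, which needs enough genus. So the generating set $d_1,\dots,d_m$ can be chosen with every curve non-separating.

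Since the action of a group is determined by the action of any generating set, it then suffices to establish \eqref{eqndi} for each $d_i$ separately.

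\textbf{Step 2: markings.} For each non-separating curve $c_i$, I would build a marking $\mu_i=(R_i,f_i)$ in which $R_i$ is a one-vertex ribbon graph, a bouquet of circles, with one loop sent by $f_i$ to $c_i$. This is possible because a non-separating curve extends to a system of arcs and curves cutting $\Sigma$ into a disk. Concretely, complete $c_i$ to a geometric symplectic basis together with arcs to the boundary intervals or punctures.

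For components with punctures or boundary circles only, I would first pass to the description of Theorem~\ref{thmmain}. There, boundary circles are replaced by marked intervals through $\Sigma^\ddagger$ and the band tensor functor, and punctures are handled via \eqref{eqnevO}. This replacement is equivariant for mapping classes fixing the base points, so the problem reduces to the open situation where markings exist.

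\textbf{Step 3: the formula.} For a twist about a curve that is part of a one-vertex marking, Corollary~\ref{cordehnadapted} gives
\begin{align}
d_{i*}\circ\psi_i=\psi_i\circ\gamma_2^i\circ{\gamma_1^i}^{-1} \ .
\end{align}
Composing on the right with $\psi_i^{-1}$ yields \eqref{eqndi}.

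\textbf{Main obstacle.} I expect the hardest part to be Step~2 in the presence of closed boundary circles and punctures. The markings, and hence Corollary~\ref{cordehnadapted}, are a priori only available when there is at least one marked interval per component. So I must check that the reduction $\Sigma\mapsto\Sigma^\ddagger$, with $L$ inserted, intertwines the Dehn twist actions for curves that stay away from the new intervals. I would first try choosing each $c_i$ disjoint from the chosen base-point arcs. Equivariance of the descent to factorization homology in the proof of Theorem~\ref{thmmain} should then transport the formula.
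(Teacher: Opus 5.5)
Your proposal is correct and is essentially the paper's argument: take a generating set of non-separating Dehn twists, realize each curve as a loop of a one-vertex marking, and apply Corollary~\ref{cordehnadapted}. The only variation is that the paper builds a single one-vertex marking $\mu$ from the $4g$-gon model containing a non-separating curve $c$, and sets $\mu_i:=f_i.\mu$ for a mapping class $f_i$ with $f_i(c)=c_i$ (change of coordinates), instead of completing each $c_i$ to a spine by hand.

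The obstacle you anticipate in Step~2 does not really arise. Markings and the isomorphisms $\psi_\mu$ of Proposition~\ref{propcolim} exist for every $\Sigma\in\O(n)$ other than the annulus without marked intervals, and this includes surfaces whose only boundary components are free, i.e.\ punctures, so no detour through \eqref{eqnevO} is needed. For parametrized circles one has $\Map(\Sigma)=\Map(\Sigma^\ddagger)$, so the reduction to $\Sigma^\ddagger$ is automatically equivariant and you need not choose curves avoiding the base-point arcs.
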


\begin{proof}The fact that a generating system of non-separating Dehn twists is available can be found e.g.~in \cite[Corollary 4.16]{farbmargalit}. Therefore, it is enough to prove that we can extend each of them to a marking and hence can conclude \eqref{eqndi} from Corollary~\ref{cordehnadapted}.
	
	Clearly, we have a marking $\mu$ for $\Sigma$ for which a simple closed curve $c$ that is part of $\mu$ is non-separating. This can be easily seen by writing $\Sigma$ as $4g$-gon
	 with identified edges,  plus some cut out disks
	 in the middle
	  to create the boundary circles.
	  Now choose a marking with one vertex, a circle around each cut out disk, and edges going through identified edges of the $4g$-gon. All its edges passing through identified sides are non-separating simple closed curves.
	
	Next suppose that $d_i$ is the Dehn twist about $c_i$.
	Then there is a mapping class $f_i$ mapping $c$ to $c_i$ \cite[Section~1.3]{farbmargalit}. Now $c_i$ is a part of $\mu_i:=f_i.\mu$. This proves that each curve in the system can be extended to a marking and finishes the proof.
\end{proof}

	\begin{question}\label{q4}
			Denote by $D \in \mathbb{N}_0 \cup \{\infty\}$ the order of the operator $d_*$ on $\block(A;-,-)$.
			In the finite rigid case from Section~\ref{secfiniterigid} in which $\RV$ is the ind completion of a modular category $\Vmod$,
			\begin{align} D = |\theta| \ , 
				\end{align}
			where $\theta$ is the balancing of $\Vmod$.
			Indeed, by the comparison from Corollary~\ref{corfiniterigid} we know that $D$ is the order of the balancing $\theta^{-1}\boxtimes \theta$ of $\overline{\Vmod}\boxtimes\Vmod$, which is $|\theta|$. 
			On closed surfaces, the order of the operator associated to a Dehn twist about a \emph{non-separating} simple closed curve is also $|\theta|$ \cite[Theorem 4.1]{mwdehn}. How can these results be generalized beyond the finite rigid case?
		\end{question}

\needspace{15\baselineskip}
\section{Finite rank Theorem and stability}
In this section, we prove two structural results on spaces of conformal blocks $\block$, namely a finiteness result
and also a connection to the Euler characteristic of a surface.

\subsection{Finiteness properties}
For any reasonable construction of spaces of conformal blocks, one should have a finiteness statement in the $C_2$-cofinite case, see e.g.~\cite[Proposition 5.1.1]{DGT2} for such a result in a different context.
We can prove such a statement for the spaces of conformal blocks $\block$.
The proof will need several preparations:

\begin{lemma}\label{lemmafiltered}
	Let $Y:\cat{J}\to\Cat$ be a category-valued diagram such that all $Y(i)$ are cocomplete and $Y(f):Y(i)\to Y(j)$ for all morphisms $f:i\to j$ in $\cat{J}$ preserve filtered colimits. Then all structure maps $\pi_i : \operatorname{2-lim}\, Y\to Y(i)$
preserve filtered colimits, i.e.\ filtered colimits are computed factor-wise.	\end{lemma}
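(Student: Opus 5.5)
The plan is to use the explicit model of the pseudo-limit (2-limit) of a $\Cat$-valued diagram and then check directly that a filtered colimit computed factor-wise is again an object of the 2-limit, and that it is a colimit there.

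\emph{The model.} An object of $\operatorname{2-lim}\, Y$ is a family $(y_i)_{i\in\cat{J}}$ with $y_i\in Y(i)$, together with isomorphisms $\phi_f : Y(f)y_i \ra{\cong} y_j$ for every $f:i\to j$. These isomorphisms are subject to the usual cocycle conditions involving the coherence data of $Y$ for composition and identities. A morphism $(y_i,\phi)\to(y'_i,\phi')$ is a family of morphisms $a_i:y_i\to y'_i$ satisfying $a_j\circ\phi_f=\phi'_f\circ Y(f)a_i$. The projection $\pi_i$ forgets everything except $y_i$.

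\emph{Constructing the colimit.} Let $D:\cat{K}\to\operatorname{2-lim}\, Y$ be a filtered diagram, $k\mapsto (y_i^k,\phi^k)$.
\begin{pnum}
\item For each $i$, set $y_i:=\colim_k y_i^k$ in $Y(i)$; this exists because $Y(i)$ is cocomplete.
\item Since $Y(f)$ preserves filtered colimits, the canonical comparison $\colim_k Y(f)y_i^k\to Y(f)y_i$ is an isomorphism.
\item The natural family $\phi_f^k$ therefore induces an isomorphism $\phi_f: Y(f)y_i\cong\colim_k Y(f)y_i^k\ra{\cong}\colim_k y_j^k=y_j$.
\item The cocycle conditions for $\phi$ follow from those for each $\phi^k$. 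Both sides of each condition are maps out of a colimit, and they agree after precomposition with every colimit injection; here one uses naturality of the coherence isomorphisms of $Y$ with respect to the comparison maps.
\item The colimit injections $y_i^k\to y_i$ are compatible with $\phi^k$ and $\phi$ by construction, so they give a cocone in $\operatorname{2-lim}\, Y$.
\end{pnum}

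\emph{Universality.} Let $(z_i,\psi)$ be another cocone. The component maps $y_i\to z_i$ are induced factor-wise by the universal property in each $Y(i)$. The compatibility of these maps with $\phi$ and $\psi$ is an equality of maps out of $Y(f)y_i\cong\colim_k Y(f)y_i^k$. Because this is again a colimit, the equality can be checked on injections, where it holds by assumption. Uniqueness of the induced map holds componentwise. Hence $\pi_i$ sends this colimit to $\colim_k y_i^k$, which is the claim.

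\emph{Main obstacle.} The step needing care is the bookkeeping of the coherence isomorphisms of $Y$ (if $Y$ is a pseudofunctor) in the cocycle verification. Everything reduces to maps out of colimits that are preserved by the $Y(f)$, so I expect this to be routine but tedious. Note that filteredness is used only through the hypothesis that the $Y(f)$ preserve filtered colimits. If $Y$ happens to be a strict diagram, one could instead cite the general fact that pseudo-limits of categories create the colimits preserved by all transition functors.
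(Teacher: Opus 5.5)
Your proposal is correct and follows essentially the same route as the paper. Both use the explicit model of $\operatorname{2-lim}\, Y$ by families $(y_i)$ with coherence isomorphisms, form the filtered colimits factor-wise, and glue them via $Y(f)\colim_k y_i^k \cong \colim_k Y(f)\, y_i^k \cong \colim_k y_j^k$, which uses that the $Y(f)$ preserve filtered colimits. You also spell out the cocycle and universality checks that the paper leaves as ``one can verify''; these rest on the fact that, after applying $Y(f)$, the colimit injections remain colimit injections and are therefore jointly epimorphic.
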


\begin{proof}
	The category $\operatorname{2-lim}\, Y$ is the category of sequences $(y_i)_{i\in\cat{J}}$ with $y_i\in Y(i)$ together with coherence isomorphisms
	$\varphi_f:Y(f)y_i\to y_j$ for all morphisms $f:i\to j$
	such that \begin{itemize}
		\item  $\varphi_{\id_i}$ for all $i\in \cat{J}$ is the canonical isomorphism $Y(\id_j) x_j\cong x_j$,
		\item $\varphi_{gf} = \varphi_g \circ X(g)\varphi_f\circ \alpha_{g,f}$ for composable morphisms $i\ra{f}j\ra{g}\ell$ with the coherence isomorphism $\alpha_{g,f} : Y(gf)\cong Y(g)Y(f)$.
		\end{itemize}
	Now let $\cat{U}$ be a filtered category and $Z:\cat{U}\to \operatorname{2-lim}\, Y$ a diagram with colimit $\colim Z$. 
	The diagram $Z$ consists of a coherent family of diagrams $Z_i : \cat{U}\to Y(i)$. 
	This means that the family $(\colimsub{\cat{U}} Z_i)_{i\in\cat{J}}$ together with the isomorphisms
	\begin{align}
		Y(f) \colimsub{\cat{U}} Z_i \cong \colimsub{\cat{U}} Y(f)Z_i \cong \colimsub{\cat{U}} Z_j
		\end{align} for all morphisms $f:i\to j$ in $\cat{J}$
	is coherent. This uses that the $Y(f)$ preserve filtered colimits.
	One can verify that this family is the filtered colimit of $Z:\cat{U}\to \operatorname{2-lim}\, Y$.
	With this description of filtered colimits in $Z:\cat{U}\to \operatorname{2-lim}\, Y$, we can deduce that the structure maps $\pi_i : \operatorname{2-lim}\, Y\to Y(i)$ preserve filtered colimits.
	\end{proof}

\begin{proposition}\label{propcompact}
	Let $X:\cat{J}\to\PrL$ be a diagram in $\PrL$ such that $X(j)$ has enough compact objects and such that $X(f):X(i)\to X(j)$ for every morphism $f:i\to j$ in $\cat{J}$
	preserves compact objects.
	Suppose now that $(\varphi_i :X(i)\to T)_{i\in\cat{I}}$ is a homotopy coherent cone under $X$ such that $T$ has enough compact objects and all $\varphi_i$ preserve compact objects.
	Then the induced map $\operatorname{2-colim}\, X \to T$ preserves compact objects.
	\end{proposition}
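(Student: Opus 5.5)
The strategy is to pass from colimits in $\PrL$ to limits of right adjoints, and then to test compactness of an object through mapping spaces into filtered colimits, using Lemma~\ref{lemmafiltered}.

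First, recall that in $\PrL$ a 2-colimit of $X:\cat{J}\to\PrL$ is computed as the 2-limit, in $\Cat$, of the diagram of right adjoints $X^R:\cat{J}^\op\to\Cat$, $i\mapsto X(i)$, $f\mapsto X(f)^R$. The structure maps $\iota_i:X(i)\to C:=\operatorname{2-colim}\,X$ are the left adjoints of the projections $\pi_i:C\simeq\operatorname{2-lim}\,X^R\to X(i)$. A cocontinuous functor between categories with enough compact objects preserves compact objects if and only if its right adjoint preserves filtered colimits. Indeed, if $F\dashv G$ and $c$ is compact, then $\Hom(Fc,\colim y_\alpha)\cong\Hom(c,G\colim y_\alpha)$. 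If $G$ preserves filtered colimits, this is $\colim\Hom(c,Gy_\alpha)\cong\colim\Hom(Fc,y_\alpha)$. Conversely, if $F$ preserves compact objects, the comparison map $\colim Gy_\alpha\to G\colim y_\alpha$ is tested against compact generators $c$ and is an isomorphism there. Applying this to the hypotheses, each $X(f)^R$ and each $\varphi_i^R:T\to X(i)$ preserves filtered colimits.

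Second, write $\Phi:C\to T$ for the induced map. Its right adjoint $\Phi^R:T\to\operatorname{2-lim}\,X^R$ is the functor $t\mapsto(\varphi_i^R t)_i$, together with the coherence isomorphisms coming from the cone structure. By Lemma~\ref{lemmafiltered}, applied to $Y=X^R$ (its values are cocomplete and its transition maps preserve filtered colimits), filtered colimits in $\operatorname{2-lim}\,X^R$ are computed factor-wise. Since each $\varphi_i^R$ preserves filtered colimits, the canonical map $\colim\Phi^R(t_\alpha)\to\Phi^R(\colim t_\alpha)$ is, factor-wise, the isomorphism $\colim\varphi_i^R t_\alpha\cong\varphi_i^R\colim t_\alpha$. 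Hence $\Phi^R$ preserves filtered colimits.

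Third, to conclude that $\Phi$ preserves compact objects, I would either invoke the equivalence from the first step for $C$, or argue directly. For a compact $c\in C$ and a filtered diagram $t_\alpha$ in $T$, we have $\Hom_T(\Phi c,\colim t_\alpha)\cong\Hom_C(c,\Phi^R\colim t_\alpha)\cong\Hom_C(c,\colim\Phi^R t_\alpha)\cong\colim\Hom_T(\Phi c,t_\alpha)$. The direct argument only needs compactness of $c$ in $C$, so it does not require knowing that $C$ has enough compact objects. The statement is about compact objects of $\operatorname{2-colim}\,X$ in general, so I would use the direct argument.

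I expect the main obstacle to be the coherence bookkeeping in the second step. One has to check that the factor-wise isomorphisms are compatible with the coherence data $\varphi_f$ of $\Phi^R$, so that they assemble into an isomorphism in the 2-limit rather than merely in each factor. One also has to justify the identification of the 2-colimit in $\PrL$ with the 2-limit of right adjoints in the bicategorical setting. For the latter I would cite the standard fact, for instance the calculation recipes of \cite{chirjf}.
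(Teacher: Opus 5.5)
Your proposal is correct and follows essentially the same route as the paper. You identify the 2-colimit in $\PrL$ with the 2-limit of right adjoints via \cite{chirjf}, and use Lemma~\ref{lemmafiltered} to show that the right adjoint $T\to\operatorname{2-lim}\,X^R$ of the induced map preserves filtered colimits. You then conclude directly that its left adjoint preserves compact objects, without needing compact generation of the 2-colimit, exactly as in the paper's parenthetical remark.
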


\begin{proof}
	Denote by $U:\PrL\to\Cat$ the forgetful functor.
	By \cite[Proposition~2.1.11]{chirjf} the underlying category of $\operatorname{2-colim}\, X$ is the 2-limit in $\Cat$ over the categories $UX(i)$ and the arrows $X(f)^\text{R} : UX(j)\to UX(i)$, where $X(f)^\text{R}$ is the right adjoint to $X(f)$. Let us denote this $\Cat$-valued diagram by $Y:\cat{J}^\op\to \Cat$.
	Since the $X(f)$ preserve compact objects and since all $X(i)$ are compactly generated, the functors $X(f)^\text{R}$ preserve filtered colimits.
	The right adjoints $\varphi^\text{R}_i:UT\to  UX(i)$ also preserve filtered colimits.	From the description of filtered colimits in $\operatorname{2-lim}\, Y$ by Lemma~\ref{lemmafiltered}, we can deduce that the resulting map $UT\to \operatorname{2-lim}\, Y$ preserves filtered colimits.
	Its left adjoint $\operatorname{2-colim}\, X \to T$ is by  \cite[Proposition~2.1.11]{chirjf}
	the map induced by the map $\varphi$ in $\PrL$.
	The left adjoint of a functor preserving filtered colimits preserves compact objects
	(this does not need that $\operatorname{2-colim}\, X$ has enough compact objects).
	\end{proof}

\begin{lemma}\label{lemmafinite}
	Let $\cat{A}$ be a framed $E_2$-algebra in $\PrL$ obtained from a framed $E_2$-algebra in $\Prc$ by inclusion $\Prc\subset \PrL$, and
	let $F: \int_\Sigma \cat{A} \to \Vect$ be a cocontinuous functor such that the restriction along any map $\cat{A}^{\boxtimes n}\to\int_\Sigma \cat{A}$ induced by an embedding of $n$ disks into $\Sigma$ sends compact objects in $\cat{A}^{\boxtimes n}$ to finite-dimensional vector spaces.
	Then $\dim\, F(X) < \infty$ for any compact object $X\in\int_\Sigma \cat{A}$.
	\end{lemma}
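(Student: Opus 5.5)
The plan is to apply Proposition~\ref{propcompact} to the colimit diagram defining factorization homology. Let $\cat{J}$ be the indexing category of disk embeddings $\varphi:(\mathbb{D}^2)^{\sqcup n}\to\Sigma$, and let $X:\cat{J}\to\PrL$ be the diagram $\varphi\mapsto\cat{A}^{\boxtimes n}$. The transition maps are built from the monoidal structure of $\cat{A}$: products, units and braidings/balancings. Since $\cat{A}$ comes from $\Prc$, all these maps preserve compact objects. Each $\cat{A}^{\boxtimes n}$ is compactly generated, because $\boxtimes$ in $\Prc$ takes compact generators $X_1\boxtimes\dots\boxtimes X_n$ to compact generators.

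As target I would take $T=\catf{ind}\,\vect$, viewed as $\Vect$. This is compactly generated, and its compact objects are exactly the finite-dimensional vector spaces. The cone $(F\circ\iota_\varphi)_\varphi$, where $\iota_\varphi:\cat{A}^{\boxtimes n}\to\int_\Sigma\cat{A}$ are the structure maps, is a homotopy coherent cone under $X$. It is induced by $F$ through the universal property $\int_\Sigma\cat{A}=\operatorname{2-colim}\,X$, so the induced map is $F$ itself up to canonical isomorphism.

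By hypothesis each leg $F\circ\iota_\varphi$ sends compact objects to finite-dimensional vector spaces, i.e.\ it preserves compact objects. Proposition~\ref{propcompact} then says that the induced map $\operatorname{2-colim}\,X\to\Vect$, namely $F$, preserves compact objects. Hence $F(X)$ is finite-dimensional for every compact $X\in\int_\Sigma\cat{A}$.

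The main point to check is that the hypotheses of Proposition~\ref{propcompact} really hold: the transition functors must preserve compact objects coherently. This follows from the $\Prc$-origin of $\cat{A}$, since the diagram actually factors through $\Prc\subset\PrL$. One should also note that the factorization homology colimit, although indexed by an $\infty$-category of embeddings, reduces bicategorically to a 2-colimit over an ordinary (or 2-)category. That is the form needed in Proposition~\ref{propcompact} and in \cite[Proposition~2.1.11]{chirjf}. If $\cat{J}$ is a 2-category, I would either strictify it or observe that the proof of Proposition~\ref{propcompact} only uses the description of the 2-colimit as a 2-limit of right adjoints, which works equally for 2-categorical indexing.
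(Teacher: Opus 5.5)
Your proposal is correct and is essentially the paper's proof. The paper likewise views $\int_\Sigma \cat{A}$ as a 2-colimit in $\PrL$ of a diagram of compactly generated categories whose transition maps preserve compact objects, since the framed $E_2$-structure lives in $\Prc$. It then reads the hypothesis as saying that each restriction of $F$ preserves compact objects, and applies Proposition~\ref{propcompact} with target $\Vect$. Your remark about reducing the $\infty$-categorical indexing of the factorization homology colimit to a (2-)categorical one addresses a point the paper does not discuss, and it causes no problem.
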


\begin{proof}
	By definition $\int_\Sigma \cat{A}$ is a 2-colimit in $\PrL$
	of a diagram of categories with enough compact
	objects such that all arrows of the diagram preserve compact objects (because the  framed $E_2$-algebra underlying $\cat{A}$ lives in $\Prc$). 
	
	The fact that each restriction
	$\cat{A}^{\boxtimes n}\to\int_\Sigma \cat{A}\ra{F}\Vect$ sends compact objects to finite-dimensional vector spaces means exactly that these restrictions preserve compact objects. 
	Now Proposition~\ref{propcompact} implies that $F$ preserves compact objects, which means that it sends compact objects to finite-dimensional vector spaces.
	\end{proof}

\begin{theorem}\label{thmfinite}
	If $V$ is $C_2$-cofinite, all the block functors
	\begin{align}
		\block(\Sigma;-): \RV^{\boxtimes n} \boxtimes \left(\int_{\mathbb{S}^1} \RV\right)^{\boxtimes m} \to \Vect  \label{eqntheblockfunctors}
	\end{align} from of the modular functor with singularities associated to $V$ by
	Theorem~\ref{thmmain} satisfy
	\begin{align} \dim\, \block(\Sigma;X_1,\dots,X_n,Y_1,\dots,Y_m)<\infty
	\end{align}
	if all $X_1,\dots,X_n \in \RV$ 
	and all $Y_1,\dots,Y_m \in \int_{\mathbb{S}^1} \RV$ are compact.
\end{theorem}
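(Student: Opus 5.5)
The plan is to reduce everything to the disk case and then use Lemma~\ref{lemmafinite} and Proposition~\ref{propcompact}. In the $C_2$-cofinite case, Proposition~\ref{propc2cofinite} identifies $\RV$ with the ind completion of the finite ribbon Grothendieck-Verdier category $\Vmod$, which lives in $\Rexf$. The first step is therefore to check that $\catT=\Proj\Vmod$ is finite in the sense of Definition~\ref{deffinite}. Via Proposition~\ref{propfinite} and Remark~\ref{remarkprc}, this says that the cyclic framed $E_2$-algebra $\RV$ lives in $\Prc$. In particular:
\begin{itemize}
\item the monoidal product, the unit and the coevaluation object $\Delta$ of \eqref{eqnDelta} are compact;
\item the coend $\int^{t\in\catT} t^*\boxtimes t$ is a finite colimit, since $\catT\simeq BA$ with $A$ finite-dimensional by Lemma~\ref{lemmafiniteness}.
\end{itemize}

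Next I treat the open sector. For a disk with $n$ marked intervals, \eqref{eqnrvdisk} gives $\spr{X_1,\dots,X_n}$. By the Coherent Spider Theorem~\ref{thmcohspider}, this is an iterated coend of $\spr{-\mid -,-}$ and $\spr{-,-}$ with copies of $\Delta$ inserted. For $t,u\in\catT$, each of $\spr{t,u}\cong\catT(D\nakar t,u)$ and $\spr{v\mid t,u}\cong \Hom(v,t\otimes u)$ is a hom space in a finite category, hence finite-dimensional. A finite coend of finite-dimensional spaces is finite-dimensional. Since every compact object of $\RV$ is a finite colimit of objects of $\catT$ (it lies in $\Vmod$), cocontinuity gives finiteness on compact objects.

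For a general surface with only marked intervals, I pick a marking $\mu$. Then $\block(\Sigma;-)\cong\block^\mu(\Sigma;-)$ by Proposition~\ref{propmarking}. The right-hand side is obtained from disk blocks by inserting $\Delta$ (Corollary~\ref{corexcinterval}), that is, by a finite coend over $\catT\simeq BA$. So it is again finite-dimensional on compact inputs.

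For boundary circles, I use the construction in the proof of Theorem~\ref{thmmain}. There $\block(\Sigma;-)$ restricted to the $m$ circle slots is a cocontinuous functor on $\int_{S\times[0,1]}\RV$, and $\int_{S\times[0,1]}\RV$ is factorization homology of $\RV$ over a disjoint union of annuli. Its restriction along any embedding of $\ell$ disks lands in a functor $\RV^{\boxtimes n}\boxtimes\RV^{\boxtimes \ell}\to\Vect$ which is a block of a surface with only marked intervals, as in \eqref{eqnmapsinducingfh}. By the previous paragraph this functor sends compact objects to finite-dimensional spaces. I then apply Lemma~\ref{lemmafinite}, with $\Sigma$ the disjoint union of the annuli and the $\RV^{\boxtimes n}$ factor carried along as a parameter. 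More precisely, I would apply Proposition~\ref{propcompact} directly to the 2-colimit diagram tensored with $\RV^{\boxtimes n}$, using that $\boxtimes$ preserves 2-colimits and compact generation. The conclusion is that $\block(\Sigma;-)$ preserves compact objects, which is the claim. Punctures cause no problem: by \eqref{eqnevO} they amount to inserting the compact object $\cat{O}$, or they simply leave the surface with fewer parametrized slots.

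I expect the main obstacle to be the first step: verifying that all structure maps of the cyclic framed $E_2$-algebra $\RV$ preserve compact objects. This includes in particular the copairing $\Delta$, which is the right Nakayama-twisted object of \eqref{eqndeltafinite}. It also includes the fact that the disk-embedding maps into $\int_{\mathbb{S}^1}\RV$ preserve compactness, so that Proposition~\ref{propcompact} applies. Here I would rely on right exactness of the tensor product from \cite{alsw} and on finiteness of $\Vmod$ from \cite{huangfin}, so that everything lives in $\Rexf$ and hence, after ind completion, in $\Prc$.
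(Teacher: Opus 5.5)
Your proposal is correct and follows the paper's proof. You first show that the open-sector functors $\RV^{\boxtimes n}\boxtimes\RV^{\boxtimes \ell}\to\Vect$ from \eqref{eqnmapsinducingfh} preserve compact objects, because all the structure lives in $\Prc$, and then pass to the circle slots via Lemma~\ref{lemmafinite} and Proposition~\ref{propcompact}; carrying $\RV^{\boxtimes n}$ along as a parameter makes explicit a step the paper leaves implicit.

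The one difference is the open-sector step. The paper gets it at once from $\Vo\simeq\catf{ind}\,\cat{A}_V$ via Theorem~\ref{thmclassification}, whereas you compute it from a marking and finite coends over $BA$. That computation does not cover the surfaces without marked intervals coming from the empty embedding $\ell=0$ when $n=0$, notably the annulus without marked intervals, where Proposition~\ref{propmarking} does not apply. Those cases are easily handled by excision; for example, the annulus gives $\int^{t\in\catT}\Hom_V(t,t)\cong A/[A,A]$, which is finite-dimensional.
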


If the $X_1,\dots,X_n$ are not compact, it does not make sense to ask finite-dimensionality because whenever $\block(\Sigma;X_1,\dots,X_n)\neq 0$,
the space $\block\left(\Sigma;\bigoplus_{i=0}^\infty X_1,\dots,X_n\right)\cong \bigoplus_{i=0}^\infty \block\left(\Sigma; X_1,\dots,X_n\right)$ is not finite-dimensional.

\begin{proof}
	By the discussion in Section~\ref{secC2} in the $C_2$-cofinite case,
	$\RV$ is the ind completion of a ribbon Grothendieck-Verdier category $\Vmod$ in $\Rexf$.
	Hence, the underlying framed $E_2$-algebra of $\RV$ lives in $\Prc$.
	
	Next we observe that the $\PrL$-valued modular $\O$-algebra $\Vo$ from Proposition~\ref{thmmcgaction} can be obtained as the ind completion of the $\Rexf$-valued modular $\O$-algebra $\cat{A}_V$ obtained as the modular extensions of $\Vmod$
	(indeed, $\cat{A}_V$ is again $\Rexf$-valued, see \cite[Proposition 7.1]{cyclic}): \begin{align}\label{eqnVoAV} \Vo\simeq \catf{ind} \, \cat{A}_V\end{align} as  $\PrL$-valued modular $\O$-algebras, where $\catf{ind}$ is applied operation-wise.
	For the proof of \eqref{eqnVoAV}, one just restricts to disks with marked intervals and uses Theorem~\ref{thmclassification}.
	
	This proves that  the $\PrL$-valued open modular functor $\Vo$ 
	lives actually in the subcategory $\Prc \subset \PrL$, see also Remark~\ref{remarkprc}, so the same applies to $\block$, when defined on tensor powers of $\RV$.
As a consequence, the maps $
\RV^{\boxtimes n} \boxtimes \RV^{\boxtimes \ell} \to \Vect$ from \eqref{eqnmapsinducingfh}, for each embedding of $\ell$ intervals into the closed boundary,
that by construction induce \eqref{eqntheblockfunctors}
 send compact objects to finite-dimensional vector spaces.
Now the assertion follows from Lemma~\ref{lemmafinite}. 
\end{proof}

\begin{remark}
	Without the assumption of $C_2$-cofiniteness,
	Theorem~\ref{thmfinite} generally fails, see \cite[Example~3.6]{yeralopen}.
\end{remark}

	\subsection{Stability}
The blocks $\block$ have another property that, at first, is slightly surprising, but very easy to prove:

\begin{proposition}[Stability, part I]\label{propstability}
	Suppose that $\Sigma$ and $\Sigma'$ are connected  surfaces without parametrized boundary components, but a non-zero number of punctures.  
	There is a non-canonical isomorphism
	$\block(\Sigma)\cong \block(\Sigma')$ if the Euler characteristics of $\Sigma$ and $\Sigma'$ agree.
\end{proposition}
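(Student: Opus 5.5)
The plan is to compute $\block(\Sigma)$ for a connected surface with $p\ge 1$ punctures, genus $g$ and no parametrized boundary directly from a marking, and to observe that the resulting coend expression depends only on the number of edges of a one-vertex ribbon graph, which is $1-\chi(\Sigma)$. Concretely, since $\Sigma$ has no parametrized boundary, $\block(\Sigma)$ is obtained via the remark on \eqref{eqnevO}: we realize $\Sigma$ from a surface with parametrized boundary circles, replace circles by punctures, and reduce to the open sector. More directly, by the proof of Theorem~\ref{thmmain} and Corollary~\ref{corexcinterval}, every puncture beyond the first can be produced by gluing, so that everything reduces to computing the value of the open modular functor $\Vo$ on a surface with one boundary component and no marked interval, where $\Vo$ is evaluated on ribbon graphs via \eqref{eqndefassoc}.

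The key step is to choose for $\Sigma$ (viewed as a surface with $p$ free boundary circles) a ribbon graph $R$ with a single vertex and $r$ loops, i.e.\ a bouquet of $r$ circles whose thickening is $\Sigma$; since $|R|\simeq\Sigma$ up to homotopy, $\chi(\Sigma)=1-r$, so $r=1-\chi(\Sigma)$. Evaluating via the recipe of Section~\ref{secmodex}, together with \eqref{eqnDelta} and Corollary~\ref{corspr}, one obtains
\begin{align}
\block(\Sigma)\cong \int^{t_1,\dots,t_r\in\catT}\spr{t_{\sigma(1)}^{\epsilon_1},\dots,t_{\sigma(2r)}^{\epsilon_{2r}}} \ ,
\end{align}
where the cyclic word of length $2r$ contains each $t_i$ once and each $t_i^*$ once, in an order dictated by the ribbon structure (this order encodes genus versus number of punctures). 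Since the surface has no marked intervals, we must check that a marking exists in this degenerate case. I would handle this by applying Proposition~\ref{propcolim} only to the marking category; alternatively, one can use the definition of $\block$ on punctured surfaces from Theorem~\ref{thmmain}, where $\Vo$ on an open surface without marked intervals is still given by the modular extension. Contractibility is not needed, because only a non-canonical isomorphism is claimed.

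The final step is to show that the coend is independent of the cyclic order, up to non-canonical isomorphism. Here I would use the braiding: the unordered expression is $\spr{I,\,\cdot\,}$ applied to a tensor product of the factors $t_i^*$ and $t_i$, because the pairing is compatible with the monoidal product as in the proof of Corollary~\ref{corell}. Using the braiding of $\RV$ to reorder the factors, exactly as was done there to pass from $u^*\otimes v^*\otimes u\otimes v$ to $u^*\otimes u\otimes v^*\otimes v$, every such word becomes $\spr{I,\mathbb{F}^{\otimes r}}$ with $\mathbb{F}=\int^{t}t^*\otimes t$. Hence $\block(\Sigma)\cong\spr{I,\mathbb{F}^{\otimes(1-\chi(\Sigma))}}$, which depends only on $\chi(\Sigma)$.

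I expect the main obstacle to be the legitimacy of the braiding step in this generality. Reordering arbitrary words needs the braiding to be natural and invertible on $\RV$, which holds since $\RV$ is braided by Theorem~\ref{thmcfe2}. The step also needs $\spr{X_1,\dots,X_n}\cong\spr{I,X_1\otimes\cdots\otimes X_n}$, which I would derive from the gluing isomorphism \eqref{eqnY} together with the definition of $\spr{-,-,-}$ via $B_\otimes$. This braiding is exactly the source of the non-canonicity in the statement.
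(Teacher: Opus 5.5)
Your proposal is correct and is essentially the paper's proof: you choose a one-vertex ribbon graph marking, write $\block(\Sigma)$ as a coend over the loop variables of the cyclic multi-pairing from Corollary~\ref{corspr}, and use the braiding to see that the integrand depends, up to non-canonical isomorphism, only on the number of loops, which is a function of $\chi(\Sigma)$. Your explicit reduction to $\spr{I,\mathbb{F}^{\otimes r}}$, via the argument of Corollary~\ref{corell}, spells out the paper's one-line braiding step. Your count $r=1-\chi(\Sigma)=2g+p-1$ is the correct one; the count stated in the paper is off, harmlessly. One remark should be fixed: contractibility of $\M(\Sigma)$ \emph{is} what makes $\psi_\mu$ an isomorphism in Proposition~\ref{propcolim}, but nothing is lost, because the one excluded case, the twice-punctured sphere, is the only such surface with $\chi=0$ and is therefore trivial.
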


\begin{proof}
	If $\Sigma$ is connected, it admits
	a marking $\mu=(R,g)$ by a ribbon graph $R$ with only one vertex
	and $2\ell$ half edges with $\ell = 2g+p$ if $\Sigma$ has genus $g$ and $p\ge 1$ punctures; phrased differently, $\ell = 3-\chi(\Sigma)$ with the Euler characteristic $\chi(\Sigma)$ of $\Sigma$. 
	Theorem~\ref{thmmain} tells us that $\block(\Sigma)$ is a coend over $\ell$ dummy variables of a functor $F_R:\RV^{\boxtimes (2 \ell)} \to \Vect$, namely the one from Corollary~\ref{corspr}, with the cyclic ordering of the arguments prescribed by $R$.
	But since $\RV$ is braided, $F_R$ depends, up to \emph{non-canonical} isomorphism only on $\ell$, and therefore only on $\chi(\Sigma)$. 
	The same is now true for $\block(\Sigma)$.
\end{proof}

Proposition~\ref{propstability} is consistent with the idea that $\block$ describes spaces of conformal blocks \emph{after combining left and right movers}:
In this interpretation, $\block$ on a surface with genus $g$ and $p\ge 1$ punctures should be the space of \emph{chiral} conformal blocks on the double $\bar{\Sigma}\cup_{\partial \Sigma} \Sigma$, where $\Sigma$ has genus $g$ and $p$ boundary components. 
But $\bar{\Sigma}\cup_{\partial \Sigma} \Sigma$ is a closed surface of genus $2g+p+1=3-\chi(\Sigma)$, and hence
\begin{align}
	\bar{\Sigma}\cup_{\partial \Sigma} \Sigma \cong \bar{\Sigma'}\cup_{\partial \Sigma'} \Sigma' \quad \Longleftrightarrow \quad \chi(\Sigma)=\chi(\Sigma') \ . 
	\end{align}
So if we imagine $\block$ as the evaluation of a hypothetical chiral theory on the double, then $\block$ should produce non-canonically isomorphic spaces on surfaces of the same Euler characteristic.

Similarly, we prove:

\begin{corollary}[Stability, part II]\label{corstability}
	Suppose that $\Sigma$ and $\Sigma'$ are the same surface with at least one boundary component per connected component, but with possibly different configurations of $n$ marked intervals in the boundary.
	Then there is a non-canonical
	isomorphism $\block(\Sigma)\cong \block(\Sigma')$ of functors $\RV^{\boxtimes n}\to\Vect$.
	\end{corollary}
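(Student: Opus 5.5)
The plan is to mimic the proof of Proposition~\ref{propstability}, using a marking by a ribbon graph with a single vertex and then the braiding to permute the arguments. Since $\Sigma$ and $\Sigma'$ are the same surface, they have the same genus $g$ and the same number $b\ge 1$ of boundary components per connected component; only the distribution of the $n$ marked intervals among the boundary circles, and their cyclic order along each circle, differs. It suffices to treat connected surfaces, because for disconnected ones $\block$ is the external tensor product of the values on the components.

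First, choose markings $\mu=(R,f)\in\M(\Sigma)$ and $\mu'=(R',f')\in\M(\Sigma')$ in which $R$ and $R'$ each have exactly one vertex. Such one-vertex ribbon graphs exist because every connected surface with at least one marked interval admits a marking that is a bouquet of circles with legs. This is the $4g$-gon construction used in the proof of Theorem~\ref{thmmcggen}: add one loop encircling each boundary component that carries no marked interval, and attach the $n$ legs at the vertex in the cyclic positions dictated by their placement on the boundary. Both graphs have the same number $\ell$ of loops, which an Euler characteristic count gives as $\ell = 1-\chi(\Sigma)$ in the presence of marked intervals. By Proposition~\ref{propmarking}, Corollary~\ref{corexcinterval} and \eqref{eqnrvdisk}, this yields
\begin{align}
\block(\Sigma;X_1,\dots,X_n)\cong \int^{t_1,\dots,t_\ell\in\catT} \spr{\dots}_R \ , \qquad \block(\Sigma';X_1,\dots,X_n)\cong \int^{t_1,\dots,t_\ell\in\catT} \spr{\dots}_{R'} \ .
\end{align}
In each expression, the bracket $\spr{-,\dots,-}$ from Corollary~\ref{corspr} has $n+2\ell$ arguments, namely the labels $X_i$ together with pairs $t_j,t_j^*$, arranged in a cyclic order determined by $R$ or $R'$ respectively.

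Second, compare the two integrands. By Corollary~\ref{corspr} and the compatibility $\spr{X_1,\dots,X_k}\cong\spr{X_1\otimes\dots\otimes X_k}$ used in the proof of Corollary~\ref{corell}, each bracket can be written as a pairing of a tensor product of its arguments. The braiding of $\RV$ then permutes the tensor factors, so it gives natural isomorphisms between brackets whose arguments differ only by a permutation. These isomorphisms are natural in all variables, including the dinatural coend variables, so they descend to an isomorphism of the coends. The isomorphism is natural in $X_1,\dots,X_n$, which gives an isomorphism of functors $\RV^{\boxtimes n}\to\Vect$.

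The main obstacle is this naturality bookkeeping. The braiding isomorphisms must be chosen uniformly in the objects, for instance always as the same fixed braid word, so that they commute with the morphisms induced in the variables $t_j$ and $t_j^*$; only then do they pass to coends. Since we fix a single braid once and for all and the braiding is natural, this should go through, but the resulting isomorphism depends on the chosen braid and is therefore non-canonical, as stated. I would also double-check that the ribbon structure is not needed, since only the braiding is used. I would further check that the loop count $\ell$ really agrees for the two configurations; it does, because it depends only on $\chi$ and on whether marked intervals are present, and both are shared by $\Sigma$ and $\Sigma'$.
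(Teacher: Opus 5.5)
Your proposal is correct and follows essentially the paper's own argument: one-vertex markings exhibit both $\block(\Sigma)$ and $\block(\Sigma')$ as coends of brackets with the same arguments $X_1,\dots,X_n,t_j,t_j^*$ in different cyclic orders, and a fixed braid, being natural in every slot, identifies the integrands and hence the coends non-canonically. Two small caveats: your recipe for the bouquet should put loops around all but one boundary component, not only around those without marked intervals (for the annulus with one interval on each circle, your recipe gives no loop, while your correct count $\ell=1-\chi(\Sigma)$ shows that one loop is needed), and the reduction to connected surfaces tacitly requires that both configurations place the same labelled intervals on the same components, which the paper also assumes implicitly.
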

   		
\begin{proof}
	As in the proof of Proposition~\ref{propstability},
	$\block(\Sigma)$ and $\block(\Sigma')$ are both coends over a priori different functors $F, F':\RV^{\boxtimes n}\boxtimes \RV^{\boxtimes m}\to\Vect$ for some appropriately chosen $m\ge 0$.
	But these two functors are actually non-canonically isomorphic  through the braiding of $\RV$, i.e.\ $F\cong F'$. This implies $\block(\Sigma)\cong \block(\Sigma')$.
	\end{proof}

\section{Class functions, traces and the Higman map}
This section is devoted to the study of certain algebraic aspects of the construction introduced in the preceding 
sections. In particular, we will calculate the pairing on the factorization homology of a cyclic associative algebra that we obtain by evaluation on the cylinder.
The results will turn out closely related to the question how many cyclic subcategories we may actually have on a category of modules over a vertex operator algebra.
This will lead to a far-reaching finiteness statement for rational conformal field theories.

	\subsection{The annulus and bicategorical traces}
	Traces can be defined in higher-categorical contexts, see e.g.~\cite{pontoshulman}.
	For symmetric monoidal bicategories, we will use the following notion:
	
\begin{definition}
	Let $X\in\cat{S}$ be a dualizable object in a symmetric monoidal bicategory.
	Then we define the \emph{trace} of a 1-morphism $F:X\to X$ as the endomorphism $\tr F : I \to I$
	\begin{align}
		I \ra{\text{coevaluation}} X^\vee \boxtimes X \ra{\id_{X^\vee} \boxtimes F} X^\vee \boxtimes X \ra{\text{evaluation}} I
		\end{align} of the monoidal unit $I\in\cat{S}$. The trace of the identity of $X$ is also referred to as the \emph{dimension} of $X$.
	\end{definition}
We note that there is a canonical isomorphism $\tr F^\vee \cong \tr F$.
If $\cat{S}=\PrL$, the trace of a 1-morphism can be seen as a vector space.

\begin{remark}\label{remarktrace}
If $X$ has a symmetric non-degenerate pairing $\kappa : X \boxtimes X \to I$ with coevaluation $\Delta : I \to X\boxtimes X$, then $\tr F$ is canonically isomorphic to
\begin{align}
	I \ra{\Delta} X \boxtimes X \ra{\id_X \boxtimes F} X \boxtimes X \ra{\kappa } I \ , 
	\end{align}
and it does not matter whether we place $F$ on the other leg in the sense that we replace the middle morphism with $F\boxtimes \id_X$.
 In particular, for any cyclic associative algebra $\cat{A}$ in $\cat{S}$, the trace $\tr F$ can be defined for any 1-morphism $F:\cat{A}\to\cat{A}$.
 \end{remark}

\begin{remark}
	Of course, if we choose for $\cat{S}$ the symmetric monoidal category of finite-dimensional vector spaces artificially seen as symmetric monoidal bicategory, we recover the usual traces from linear algebra.
	\end{remark}
 
 \begin{proposition}\label{propannulustrace}
 	For any cyclic associative algebra $\cat{A}$, the open modular functor $\Ao$
 	satisfies on the annulus equipped with one marked interval on each boundary circle
 	\begin{align}
 		\Ao(\mathbb{S}^1 \times [0,1]; X, Y) \cong \tr (\ell_X r_Y) \cong \tr (\ell_Y r_X) 
 		\end{align}
 	for all 1-morphisms $X,Y : I \to \cat{A}$ and their multiplication maps 
 	\begin{itemize}
 		\item $\ell_X : \cat{A} \ra{X\boxtimes-} \cat{A}\boxtimes\cat{A}\ra{\otimes}\cat{A}$ from the left,
 		\item  and  $r_Y : \cat{A} \ra{-\boxtimes Y} \cat{A}\boxtimes\cat{A}\ra{\otimes}\cat{A}$ from the right.
 		\end{itemize}
 	\end{proposition}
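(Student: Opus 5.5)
We compute $\Ao$ on the annulus by choosing a convenient marking and then reading off the resulting coend as a trace. By Proposition~\ref{propcolim}, $\Ao(\mathbb{S}^1\times[0,1];X,Y)$ is canonically isomorphic to $\Ao^\mu(\mathbb{S}^1\times[0,1];X,Y)$ for any single marking $\mu$. The annulus with one marked interval on each boundary circle is not the annulus without marked intervals, so Lemma~\ref{lemmaM} applies. We choose the ribbon graph with one vertex, four cyclically ordered legs and one internal edge. The two remaining legs are the external ones, and they sit in the cyclic order (external $X$, half edge, external $Y$, half edge), so that the fattening is the annulus with one interval per boundary circle (compare Corollary~\ref{cordehn}). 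By the explicit description of $\Ao^{(R)}$ in Section~\ref{secmodex}, this gives
\begin{align}
\Ao(\mathbb{S}^1\times[0,1];X,Y)\cong \cat{A}_4(X,\Delta',Y,\Delta'') \ ,
\end{align}
where $\cat{A}_4:\cat{A}^{\boxtimes 4}\to I$ is the four-valent corolla operation and $\Delta=\Delta'\boxtimes\Delta''$ is the coevaluation.

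Next we rewrite $\cat{A}_4$ through the pairing and the product. By the coherent Spider Theorem (Theorem~\ref{thmcohspider}, whose proof only uses the underlying cyclic associative algebra, so it works for general $\cat{A}$), we may use any decomposition respecting the cyclic order. We pick $\cat{A}_4(a,b,c,d)\cong\kappa(a\otimes b\otimes c,\,d)$, with $\otimes$ the product of $\cat{A}$ and $\kappa$ the pairing; this comes from the compatibility $\cat{A}_3(a,b,c)\cong\kappa(a\otimes b,c)$ implicit in the cyclic structure (condition (C) in the $\Bimod$ case). Then
\begin{align}
\cat{A}_4(X,\Delta',Y,\Delta'')\cong \kappa\big(X\otimes\Delta'\otimes Y,\Delta''\big)=\kappa\circ(\ell_X r_Y\boxtimes\id)\circ\Delta \ ,
\end{align}
which by Remark~\ref{remarktrace} is $\tr(\ell_Xr_Y)$. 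Here $\ell_X$ and $r_Y$ commute up to the associator, so the composite is unambiguous up to canonical isomorphism.

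For the second isomorphism we use the cyclic symmetry: rotating the cyclic order of the four-valent corolla by two steps gives $\cat{A}_4(X,\Delta',Y,\Delta'')\cong\cat{A}_4(Y,\Delta'',X,\Delta')$. Since $\Delta$ is symmetric, we may swap $\Delta'$ and $\Delta''$. The same computation then yields $\tr(\ell_Yr_X)$. Alternatively, the mapping class exchanging the two boundary circles identifies the two expressions directly.

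The main obstacle is the middle step: justifying, for a general cyclic associative algebra in a symmetric monoidal bicategory, that the four-valent operation is canonically $\kappa(-\otimes-\otimes-,-)$, and that the inserted copairing genuinely assembles into the trace composite of Remark~\ref{remarktrace} rather than a twisted variant. I would handle this with the equivalence of the two descriptions of cyclic associative algebras (\cite[Theorem 4.2]{cyclic}, as in the proof of Theorem~\ref{thmcohspider}), together with the zigzag identities for $(\kappa,\Delta)$ to keep track of which leg of $\Delta$ receives $\ell_Xr_Y$.
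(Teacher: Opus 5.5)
Your proposal is correct and follows essentially the paper's route. The paper also writes $\Ao(\mathbb{S}^1\times[0,1];X,Y)$ as the arity-four operation with the copairing inserted in alternating slots; it does this via excision, which amounts to your single-marking evaluation through Proposition~\ref{propcolim}. It then uses invariance of the pairing to rewrite this as $\kappa(\Delta',X\otimes\Delta''\otimes Y)$ and concludes with Remark~\ref{remarktrace}, obtaining $\tr(\ell_Y r_X)$ exactly as you do, by cyclic rotation together with the symmetry $\Delta'\boxtimes\Delta''\cong\Delta''\boxtimes\Delta'$ of the copairing.
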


\begin{proof}
	  With excision for $\Ao$, we obtain
	\begin{align}
		\Ao(\mathbb{S}^1 \times [0,1]; X, Y) \cong \spr{\Delta',X,\Delta'', Y} \ , \label{eqnannulusexcision}
		\end{align}
	with the total arity four operation $\spr{-,-,-,-}:\cat{A}^{\boxtimes 4}\to I$, see the description of cyclic associative algebras in \cite[Theorem 4.2]{cyclic} and the construction of $\Ao$ in \cite{envas}. 
	Thanks to the invariance of the pairing up to coherent isomorphism,
	$ \spr{\Delta',X,\Delta'', Y}\cong \kappa(\Delta',X\otimes \Delta'' \otimes Y)$.
	Now $	\Ao(\mathbb{S}^1 \times [0,1]; X, Y) \cong \tr (\ell_X r_Y)$
	follows from Remark~\ref{remarktrace}.
	
	Finally, we observe also that
	$\spr{\Delta',X,\Delta'', Y}\cong \spr{\Delta'',Y,\Delta',X}\cong \spr{\Delta',Y,\Delta'',X}\cong \tr (\ell_Y r_X) $ by the cyclicity of the pairing and the symmetry $\Delta'\boxtimes\Delta''\cong\Delta''\boxtimes\Delta'$ of the copairing.
	\end{proof}

\subsection{The Higman map}
In the rest of the section, we will always assume that $\cat{S}$ is a cocomplete symmetric monoidal bicategory whose monoidal product commutes with colimits and that has inner homs. Clearly, $\cat{S}=\PrL$ is an example.
For an algebra $\cat{A}$ in $\cat{S}$, we may then define its \emph{(Drinfeld) center}
as
\begin{align}
	Z(\cat{A}) := \END_{\cat{A}^\text{e}}(\cat{A}) \ , 
	\end{align}
where $\cat{A}^\text{e}=\cat{A}^\text{opp} \boxtimes \cat{A}$ is the enveloping algebra (its modules are $\cat{A}$-bimodules),
 and $\cat{A}$ carries its standard bimodule structure, see \cite[Proposition 3.6]{bjss} for how this recovers existing definitions. 

\begin{theorem}\label{thmhigman}
	For any cyclic associative algebra $\cat{A} \in \cat{S}$ in a cocomplete symmetric monoidal bicategory whose monoidal product commutes with colimits and that has inner homs,
	the map \begin{align}\Delta' \otimes - \otimes \Delta''  :  \cat{A} \to \cat{A} \label{eqnthemapforH}
		\end{align}
	induces  a map
	\begin{align} H: \int_{\mathbb{S}^1}\cat{A}\to Z(\cat{A})  
		\end{align} that we call the Higman map.
	\end{theorem}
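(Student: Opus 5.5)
We construct $H$ in two stages: first show that \eqref{eqnthemapforH} factors through the Hochschild homology $\int_{\mathbb{S}^1}\cat{A}$, then show that it lands in $Z(\cat{A})$.

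\textbf{Step 1 (the Hochschild homology side).} In $\cat{S}$, the Hochschild homology is the relative tensor product
\begin{align}
\int_{\mathbb{S}^1}\cat{A}\simeq \cat{A}\boxtimes_{\cat{A}^\text{e}}\cat{A} ,
\end{align}
realized as the geometric realization (2-truncated bar construction) of $\cat{A}\boxtimes\cat{A}^{\boxtimes \bullet}$ with face maps given by left and right multiplication. So it suffices to produce a 1-morphism $\cat{A}\to Z(\cat{A})$ together with coherent isomorphisms witnessing invariance under these face maps, namely
\begin{align}
H(a\otimes X)\cong H(X\otimes a) ,
\end{align}
subject to the simplicial coherence conditions.

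\textbf{Step 2 (the map lands in the center).} By the universal property of the inner hom, a map $\cat{A}\to Z(\cat{A})=\END_{\cat{A}^\text{e}}(\cat{A})$ is the same as a map $\cat{A}\boxtimes\cat{A}\to\cat{A}$ that is $\cat{A}$-bimodule linear in the second variable. Take $(X,Y)\mapsto \Delta'\otimes X\otimes \Delta''\otimes Y$. The required bimodule structure comes from the two standard isomorphisms
\begin{align}
\Delta'\boxtimes(a\otimes\Delta'')\cong(\Delta'\otimes a)\boxtimes\Delta'' ,
\end{align}
and its mirror image. These are the isomorphisms expressing that the copairing of a cyclic associative algebra is $\cat{A}$-balanced; they follow from the invariance of $\kappa$ together with non-degeneracy, via the description in \cite[Theorem 4.2]{cyclic}. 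Using them, one slides $a$ through $\Delta'\otimes X\otimes\Delta''$, which gives left and right linearity in $Y$ up to coherent isomorphism.

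The same sliding isomorphism also gives the cyclic invariance needed in Step 1:
\begin{align}
\Delta'\otimes aX\otimes\Delta''\cong \Delta' a\otimes X\otimes\Delta''\cong \Delta'\otimes X\otimes a\Delta'' .
\end{align}
Together with the symmetry $\Delta'\boxtimes\Delta''\cong\Delta''\boxtimes\Delta'$, this yields the identification under which the face maps agree. Since colimits commute with $\boxtimes$, the map then descends to $\int_{\mathbb{S}^1}\cat{A}$. The two structures (descent in $X$, bimodule linearity in $Y$) are compatible because they are built from the same sliding isomorphisms.

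\textbf{Main obstacle.} The hard part is coherence. One must check that the sliding isomorphisms satisfy the higher simplicial identities, both for descent to the bicategorical colimit and for the bimodule axioms (associativity of the action). The first thing I would try is to avoid checking these by hand: using the modular extension from Theorem~\ref{thmclassification}, read off all these isomorphisms as images of mapping classes of disks and annuli with marked intervals. Coherence is then automatic, since the relevant groupoids of markings are contractible by Lemma~\ref{lemmaM}.
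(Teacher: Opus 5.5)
Your outline is correct in substance, but it takes a more hands-on route than the paper. You build $H$ directly. You descend $X\mapsto\Delta'\otimes X\otimes\Delta''$ along the $2$-truncated cyclic bar construction, and you check that it lands in $Z(\cat{A})=\END_{\cat{A}^\text{e}}(\cat{A})$ via the universal property of the inner hom, using sliding isomorphisms for the copairing.

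The paper never manipulates $\Delta'\otimes X\otimes\Delta''$ directly. It starts from the symmetric pairing $\beta$ on $\int_{\mathbb{S}^1}\cat{A}$ obtained by evaluating the modular extension on the annulus with two boundary circles (Corollary~\ref{corpairingfh}). The descent to factorization homology on both circles is already part of Theorem~\ref{thmmain}. The paper then adjoins one variable and applies the equivalence $\HOM(\int_{\mathbb{S}^1}\cat{A},I)\simeq Z(\cat{A})$ from \cite{yeralopen}. The formula $UHLX\cong\Delta'\otimes X\otimes\Delta''$ is only derived afterwards, from excision and the zigzag identities.

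The two arguments have the same content. Under the bimodule self-duality $\cat{A}\simeq\HOM(\cat{A},I)$ induced by $\kappa$, your bimodule map $(X,Y)\mapsto\Delta'\otimes X\otimes\Delta''\otimes Y$ corresponds to the functional $(X,Y,Z)\mapsto\kappa(\Delta'\otimes X\otimes\Delta''\otimes Y,Z)$. This is $\Ao$ on an annulus with $X$ on one boundary circle and $Y,Z$ on the other. Your ``descent in $X$ plus bimodule linearity in $Y$'' is exactly the descent of this functional to $\int_{\mathbb{S}^1}\cat{A}$ in both circles, i.e.\ $\beta$. The translation between the two pictures is precisely \eqref{eqnequivZ}.

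Your route gives an explicit categorification of the classical Higman argument, making centrality and cyclic invariance of $\Delta'\otimes X\otimes\Delta''$ visible. The price is the coherence you correctly flag: the cocycle condition on $2$-simplices, the bimodule axioms, and their mutual compatibility.

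Your proposed fix for this is slightly misaimed. Contractibility of marking groupoids (Lemma~\ref{lemmaM}) only gives independence of the marking. Mapping class groups of annuli with marked intervals are nontrivial, so it does not by itself make your sliding isomorphisms coherent. What actually does the job is the descent of the open modular functor along embeddings of intervals into boundary circles, as in the proof of Theorem~\ref{thmmain}. You would apply it after converting your $\cat{A}$-valued maps into $I$-valued ones via $\kappa$; done that way, your argument becomes the paper's.

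The paper's formal route also shows by construction that $H$ is adjoint to $\beta$. That is what yields the subsequent statement that $\beta$ is non-degenerate if and only if $H$ is an equivalence. With your construction, that identification would still need a separate proof.
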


We justify the name in Section~\ref{sechigman}.

\begin{proof}
	Via the modular extension, the non-degenerate symmetric pairing
	 $\kappa : \cat{A}\boxtimes\cat{A} \to I$ factors through factorization homology
	\cite[Section~5]{envas}, see also Corollary~\ref{corpairingfh},
	and gives us
	the symmetric pairing $\beta :\int_{\mathbb{S}^1} \cat{A} \boxtimes \int_{\mathbb{S}^1} \cat{A}\to \Vect$
	with
	\begin{align}
		\beta(LX,LY)\cong \Ao(\mathbb{S}^1\times [0,1];X,Y)
		\end{align} for $X,Y\in\cat{A}$.
	
	Now we adjoin one of the $\int_{\mathbb{S}^1} \cat{A}$ to the right and use
	\begin{align}
		\HOM \left(\int_{\mathbb{S}^1} \cat{A},I\right)\simeq Z(\cat{A}) \label{eqnequivZ}
		\end{align}
	from \cite[Section~3.5]{yeralopen} to turn $\beta$ into a map
	$H:\int_{\mathbb{S}^1}\cat{A}\to Z(\cat{A}) $.
	By construction this means for the forgetful functor $U:Z(\cat{A})\to\cat{A}$ and $X\in\cat{A}$
	\begin{align}\label{eqnUHL}
		UHLX\cong \Ao(\mathbb{S}^1 \times [0,1];X,\Delta')\otimes\Delta'' \stackrel{\substack{\eqref{eqnannulusexcision} \text{\ and} \\ \text{zigzag identities}}}{\cong} \Delta' \otimes X \otimes \Delta'' \ . 
		\end{align}
	Therefore, it is clear that this map $\cat{A}\to\cat{A}$ descends to factorization homology and factors through the center and hence gives rise to the map $H$.
	\end{proof}

\begin{corollary}
	The pairing $\beta :\int_{\mathbb{S}^1} \cat{A} \boxtimes \int_{\mathbb{S}^1} \cat{A}\to I$
	is non-degenerate if and only if the Higman map is an equivalence.
	Moreover,
	there are canonical isomorphisms
	\begin{align}
		\beta(LX,	LY)\cong \kappa(X,UHLY)\cong \tr(\ell_Xr_Y ) \cong \tr (\ell_Y r_X)  \label{eqnHcomparison}
	\end{align} for all 1-morphisms $X,Y : I \to \cat{A}$ and the canonical map $L:\cat{A}\to\int_{\mathbb{S}^1}\cat{A}$.
	\end{corollary}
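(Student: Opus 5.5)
The corollary should follow directly from how the Higman map $H$ was built in the proof of Theorem~\ref{thmhigman}. I would treat the two parts separately.

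For the equivalence statement, recall that $H$ was defined as the adjoint of $\beta$ under the equivalence $\HOM(\int_{\mathbb{S}^1}\cat{A},I)\simeq Z(\cat{A})$ from \eqref{eqnequivZ}. So $H$ is, up to this equivalence, the map $\int_{\mathbb{S}^1}\cat{A}\to \HOM(\int_{\mathbb{S}^1}\cat{A},I)$ obtained by currying $\beta$. In a symmetric monoidal bicategory with inner homs, a pairing $\beta: Y\boxtimes Y\to I$ exhibits $Y$ as its own dual in the homotopy category exactly when its curried map $Y\to \HOM(Y,I)$ is an equivalence and $Y$ is dualizable. Here $Y=\int_{\mathbb{S}^1}\cat{A}$.

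The step I expect to need care is dualizability. If $\beta$ is non-degenerate, dualizability is automatic. Conversely, suppose $H$ is an equivalence. Then $\beta$ identifies $Y$ with $\HOM(Y,I)$. I would obtain the copairing by transporting the coevaluation $I\to \HOM(Y,I)\boxtimes Y$ when it exists. Failing that, I would argue directly that $\int_{\mathbb{S}^1}\cat{A}$ is dualizable in $\cat{S}$: it is a colimit of dualizable objects $\cat{A}^{\boxtimes n}$, and in $\PrL$ every compactly generated category is dualizable. I would state the generality at the level where this holds, namely the level at which \eqref{eqnequivZ} was proven in \cite{yeralopen}. In every case the argument reduces to the standard fact that non-degeneracy means the curried pairing is an equivalence onto the internal dual.

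For \eqref{eqnHcomparison}, I would chain isomorphisms already established. First, $\beta(LX,LY)\cong \Ao(\mathbb{S}^1\times[0,1];X,Y)$ holds by construction of $\beta$. Next, $\kappa(X,UHLY)\cong \kappa(X,\Delta'\otimes Y\otimes\Delta'')$ by \eqref{eqnUHL}. This equals $\spr{X,\Delta',Y,\Delta''}$ by invariance of the pairing and the coherent Spider Theorem, which in turn matches \eqref{eqnannulusexcision} after using cyclic symmetry and the symmetry of $\Delta$. Finally, Proposition~\ref{propannulustrace} supplies the two trace descriptions. Canonicity follows because each step is either a structure isomorphism of the cyclic algebra, which is coherent by Theorem~\ref{thmcohspider}, or an adjunction unit.
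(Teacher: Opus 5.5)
Your proposal follows the paper's proof: there too the first claim is read off from $H$ being, by construction, the curried form of $\beta$ under \eqref{eqnequivZ}, and \eqref{eqnHcomparison} is obtained by chaining $\beta(LX,LY)\cong\Ao(\mathbb{S}^1\times[0,1];X,Y)$ with Proposition~\ref{propannulustrace} and with $\kappa(X,UHLY)\cong\kappa(X,\Delta'\otimes Y\otimes\Delta'')\cong\Ao(\mathbb{S}^1\times[0,1];X,Y)$ via \eqref{eqnUHL}. The paper does not discuss dualizability of $\int_{\mathbb{S}^1}\cat{A}$ at all; it implicitly treats non-degeneracy as invertibility of the curried map. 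Your fallback argument for dualizability should be dropped or replaced, because colimits of dualizable objects need not be dualizable, and ``compactly generated implies dualizable'' is a fact about stable presentable $\infty$-categories rather than the linear $1$-categorical $\PrL$ used here, where one would instead need something like enough compact projective objects.
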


\begin{proof}
The Higman map	$H$ is the map induced by the pairing $\beta :\int_{\mathbb{S}^1} \cat{A} \boxtimes \int_{\mathbb{S}^1} \cat{A}\to I$
	and the equivalence $\HOM \left(\int_{\mathbb{S}^1} \cat{A},I\right)\simeq Z(\cat{A}) $ from \eqref{eqnequivZ}.
Therefore, $\beta$ is non-degenerate if and only if $H$ is an equivalence.

For \eqref{eqnHcomparison}, 
note that 	\begin{align}
	\beta(LX,LY)\cong \Ao(\mathbb{S}^1\times [0,1];X,Y)\cong \tr (\ell_X r_Y) \cong \tr (\ell_Y r_X) \label{eqnbetaLtrace}
\end{align} for $X,Y\in\cat{A}$, where we have used Proposition~\ref{propannulustrace}.
Moreover, with \eqref{eqnUHL}, 
\begin{align}\kappa(X,UHLY)\cong \kappa(X,\Delta' \otimes Y \otimes \Delta'')\cong \Ao(\mathbb{S}^1 \times [0,1];X,Y) \ . 
\end{align}
\end{proof}

\begin{remark}\label{remhandleelemment}
		We refer to
	\begin{align}
		h := HLu:I\to Z(\cat{A})
	\end{align} for the unit $u:I\to \cat{A}$
	as the \emph{handle element} of $\cat{A}$.  This terminology is chosen in generalization of \cite[page~128]{kocktft}, but it is also self-explanatory:
	The value of $\Ao$ on an annulus with one marked interval (a `handle')
	 yields a map $\cat{A}\to I$ that after dualization yields $h$, see also \cite[Definition~5.4]{tracesw}.
	 Moreover, it is well-known that the evaluation of $\Ao$ on the annulus with two marked intervals on the \emph{same} boundary component, one seen as incoming and one as outgoing, yields an $\cat{A}$-bimodule map $\chi:\cat{A}\to\cat{A}$ that is given by multiplication with $Uh$:
	 \begin{align}
	 	\chi \cong Uh\otimes- \ . 
	 	\end{align}
 	Suppose that $\cat{A}$ is actually braided, then Corollary~\ref{corstability} tells us that $\chi$ is isomorphic to \eqref{eqnthemapforH}.
	\end{remark}

\begin{corollary}\label{corhigmanF}
	For $\RV$, the Higman map $H: \int_{\mathbb{S}^1}\RV\to Z(\RV)$ is induced by the map
	\begin{align} \RV \to \RV \ , \quad X \mapsto \mathbb{F} \otimes X \quad \text{with}\quad \mathbb{F} = \int^{t\in \catT} t^* \otimes t \ . \label{eqncentralmonad}
		\end{align}
	\end{corollary}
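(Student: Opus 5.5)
We specialize Theorem~\ref{thmhigman} to $\cat{A}=\RV$ and compute the map~\eqref{eqnthemapforH} explicitly. By that theorem, $H$ is induced by
\begin{align}
X\mapsto \Delta'\otimes X\otimes \Delta'' \ ,
\end{align}
where $\Delta=\Delta'\boxtimes\Delta''$ is the coevaluation object of $\RV$. Theorem~\ref{thmcfe2} gives $\Delta=\int^{t\in\catT}t^*\boxtimes t$. Since the monoidal product of $\RV$ is cocontinuous in each variable, we obtain
\begin{align}
\Delta'\otimes X\otimes\Delta''\cong \int^{t\in\catT} t^*\otimes X\otimes t
\end{align}
naturally in $X$. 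This reduces the statement to identifying this functor with $X\mapsto \mathbb{F}\otimes X$.

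For that identification we use the braiding of $\RV$. Braiding $X$ past $t$ gives $t^*\otimes X\otimes t\cong t^*\otimes t\otimes X$, naturally in $t$ and $X$. Because the braiding is natural in $t$, these isomorphisms are dinatural in the coend variable, so they pass to coends:
\begin{align}
\int^{t\in\catT}t^*\otimes X\otimes t\cong \left(\int^{t\in\catT}t^*\otimes t\right)\otimes X=\mathbb{F}\otimes X \ .
\end{align}
The last step again uses cocontinuity of $-\otimes X$. Alternatively, one can invoke Remark~\ref{remhandleelemment} together with Corollary~\ref{corstability}, which already say that $\chi$ agrees with~\eqref{eqnthemapforH}. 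In this reading $Uh\cong\Delta'\otimes\Delta''=\mathbb{F}$, and hence~\eqref{eqnthemapforH} is $\mathbb{F}\otimes-$ up to isomorphism.

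The main point requiring care is the word ``induced''. We need the map $\RV\to\RV$, $X\mapsto\mathbb{F}\otimes X$, to still descend along $L$ to $\int_{\mathbb{S}^1}\RV$ and to factor through $U:Z(\RV)\to\RV$ in a way compatible with the descent data of Theorem~\ref{thmhigman}. To settle this, we observe that $H$ is characterized by $UHL$ together with its structure as a map out of the factorization homology. Any natural isomorphism of the underlying functor $UHL$ then transports that structure, so that $\mathbb{F}\otimes-$ inherits it. We expect this to be routine. The braiding-based isomorphism is non-canonical only in the choice of over- versus under-crossing, and both choices give valid identifications.
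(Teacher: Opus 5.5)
Your proof is correct, and your ``alternative'' (Remark~\ref{remhandleelemment} together with Corollary~\ref{corstability}, using $Uh=UHLu\cong\Delta'\otimes\Delta''=\mathbb{F}$ from \eqref{eqnUHL}) is exactly the paper's one-line proof. Your main argument carries out by hand, on the coend $\int^{t\in\catT}t^*\otimes X\otimes t$, the braiding rearrangement that the paper leaves inside Corollary~\ref{corstability}; this lets you skip the detour through the annulus with both intervals on one boundary circle ($\chi\cong Uh\otimes-$), but the underlying idea is the same, and your transport-of-structure reading of ``induced'' is as precise as the paper's.
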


\begin{proof}
	This is a direct consequence of Remark~\ref{remhandleelemment}.
	\end{proof}

\begin{remark}
	The fact that  the Higman map is an equivalence in the finite rigid case (because $\beta$ is non-degenerate, see Remark~\ref{rembetanondeg}) can also be deduced directly from Corollary~\ref{corhigmanF} after observing that \eqref{eqncentralmonad} is the central monad and \cite[Theorem 4.4 and Theorem 5.9]{sn}.
	\end{remark}

\subsection{The 1-categorical situation}\label{sechigman}
	Theorem~\ref{thmhigman} applies in particular to the symmetric monoidal 1-category of vector spaces. In order to unpack this, let us set up some
	notation regarding finite-dimensional algebras, see e.g.\ \cite{egno,grabowski} for more background on this standard topic.
	We assume that we work over an algebraically closed field of characteristic zero:
	Suppose that $A$ is a finite-dimensional algebra with non-isomorphic indecomposable projective modules  $P_1,\dots,P_n$. In other words, $A=\End_A(\bigoplus_{i=1}^n P_i)$, and the identities $\id_{P_i}$ are the primitive idempotents $e_i\in A$ with $1=e_1+\dots+e_n$.
	To the projective object $P_i$, we can associative a simple object $S_i$ arising as a quotient of $P_i$, and the $S_1,\dots,S_n$ are representatives for the isomorphism classes of simple $A$-modules. Each simple $A$-module $S_i$ gives rise to a linear form $\chi_i:A / [A,A] \to k$, the so-called \emph{character}, namely the action
	map $A \to \Hom_A(S_i,S_i	)\cong k$ which descends to the cocenter.
	Over a field of characteristic zero, the $\chi_i$ are linearly independent as linear forms on $A / [A,A]$. 
	Indeed,  
	it is well-known that $(A / [A,A])^* \to k^n$ sending $\alpha : A / [A,A]\to k$ to $(\alpha(e_1),\dots,\alpha(e_n))$ sends the spans of the $\chi_i$ to $k^n$ because $\chi_i(e_j)=\delta_{ij} \dim_k S_i$ and $\dim_k S_i\neq 0$ in $k$ because $k$ has characteristic zero. 
	This follows since $e_i$ acts by the identity on $P_i$ and hence $S_i$, but as zero operator on all $P_j$ and hence $S_j$ for $i\neq j$. 
	From this, we deduce also that the classes $\bar{e}_i$ of the $e_i$ in $A / [A,A]$ are linearly independent.

	A cyclic associative algebra in the category of vector spaces, seen as symmetric monoidal bicategory, is a symmetric Frobenius algebra.
	In this case $\int_{\mathbb{S}^1} A = A / [A,A]$,
	and $Z(A)$ is the usual center.
	Then $H:A/[A,A] \to Z(A)$ is given by $H\bar a = \sum_{i=1}^\ell b^i ab_i$ for dual basis $(b_1,\dots,b_\ell)$ and $(b^1,\dots,b^\ell)$ satisfying $\kappa(b_i,b^j)=\delta_i^j$, see \cite[Proposition~3.13]{broue} and also \cite{lzzhigman}; its image is the so-called \emph{projective center} of $A$, also called \emph{Higman ideal} $H(A)$. If it agrees with all of $Z(A)$, the map $H$ is an isomorphism because $A/[A,A]$ and $Z(A)$ have the same dimension thanks to $(A/[A,A])^*\cong Z(A)$ (because $A$ is a symmetric Frobenius algebra).
	We then have the following standard result:
	
	\begin{proposition} \label{propsymfrob} For a symmetric Frobenius algebra $A$ over an algebraically closed field of characteristic zero with $n$ non-isomorphic simple modules, the following are equivalent:
	\begin{pnum}
		\item     The Higman map $H:A/[A,A]\to Z(A)$ is an isomorphism.    \label{itemH}
			\item		$A$ is separable.		\label{itemsep}
		\item		$A$ is semisimple.		\label{itemssi}
		\item $\dim\, A / [A,A]=n$.  \label{itemcocenter}
		\item $\dim\, Z(A) =n$.	\label{itemcenter}
		\end{pnum}
	\end{proposition}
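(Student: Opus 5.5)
I will prove the equivalences by placing everything around the semisimplicity condition \ref{itemssi}. Two facts from the discussion preceding the statement do most of the work. First, $\dim A/[A,A]=\dim Z(A)$, because $(A/[A,A])^*\cong Z(A)$ for a symmetric Frobenius algebra. Second, the classes $\bar e_1,\dots,\bar e_n$ are linearly independent in $A/[A,A]$, and the characters $\chi_1,\dots,\chi_n$ are linearly independent in $(A/[A,A])^*$. Together these give $\dim A/[A,A]=\dim Z(A)\ge n$ in general, and they show at once that \ref{itemcocenter} and \ref{itemcenter} are equivalent.

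For \ref{itemsep}$\Leftrightarrow$\ref{itemssi} I use the standard facts over an algebraically closed field. A separable algebra is semisimple, since any module is a direct summand of $A\otimes M$ via the separability idempotent. Conversely, a semisimple algebra is a product of matrix algebras, and these are separable. For \ref{itemssi}$\Rightarrow$\ref{itemcenter}, Wedderburn gives $A\cong\prod_{i=1}^n M_{d_i}(k)$, so $\dim Z(A)=n$. For \ref{itemcocenter}$\Rightarrow$\ref{itemssi}, I argue via the radical $J=\operatorname{rad}A$. The surjection $A\to A/J\cong\prod M_{d_i}(k)$ induces a surjection $A/[A,A]\to (A/J)/[A/J,A/J]\cong k^n$. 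If $\dim A/[A,A]=n$, this surjection is an isomorphism, so $J\subset[A,A]$. I then need to conclude $J=0$. The natural route uses the symmetric form. The orthogonal complement of $[A,A]$ under $\kappa$ is $Z(A)$, so $J\subset[A,A]$ gives $\kappa(Z(A),J)=0$. On the other hand, the socle $\operatorname{soc}(A)=J^\perp$ is an ideal, and it contains central elements pairing nontrivially with $J$ whenever $J\ne0$. I expect this to be the main obstacle. What I would try first: take $0\ne x\in J$; since $\kappa$ is non-degenerate and $J^m=0$, pick $m$ maximal with $J^m\ne0$. Then $J^m\cdot J=0$, so $J^m\subset\operatorname{soc}$, and I would look for a central element in $\operatorname{ann}(J)$ pairing nontrivially with $J$. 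The cleanest route, if this gets messy, is to use the Higman map itself, as below.

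For \ref{itemH}$\Leftrightarrow$\ref{itemssi} I use the explicit formula $H\bar a=\sum_i b^iab_i$. The key computation is that $H(A)\subset\operatorname{soc}(A)\cap Z(A)$ and, more sharply, that $H$ vanishes on $J$ modulo $[A,A]$, i.e.\ $H(J)=0$. This holds because $\sum_i b^i x b_i$ acts on every module $M$ by the trace-type operator $m\mapsto\sum b^ixb_im$. By projectivity/Casimir arguments, as in \cite{broue}, this operator factors through $\operatorname{tr}_M(x\cdot-)$ up to the Casimir, so it kills nilpotent $x$. Granting $H(J)=0$, the rank of $H$ is at most $\dim A/(J+[A,A])=n$. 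So if $H$ is an isomorphism, $\dim Z(A)=n$, which is \ref{itemcenter}, and the equivalence $\dim Z(A)=\dim A/[A,A]$ gives \ref{itemcocenter}. Conversely, if $A$ is semisimple, $H$ is the sum over blocks of the Higman maps of the matrix blocks. On $M_d(k)$ with the symmetric form $\lambda\operatorname{tr}$, $H(\bar a)=\lambda^{-1}\operatorname{tr}(a)\cdot 1$, which is nonzero because $\operatorname{char}k=0$. So $H$ is an isomorphism, closing the cycle \ref{itemssi}$\Rightarrow$\ref{itemH}$\Rightarrow$\ref{itemcenter}$\Leftrightarrow$\ref{itemcocenter}. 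The missing \ref{itemcocenter}$\Rightarrow$\ref{itemssi} step is exactly the radical argument flagged above, which I would settle via the identity $H(A)=\operatorname{soc}(A)\cap Z(A)$-type description of the Higman ideal together with non-degeneracy of $\kappa$.
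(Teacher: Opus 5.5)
Your implications give (ii)$\Leftrightarrow$(iii)$\Rightarrow$(i)$\Rightarrow$(v)$\Leftrightarrow$(iv), but nothing leads from (i), (iv) or (v) back to (iii). The step (iv)$\Rightarrow$(iii), which would close the cycle, is left open. The routes you sketch for it cannot work as stated:
\begin{itemize}
\item $\operatorname{soc}(A)=J^\perp=\operatorname{ann}(J)$ consists, by definition, of the elements pairing trivially with $J$. So a central element of $\operatorname{ann}(J)$ pairing nontrivially with $J$ cannot exist, and your sentence asserting that the socle contains such elements is self-contradictory.
\item The proposed identity $H(A)=\operatorname{soc}(A)\cap Z(A)$ is false in general; only the inclusion $\subseteq$ holds. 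Indeed $\operatorname{soc}(A)\cap Z(A)=(J+[A,A])^\perp$ always has dimension $n$, while $\dim H(A)$ is the rank of the Cartan matrix. So the inclusion is strict whenever the Cartan matrix is singular.
\end{itemize}

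The central element you were looking for is simply $1$, and you already have everything needed. From $J\subseteq[A,A]$ you get $Z(A)=[A,A]^\perp\subseteq J^\perp=\operatorname{soc}(A)$. Hence $1\in\operatorname{soc}(A)$, and so $J=J\cdot 1=0$. Equivalently, $\kappa(1,J)=0$ together with $AJ\subseteq J$ gives $\kappa(A,J)=0$, and $J=0$ by non-degeneracy. This is exactly the paper's argument.

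With this line added your proof is complete, and it differs from the paper's in two places:
\begin{itemize}
\item You obtain (i)$\Rightarrow$(v) from $H(J)=0$ plus a rank count. The paper instead gets (i)$\Rightarrow$(ii) by turning a preimage $v$ of $1$ under $H$ into the separability idempotent $\sum_i b^i\otimes vb_i$.
\item You prove (iii)$\Rightarrow$(i) by the direct block computation $H(\bar a)=\lambda^{-1}\operatorname{tr}(a)\cdot 1$. This is more self-contained than the paper's appeal to the full-rank Cartan matrix and \cite[Corollary~2.7]{lzzhigman}.
\end{itemize}

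Your justification of $H(J)=0$ via ``Casimir arguments'' is not an argument, though the claim is true. The clean proof is that $\kappa(H\bar x,y)=\operatorname{tr}_A(\ell_x r_y)$. For $x\in J$ the operator $\ell_x r_y$ is nilpotent, because $\ell_x$ is nilpotent and commutes with $r_y$. So this trace vanishes for all $y$, and non-degeneracy gives $H\bar x=0$.
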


\begin{proof}
	All of these facts are well-known; let us give the pointers to the literature:
	If $H$ is an isomorphism, $1=\sum_{i=1}^\ell b^ivb_i$ for some $v\in A$. But then $p=\sum_{i=1}^\ell b^i \otimes vb_i$ is a separability idempotent. A fast way to verify this is using the open topological field theory associated to $A$ \cite{laudapfeiffer}, see also the discussion in \cite[Section~2.4]{lpstatesum} and \cite{aguiar}. This proves  \ref{itemH} $\Rightarrow$ \ref{itemsep}, and of course \ref{itemsep} $\Rightarrow$ \ref{itemssi}.

	For \ref{itemssi} $\Rightarrow$ \ref{itemH}, we use that in the semisimple case the Cartan matrix has full rank $n$ (it is actually the identity matrix of course). By \cite[Corollary~2.7]{lzzhigman} this implies that the image of $H$ is $n$-dimensional.
But in the semisimple case $A / [A,A]$ and $Z(A)$ are $n$-dimensional, so $H$ is an isomorphism.
So far, we know \ref{itemH} $\Leftrightarrow $ \ref{itemssi} $\Leftrightarrow $ \ref{itemssi}.

	Moreover, \ref{itemcocenter} $\Leftrightarrow$ \ref{itemcenter} since $A / [A,A]$ and $Z(A)$ are dual. Furthermore, \ref{itemssi} clearly implies \ref{itemcocenter} because in the semisimple case the category of finite-dimensional $A$-modules is just $n$ copies of the category of finite-dimensional vector spaces.

	For the proof of \ref{itemcocenter} $\Rightarrow$ \ref{itemssi}, denote by $J$ the Jacobson radical of $A$. The algebra $A/J$ is semisimple with $n$ non-isomorphic simple modules. The surjection $A / [A,A] \to(A/J) / [A/J,A/J]$ must actually also be injective for dimensional reasons. But this is only possible if $J \subset [A,A]$. After taking complements with respect to the Frobenius pairing and taking into account
	 $J^\perp = \catf{soc}\, A=\{a\in A \, |\, Ja=0\}$ and $[A,A]^\perp = Z(A)$, see \cite[after no.~(32), also (35)]{kuehlshammer}, we obtain that the center, in particular the unit, lies in $\catf{soc}\, A$. This implies $J=0$, thereby making $A$ semisimple.
	\end{proof}

\subsection{An attempt at a structural understanding of cyclic categories of modules over vertex operator algebras}
Generally, the (open) modular functors that we built in this article do not come from finitely semisimple rigid categories.
In the literature, this is still the best understood case, and it can be valuable to know when we are actually in this special situation.
The next result tells us that we can detect whether the categories are  semisimple and rigid by looking at the value on the annulus and the disk.

\begin{theorem}\label{thmssirigid}
	Suppose that $\cat{A}$ is a cyclic framed $E_2$-algebra in $\PrL$ over an algebraically closed field of characteristic zero.
	Let $\cat{A}$ be finite, i.e.\ it is 
	the ind completion of cyclic framed $E_2$-algebra in finite categories.
	\begin{pnum}
		\item	Assume that $\cat{A}$ has $n$ compact simple objects.\label{pdimhh}
		Then \begin{align} \dim\,\Ao(\mathbb{S}^1\times [0,1])\ge n \ . \label{eqndimn}
		\end{align}
	In particular, the coend $\mathbb{F}=\int^{t\in \catT} t^* \otimes t$ is non-zero for $n\ge 1$.
		\item	Assume that there is a non-degenerate trace, i.e.\ a linear form 
		\begin{align} \varepsilon : \Ao(\mathbb{S}^1 \times [0,1])\to k \label{eqntracemod}
		\end{align} such that the maps
		$\cat{A}(t,u)\otimes \cat{A}(u,t)\to \Ao(\mathbb{S}^1 \times [0,1]) \ra{\varepsilon} k$ are non-degenerate.
		If the equality in \eqref{eqndimn} holds, then $\cat{A}$ is semisimple.\label{nondegtrace}
		
		\item If $\cat{A}$ has\label{pssirigid} \begin{itemize}
			\item a non-degenerate trace as in \ref{nondegtrace}, 
			\item a simple unit and $\Ao(\mathbb{D}^2)\neq 0$, 
			\item and if equality holds in \eqref{eqndimn}, 
			\end{itemize} then $\cat{A}$ is semisimple and rigid, i.e.\ it is the ind completion of a ribbon fusion category.
		
		\item In the finite rigid case, i.e.\ if $\cat{A}$ is the ind completion of a finite ribbon category, 
		a trace as in \ref{nondegtrace} exists if and only if the finite ribbon category is unimodular. 
		In that case, the trace \eqref{eqndimn} can always be chosen such that it is invariant under the $\mathbb{Z}_2$-action on $\Ao(\mathbb{S}^1 \times [0,1])$.\label{ptrace}
	\end{pnum}
\end{theorem}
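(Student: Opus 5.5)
The plan is to reduce everything to a finite-dimensional algebra and then apply the $1$-categorical results of Section~\ref{sechigman}. Since $\cat{A}$ is finite, Lemma~\ref{lemmafiniteness} lets me replace the compact projectives $\catT$ by $BA$ for a finite-dimensional algebra $A$. I choose $A=\End(P)$ for a projective generator $P=\bigoplus_{i=1}^n P_i$ of $\catf{comp}(\cat{A})$. By \eqref{eqnHHcoend} (whose proof only uses Corollary~\ref{corexcinterval} and \eqref{eqnsprhom}, hence holds for any cyclic associative algebra in $\PrL$),
\begin{align}
\Ao(\mathbb{S}^1\times[0,1])\cong \int^{t\in\catT}\Hom(t,t)\cong A/[A,A] \ ,
\end{align}
where the last step is Morita invariance of the cocenter.

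For \ref{pdimhh}, the classes $\bar e_1,\dots,\bar e_n$ of the primitive idempotents are linearly independent in $A/[A,A]$, as shown at the beginning of Section~\ref{sechigman}. This uses characteristic zero through $\chi_i(e_j)=\delta_{ij}\dim S_i$, and gives \eqref{eqndimn}. For the claim $\mathbb{F}\neq 0$, I would use $\mathbb{F}\cong\Delta'\otimes\Delta''$ (Theorem~\ref{thmcfe2}) and \eqref{eqnUHL} with $X=I$. This yields $\kappa(I,\mathbb{F})\cong\Ao(\mathbb{S}^1\times[0,1];I,I)$. Inserting the unit into a marked interval forgets that interval, via the unit axiom (C) together with excision. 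Hence $\kappa(I,\mathbb{F})$ is the cocenter, which has dimension $\ge n\ge 1$, so $\mathbb{F}\neq0$.

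For \ref{nondegtrace}, I take $t=u=P$. The assumption says that $A\otimes A\to k$, $a\otimes b\mapsto \varepsilon(\overline{ab})$, is non-degenerate. This form is symmetric because it factors through $A/[A,A]$. So $A$ is a symmetric Frobenius algebra, and equality $\dim A/[A,A]=n$ gives semisimplicity by Proposition~\ref{propsymfrob} (\ref{itemcocenter}$\Rightarrow$\ref{itemssi}). Semisimplicity of $A$ is Morita invariant, so $\catf{comp}(\cat{A})$ is semisimple.

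For \ref{pssirigid}, part \ref{nondegtrace} makes $\catf{comp}(\cat{A})$ a finite semisimple ribbon Grothendieck-Verdier category with simple unit (Proposition~\ref{propfinite}). In the semisimple case $\nakar\cong\id$. Then $\Ao(\mathbb{D}^2)=\kappa(I,I)\cong\cat{A}(DI,I)$ by Proposition~\ref{propfinite}, and this is nonzero by assumption. Since $K=DI$ is simple, being the image of the simple unit under the anti-equivalence $D$, it follows that $K\cong I$. It remains to show that a semisimple ribbon Grothendieck-Verdier category with simple unit and $K\cong I$ is rigid, with $DX$ the dual. The coevaluation $I\to X\otimes DX$ would be obtained from $\Hom(X\otimes DX,I)\cong\Hom(X,X)$ together with semisimplicity, which lets me split the evaluation. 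The zigzag identities must then be checked, for instance by comparing with the two monoidal products $\otimes$ and $\odot$ of \eqref{eqnsmp} and showing that they coincide when $K\cong I$ and everything is exact. I expect this rigidity step to be the main obstacle; if needed I would invoke the known criterion that a Grothendieck-Verdier category is rigid if and only if $\otimes\cong\odot$ (Boyarchenko-Drinfeld).

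For \ref{ptrace}, a non-degenerate trace in the above sense is the same as a symmetric Frobenius structure on $A$. Equivalently, it is a Calabi-Yau structure on $\Proj$, which exists if and only if $\nakar\cong\id$, up to the pivotal twist. For a finite ribbon category one has $\nakar\cong D_{\alpha}\otimes-$ with the distinguished invertible object $D_\alpha$, so such a trace exists if and only if $D_\alpha\cong I$, i.e.\ unimodularity; here I would use the twisted Calabi-Yau results cited in Proposition~\ref{propfinite}. The trace can then be taken to be the modified trace. The $\mathbb{Z}_2$-action on the cocenter is induced by duality $f\mapsto f^\vee$, and the modified trace satisfies $\mathsf{t}(f^\vee)=\mathsf{t}(f)$ for a pivotal unimodular category; this gives the invariance. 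I avoid simply averaging over $\mathbb{Z}_2$, because averaging might destroy non-degeneracy.
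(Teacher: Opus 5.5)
Parts (i), (ii), (iv) and the deduction $K\cong I$ in (iii) follow the paper's route: the cocenter of the basic algebra, linear independence of the $\bar e_i$, Proposition~\ref{propsymfrob}, and unimodularity via modified traces with $f\mapsto f^\vee$ as the $\mathbb{Z}_2$-action. The genuine gap is the rigidity step in (iii), and it is not a matter of routinely checking zigzag identities.

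For a simple $X$ you do get a nonzero evaluation $e: DX\otimes X\to I$. By splitting the map $X\otimes DX\to I$ that corresponds to $X\cong D^2X$, you also get a map $c: I\to X\otimes DX$. Since $\End(X)=k$, the two snake composites built from $e$ and $c$ are scalars. The interchange law shows they are the same scalar $\lambda$, because both composites, postcomposed with $e$, equal $(e\otimes e)\circ(\id_{DX}\otimes c\otimes \id_X)$. Rescaling $c$ only rescales $\lambda$, so everything hinges on $\lambda\neq 0$. Nothing you have used forces this. Your splitting normalizes the closed loop $I\to X\otimes DX\to I$, not the snake. Moreover, $\lambda$ is governed by the associativity constraint, not by the fusion rules, semisimplicity, or exactness of $\otimes$. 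Reformulating rigidity as $\otimes\cong\odot$ does not help either: establishing that isomorphism compatibly with the canonical comparison map is equivalent to the rigidity you are after, and exactness gives no handle on it.

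The paper closes this step with a genuinely nontrivial external input. It cites Etingof--Penneys \cite[Corollary~1.3]{etingofpenneys}, which shows that the finitely semisimple $r$-category $\cat{C}=\catf{comp}(\cat{A})$ is rigid. That result relies on the braiding and on characteristic zero, and your rigidity argument uses neither. Only after rigidity is established do $K\cong I$ and \cite[Proposition~2.3]{bd} identify the Grothendieck--Verdier duality with the rigid duality, so that $\cat{C}$ is a ribbon fusion category.
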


\begin{proof}
	By assumption
	and Proposition~\ref{propfinite}
	 $\cat{A}$ is the ind completion of a ribbon Grothendieck-Verdier category $\cat{C}$.
	The latter is linearly equivalent to the category of finite-dimensional modules over a finite-dimensional algebra $A$ with $n$ isomorphism classes of simple objects. The discussion before Proposition~\ref{propsymfrob} immediately proves \ref{pdimhh}.
	Thanks to $\Ao(\mathbb{S}^1\times [0,1])\cong \kappa(I,\mathbb{F})$, we conclude that $\mathbb{F}$ cannot be zero.

	The assumption in \ref{nondegtrace} says that $A$ is a symmetric Frobenius algebra. Now \ref{nondegtrace} follows from Proposition~\ref{propsymfrob}.
	
	For \ref{pssirigid}, note that $\Ao(\mathbb{D}^2)\neq 0$ says $\cat{C}(K,I)\neq 0$ for the dualizing object $K$ of $\cat{C}$ \cite[eq. (4.1)]{envas}. By assumption $I$ is simple,
	 and hence so is its image $K$ under the Grothendieck-Verdier duality.
	This implies $K\cong I$ via Schur's Lemma. In other words, $\cat{C}$ is an $r$-category in the sense of \cite{bd}.
	From \ref{nondegtrace}, we conclude that $\cat{C}$ is a finitely semisimple category. 
	By \cite[Corollary 1.3]{etingofpenneys}
	this implies rigidity of the monoidal product.
	Since $K\cong I$, the Grothendieck-Verdier duality agrees with the rigid duality \cite[Proposition 2.3]{bd}.
	In other words, $\cat{C}$ is a ribbon fusion category.
	
	For \ref{ptrace}, recall that in the finite ribbon case, a trace as in \ref{nondegtrace}
	 exists if and only if 
	the finite ribbon category is unimodular, see \cite{mtrace} and the references therein and also \cite{shibatashimizu}.
	In fact, these papers tell us that we actually find a two-sided modified trace that, in particular, will have the property that a morphism $f$ and its dual $f^\vee$ will have the same trace \cite[Lemma 3 (b)]{mtrace3}. But $f \mapsto f^\vee$ is exactly the mapping class group action by $\mathbb{Z}_2$ on $\Ao(\mathbb{S}^1 \times [0,1])\cong HH_0(\cat{A})$ in the rigid case \cite[Example 3.6]{yeralopen}.
	\end{proof}

\begin{question}\label{qtrace}
	In the finite rigid case, \eqref{eqntracemod} can be chosen as a modified trace as discussed in the proof of Theorem~\ref{thmssirigid}.
	Beyond the rigid case, this requirement does not make sense because the notion of a modified trace needs rigidity.
	However, the notion of a trace invariant under the action of the mapping class group  $\mathbb{Z}_2$
	of the annulus without marked intervals still makes sense and is always fulfilled for modified traces. 
	This suggests that $\mathbb{Z}_2$-invariant traces, a notion that always 
	can be defined
	 for the compact projective objects of $\RV$, could be a reasonable replacement of modified traces beyond the finite rigid case.
	Which vertex operator algebras have such a trace?
	\end{question}

\begin{definition}
	Let $V$ be a $C_2$-cofinite vertex operator algebra and $\Vmod$ the category of strongly graded $V$-modules.
	As in \cite[Definition 1.8.13]{egno}, denote by $K_0(V)$ the free abelian group spanned by the isomorphism classes $[P_1],\dots,[P_n]$ of indecomposable projective objects of $\Vmod$.
	We define \emph{fusion data} for $V$ as the structure of a not necessarily unital associative multiplication $[P_i] \star [P_j] = \sum_{\ell=1}^n c_{ij}^\ell [P_\ell]$ with coefficients $c_{ij}^\ell \in \mathbb{N}_0$. 
	\end{definition}

\begin{theorem}\label{corfiniteness}
	Let $V$ be a $C_2$-cofinite vertex operator algebra and $\Vmod$ the category of strongly graded $V$-modules having $n$ indecomposable projective objects.
	Equip $\Vmod$ with the non-degenerate symmetric  pairing induced by taking the contragredient $X \mapsto X'$ of 
	 $V$-modules, with the symmetry arising from the isomorphism $X''\cong X$.
	For a given fusion product $\star$ with coefficients $c_{ij}^\ell $on $K_0(V)$, denote by $F_V(\star)$ the equivalence classes of open modular functors 
	 $\cat{A}$ inducing $\star$ on $K_0(V)$ via $c_{ij}^\ell = \dim\, \cat{A}(\mathbb{D}^2_3;P_i,P_j,P_\ell')$, where $\mathbb{D}_3^2$ is the disk with three marked intervals.
	In other words, $F_V(\star)$ is the fiber of $\star$ for the restriction to disks.
	\begin{pnum}
		\item In the finite rigid case, there exists fusion data $\star$ such that the fiber $F_V(\star)$ is non-empty.\label{pnonempty} 
		\item Assume that $V$ is simple and $V'\cong V$. Consider the subfiber $F_V'(\star)$ of those $\cat{A}$ such that the value of the annulus has a non-degenerate trace and has dimension $n$.
		Then $F'_V(\star)$ is finite, i.e.\ only finitely many open modular functors extend the fusion data. \label{pfinitefiber}
		\end{pnum}
	\end{theorem}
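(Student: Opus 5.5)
Part \ref{pnonempty} is nearly immediate. In the finite rigid case, Corollary~\ref{corvmod} makes $\catT=\Proj\Vmod$ a cyclic category. Theorem~\ref{thmmcggen}'s predecessor, Theorem~\ref{thmmcgaction}, then gives an open modular functor $\Vo$, whose compact part is the $\Lexf$/$\Rexf$ version.

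We evaluate it on $\mathbb{D}^2_3$ with labels $P_i,P_j,P_\ell'$ and define $c_{ij}^\ell:=\dim\,\Vo(\mathbb{D}^2_3;P_i,P_j,P_\ell')$. This is finite by Theorem~\ref{thmfinite}, since finite rigid implies $C_2$-cofinite in the relevant examples; alternatively, finiteness holds directly because $\Vmod$ is a finite category. By \eqref{eqnrvdisk} and $\spr{P_i,P_j,P_\ell'}\cong\Hom_V(P_\ell,P_i\otimes P_j)$ up to the pairing, these coefficients are the multiplicities of $P_\ell$ in $P_i\otimes P_j$. Indeed, projectives form a tensor ideal, and the Hom-dimension into projectives detects indecomposable summands via the Cartan/Nakayama pairing. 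I would check associativity of $\star$ from the associator of $\Vmod$, so $\star$ is fusion data and $F_V(\star)\ni\Vo$. Should the Hom-dimensions not literally be multiplicities, the fallback is to define $\star$ by the structure constants and deduce associativity from excision (Corollary~\ref{corexcinterval}) applied to the two decompositions of $\mathbb{D}^2_4$.

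For \ref{pfinitefiber}, we first use Theorem~\ref{thmclassification} to replace open modular functors by cyclic associative algebras $\cat{A}$ in $\PrL$ whose compact part is $\Vmod$ with the contragredient pairing. Any $\cat{A}\in F'_V(\star)$ has an annulus value of dimension $n$ carrying a non-degenerate trace. I would now run the argument of Theorem~\ref{thmssirigid}\ref{nondegtrace}: the endomorphism algebra of a projective generator is then symmetric Frobenius with $\dim A/[A,A]=n$, so it is semisimple by Proposition~\ref{propsymfrob}. That result needs only the cyclic associative structure, not braiding, so it applies here.

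Simplicity of $V$ makes the unit simple, and $V'\cong V$ gives $K\cong I$ together with $\Ao(\mathbb{D}^2)\neq0$. As in part \ref{pssirigid}, combining this with \cite{etingofpenneys} shows that $\cat{A}$ is a multifusion, indeed fusion, category structure on $\vect^{\oplus n}$. By Definition~\ref{defvcyclic} the monoidal structure is only promonoidal, but in the semisimple case it is representable by $\star$. We thus get a fusion category whose Grothendieck ring is $(K_0(V),\star)$, and the pairing is fixed.

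Finiteness then follows from Ocneanu rigidity: a fusion ring admits only finitely many fusion categorifications up to equivalence over characteristic zero (Etingof--Nikshych--Ostrik). Equivalence classes of cyclic associative algebras correspond to fusion categories together with a pivotal/symmetric pairing datum. For fixed fusion category there are finitely many such data, since pivotal structures form a torsor over the finite group of invertible objects' characters, and each such datum is determined up to finitely many choices by $\operatorname{Aut}_\otimes(\id)$.

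The main obstacle will be making precise the passage from the promonoidal, merely cyclic-associative structure to an honest monoidal fusion category with fusion ring $\star$. This includes checking that $\Ao(\mathbb{D}^2_3)$ computes the structure constants of the representing tensor product, which requires semisimplicity to be established first. It also includes controlling the extra cyclic datum so that the fiber over a single fusion category is finite. I would try to handle the latter by identifying cyclic structures on a fusion category with spherical-type pairings, which are finite in number.
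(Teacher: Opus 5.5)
There is a genuine gap in your part (i). You identify $c_{ij}^\ell=\dim\Hom_V(P_\ell,P_i\otimes P_j)$ with the multiplicity of $P_\ell$ as a direct summand of $P_i\otimes P_j$. This is false as soon as $\Vmod$ is not semisimple. In fact $\dim\Hom_V(P_\ell,X)$ is the composition multiplicity $[X:S_\ell]$ of the head $S_\ell$ of $P_\ell$, so $c_{ij}^\ell=\sum_k m_{ij}^k C_{k\ell}$, where $m_{ij}^k$ are the summand multiplicities and $C$ is the Cartan matrix. Your excision fallback breaks for the same reason. Gluing produces $\int^{t\in\catT}$ over the non-semisimple category $\catT=\Proj\Vmod$, i.e.\ a tensor product over the basic algebra, and its dimension is not $\sum_\ell c_{ij}^\ell c_{\ell k}^m$.

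So associativity of $\star$ is not established, and it can actually fail. Take the triplet $\cat{W}_2$, where all modules involved are self-contragredient and $C$ is symmetric, so pairing conventions do not matter. Let $e_1=P^+$ and $e_2=P^-$ be the projective covers of the two non-projective simples $S^\pm$; each contains $S^+$ and $S^-$ with multiplicity $2$. Let $e_3=X^+$ and $e_4=X^-$ be the two simple projectives. The fusion rules $X^+\otimes X^+\cong P^+$, $X^+\otimes X^-\cong P^-$, $X^+\otimes P^+\cong 2X^+\oplus 2X^-$ and $[P^\pm\otimes P^+]=8[S^+]+8[S^-]$ give $(e_3\star e_3)\star e_1=32(e_1+e_2)$ but $e_3\star(e_3\star e_1)=8(e_1+e_2)$.

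The paper's proof of (i) is only the reference to Corollary~\ref{corvmod}. That produces the open modular functor but does not address associativity of the induced numbers. Your argument is correct precisely in the semisimple case, where $C$ is the identity and the coend in the excision isomorphism becomes a direct sum over simple objects.

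Your part (ii) is essentially the paper's argument. Theorem~\ref{thmssirigid}~\ref{pssirigid} (semisimplicity via Proposition~\ref{propsymfrob}, then $K\cong I$ and rigidity) gives a fusion category with fusion ring $\star$. Then Ocneanu rigidity \cite[Theorem~9.1.4]{egno} and Theorem~\ref{thmclassification} finish the proof. A few remarks on the differences:
\begin{itemize}
\item Your ``main obstacle'' is not one. In the finite setting the compact objects already carry an honest pivotal Grothendieck-Verdier structure in $\Rexf$ (Remark~\ref{remarkprc} and \cite{cyclic}). Once semisimplicity is known, the $c_{ij}^\ell$ are the fusion coefficients.
\item For the pivotal datum, the paper just notes that it is fixed by the prescribed pairing with $X''\cong X$. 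Your count of pivotal structures also works and does not need that observation. However, the torsor is over $\operatorname{Aut}_\otimes(\id)$, i.e.\ the characters of the universal grading group, not characters of the group of invertible objects.
\item You are right that the semisimplicity step needs no braiding. But the rigidity input \cite{etingofpenneys} is a result about braided categories, and an element of $F'_V(\star)$ carries no braiding a priori. The paper's proof shares this point, since it applies Theorem~\ref{thmssirigid}, which is formulated for cyclic framed $E_2$-algebras.
\end{itemize}
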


\begin{proof}
	Statement~\ref{pnonempty} follows from Corollary~\ref{corvmod}.
For \ref{pfinitefiber}, use Theorem~\ref{thmssirigid}~\ref{pssirigid} to conclude that the category obtained by evaluation of $\cat{A}\in F'(\star)$ 
is a finitely semisimple pivotal Grothendieck-Verdier category that is actually rigid, with the Grothendieck-Verdier duality coinciding with the rigid duality. In other words, it is a fusion category. But up to equivalence, only finitely many of those recover a given fusion ring \cite[Theorem 9.1.4]{egno}. On the other hand, the fusion category plus the pivotal structure (which is fixed by construction to be $X''\cong X$) determine already the open modular functor up to equivalence (Theorem~\ref{thmclassification}).
\end{proof}

There is the following immediate consequence: Let $V$ be strongly rational, i.e.\ $\Vmod$ is finitely semisimple and $V\cong V'$.
If we build an open modular functor from $\Vmod$, it has to turn $\Vmod$ into a pivotal fusion category by Theorem~\ref{thmssirigid}~\ref{pssirigid}.
This could be the structure from \cite{huang} or the geometric one from \cite{damioliniwoike} induced by the algebro-geometric blocks from \cite{frenkelbenzvi}, or another one.
From this pivotal fusion category, one can construct an open-closed modular functor without singularities \cite{sn}, in a unique way such that the boundary induces the bulk in the sense of Remark~\ref{rembulkboundary}. For $N\in\mathbb{N}$, denote by $F_V^N$ all such open-closed modular functor extensions for $\Vmod$, regardless of the structure that we put on it, but subject to the requirement that the coefficients of the underlying fusion ring all satisfy $c_{ij}^\ell \le N$. Now Corollary~\ref{corfiniteness} \ref{pfinitefiber} tells us:

\begin{corollary}\label{corfiltration}
	In the strongly rational case, $F_V^1 \subset F_V^2 \subset \dots $ is a filtration of the extensions of $\Vmod$, without a fixed monoidal structure, to an open-closed modular functor, and each term $F_V^N$ is finite.
	\end{corollary}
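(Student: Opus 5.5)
The corollary should follow from Theorem~\ref{corfiniteness}~\ref{pfinitefiber}: write each $F_V^N$ as a finite union of its fibers $F_V(\star)$, and show that only finitely many fusion data occur.

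\emph{Filtration.} The inclusion $F_V^N\subset F_V^{N+1}$ is immediate. The condition $c_{ij}^\ell\le N$ is weaker than $c_{ij}^\ell\le N+1$, and nothing else in the definition depends on $N$. It is also exhaustive. For any extension, the coefficients $c_{ij}^\ell=\dim\,\cat{A}(\mathbb{D}^2_3;P_i,P_j,P_\ell')$ are finite by Theorem~\ref{thmfinite}, because strongly rational implies $C_2$-cofinite. There are only finitely many triples $(i,j,\ell)$, so each extension lies in some $F_V^N$ and $\bigcup_N F_V^N$ contains all extensions.

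\emph{Finitely many fusion data.} In the strongly rational case $\Vmod$ is finitely semisimple with $n$ simple objects. These are exactly the indecomposable projectives $P_1,\dots,P_n$. A fusion datum is then a tuple $(c_{ij}^\ell)\in\{0,\dots,N\}^{n^3}$, so at most $(N+1)^{n^3}$ fusion data $\star$ can occur in $F_V^N$. Therefore $F_V^N$ is contained in the finite union of the fibers $F_V(\star)$ over these $\star$. Strictly speaking we restrict to the open sector. By Remark~\ref{rembulkboundary} and \cite{sn}, an open-closed extension in which the boundary induces the bulk is determined by its open part, and the open part is determined by the underlying cyclic associative algebra via Theorem~\ref{thmclassification}. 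So counting open modular functors suffices.

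\emph{Each fiber is finite.} The main point is to check that every element of these fibers already lies in the subfiber $F_V'(\star)$, i.e.\ that the hypotheses of Theorem~\ref{corfiniteness}~\ref{pfinitefiber} hold automatically. First, $V$ is simple and $V\cong V'$ by strong rationality. Second, the finite-dimensional algebra $A$ with $\catf{comp}\,\cat{A}\simeq A\text{-mod}$ is semisimple. Hence $\Ao(\mathbb{S}^1\times[0,1])\cong HH_0(A)=A/[A,A]$, using \eqref{eqnHHcoend}, and this space has dimension exactly $n$, with basis the classes $\bar e_i$. Third, a non-degenerate trace exists: take $\varepsilon(\bar e_i)=1$. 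The pairing $\cat{A}(t,u)\otimes\cat{A}(u,t)\to k$ is then the usual trace pairing on a product of matrix algebras, which is non-degenerate. Theorem~\ref{corfiniteness}~\ref{pfinitefiber} now gives $|F_V(\star)|=|F_V'(\star)|<\infty$, and so each $F_V^N$ is finite.

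The step I expect to be most delicate is the third one, namely that the annulus value and its trace always meet the hypotheses without extra choices. If non-degeneracy relative to the specific maps in Theorem~\ref{thmssirigid}~\ref{nondegtrace} causes trouble, I would instead invoke the semisimplicity plus $K\cong I$ argument from the proof of Theorem~\ref{thmssirigid}~\ref{pssirigid} directly. That argument shows the category is a pivotal fusion category, and then Ocneanu rigidity \cite[Theorem 9.1.4]{egno} can be applied to each fusion ring.
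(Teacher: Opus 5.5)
Your proposal is correct and follows the paper's own argument: every extension is determined by its open part, since the bulk is induced by the boundary; only finitely many fusion data with $c_{ij}^\ell\le N$ exist on $n$ simples; and Theorem~\ref{corfiniteness}~\ref{pfinitefiber} makes each fiber finite, with your explicit checks of the annulus dimension and the non-degenerate trace supplying what the paper leaves implicit when it invokes Theorem~\ref{thmssirigid}~\ref{pssirigid}. The only slip is citing Theorem~\ref{thmfinite} for exhaustiveness, because that theorem concerns $\block$ for the fixed cyclic structure and not an arbitrary extension; the finiteness of the $c_{ij}^\ell$ instead follows from the extension making $\Vmod$ a fusion category, and is in any case built into the definition of fusion data.
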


This is a far-reaching finiteness result for rational conformal field theories. Let us slightly rephrase: Given a strongly rational vertex operator algebra $V$, suppose that we want to build spaces of conformal blocks with boundary labels being the simple $V$-modules, and to convert incoming to outgoing labels via the contragredient $V\mapsto V'$. The construction is to be defined on \emph{all} open-closed surfaces, with or without boundary, at all genera in a coherent way such that one obtains an open-closed modular functor. Apart from that, we are free on \emph{how} to do the construction.
Once we ask that the dimension at disks with three marked intervals labeled with arbitrary simple objects does not exceed some fixed $N$, there remain only finitely many possibilities to do the construction.

		\section{Singular correlators for the triplet $\cat{W}_{2,3}$ at $c=0$\label{sectriplet23}}
		For a cyclic subcategory of modules over a vertex operator algebra $V$,
		Theorem~\ref{thmmain} gives us a modular functor with singularities $\block$.
		After constructing this modular functor, the next task is the construction correlators, i.e.\ vectors in the spaces of conformal blocks invariant under the mapping class group actions and sewing.
		As explained in the introduction,
		this is part of the passage from chiral to full conformal field theory, see e.g.~\cite{algcften} for more background.
		
		The construction of correlators will be achieved in this section using the modular microcosm principle \cite{microcosm}
		and will be adapted to the modular functor with singularities that we have already provided.
		In the general case, the modular functors built in this article do not allow for the removal of singularities, and this will imply also that it will be natural to ask a little less from consistent systems of correlators, simply because certain types of sewings do not exist. As a consequence, the construction in this section does not recover all aspects of the correlator construction in the rational \cite{frs1,ffrsunique} and the finite rigid case \cite{correlators}.

	We argue that the notion of correlators that we consider in this section beyond the rigid case is the
	 `correct' one, and our main piece of evidence is the following:
		The guiding example will be the triplet $\cat{W}_{2,3}$ at $c=0$ 
		from the papers \cite{grw,grw-proc} that are among the first exploratory works beyond the rigid case that have, since then, inspired the development of tools for the non-rigid case
		\cite{alsw,fsswfrobenius,microcosm}.
		For this example, a notion of boundary condition is proposed in \cite{grw} based on physical considerations, hoping that it will eventually lead to a construction of a bulk theory, more precisely a full conformal field theory described through a consistent system of correlators.
		This goal is expressed without discussing in detail spaces of conformal blocks or notions of correlators since such a framework was not known for the non-rigid case at the time. 
		The main insight of this section will be that for the modular functor with singularities and the notion of correlator we will present momentarily, the 
		boundary conditions from
		\cite{grw,grw-proc}, up to a small correction in the definition,
		provide each a system of correlators.

			\subsection{Boundary conditions of the triplet $\cat{W}_{2,3}$ at $c=0$}
		The triplet $\cat{W}_{2,3}$ at $c=0$ gives rise to ribbon Grothendieck-Verdier category $\Wmod$
		in $\Rexf$ by \cite{grw,grw-proc,alsw} with monoidal product $\otimes$. 
		By Proposition~\ref{propc2cofinite} the ind completion $\RW$ 
		of this ribbon Grothendieck-Verdier category is a cyclic framed $E_2$-algebra in presentable categories.
		
		Actually, $\Wmod$ gives us also
		a ribbon Grothendieck-Verdier category in $\Lexf$ with the conventions from \cite[Section~4.2]{cyclic}.
		The underlying linear category is the same, but the monoidal product of is given by $X\odot Y=(Y'\otimes X')'$ for $X,Y\in \Wmod$. The dualizing object is the vertex operator algebra $\cat{W}$ itself while the unit is its contragredient $\cat{W}'$.

		\begin{definition}[$\text{Boundary theory for the triplet, based on \cite{grw,grw-proc}}$]\label{grwboundarycondition} A \emph{GRW boundary theory} for $\cat{W}_{2,3}$ is an associative algebra $F\in \Wmod$ for $\otimes$ together with a non-degenerate pairing $\beta : F\otimes F\to \cat{W}'$ that is invariant for the multiplication and also symmetric.
		\end{definition}
		Symmetry means that
		\begin{align}
			F\otimes F \ra{\theta_F \otimes \id_F} F\otimes F \ra{c_{F,F}^{-1}}F\otimes F \ra{\beta}\cat{W}'
		\end{align}
		agrees again with $\beta$. In other words, $\beta$ is preserved by the action of the pivotal structure on $\Hom_{\cat{W}}(F\otimes F,\cat{W}')$~\cite[Section~5.1]{cyclic}. 
		
		\begin{remark}	The symmetry condition was not considered in \cite{grw,grw-proc}, but we believe this to be an oversight. 
			After all, $\beta$ is the supposed to be the non-degenerate two-point function for the disk, and since two intervals placed on the boundary of a disk are necessarily symmetric, the symmetry of $\beta$ is a topological requirement.\end{remark}

			\begin{example}[Chiral symmetry algebra]
			Inside $\Wmod$, there is an object  $\cat{W}(0)$
			called the \emph{chiral symmetry algebra} \cite[Section~4.2 and Lemma B.1]{grw-proc} that is 
			\begin{itemize}
				\item a non-trivial quotient of the monoidal unit $\cat{W}$,
				\label{itemquot}
				\item and a braided commutative algebra with the multiplication being an isomorphism $\mu:\cat{W}(0) \otimes \cat{W}(0)\cong \cat{W}(0)$,
				\item that moreover comes with a map $\varepsilon:\cat{W}(0)\to \cat{W}'$ such that the pairing $\beta = \varepsilon \circ \mu$ is non-degenerate (and automatically invariant for the multiplication).
			\end{itemize}
			Since $\cat{W}(0)$ is a quotient of the unit, the balancing of $\cat{W}(0)$ is trivial: $\theta_{\cat{W}(0)}=\id_{\cat{W}(0)}$. In combination with the commutativity of $\mu$, this tells us 
			that $\beta$ is symmetric. 
			This means that the chiral symmetry algebra is 
			 a GRW boundary theory.
		\end{example}

	\begin{question}\label{q5}
		How can one classify all GRW boundary conditions?
	\end{question}

\begin{remark}
	As explained in \cite[Section~4.3]{grw-proc}, both $\cat{W}$ and $\cat{W}'$ are braided commutative algebras, but they are not isomorphic to their Grothendieck-Verdier dual, and hence cannot be equipped with a non-degenerate symmetric pairing.
\end{remark}
		
		\subsection{Consistent systems of singular correlators}
		Denote by $\Omega$ a $\PrL$-valued open-closed modular functor with singularities (Definition~\ref{defmfsingularities})
		with $\cat{A}\in\PrL$ for the open sector and $\cat{B}\in\PrL$ for the closed one.
		In other words, for an operation $\Sigma \in \sing(n,m)$ in the singular surface operad, we obtain a map 
		\begin{align}\Omega(\Sigma;-):\cat{A}^{\boxtimes n}\boxtimes\cat{B}^{\boxtimes m}\to\Vect \ . 
			\end{align}
		The coevaluation objects are $\Delta \in \cat{A}\boxtimes \cat{A}$ and $\Delta_\circ \in \cat{B}\boxtimes\cat{B}$, but only $\Delta$ is 
		 the copairing to a non-degenerate pairing $\kappa : \cat{A}\boxtimes \cat{A}\to\Vect$.
		By symmetry there are canonical isomorphisms $\tau \Delta \cong \Delta$ and $\tau \Delta_\circ \cong \Delta_\circ$ for the symmetric braiding $\tau$
		of $\PrL$.
		Recall that for an object $X\in\cat{A}$, we call a map $\nu: \Delta \to X\boxtimes X$ \emph{symmetric} \cite[Definition 4.1]{microcosm} if
		\begin{align}
			\Delta \cong \tau \Delta \ra{\tau(\nu)} \tau (X\boxtimes X)\cong X \boxtimes X
			\end{align} agrees with $\nu$.
		In \cite[Definition 4.5]{microcosm}, it is defined when such a symmetric map is \emph{non-degenerate} (this is done in reference to the pairing $\kappa:\cat{A}\boxtimes\cat{A}\to\Vect$).
		An object $X$ equipped with such a non-degenerate symmetric map $\Delta \to X\boxtimes X$ is then referred to as a \emph{self-dual object}.
		
		Similarly, we can define symmetric maps $\Delta_\circ \to Y\boxtimes Y$ for $Y\in\cat{B}$. Since $\Delta_\circ$ is not the copairing for a non-degenerate pairing, no self-duality for $Y$ in the strong sense can be defined. We will refer to $Y$ in this case as a \emph{symmetric object}.
		
		 For the singular surface operad $\sing : \TwoGraphs \to \Cat$ (Section~\ref{secmfsing}), denote by
		 $\int \sing$ its Grothendieck construction \cite[Section I.5]{maclanemoerdijk}, i.e.\ the symmetric monoidal category of pairs $(T,\Sigma)$ with $T\in \TwoGraphs$ and $\Sigma\in \sing(T)$. Roughly, this is the category obtained from all operations in $\sing$ with morphisms coming from the isomorphisms between operations and gluing operations. It can be seen as a moduli space of operations in $\sing$, see \cite[Remark 3.4]{microcosm}.
		
		 If $X\in \cat{A}$ and $Y\in\cat{B}$ are symmetric objects,
		 then by the generalization of \cite[Proposition 4.7]{microcosm}
		 \begin{align}
		 \label{OmegaXYeqn}	\Omega^{X,Y} : \left(    \int\sing\right)^\op \to \Vect \ , \quad (n,m,\Sigma) \mapsto \Omega(\Sigma;X^{\boxtimes n},Y^{\boxtimes m})
		\end{align}
	extends to a symmetric monoidal functor;
	the reversal of the arrows in the source category in comparison to \cite{microcosm} comes from the fact that we work with cocontinuous functors 
	 instead of continuous ones.
	
	\begin{definition}
		For an open-closed modular functor $\Omega$
		with singularities with open and closed sector $\cat{A}, \cat{B} \in\PrL$, respectively, a \emph{consistent system of singular correlators} is
		\begin{itemize}
			\item a self-dual object $F \in \cat{A}$, the so-called \emph{boundary field},
			\item a symmetric object $B\in\cat{B}$, the so-called \emph{bulk field},
			\item plus a monoidal transformation $\lambda^{F,B} : \Omega^{X,Y} \to k$ of symmetric monoidal functors 
	$\left(    \int\sing\right)^\op \to \Vect$
	whose component
	\begin{align}
		\lambda^{F,B}_\Sigma : \Omega(\Sigma;X^{\boxtimes n},Y^{\boxtimes m}) \to k\label{eqnlambdaFB}
		\end{align} 
	at a surface $\Sigma \in \sing(n,m)$
	is referred to as the \emph{correlator} at the surface $\Sigma$.
	\end{itemize}
\end{definition}

The linear forms \eqref{eqnlambdaFB}
are by definition invariant under the mapping class group of $\Sigma$ and all sewing operations allowed in the singular surface operad; both is packaged into the naturality as in \cite{jfcs,microcosm}.
The fact that the correlators are linear forms on spaces of conformal blocks rather than vectors inside them is again a consequence of the fact that we work with cocontinuous functors instead of continuous ones.
This dualization is discussed briefly for ansular correlators in \cite[Section~3.5.1]{yeralthesis}. 
	
	\subsection{The construction of singular correlators}
	We are now in a position to prove that the boundary theories in the sense of \cite{grw,grw-proc} actually produce consistent systems of correlators for the singular 
	modular functor built from the triplet $\cat{W}_{2,3}$ through the construction of this article:
	
	\begin{theorem}\label{thmW23}
		Every GRW boundary condition gives rise to a 
		consistent system of singular correlators
		for the singular modular functor built from the triplet
		$\cat{W}_{2,3}$.
		In particular, the chiral symmetry algebra gives rise to such a system.
		\end{theorem}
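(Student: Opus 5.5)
Given a GRW boundary theory $(F,\beta)$, I would build the correlators with the modular microcosm principle of \cite{microcosm}, adapted to the singular surface operad. The plan has three stages. First, turn $F$ into a self-dual object of $\cat{A}=\RW$. Second, produce a symmetric bulk object $B$ in $\cat{B}=\int_{\mathbb{S}^1}\RW$. Third, show that the open correlators determined by $F$ extend uniquely along the closed sector.

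For the first stage, I would use Proposition~\ref{propfinite} to rewrite the pairing on $\RW$ as $\spr{X,Y}\cong \Hom(X\otimes Y,K)^*$ on compact objects, where $K$ is the dualizing object. A non-degenerate symmetric pairing $F\otimes F\to \cat{W}'$ in the $\otimes$-description should then correspond to a non-degenerate symmetric map $\nu:\Delta\to F\boxtimes F$ in the sense of \cite[Definition 4.5]{microcosm}. Some care is needed with the $\Rexf$/$\Lexf$ conventions of Remark~\ref{remlexrex}; the point is that $\cat{W}'$ plays the role of the dualizing object for the relevant product. The symmetry condition added in Definition~\ref{grwboundarycondition} is exactly what makes $\nu$ symmetric under the pivotal structure \cite[Section~5.1]{cyclic}.

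The associative multiplication, together with invariance of $\beta$, makes $F$ a symmetric Frobenius algebra in the Grothendieck-Verdier sense of \cite{fsswfrobenius,microcosm}. By the open version of the microcosm principle \cite[Theorem~5.?]{microcosm}, combined with the classification Theorem~\ref{thmclassification}, this yields a consistent system of open correlators. Concretely, it is a monoidal transformation $\Vo^{F}\to k$ on $\O$, given on disks by the linear forms $\spr{F,\dots,F}\to k$ induced by iterated multiplication followed by $\beta$.

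For the bulk field I would take $B:=LF$, the image of $F$ under the band tensor functor. The symmetric map $\Delta_\circ\to B\boxtimes B$ is then $(L\boxtimes L)\nu$. This works because $\Delta_\circ=(L\boxtimes L)\Delta$ by the proof of Theorem~\ref{thmmain}, and symmetry is preserved. No non-degeneracy is required here, which is exactly what Definition~\ref{defmfsingularities} allows.

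For the third stage, I would define the correlator on a surface $\Sigma\in\sing(n,m)$ by replacing each parametrized boundary circle by a marked interval, as in the surface $\Sigma^\ddagger$ from the proof of Theorem~\ref{thmmain}. That proof gives an identification
\begin{align}
\block(\Sigma;F^{\boxtimes n},(LF)^{\boxtimes m})\cong \block(\Sigma^\ddagger;F^{\boxtimes(n+m)}) \ ,
\end{align}
and I would transport the open correlator along it.

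I then need to check naturality with respect to the three kinds of morphisms in $\int\sing$. Mapping class group invariance, and invariance under gluing of intervals, come from the open system. Gluing along boundary circles with the added puncture reduces, by the Excision~2 argument in the proof of Theorem~\ref{thmmain}, to gluing of intervals on $\Sigma^\ddagger$ using $\nu$, so it is again covered by the open system.

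The main obstacle is that the passage $\Sigma\mapsto\Sigma^\ddagger$ depends on the choice of base points and on the collar parametrization. So I must show that the transported forms are invariant under mapping classes that rotate boundary circles, and under the different ways of descending to factorization homology. I expect to handle this with the colimit description of $\block$ on circles, $\int_{\mathbb{S}^1}$ being a colimit over interval embeddings. The open correlators are compatible with all interval embeddings, since the Frobenius structure is symmetric, and this makes the forms descend.

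Finally, the chiral symmetry algebra is a GRW boundary theory by the Example above, so it is an instance of the general statement.
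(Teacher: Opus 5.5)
Your proposal is correct and takes essentially the paper's route. GRW theories are identified with symmetric Frobenius algebras in $\Wmod$ (Corollary~\ref{corgrw}), the open correlators come from Theorem~\ref{thmopenmicrocosm}, and they are transported along $\block(\Sigma;F^{\boxtimes n},(LF)^{\boxtimes m})\cong\Vo(\Sigma_*;F^{\boxtimes(n+m)})$ with bulk field $LF$ and symmetric map $\Delta_\circ=(L\boxtimes L)\Delta\to LF\boxtimes LF$ (Theorem~\ref{thmcor}). The base-point issue you flag as the main obstacle is milder than you expect: morphisms in $\sing$ preserve the boundary parametrizations, so once a base point is fixed one has $\Map(\Sigma)=\Map(\Sigma_*)$, and no separate descent argument is needed.
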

	
	The statement can be proved more generally. To this end, we recall the following notion:
	
	\begin{definition}[$\text{following \cite{fsswfrobenius,microcosm}}$]\label{deffrobenius}
		A \emph{symmetric Frobenius algebra} $F$ in a pivotal Grothendieck-Verdier category $\cat{A}$ in $\Rexf$
		is a unital associative algebra in $\cat{A}$ equipped with a map
		$\beta : F \otimes F \to K$ called \emph{pairing} that is \begin{itemize}
			\item \emph{non-degenerate} in the sense that the map $\psi:F\to DF$ corresponding to $\beta$ is an isomorphism,
			\item  \emph{symmetric} in the sense that $D\psi : D^2 F \to F$ agrees with $\psi$ after identifying $F\cong D^2 F$ using the pivotal structure,\item
			and \emph{invariant} for the multiplication $\mu :F \otimes F\to F$ and its unit $\eta : I \to F$ in the sense that $\beta(\eta,\mu)=\beta$.
		\end{itemize}
	\end{definition}

\begin{lemma}\label{lemmasymmetricpairing}
	If the pivotal Grothendieck-Verdier category in Definition~\ref{deffrobenius} is a ribbon Grothendieck-Verdier category, the symmetry requirement is equivalent to the requirement for 
	\begin{align}
		F\otimes F \ra{\theta_F \otimes \id_F} F\otimes F \ra{c_{F,F}^{-1}}F\otimes F \ra{\beta}K
	\end{align}
	to agree with $\beta$. In particular, if $F$ is commutative, the requirement is equivalent to $\theta_F = \id_F$. 
\end{lemma}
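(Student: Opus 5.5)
The plan is to translate the pivotal symmetry condition of Definition~\ref{deffrobenius} into a statement about morphisms $F\otimes F\to K$ by Grothendieck-Verdier adjunction. Under the defining isomorphism $\cat{A}(F\otimes F,K)\cong\cat{A}(F,DF)$, the pairing $\beta$ corresponds to $\psi$. The assignment $\psi\mapsto D\psi\circ(\text{pivotal iso }F\cong D^2F)$ then defines an involution on $\cat{A}(F,DF)$, which is exactly the involution appearing in the symmetry requirement. Transported back, it becomes an involution on $\cat{A}(F\otimes F,K)$. It therefore suffices to identify this transported involution with
\begin{align}
\beta\mapsto \beta\circ c_{F,F}^{-1}\circ(\theta_F\otimes\id_F) \ .
\end{align}

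To do this, I would invoke the standard fact, recalled in \cite{bd} and in \cite[Section~5]{cyclic}, that in a ribbon Grothendieck-Verdier category the pivotal structure is the one induced by the braiding and the balancing. Concretely, the involution on $\cat{A}(X\otimes Y,K)\cong\cat{A}(X,DY)$ obtained by dualizing and using $X\cong D^2X$ corresponds to the map
\begin{align}
\cat{A}(X\otimes Y,K)\to\cat{A}(Y\otimes X,K),\qquad f\mapsto f\circ c_{X,Y}^{-1}\circ(\theta_Y\otimes\id_X)
\end{align}
(up to the convention of which side carries $\theta$). This is precisely how the text after Definition~\ref{grwboundarycondition} describes the action of the pivotal structure on $\Hom(F\otimes F,\cat{W}')$, via \cite[Section~5.1]{cyclic}.

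The main step is to verify this identification by unwinding naturality. The key ingredients are the following:
\begin{itemize}
\item the isomorphism $D\psi$ is characterized by the adjunction against $K$;
\item $D\theta_X=\theta_{DX}$ holds by the axioms;
\item the balancing satisfies $\theta_{X\otimes Y}=c_{Y,X}c_{X,Y}(\theta_X\otimes\theta_Y)$, together with the compatibility at $K=DI$.
\end{itemize}
I expect this bookkeeping of conventions to be the main obstacle, in particular pinning down whether $\theta_F$ lands on the first or the second factor. I would settle this by citing the explicit pivotal structure $\omega_X$ of \cite{bd} built from $\theta$ and the braiding, rather than recomputing it.

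For the commutative case, I would use $\mu\circ c_{F,F}=\mu$, hence also $\mu\circ c_{F,F}^{-1}=\mu$, together with invariance, which gives $\beta=\beta(\eta,\mu)$, i.e. $\beta=\varepsilon\circ\mu$ with $\varepsilon:=\beta\circ(\eta\otimes\id_F)$. The symmetry condition then reads
\begin{align}
\varepsilon\circ\mu\circ(\theta_F\otimes\id_F)=\varepsilon\circ\mu \ .
\end{align}
By non-degeneracy, that is, since $\psi$ is an isomorphism and $\beta(-\circ\theta_F,-)$ corresponds to $\psi\circ\theta_F$, this is equivalent to $\psi\circ\theta_F=\psi$. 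Since $\psi$ is invertible, this in turn is equivalent to $\theta_F=\id_F$.
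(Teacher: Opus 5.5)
Your route is the paper's own: its proof is only a pointer to \cite[Section~5.4]{cyclic} for the relation between balancing, braiding and pivotal structure. Your treatment of the commutative case is correct and more explicit than the paper's: invariance gives $\beta=\varepsilon\circ\mu$, commutativity removes the braiding, and injectivity of $\beta\mapsto\psi$ turns $\beta\circ(\theta_F\otimes\id_F)=\beta$ into $\theta_F=\id_F$.

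The step that would fail is the identification you plan to cite. The ambiguity you flag, namely on which factor $\theta$ sits, is harmless. For any $f:X\otimes Y\to K$ with adjoint $\phi:X\to DY$, both $f\circ(\theta_X\otimes\id_Y)$ and $f\circ(\id_X\otimes\theta_Y)$ correspond to $\phi\circ\theta_X=\theta_{DY}\circ\phi=D\theta_Y\circ\phi$. What matters is pairing $c^{-1}$ with $\theta$ rather than with $\theta^{-1}$. Take the convention $\theta_{X\otimes Y}=c_{Y,X}c_{X,Y}(\theta_X\otimes\theta_Y)$ that you list (and that (B2) of Definition~\ref{defvcyclic} encodes), together with $\theta_K=\theta_{DI}=D\theta_I=\id_K$. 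Then every $h:F\otimes F\to K$ satisfies $h\circ c_{F,F}^{-2}=h\circ(\theta_F\otimes\theta_F)$. Hence $\sigma(\beta):=\beta\circ c_{F,F}^{-1}\circ(\theta_F\otimes\id_F)$ satisfies
\begin{align}
\sigma^2(\beta)=\beta\circ c_{F,F}^{-2}\circ(\theta_F\otimes\theta_F)=\beta\circ(\theta_F^2\otimes\theta_F^2) \ .
\end{align}
So $\sigma$ is not an involution in general, and it cannot equal the transported involution $\psi\mapsto D\psi\circ\omega_F$. Unwinding correctly gives $\beta\mapsto\beta\circ c_{F,F}\circ(\theta_F\otimes\id_F)=\beta\circ c_{F,F}^{-1}\circ(\theta_F^{-1}\otimes\id_F)$, whose square is $\beta\circ\theta_{F\otimes F}=\beta$.

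Here is a concrete check over $\mathbb{C}$. In $\catf{Vect}_{\mathbb{Z}/4}$ with braiding $c(a,b)=i^{ab}$ and balancing $\theta_a=i^{a^2}$, the induced pivotal structure is the trivial one. The group algebra $\bigoplus_a \mathbb{C}e_a$, with $e_a$ in degree $a$ and $\varepsilon(e_a)=\delta_{a,0}$, is then a symmetric Frobenius algebra, since $\beta(e_a,e_b)=\delta_{a+b,0}$ is a symmetric bilinear form. Nevertheless $\beta\circ c^{-1}\circ(\theta\otimes\id)$ evaluates to $i\cdot i=-1$ on $e_1\otimes e_3$, whereas $\beta(e_1\otimes e_3)=1$.

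So the ``standard fact'' in the form you state it needs $c_{F,F}^{-1}$ replaced by $c_{F,F}$, or $\theta_F$ by $\theta_F^{-1}$. The same correction applies to the composite as displayed in the statement, which the paper's bare citation does not check. With that correction your plan goes through. The commutative conclusion is unaffected: $\mu\circ c_{F,F}=\mu$ reduces the corrected condition to the same equation $\beta\circ(\theta_F\otimes\id_F)=\beta$, hence to $\theta_F=\id_F$.
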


\begin{proof}
	This is a consequence of the connection between the balancing, the braiding and the pivotal structure, see \cite[Section~5.4]{cyclic}.
	\end{proof}

This leads us to the following description of GRW boundary conditions:
\begin{corollary}\label{corgrw}
	GRW boundary theories for $\cat{W}_{2,3}$ are exactly symmetric Frobenius algebras in the ribbon Grothendieck-Verdier category $\Wmod$ in $\Rexf$.
	\end{corollary}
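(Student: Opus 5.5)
The plan is to compare Definition~\ref{grwboundarycondition} with Definition~\ref{deffrobenius} clause by clause, working in $\Wmod$ with its $\Rexf$ conventions. There the monoidal product is $\otimes$, the monoidal unit is $\cat{W}$, and the dualizing object is $K=D\cat{W}=\cat{W}'$, since the Grothendieck-Verdier duality is given by taking the contragredient. With this, the target $\cat{W}'$ of the GRW pairing $\beta:F\otimes F\to\cat{W}'$ is literally the dualizing object $K$ appearing in Definition~\ref{deffrobenius}. The associative algebra structures on $F$ for $\otimes$ also coincide. One point needs care: Definition~\ref{deffrobenius} asks for a unit $\eta:\cat{W}\to F$. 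I would read the GRW notion of associative algebra as unital, as in \cite{grw,grw-proc}; the chiral symmetry algebra is unital because it is a quotient of $\cat{W}$. The invariance conditions then match directly.

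Next comes non-degeneracy. In a Grothendieck-Verdier category, maps $F\otimes F\to K$ correspond bijectively to maps $\psi:F\to DF$ via $\cat{A}(X\otimes Y,K)\cong\cat{A}(X,DY)$. I would take the non-degeneracy in \cite{grw,grw-proc} to mean exactly that the induced map $F\to F'$ is an isomorphism. This is the standard meaning for a pairing with values in $\cat{W}'$ in the contragredient formalism, since $\Hom(F\otimes F,\cat{W}')\cong\Hom(F,F')$ by the defining property of contragredients and intertwining operators. So the two non-degeneracy notions agree.

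Finally, symmetry. By Lemma~\ref{lemmasymmetricpairing}, applied because $\Wmod$ is a ribbon Grothendieck-Verdier category in $\Rexf$, symmetry of $\beta$ in the sense of Definition~\ref{deffrobenius} (via $D\psi$ and the pivotal structure $F\cong D^2F$) is equivalent to $\beta\circ c_{F,F}^{-1}\circ(\theta_F\otimes\id_F)=\beta$. This is verbatim the symmetry condition stated after Definition~\ref{grwboundarycondition}. Combining the three steps gives a bijection between GRW boundary theories and symmetric Frobenius algebras in $\Wmod$, and the identification is the identity on underlying data.

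The main obstacle I expect is bookkeeping of conventions rather than mathematics. Two things must be checked: first, that the pivotal structure used in Definition~\ref{deffrobenius} (induced by the ribbon structure, as in \cite[Section~5.4]{cyclic}) is the one implicitly used for $\Wmod$, namely the contragredient isomorphism $X''\cong X$; second, that the $\Rexf$ rather than the $\Lexf$ version is meant, so that $K=\cat{W}'$ rather than $\cat{W}$. I would settle these by citing \cite{alsw} for the identification of $D$ with the contragredient and the balancing with $e^{2\pi i L_0}$, after which Lemma~\ref{lemmasymmetricpairing} applies without modification.
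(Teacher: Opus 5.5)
Your proposal is correct and is essentially the paper's route. The paper states the corollary without a separate proof, as a direct consequence of matching Definition~\ref{grwboundarycondition} against Definition~\ref{deffrobenius} in the $\Rexf$ conventions (unit $\cat{W}$, dualizing object $K=\cat{W}'$, $D$ the contragredient), with Lemma~\ref{lemmasymmetricpairing} translating the pivotal symmetry condition into the condition involving $\theta_F$ and $c_{F,F}$ stated after the GRW definition. Your extra check that the pivotal structure agrees with $X''\cong X$ is not needed, since Lemma~\ref{lemmasymmetricpairing} already refers to the pivotal structure induced by the ribbon structure.
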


Let $\cat{A}$ be a pivotal Grothendieck-Verdier category in $\Rexf$ and $F\in\cat{A}$ a self-dual object, i.e.\
an object with a symmetric non-degenerate pairing $\beta : F \otimes F\to K$ as in Definition~\ref{deffrobenius}.
	The pairing $\beta$ induces a symmetric map
	\begin{align}
		\Delta = \int_{X\in\cat{A}} DX\boxtimes X \ra{\text{structure map of the end}} DF \boxtimes F \ra{\psi\boxtimes F} \ F \boxtimes F \ . \label{eqncopairingbeta}
	\end{align}
For the open surface operad $\O:\Graphs \to \Cat$, denote by $\int \O$ its Grothendieck construction.
	By means of this map,
	 one may define a symmetric monoidal functor \begin{align}
	\Ao^F:	\left(\int \O\right)^\op \to \vect\ , \quad \Sigma \mapsto \Ao(\Sigma;F,\dots,F) \ ;
	\end{align} 
	again, this is dual to \cite[Proposition 4.7]{microcosm}
	and analogous to $\Omega^{X,Y}$ in~\eqref{OmegaXYeqn}.

\begin{theorem}[Classification of open correlators]\label{thmopenmicrocosm}
	For any symmetric Frobenius algebra $F\in\cat{A}$ in a pivotal Grothendieck-Verdier category in $\Rexf$,
	the Frobenius structure of $F$ induces linear forms
	\begin{align}
		\lambda_\Sigma^F : \Ao(\Sigma;F,\dots,F) \to k \quad \text{for} \quad \Sigma \in \O(T) 
	\end{align}
	that produce a monoidal natural transformation $\Ao(-;F,\dots,F)\to k$ of symmetric monoidal functors $\left(\int\O\right)^\op\to \vect$, i.e.\ a consistent system of open correlators for the open conformal field theory with monodromy data $\cat{A}$.
	All such systems arise from symmetric Frobenius algebras in $\cat{A}$.  
\end{theorem}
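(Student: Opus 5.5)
The plan is to reduce the statement to the modular microcosm principle for cyclic associative algebras and the classification of open modular functors. First, I will rewrite the data needed for a monoidal transformation $\Ao(-;F,\dots,F)\to k$ on $\left(\int\O\right)^\op$ as operadic data. By the dual of \cite[Proposition 4.7]{microcosm}, the functor $\Ao^F$ is the pullback of the modular $\O$-algebra $\Ao$ along the self-dual object $F$, with copairing given by \eqref{eqncopairingbeta}. A monoidal transformation $\Ao^F\to k$ is then equivalent to a structure of modular algebra, \emph{inside} $\Ao$, on $F$. This is the microcosm statement: an $\O$-algebra internal to the categorical $\O$-algebra $\Ao$.

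Second, I will use the fact that $\O$ is the modular envelope of the cyclic associative operad. This is \eqref{eqnequivO} together with the strictification underlying Theorem~\ref{thmclassification}. It should let me replace $\int\O$ by its restriction to disks with marked intervals, plus the gluing along intervals. Concretely, I will show that restriction along the inclusion of genus-zero disks induces an equivalence between monoidal transformations $\Ao^F\to k$ on $\int\O$ and the analogous data on the cyclic associative part. To do this, I will rerun the argument of Proposition~\ref{propcolim}: markings $\M(\Sigma)$ are contractible by Lemma~\ref{lemmaM}, so every linear form on a surface is determined by the forms on the disks of a marking, glued via the copairing. Independence of the marking then follows from the colimit description.

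Third, I will identify the genus-zero data with a symmetric Frobenius algebra. The values $\Ao(\mathbb{D}^2_n;F,\dots,F)\cong\spr{F,\dots,F}$ are described by Theorem~\ref{thmmcgaction}. In the $\Rexf$ Grothendieck-Verdier setting of Proposition~\ref{propfinite} and Remark~\ref{remlexrex}, $\spr{X,Y,Z}$ is $\cat{A}(X\otimes Y,DZ)^*$ up to the relevant dualization, and $\spr{X,Y}$ pairs into $K$. A linear form on $\spr{F,F,F}$ therefore amounts to a morphism $F\otimes F\to DF\cong F$, i.e.\ a multiplication $\mu$ via $\psi$. A linear form on $\spr{F}$ gives the unit $\eta$, and the form on $\spr{F,F}$ is $\beta$ itself. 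Naturality with respect to the dihedral and cyclic automorphisms of disks translates into invariance of $\beta$ under $\mu$. Naturality with respect to the disk mapping classes, which act through the pivotal structure as in \cite[Section~5]{cyclic}, gives the symmetry of $\beta$. Naturality with respect to gluing two triangles into a square in the two possible ways gives associativity, and gluing a one-interval disk gives unitality. Conversely, given a Frobenius algebra, I will define the linear forms on all disks by iterated multiplication followed by $\beta$, and check these relations in the same way.

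The main obstacle will be the second step: making precise that the transformation on all of $\int\O$, including the higher-genus mapping class group invariance, is determined freely by genus-zero data. I intend to invoke the modular envelope universal property in the form used for Theorem~\ref{thmclassification}, applied to the functor category $\Ao^F\Rightarrow k$ rather than to algebras. This is where the bicategorical coherence (2-groupoid level) must be handled carefully. If that fails, I would fall back on citing the $\Lexf$-valued statement in \cite{microcosm} and transporting it to $\Rexf$ by the duality $X\odot Y=D(DY\otimes DX)$ of Remark~\ref{remlexrex}, which reverses variance and turns vectors into linear forms.
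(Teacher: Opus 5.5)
Your proposal is correct and follows the paper's route. The paper proves the statement simply by citing the dual of \cite[Theorem~8.3]{microcosm}, which is established by exactly the modular microcosm principle and its compatibility with modular extension that your Steps 1--3 reconstruct, and your fallback (dualizing the $\Lexf$-statement of \cite{microcosm}) is literally the paper's argument. If you write out the direct route, fix four small points: disks in $\O$ are oriented, so only cyclic (not dihedral) symmetries occur; the symmetry of $\beta$ comes from the rotation of the bigon rather than from disk mapping classes, which are trivial; the bigon correlator equals $\beta$ by unitality; and the annulus without marked intervals is excluded from the contractibility in Lemma~\ref{lemmaM}, so it must be handled separately.
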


This statement is the dual version of~\cite[Theorem~8.3]{microcosm} that is proved using the modular microcosm principle and its compatibility with modular extensions.

Thanks to Corollary~\ref{corgrw},
Theorem~\ref{thmW23} above is now a special case of the following main result of this section:

\begin{theorem}\label{thmcor}
	If $V$ is $C_2$-cofinite and if we choose for the cyclic subcategory of $V$-modules the projective generalized grading-restricted $V$-modules, then any compact symmetric Frobenius algebra $F\in \RV$
	gives rise to a consistent system of correlators for the modular functor $\block$ with singularities.
	The singular bulk object is $LF\in\int_{\mathbb{S}^1} \RV$. Explicitly, $F$ gives us
	mapping class group invariant linear forms
	\begin{align} \label{eqnlambdaF}\lambda^F_\Sigma: \block(\Sigma ; F^{\boxtimes n},(LF)^{\boxtimes m})\to k
	\end{align}
	for all $\Sigma \in \sing(n,m)$ compatible with all sewing operations in the singular surface operad.
\end{theorem}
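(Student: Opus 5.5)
The plan is to reduce the singular correlators to open ones, using the fact that the closed sector of $\block$ is induced from the open sector through the band tensor functor $L$. First, since $V$ is $C_2$-cofinite, Proposition~\ref{propc2cofinite} identifies $\RV$ with the ind completion of the ribbon Grothendieck-Verdier category $\Vmod$ in $\Rexf$. By the proof of Theorem~\ref{thmfinite}, the open modular functor $\Vo$ is the ind completion of the $\Rexf$-valued modular extension $\cat{A}_V$ of $\Vmod$. A compact symmetric Frobenius algebra $F\in\RV$ is therefore a symmetric Frobenius algebra in $\Vmod$ in the sense of Definition~\ref{deffrobenius}. Theorem~\ref{thmopenmicrocosm} then gives a consistent system of open correlators $\lambda^F_\Sigma:\Vo(\Sigma;F,\dots,F)\to k$, natural for mapping classes and for interval gluing along the copairing \eqref{eqncopairingbeta}. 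Transporting along $\block\cong\Vo$ (Theorem~\ref{thmmcgaction}) gives the correlators on the open sector.

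Next I would make $LF$ a symmetric object of $\int_{\mathbb{S}^1}\RV$. The symmetric map $\nu:\Delta\to F\boxtimes F$ from \eqref{eqncopairingbeta} yields $(L\boxtimes L)\nu:\Delta_\circ=(L\boxtimes L)\Delta\to LF\boxtimes LF$, and symmetry is inherited from that of $\nu$. To define $\lambda^F_\Sigma$ for $\Sigma\in\sing(n,m)$, I use the identification from the proof of Theorem~\ref{thmmain},
\begin{align}
\block(\Sigma;F^{\boxtimes n},(LF)^{\boxtimes m})\cong\block(\Sigma^\ddagger;F^{\boxtimes n},F^{\boxtimes m}),
\end{align}
where $\Sigma^\ddagger$ replaces each parametrized circle by a marked interval at the base point. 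I then set $\lambda^F_\Sigma:=\lambda^F_{\Sigma^\ddagger}$.

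Then I check naturality over $(\int\sing)^\op$, case by case.
\begin{itemize}
\item Mapping classes of $\Sigma$ preserve the boundary parametrization and hence the base points, so they are mapping classes of $\Sigma^\ddagger$. Invariance follows from the open case.
\item Gluing along intervals is open excision, i.e.\ Corollary~\ref{corexcinterval} combined with Theorem~\ref{thmopenmicrocosm}.
\item Gluing along circles with an added puncture: by the proof of Excision~2 this is, on $\Sigma^\ddagger$, exactly gluing of the two base-point intervals, and the result is $(\Sigma'_+)^\ddagger$. The copairing used is $\Delta_\circ=(L\boxtimes L)\Delta$ together with $(L\boxtimes L)\nu$, which matches the open gluing with $\nu$. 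So compatibility again reduces to the open case.
\end{itemize}
Monoidality under disjoint union is inherited from the open system. Punctures and the $n+m+p\ge1$ condition need no extra work, because the punctured surfaces are already open surfaces in $\O$.

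The main obstacle is the well-definedness of $\lambda^F_\Sigma$ on the factorization-homology description. The value $\block(\Sigma;\dots,LF,\dots)$ is a colimit over all embeddings of intervals into the boundary circles, and the isomorphism to $\Sigma^\ddagger$ picks one embedding. I need the open correlators to be compatible with the descent maps, i.e.\ with inserting the multiplication of $F$ when intervals in a circle collide. I would handle this by noting that these descent maps are themselves open-sector operations: gluing disks with three marked intervals, on which $\lambda^F$ is given by the algebra structure of $F$. Naturality of the open correlators under these operations then gives a cone over the colimit diagram, so $\lambda^F$ factors through $\int_{\mathbb{S}^1}\RV$.
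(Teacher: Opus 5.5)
Your proposal is correct and takes essentially the same route as the paper. You replace each parametrized circle by a marked interval at the base point, which identifies $\block(\Sigma;F^{\boxtimes n},(LF)^{\boxtimes m})$ with $\Vo(\Sigma^\ddagger;F^{\boxtimes(n+m)})$; you take the open correlators from the classification of open correlators for $F$ as a symmetric Frobenius algebra among the compact objects; and you handle circle gluing via the symmetric map $(L\boxtimes L)\Delta\to LF\boxtimes LF$.

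The well-definedness issue in your last paragraph does not arise and needs no descent argument. By construction, restricting the functor on $\int_{\mathbb{S}^1}\RV$ along $L$ gives the functor for the single base-point interval, so the identification at $LF$ is canonical.
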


\begin{proof}
	The compact objects
	 $\Vmod$ of $\RV$ form a ribbon Grothendieck-Verdier category in $\Rexf$ (Proposition~\ref{propfinite}), and $F$, by virtue of being compact, is a symmetric Frobenius algebra in $\Vmod$.
	For $\Sigma \in \sing (n,m)$, denote by $\Sigma_*$ the surface obtained by placing an interval around the point on the $n$ boundary circles corresponding to a fixed base point on $\mathbb{S}^1$ under the parametrization.
	Then $\Map(\Sigma)=\Map(\Sigma_*)$ and 
	$\block(\Sigma ; F^{\boxtimes n},(LF)^{\boxtimes m})$ can canonically be identified with $\Vo(\Sigma;F^{\boxtimes (n+m)})$. 
	Now Theorem~\ref{thmopenmicrocosm} gives us the 
	needed mapping class group invariant linear forms~\eqref{eqnlambdaF}, and also
	the gluing compatibility for gluing along marked intervals.
	Moreover, the gluing maps for the gluing along boundary components (recall that in this case a puncture has to be added) are defined such that the gluing constraints are satisfied with respect to the symmetric map
	\begin{align}
		\Delta_\circ = (L\boxtimes L)\Delta \to LF \boxtimes LF
		\end{align} obtained by applying $L:\RV \to \int_{\mathbb{S}^1}\RV$ to 
	$\Delta \to F\boxtimes F$.
\end{proof}

	\begin{example}
		Let $\mathbb{T}^2_r$ be a torus with $r\ge 1$ punctures.
		Then
		\begin{align}
			\block (  \mathbb{T}^2_r  )\cong \Hom_V(    \mathbb{A}^{\otimes (r+1)}, V'     )^*\label{eqnblockontorus}
		\end{align} with
		$\mathbb{A}= \otimes \left( \int_{X\in\Vmod} X' \boxtimes X\right)\neq 0$. 
		In particular, this vector space comes with an action of the torus braid group generalizing the one from \cite{brochierjordane}.
		By applying $\otimes$ to the structure map $\int_{X\in\Vmod} X' \boxtimes X\to F'\boxtimes  F$ for a 
		symmetric Frobenius algebra
		 $F$, we obtain a map
		\begin{align}
			\pi_F: \mathbb{A} \to F' \otimes F 
		\end{align}
		and by Theorem~\ref{thmcor}
		 a mapping class group invariant map
		\begin{align}
			\lambda^{F}_{  \mathbb{T}^2_r }:	\mathbb{A}^{\otimes (r+1)}\to V' 
		\end{align} 
	(because we obtain a linear form on \eqref{eqnblockontorus})
	that in \cite{correlators} (treating  the finite rigid case) is referred to as a \emph{generalized class function}.
		For $r=1$, it is explicitly given by 
		\begin{align}
			\mathbb{A} \otimes \mathbb{A} \ra{\pi_F \otimes \pi_F} F'\otimes F \otimes F'\otimes F \ra{\text{self-duality}} F^{\otimes 4} \ra{\id \otimes  c_{F,F}^{-1}\otimes \id } F^{\otimes 4} \ra{\mu\otimes \mu} F^{\otimes 2} \ra{\beta} V' \ . 
		\end{align}
	\end{example}

	\vspace*{0.3cm} \noindent  \textsc{Ludwig-Maximilians-Universität München, Department für Physik,	Theresienstrasse 37,	D-80333 München,	Germany} \\ \textsc{Université Bourgogne Europe, CNRS, IMB UMR 5584, F-21000 Dijon, France}


\small	
\begin{thebibliography}{YZHLZ11}
	
	\bibitem[AF15]{AF}
	D.~Ayala and J.~Francis.
	\newblock {Factorization homology of topological manifolds}.
	\newblock {\em J. Top.}, 8(4):1045--1084, 2015.
	
	\bibitem[AGS96]{agsmoduli}
	A.~Alekseev, H.~Grosse, and V.~Schomerus.
	\newblock {Combinatorial quantization of the {H}amiltonian {C}hern-{S}imons
		theory {II}}.
	\newblock {\em Comm. Math. Phys.}, 174(3):561--604, 1996.
	
	\bibitem[Agu00]{aguiar}
	M.~Aguiar.
	\newblock {A note on strongly separable algebras}.
	\newblock {\em Bol. Acad. Nac. Cienc.}, 65:51--60, 2000.
	
	\bibitem[Ale94]{alekseevmoduli}
	A.~Alekseev.
	\newblock {Integrability on the {H}amiltonian {C}hern-{S}imons theory}.
	\newblock {\em St. Petersburg Math. J.}, 6(2):241--253, 1994.
	
	\bibitem[ALSW25]{alsw}
	R.~Allen, S.~Lentner, C.~Schweigert, and S.~Wood.
	\newblock {Duality structures for module categories of vertex operator algebras
		and the {F}eigin {F}uchs boson}.
	\newblock {\em Selecta Math. New Ser.}, 31(36), 2025.
	
	\bibitem[AS96]{asmoduli}
	A.~Alekseev and V.~Schomerus.
	\newblock {Representation theory of {C}hern-{S}imons observables}.
	\newblock {\em Duke Math. J.}, 85(2):447--510, 1996.
	
	\bibitem[Bar79]{barr}
	M.~Barr.
	\newblock {\em $\star$-autonomous categories}, volume 572 of {\em Lecture Notes
		in Math.}
	\newblock Springer, 1979.
	
	\bibitem[BD13]{bd}
	M.~Boyarchenko and V.~Drinfeld.
	\newblock A duality formalism in the spirit of {G}rothendieck and {V}erdier.
	\newblock {\em Quantum Top.}, 4(4):447--489, 2013.
	
	\bibitem[BFM]{BFM}
	A.~Beilinson, B.~Feigin, and B.~Mazur.
	\newblock Notes on conformal field theory. 1991.
	\newblock Available at
	\href{https://www.math.stonybrook.edu/~kirillov/manuscripts/bfmn.pdf}{https://www.math.stonybrook.edu/~kirillov/manuscripts/bfmn.pdf}.
	
	\bibitem[BJ17]{brochierjordane}
	A.~Brochier and D.~Jordan.
	\newblock {Fourier transform for quantum $D$-modules via the punctured torus
		mapping class group}.
	\newblock {\em Quantum Top.}, 8:361--379, 2017.
	
	\bibitem[BJSS21]{bjss}
	A.~Brochier, D.~Jordan, P.~Safronov, and N.~Snyder.
	\newblock {Invertible braided tensor categories}.
	\newblock {\em Alg. Geom. Top.}, 21(4):2107--2140, 2021.
	
	\bibitem[BK00]{bakifm}
	B.~Bakalov and A.~Kirillov.
	\newblock {On the {L}ego-{T}eichmüller game}.
	\newblock {\em Transf. Groups}, 6:207--244, 2000.
	
	\bibitem[BK01]{baki}
	B.~Bakalov and A.~Kirillov, Jr.
	\newblock {\em Lectures on tensor categories and modular functors}, volume~21
	of {\em University Lecture Series}.
	\newblock American Mathematical Society, Providence, RI, 2001.
	
	\bibitem[Bro09]{broue}
	M.~Broue.
	\newblock Higman's criterion revisited.
	\newblock {\em Michigan Math. J}, 58, 2009.
	
	\bibitem[BSZ25]{bsz}
	S.~Barkan, J.~Steinebrunner, and A.~Y. Zhang.
	\newblock Open {2D TFT}s admit initial open-closed extensions.
	\newblock arXiv:2509.02553 [math.AT], 2025.
	
	\bibitem[BW25]{becerrawoike}
	J.~Becerra and L.~Woike.
	\newblock {Pivotal Module Categories, Factorization Homology and Modular
		Invariant Modified Traces}.
	\newblock arXiv:2512.19669 [math.QA], 2025.
	
	\bibitem[BW26]{brochierwoike}
	A.~Brochier and L.~Woike.
	\newblock A classification of modular functors via factorization homology.
	\newblock 2026.
	
	\bibitem[BZBJ18a]{bzbj}
	D.~Ben-Zvi, A.~Brochier, and D.~Jordan.
	\newblock {Integrating quantum groups over surfaces}.
	\newblock {\em J. Top.}, 11(4):874--917, 2018.
	
	\bibitem[BZBJ18b]{bzbj2}
	D.~Ben-Zvi, A.~Brochier, and D.~Jordan.
	\newblock {Quantum character varieties and braided module categories}.
	\newblock {\em Selecta Math.}, 24:4711--4748, 2018.
	
	\bibitem[CD11]{coeckeduncan}
	B.~Coecke and R.~Duncan.
	\newblock Interacting quantum observables: categorical algebra and
	diagrammatics.
	\newblock {\em New J. Phys.}, 13:043016, 2011.
	
	\bibitem[CGPM23]{asm}
	F.~Costantino, N.~Geer, and B.~Patureau-Mirand.
	\newblock Admissible skein modules.
	\newblock arXiv:2302.04493 [math.GT], 2023.
	
	\bibitem[CJF13]{chirjf}
	A.~Chirvasitu and T.~Johnson-Freyd.
	\newblock {The fundamental pro-groupoid of an affine 2-scheme}.
	\newblock {\em Appl. Categor. Struct.}, 21:469--22, 2013.
	
	\bibitem[CMSY25]{crsy}
	T.~Creutzig, R.~McRae, K.~Shimizu, and H.~Yadav.
	\newblock Commutative algebras in {G}rothendieck-{V}erdier categories,
	rigidity, and vertex operator algebras.
	\newblock {\em Comm. Contp. Math.}, 2025.
	
	\bibitem[Cos04]{costello}
	K.~Costello.
	\newblock The {A}-infinity operad and the moduli space of curves.
	\newblock arXiv:math/0402015 [math.AG], 2004.
	
	\bibitem[Cos07a]{costellographs}
	K.~Costello.
	\newblock A dual version of the ribbon graph decomposition of moduli space.
	\newblock {\em Geom. Top.}, 11(3):1637--1652, 2007.
	
	\bibitem[Cos07b]{costellotcft}
	K.~Costello.
	\newblock {Topological conformal field theories and Calabi-Yau categories}.
	\newblock {\em Adv. Math.}, 210(1):165--214, 2007.
	
	\bibitem[Day70]{daypromonoidal}
	B.~Day.
	\newblock {\em On closed categories of functors}, volume 137 of {\em Lecture
		Notes Math.}
	\newblock Springer, 1970.
	
	\bibitem[DGT21]{DGT1}
	C.~Damiolini, A.~Gibney, and N.~Tarasca.
	\newblock Conformal blocks from vertex algebras and their connections on
	{$\overline{\mathcal M}_{g, n}$}.
	\newblock {\em Geom. Topol.}, 25(5):2235--2286, 2021.
	
	\bibitem[DGT24]{DGT2}
	C.~Damiolini, A.~Gibney, and N.~Tarasca.
	\newblock On factorization and vector bundles of conformal blocks from vertex
	algebras.
	\newblock {\em Ann. Sci. \'Ec. Norm. Sup\'er. (4)}, 57(1):241--292, 2024.
	
	\bibitem[DW25]{damioliniwoike}
	C.~Damiolini and L.~Woike.
	\newblock Modular functors from conformal blocks of rational vertex operator
	algebras.
	\newblock arXiv:2507.05845 [math.QA], 2025.
	
	\bibitem[EGNO15]{egno}
	P.~Etingof, S.~Gelaki, D.~Nikshych, and V.~Ostrik.
	\newblock {\em Tensor categories}, volume 205 of {\em Math. Surveys Monogr.}
	\newblock Am. Math. Soc., 2015.
	
	\bibitem[ENO04]{eno-d}
	P.~Etingof, D.~Nikshych, and V.~Ostrik.
	\newblock {An analogue of Radford's $S^4$ formula for finite tensor
		categories}.
	\newblock {\em Int. Math. Res. Not.}, 2004(54):2915--2933, 2004.
	
	\bibitem[EO04]{etingofostrik}
	P.~Etingof and V.~Ostrik.
	\newblock {Finite tensor categories}.
	\newblock {\em Mosc. Math. J.}, 4(3):627--654, 2004.
	
	\bibitem[EP26]{etingofpenneys}
	P.~Etingof and D.~Penneys.
	\newblock Rigidity of non-negligible objects of moderate growth in braided
	categories.
	\newblock {\em Forum Math. Pi}, (7):1--18, 2026.
	
	\bibitem[ESK24]{egaskupers}
	D.~Egas~Santander and S.~Kupers.
	\newblock {Comparing combinatorial models of moduli space and their
		compactifications}.
	\newblock {\em Alg. Geom. Top.}, 24(2):595--654, 2024.
	
	\bibitem[FBZ04]{frenkelbenzvi}
	E.~Frenkel and D.~Ben-Zvi.
	\newblock {\em Vertex algebras and algebraic curves}, volume~88 of {\em
		Mathematical Surveys and Monographs}.
	\newblock American Mathematical Society, Providence, RI, second edition, 2004.
	
	\bibitem[FFRS08]{ffrsunique}
	J.~Fjelstad, J.~Fuchs, I.~Runkel, and C.~Schweigert.
	\newblock {Uniqueness of open/closed rational CFT with given algebra of open
		states}.
	\newblock {\em Adv. Theor. Math. Phys.}, 12:1283--1375, 2008.
	
	\bibitem[FHL93]{fhl}
	I.~B. Frenkel, Y.-Z. Huang, and L.~Lepowsky.
	\newblock On axiomatic approaches to vertex operator algebras and modules.
	\newblock {\em Mem. Amer. Math. Soc.}, 104(494):viii+64, 1993.
	
	\bibitem[FM12]{farbmargalit}
	B.~Farb and D.~Margalit.
	\newblock {\em A Primer on Mapping Class Groups}, volume~49 of {\em Princeton
		Math. Series}.
	\newblock Princeton University Press, 2012.
	
	\bibitem[FRS02]{frs1}
	J.~Fuchs, I.~Runkel, and C.~Schweigert.
	\newblock {TFT construction of RCFT correlators. I: Partition functions}.
	\newblock {\em Nucl. Phys. B}, 646(3):353--497, 2002.
	
	\bibitem[FRS10]{csrcft}
	J.~Fuchs, I.~Runkel, and C.~Schweigert.
	\newblock {Twenty five years of two-dimensional rational conformal field
		theory}.
	\newblock {\em J. Math. Phys.}, 51(015210), 2010.
	
	\bibitem[FS17]{jfcs}
	J.~Fuchs and C.~Schweigert.
	\newblock {Consistent systems of correlators in non-semisimple conformal field
		theory}.
	\newblock {\em Adv. Math.}, 307:598--639, 2017.
	
	\bibitem[FS21]{fspivotal}
	J.~Fuchs and C.~Schweigert.
	\newblock Bulk from boundary in finite cft by means of pivotal module
	categories.
	\newblock {\em Nucl. Phys. B}, 967:115392, 2021.
	
	\bibitem[FSS20]{fss}
	J.~Fuchs, G.~Schaumann, and C.~Schweigert.
	\newblock {Eilenberg-Watts calculus for finite categories and a bimodule
		Radford $S^4$ theorem}.
	\newblock {\em Trans. Amer. Math. Soc.}, 373(1):1--40, 2020.
	
	\bibitem[FSSW25]{fsswfrobenius}
	J.~Fuchs, G.~Schaumann, C.~Schweigert, and S.~Wood.
	\newblock Grothendieck-{V}erdier module categories, {F}robenius algebras and
	relative {S}erre functors.
	\newblock {\em Adv. Math.}, 475:110325, 2025.
	
	\bibitem[FSV13]{fsv}
	J.~Fuchs, C.~Schweigert, and A.~Valentino.
	\newblock Bicategories for boundary conditions and for surface defects in 3-d
	{TFT}.
	\newblock {\em Comm. Math. Phys.}, 321:543--575, 2013.
	
	\bibitem[FSWY25]{algcften}
	J.~Fuchs, C.~Schweigert, S.~Wood, and Y.~Yang.
	\newblock Algebraic structures in two-dimensional conformal field theory.
	\newblock In R.~Szabo and M.~Bojowald, editors, {\em Encyclopedia of
		Mathematical Physics (Second Edition, Volume 3)}, pages 604--617. Elsevier,
	2025.
	
	\bibitem[FSY22]{rcftsn}
	J.~Fuchs, C.~Schweigert, and Y.~Yang.
	\newblock {\em String-Net Construction of RCFT Correlators}, volume~45 of {\em
		SpringerBriefs Math. Phys.}
	\newblock Springer, 2022.
	
	\bibitem[Gia11]{giansiracusa}
	J.~Giansiracusa.
	\newblock {The framed little 2-discs operad and diffeomorphisms of
		handlebodies}.
	\newblock {\em J. Top.}, 4(4):919--941, 2011.
	
	\bibitem[GJS23]{skeinfin}
	S.~Gunningham, D.~Jordan, and P.~Safronov.
	\newblock {The finiteness conjecture for skein modules}.
	\newblock {\em Invent. Math.}, 232:301--363, 2023.
	
	\bibitem[GK95]{gk}
	E.~Getzler and M.~Kapranov.
	\newblock Cyclic operads and cyclic homology.
	\newblock In R.~Bott and S.-T. Yau, editors, {\em Conference proceedings and
		lecture notes in geometry and topology}, pages 167--201. Int. Press, 1995.
	
	\bibitem[GK98]{gkmod}
	E.~Getzler and M.~Kapranov.
	\newblock {Modular operads}.
	\newblock {\em Compositio Math.}, 110:65--125, 1998.
	
	\bibitem[GKPM22]{mtrace}
	N.~Geer, J.~Kujawa, and B.~Patureau-Mirand.
	\newblock {M-traces in (non-unimodular) pivotal categories}.
	\newblock {\em Algebr. Represent. Theor. (online first)}, 25:759--776, 2022.
	
	\bibitem[GN24]{gannonnegron}
	T.~Gannon and C.~Negron.
	\newblock {Quantum SL(2) and logarithmic vertex operator algebras at
		$(p,1)$-central charge}.
	\newblock {\em J. Eur. Math. Soc.}, (online first), 2024.
	
	\bibitem[GPMV13]{mtrace3}
	N.~Geer, B.~Patureau-Mirand, and A.~Virelizier.
	\newblock {Traces on ideals in pivotal categories}.
	\newblock {\em Quantum Topol.}, 4(1):91--124, 2013.
	
	\bibitem[Gra25]{grabowski}
	J.~Grabowski.
	\newblock {\em Representation Theory, A Categorical Approach}.
	\newblock Open Book Publishers, 2025.
	
	\bibitem[GRW09]{grw}
	M.~Gaberdiel, I.~Runkel, and S.~Wood.
	\newblock {Fusion rules and boundary conditions in the $c = 0$ triplet model}.
	\newblock {\em J. Phys. A}, 42(32):325--403, 2009.
	
	\bibitem[GZ26]{guizhang}
	B.~Gui and H.~Zhang.
	\newblock Analytic conformal blocks of \$c\_2\$-cofinite vertex operator
	algebras iii: The sewing-factorization theorems.
	\newblock 2026.
	
	\bibitem[Har86]{harer86}
	J.~Harer.
	\newblock {The virtual cohomological dimension of the mapping class group of an
		orientable surface}.
	\newblock {\em Invent. Math.}, 84:157--176, 1986.
	
	\bibitem[Hat65]{hattori}
	A.~Hattori.
	\newblock Rank element of a projective module.
	\newblock {\em Nagoya Math. J.}, (25):113--120, 1965.
	
	\bibitem[Hua08]{huang}
	Y.-Z. Huang.
	\newblock {Rigidity and modularity of vertex tensor categories}.
	\newblock {\em Comm. Contemp. Math.}, 10(1):871--911, 2008.
	
	\bibitem[Hua09]{huangfin}
	Y.-Z. Huang.
	\newblock Cofiniteness conditions, projective covers and the logarithmic tensor
	product theory.
	\newblock 2009.
	
	\bibitem[Iva12]{ivanov}
	S.~O. Ivanov.
	\newblock Nakayama functors and {E}ilenberg-{W}atts theorems.
	\newblock {\em J. Math. Sci.}, 183:675--680, 2012.
	
	\bibitem[JF21]{jfheisenberg}
	T.~Johnson-Freyd.
	\newblock Heisenberg-picture quantum field theory.
	\newblock In A.~Alekseev, E.~Frenkel, M.~Rosso, B.~Webster, and M.~Yakimov,
	editors, {\em Progress in Math. vol 340}, pages 371--409. Birkhäuser, 2021.
	
	\bibitem[Kü91]{kuehlshammer}
	B.~Kühlshammer.
	\newblock Group-theoretical descriptions of ring theoretical invariants of
	group algebras.
	\newblock {\em Prog. Math.}, 95:425--441, 1991.
	
	\bibitem[Kel99]{keller}
	B.~Keller.
	\newblock {On the cyclic homology of exact categories}.
	\newblock {\em J. Pure Appl. Alg.}, 136(1):1--56, 1999.
	
	\bibitem[Kir11]{kirillovsn}
	A.~Kirillov.
	\newblock String-net model of {T}uraev-{V}iro invariants.
	\newblock arXiv:1106.6033 [math.AT], 2011.
	
	\bibitem[KL01]{kl}
	T.~Kerler and V.~V. Lyubashenko.
	\newblock {\em Non-Semisimple Topological Quantum Field Theories for
		3-Manifolds with Corners}, volume 1765 of {\em Lecture Notes in Math.}
	\newblock Springer, 2001.
	
	\bibitem[Koc03]{kocktft}
	J.~Kock.
	\newblock {\em Frobenius Algebras and 2D Topological Quantum Field Theories},
	volume 2003 of {\em London Math. Soc. Student Texts}.
	\newblock Cambridge University Press, 2003.
	
	\bibitem[Kon92]{kontsevichintersection}
	M.~Kontsevich.
	\newblock {Intersection theory on the moduli space of curves and the matrix
		Airy function}.
	\newblock {\em Comm. Math. Phys.}, 147(1):1--23, 1992.
	
	\bibitem[Kon94]{kon94}
	M.~Kontsevich.
	\newblock Feynman diagrams and low-dimensional topology.
	\newblock In A.~Joseph, F.~Mignot, F.~Murat, B.~Prum, and R.~Rentschler,
	editors, {\em First European Congress of Mathematics Paris, July 6--10, 1992:
		Vol. II: Invited Lectures (Part 2)}, pages 97--121. Birkhäuser Basel, 1994.
	
	\bibitem[Laz01]{Lazaroiu}
	C.~I. Lazaroiu.
	\newblock {On the structure of open-closed topological field theory in two
		dimensions}.
	\newblock {\em Nucl. Phys. B}, 603(3):497--530, 2001.
	
	\bibitem[LL04]{LepowskyLi2004}
	J.~Lepowsky and H.~Li.
	\newblock {\em Introduction to Vertex Operator Algebras and Their
		Representations}, volume 227 of {\em Progress in Mathematics}.
	\newblock Birkh{\"a}user Boston, Boston, MA, 2004.
	
	\bibitem[LP07]{lpstatesum}
	A.~D. Lauda and H.~Pfeiffer.
	\newblock State sum construction of two-dimensional open-closed topological
	quantum field theories.
	\newblock {\em J. Knot Theory Ram.}, 16(09):1121--1163, 2007.
	
	\bibitem[LP08]{laudapfeiffer}
	A.~D. Lauda and H.~Pfeiffer.
	\newblock {Open-closed strings: Two-dimensional extended TQFTs and Frobenius
		algebras}.
	\newblock {\em Topology Appl.}, 155(7):623--666, 2008.
	
	\bibitem[LW05]{levinwen}
	M.~A. Levin and X.-G. Wen.
	\newblock {String-net condensation: A physical mechanism for topological
		phases}.
	\newblock {\em Phys. Rev. B}, 71:045110, 2005.
	
	\bibitem[Lyu95a]{lyubacmp}
	V.~V. Lyubashenko.
	\newblock {Invariants of 3-manifolds and projective representations of mapping
		class groups via quantum groups at roots of unity}.
	\newblock {\em Comm. Math. Phys.}, 172:467--516, 1995.
	
	\bibitem[Lyu95b]{lyu}
	V.~V. Lyubashenko.
	\newblock {Modular transformations for tensor categories}.
	\newblock {\em J. Pure Appl. Alg.}, 98:279--327, 1995.
	
	\bibitem[Lyu96]{lyulex}
	V.~V. Lyubashenko.
	\newblock {Ribbon abelian categories as modular categories}.
	\newblock {\em J. Knot Theory and its Ramif.}, 05(03):311--403, 1996.
	
	\bibitem[LZZ12]{lzzhigman}
	Y.~Liu, G.~Zhou, and A.~Zimmermann.
	\newblock { Higman ideal, stable Hochschild homology and Auslander-Reiten
		conjecture}.
	\newblock {\em Math. Z.}, 270:759--781, 2012.
	
	\bibitem[MR95]{masbaumroberts}
	G.~Masbaum and J.~Roberts.
	\newblock {On central extensions of mapping class groups}.
	\newblock {\em Math. Ann.}, 302:131--150, 1995.
	
	
	
	\bibitem[McR26]{mcrae}
	R.~McRae.
	\newblock {On rationality for {$C_2$}-cofinite vertex operator algebras}.
	\newblock {\em Camb. J. Math.}, 14(1):1--115, 2026.
	
	\bibitem[MLM92]{maclanemoerdijk}
	S.~Mac~Lane and I.~Moerdijk.
	\newblock {\em Sheaves in Geometry and Logic}.
	\newblock Springer Universitext. Springer, 1992.
	
	\bibitem[MN26]{mcraenegron}
	R.~McRae and C.~Negron.
	\newblock Cocompletions for non-abelian vertex tensor categories.
	\newblock arXiv:2606.07987 [math.QA], 2026.
	
	\bibitem[MS89]{ms89}
	G.~Moore and N.~Seiberg.
	\newblock {Classical and Quantum Conformal Field Theory}.
	\newblock {\em Comm. Math. Phys.}, 123:177--254, 1989.
	
	\bibitem[MS06]{mooresegal}
	G.~Moore and G.~Segal.
	\newblock D-branes and {K}-theory in {2D} topological field theory.
	\newblock arXiv:hep-th/0609042, 2006.
	
	\bibitem[MSWY26]{sn}
	L.~Müller, C.~Schweigert, L.~Woike, and Y.~Yang.
	\newblock {The {L}yubashenko Modular Functor for {D}rinfeld Centers via
		Non-Semisimple String-Nets}.
	\newblock {\em Adv. Math.}, 488:110770, 2026.
	
	\bibitem[M{\"u}l26]{Muller2026GVnotes}
	L.~M{\"u}ller.
	\newblock An introduction to {G}rothendieck-{V}erdier duality in conformal
	field theory, quantum topology, and representation theory.
	\newblock
	\url{https://drive.google.com/file/d/1fPlOjGruRym6I02F-m-R2LAJOr_V5h6H/view},
	2026.
	\newblock Lecture notes based on a talk at the workshop \emph{Geometric and
		Category-Theoretic Approaches to Conformal Field Theory}, Chennai
	Mathematical Institute.
	
	\bibitem[MW23a]{cyclic}
	L.~Müller and L.~Woike.
	\newblock {Cyclic framed little disks algebras, {G}rothendieck-{V}erdier
		duality and handlebody group representations}.
	\newblock {\em Quart. J. Math.}, 74(1):163--245, 2023.
	
	\bibitem[MW23b]{mwdiff}
	L.~Müller and L.~Woike.
	\newblock {The Diffeomorphism Group of the Genus One Handlebody and
		{H}ochschild homology}.
	\newblock {\em Proc. Amer. Math. Soc.}, 151(6):2311--2324, 2023.
	
	\bibitem[MW24]{mwansular}
	L.~Müller and L.~Woike.
	\newblock {Classification of {C}onsistent {S}ystems of {H}andlebody {G}roup
		{R}epresentations}.
	\newblock {\em Int. Math. Res. Not.}, 2024(6):4767--4803, 2024.
	
	\bibitem[MW25a]{envas}
	L.~Müller and L.~Woike.
	\newblock Categorified open topological field theories.
	\newblock {\em Proc. Amer. Math. Soc.}, 153:2381--2396, 2025.
	
	\bibitem[MW25b]{mwdehn}
	L.~Müller and L.~Woike.
	\newblock The {D}ehn twist action for quantum representations of mapping class
	groups.
	\newblock 2025.
	
	\bibitem[MW26]{mwskein}
	L.~Müller and L.~Woike.
	\newblock Admissible skein modules and ansular functors: A comparison.
	\newblock 2026.
	
	\bibitem[{nLa}26]{nlab:promonoidal_category}
	{nLab authors}.
	\newblock promonoidal category.
	\newblock \url{https://ncatlab.org/nlab/show/promonoidal+category}, 2026.
	\newblock
	\href{https://ncatlab.org/nlab/revision/promonoidal+category/18}{Revision
		18}.
	
	\bibitem[Pen87]{penner}
	R.~C. Penner.
	\newblock {The decorated Teichmüller space of punctured surfaces}.
	\newblock {\em Comm. Math. Phys.}, 113(2):299--339, 1987.
	
	\bibitem[PS13]{pontoshulman}
	K.~Ponto and M.~Shulman.
	\newblock {Shadows and traces in bicategories}.
	\newblock {\em J. Homotopy Relat. Struct.}, 8:151--200, 2013.
	
	
	\bibitem[Rob94]{roberts}
	J.~Roberts.
	\newblock {Skeins and mapping class groups}.
	\newblock {\em Math. Proc. Camb. Phil. Soc.}, 115:53--77, 1994. 
	
	\bibitem[RGW14]{grw-proc}
	I.~Runkel, M.~Gaberdiel, and S.~Wood.
	\newblock Logarithmic bulk and boundary conformal field theory and the full
	centre construction.
	\newblock In Chengming Bai, J{\"u}rgen Fuchs, Yi-Zhi Huang, Liang Kong, Ingo
	Runkel, and Christoph Schweigert, editors, {\em Conformal Field Theories and
		Tensor Categories}, pages 93--168, Berlin, Heidelberg, 2014. Springer Berlin
	Heidelberg.
	
	\bibitem[Rie14]{riehl}
	E.~Riehl.
	\newblock {\em Categorical Homotopy Theory}, volume~24 of {\em New Math.
		Monogr.}
	\newblock Cambridge University Press, 2014.
	
	\bibitem[Rie16]{riehlcat}
	E.~Riehl.
	\newblock {\em Category Theory in Context}.
	\newblock Dover Publications Inc., 2016.
	
	\bibitem[RT90]{rt1}
	N.~Reshetikhin and V.~G. Turaev.
	\newblock {Ribbon graphs and their invariants derived from quantum groups}.
	\newblock {\em Comm. Math. Phys.}, 127:1--26, 1990.
	
	\bibitem[RT91]{rt2}
	N.~Reshetikhin and V.~Turaev.
	\newblock {Invariants of 3-manifolds via link polynomials and quantum groups}.
	\newblock {\em Invent. Math.}, 103:547--598, 1991.
	
	\bibitem[RV16]{riehlverity}
	E.~Riehl and D.~Verity.
	\newblock Homotopy coherent adjunctions and the formal theory of monads.
	\newblock {\em Adv. Math.}, 286:802--888, 2016.
	
	\bibitem[RW22]{ghost}
	Allen R. and S.~Wood.
	\newblock Bosonic ghostbusting: The bosonic ghost vertex algebra admits a
	logarithmic module category with rigid fusion.
	\newblock {\em Comm. Math. Phys.}, 390:959--1015, 2022.
	
	\bibitem[Seg88]{Segal}
	G.~Segal.
	\newblock {Two-dimensional conformal field theories and modular functors}.
	\newblock In {\em {IX International Conference on Mathematical Physics
			(IAMP)}}, 1988.
	
	\bibitem[Shi17]{shimizucf}
	K.~Shimizu.
	\newblock {The monoidal center and the character algebra}.
	\newblock {\em J. Pure Appl. Alg.}, 221(9):2338--2371, 2017.
	
	\bibitem[Shi19]{shimizumodular}
	K.~Shimizu.
	\newblock {Non-degeneracy conditions for braided finite tensor categories}.
	\newblock {\em Adv. Math.}, 355:106778, 2019.
	
	\bibitem[SP09]{schommerpries}
	C.~J. Schommer-Pries.
	\newblock {\em The classification of two-dimensional extended topological field
		theories}.
	\newblock PhD thesis, Berkeley, 2009.
	
	\bibitem[SS23]{shibatashimizu}
	T.~Shibata and K.~Shimizu.
	\newblock {Modified Traces and the Nakayama Functor}.
	\newblock {\em Alg. Rep. Theory}, 26:513--551, 2023.
	
	\bibitem[Sta65]{stallings}
	J.~Stallings.
	\newblock Centerless groups -- n algebraic formulation of gottliebs theorem.
	\newblock {\em Topology}, (4):129--132, 1965.
	
	\bibitem[Ste25]{steinebrunner}
	J.~Steinebrunner.
	\newblock 2-dimensional {TFT}s via modular $\infty$-operads.
	\newblock arXiv:2506.22104 [math.CT], 2025.
	
	\bibitem[SW23]{tracesw}
	C.~Schweigert and L.~Woike.
	\newblock The trace field theory of a finite tensor category.
	\newblock {\em Alg. Rep. Theory}, 26:1931--1949, 2023.
	
	\bibitem[Til98]{tillmann}
	U.~Tillmann.
	\newblock {$\mathcal{S}$-Structures for $k$-Linear Categories and the
		Definition of a Modular Functor}.
	\newblock {\em J. London Math. Soc.}, 58(1):208--228, 1998.
	
	\bibitem[Tur94]{turaev}
	V.~G. Turaev.
	\newblock {\em Quantum Invariants of Knots and 3-Manifolds}, volume~18 of {\em
		Studies in Math.}
	\newblock De Gruyter, 1994.
	
	\bibitem[TV92]{turaevviro}
	V~Turaev and O.~Viro.
	\newblock State sum invariants of 3-manifolds and quantum 6j-symbols.
	\newblock {\em Topology}, 31(4):865--902, 1992.
	
	\bibitem[Woi25a]{correlators}
	L.~Woike.
	\newblock The construction of correlators in finite rigid logarithmic conformal
	field theory.
	\newblock arXiv:2507.22841 [math.QA], 2025.
	
	\bibitem[Woi25b]{hdr}
	L.~Woike.
	\newblock {\em {Modular Operads, Factorization Homology and Applications in
			Quantum Topology}}.
	\newblock Habilitation {\`a} diriger des recherches, {Universit{\'e} Bourgogne
		Europe}, 2025.
	\newblock hal.science/tel-05457957.
	
	\bibitem[Woi25c]{reflection}
	L.~Woike.
	\newblock Reflection equivariance and the {H}eisenberg picture for spaces of
	conformal blocks.
	\newblock arXiv:2507.22820 [math.QA], 2025.
	
	\bibitem[Woi25d]{microcosm}
	L.~Woike.
	\newblock {The cyclic and modular microcosm principle}.
	\newblock {\em Canad. J. Math. (online first)}, 2025.
	
	\bibitem[Woi26]{woikemfo}
	L.~Woike.
	\newblock Spaces of conformal blocks beyond semisimplicity, rigidity and
	finiteness.
	\newblock Oberwolfach Report No. 12/2026, 2026.
	
	\bibitem[WW16]{wahlwesterland}
	N.~Wahl and C.~Westerland.
	\newblock {Hochschild homology of structured algebras}.
	\newblock {\em Adv. Math.}, 288:240--307, 2016.
	
	\bibitem[Yer]{yeralthesis}
	D.~Yeral.
	\newblock {\em Factorization homology, three-manifold invariants and modular
		functors}.
	\newblock PhD thesis, Thesis at Université Bourgogne Europe (submitted in
	2026).
	
	\bibitem[Yer26]{yeralopen}
	D.~Yeral.
	\newblock Open modular functors from non-finite tensor categories.
	\newblock arXiv:2602.03519 [math.QA], 2026.
	
	\bibitem[YZHLZ11]{hlzi}
	Y.-Z. Yi-Zhi~Huang, J.~Lepowsky, and L.~Zhang.
	\newblock Logarithmic tensor category theory, VIII: Braided tensor category
	structure on categories of generalized modules for a conformal vertex
	algebra.
	\newblock arXiv:1110.1931 [math.QA], 2011.
	
	\bibitem[Zha25]{zhang}
	H.~Zhang.
	\newblock Non-equivalence of smooth and nodal conformal block functors in
	logarithmic {CFT}.
	\newblock arXiv:2509.07720 [math.QA], 2025.
	
\end{thebibliography}
\end{document}